\DeclareSymbolFont{cyrletters}{OT2}{wncyr}{m}{n}
\DeclareMathSymbol{\Sha}{\mathalpha}{cyrletters}{"58}
\newtheorem{thm}{Theorem}[section]
\newtheorem{lem}[thm]{Lemma}
\newtheorem{prop}[thm]{Proposition}
\newtheorem{cor}[thm]{Corollary}
\newtheorem{example}{Example}
\newtheorem*{example*}{Example}
\newtheorem{lemma}[thm]{Lemma}
\newtheorem{proposition}[thm]{Proposition}
\newtheorem*{proposition*}{Proposition}
\newtheorem{dff}[thm]{Definition}
\newtheorem*{mainprop*}{Proposition \ref{finitepointsoncurves}}
\numberwithin{equation}{section}
\newcommand{\QQ}{\mathbb{Q}}
\newcommand{\dR}{\mathrm{dR}}
\newcommand{\an}{\mathrm{an}}
\newcommand{\prim}{\mathrm{prim}}
\newcommand{\GL}{{\operatorname{GL}}}
\newcommand{\SL}{{\operatorname{SL}}}
\newcommand{\Sym}{\mathrm{Sym}}
\newcommand{\Z}{{\mathbf{Z}}}
\newcommand{\F}{{\mathbf{F}}}
\newcommand{\Q}{{\mathbf{Q}}}
\newcommand{\Aff}{\mathrm{Aff}}
\newcommand{\C}{{\mathbf{C}}}
\renewcommand{\P}{{\mathbf{P}}}
\newcommand{\G}{\mathbf{G}}
\newcommand{\Spec}{{\operatorname{Spec}}}
\newcommand{\Frob}{\mathrm{Frob}}
\newcommand{\five}{8} %
\newcommand{\Gal}{{\operatorname{Gal}}}
\newcommand{\p}{{\mathfrak{p}}}
\newcommand{\LGr}{\mathrm{LGr}}
\newcommand{\Pic}{{\operatorname{Pic}}}
\newcommand{\Hom}{{\operatorname{Hom}}}
\newcommand{\inft}{\mathbf{C}}
\newcommand{\cris}{\mathrm{cris}}
\newcommand{\Res}{\mathrm{Res}}
\newcommand{\Aut}{{\operatorname{Aut}}}
\newcommand{\lmatrix}{\left(\!\!\begin{array}{cc}}
\newcommand{\rmatrix}{\end{array}\!\!\right)}
\newcommand{\lvector}{\left(\!\!\begin{array}{c}}
\newcommand{\Sp}{\mathrm{Sp}}
\newcommand{\et}{\mathrm{et}}
\newcommand{\rvector}{\end{array}\!\!\right)}
\DeclareSymbolFont{cyrletters}{OT2}{wncyr}{m}{n}
\DeclareFontFamily{OT1}{rsfs}{}
     \DeclareFontShape{OT1}{rsfs}{n}{it}{<-> rsfs10}{}
\DeclareMathAlphabet{\mathscr}{OT1}{rsfs}{n}{it}
\newcommand{\Hodgeestimatebound}{40}
\newcommand{\HPr}{H_1^\mathrm{Pr}}
\newcommand{\numcovers}{N}
\newcommand{\monmap}{\operatorname{Cov}} %
\newcommand{\monsp}{\operatorname{Mon}}
\newcommand{\Gq}{\operatorname{Aff}(q)}
\newcommand{\bigexp}{M}
\newcommand{\arith}{\mathrm{arith}}
\newcommand{\crys}{\mathrm{crys}}
\newcommand{\Jmid}{\mathsf{m}}
\renewcommand{\ss}{\mathrm{ss}}
\newcommand{\wt}{\mathrm{wt}}
\newcommand{\codim}{\mathrm{codim}}
\newif\iffriendly
\begin{document}

\title{Diophantine problems and $p$-adic period mappings}
\author{Brian Lawrence and Akshay Venkatesh}
\maketitle
\setcounter{tocdepth}{1}
\begin{abstract}
 
We give an alternative  proof of Faltings's theorem (Mordell's conjecture): a curve of genus at least two over a number field has finitely many rational points. 
Our argument utilizes the set-up of Faltings's original proof, but is in spirit closer to the methods of Chabauty and Kim: we replace the use of abelian varieties
by a more detailed analysis of the variation of $p$-adic Galois representations in a family of algebraic varieties.  The key inputs
into this analysis are the comparison theorems of $p$-adic Hodge theory, and
explicit  topological computations of monodromy.

By the same methods we show that, 
 in sufficiently large dimension and degree, the set of hypersurfaces in projective
 space, with good reduction away from a fixed set of primes, is contained in a  proper
 Zariski-closed subset of the moduli space of all hypersurfaces.   This uses in an essential way the Ax--Schanuel property for period mappings, recently established by Bakker and Tsimerman.  

 \end{abstract}
\tableofcontents

\iffriendly
\else
\fi

\section{Introduction}
\label{intro}
\subsection{} 

Let $K$ be a number field. 
This paper has two main goals.  

Firstly,  we will give a new proof of 
the finiteness of $K$-rational points on a smooth projective $K$-curve of genus $\geqslant 2$. 
The proof is closely related to Faltings's proof \cite{Faltings}, but  is based on a closer study of the variation of  $p$-adic Galois representations in a family; it makes no usage of techniques specific to abelian varieties.  
  
Secondly,  we give an application of the same methods to a higher-dimensional situation.   Consider the family of degree-$d$ hypersurfaces in $\mathbf{P}^n$
and let $F_{n,d}$ be the complement of the discriminant divisor in this family; we regard $F_{n,d}$ as a smooth $\Z$-scheme.   For $S$ a finite set of primes, points of $F_{n,d}(\Z[S^{-1}])$ correspond
to proper smooth  hypersurfaces of degree $d$ in $\mathbf{P}^n_{\Z[S^{-1}]}$. 
It is very reasonable to suppose that $F_{n,d}(\Z[S^{-1}])$ is finite modulo the action of $\GL_{n+1}(\Z[S^{-1}])$ for $d \geqslant 3$ and all $n$.  We shall show at least that, 
if $n \geqslant n_0$ and $d \geqslant d_0(n)$, then $F_{n,d}(\Z[S^{-1}])$ is contained
in a proper Zariski closed subset of $F_{n,d}$ (i.e.,  there exists
a proper   
 $\Q$-subvariety of the generic fiber $(F_{n,d})_{\Q}$ whose rational points contain $F_{n,d}(\Z[S^{-1})$). 
 To prove this higher-dimensional result, we use a very recent theorem of Bakker and Tsimerman, the Ax--Schanuel theorem for period mappings.

We can obtain a still stronger theorem along a subvariety of $F_{n,d}$ if one has control over monodromy. Namely, if $F_{n,d}^* \subset (F_{n,d})_{\Q}$
is the Zariski closure of integral points, our result actually implies that the  Zariski closure of monodromy for the universal family of hypersurfaces
must drop over each component of $F_{n,d}^*$.    
It is possible that  this imposes a stronger codimension condition on $F_{n,d}^*$ than simply ``proper'' but we do not know for sure.
   
Note that, without the result of Bakker and Tsimerman,  one can {\em still} prove that $F_{n,d}(\Z[S^{-1}])$ lies  in a proper $\Q_p$-analytic subvariety of   
$F_{n,d}(\Q_p)$,
but one cannot prove the second statement about $F_{n,d}^*$. 
  
A simple toy case to illustrate the methods is given by the $S$-unit equation, which we analyze in \S \ref{Sunit}.

\subsection{Outline of the proof}   \label{subsec:outline}Consider a smooth projective family $X \rightarrow Y$ over $K$, where $Y$ is itself a smooth $K$-variety; we suppose this extends to a family $\pi: \mathcal{X} \rightarrow \mathcal{Y}$ over the ring $\mathcal{O}$ of $S$-integers of $K$, for some finite set $S$ of places of $K$ (containing all the archimedean places). 

For $y \in Y(K)$ call $X_y$ the fiber over $y$. We want to  bound $\mathcal{Y}(\mathcal{O})$,  making
use of the fact that, if $y \in Y(K)$ extends to $\mathcal{Y}(\mathcal{O})$, then $X_y$ admits a smooth proper model over $\mathcal{O}$. 
That one can thus reduce Mordell's conjecture to finiteness results  for varieties with good reduction was observed by Parshin \cite{Parshin} and then used by Faltings in his proof of the Mordell conjecture \cite{Faltings}. 

Choosing a rational prime $p$ that is unramified in $K$ and not below any prime of $S$, write $\rho_y$ for the Galois representation
of $G_K = \Gal(\overline{K}/K)$ on the $p$-adic geometric {\'e}tale cohomology of $X_y$, i.e.\ $H^*_{\et}(X_y \times_{K} \bar{K}, \Q_p)$. 
As observed by Faltings, one deduces from Hermite--Minkowski finiteness that, %
as $y$ varies through $\mathcal{Y}(\mathcal{O})$,  there are only finitely many possibilities
for the semisimplification of  the $G_K$-representation $\rho_y$ (denoted by $\rho_y^{\mathrm{ss}}$).

We   seek to use the fact that, for $v$ a place of $K$ above $p$, one can understand the restriction $\rho_{y,v}$ of $\rho_y$ to $G_{K_v}$  via $p$-adic Hodge theory.
In the Mordell case,  when $Y$ is a projective curve, our argument proceeds by showing that {\em both} of the following statements hold
  for suitable
choice of $X$ and $v$:
\begin{quote}
(*)  The representation $\rho_y$ is semisimple for all but finitely many $y \in Y(K)$, {\em and}  the map
\begin{equation} \label{rhorho0}
 y \in Y(K) \longrightarrow \mbox{isomorphism class of $\rho_{y,v}$} \end{equation}
has finite fibers.
\end{quote}
 
Faltings proves  much stronger statements when $X$ is an abelian scheme over $Y$, using a remarkable argument with heights:
every $\rho_y$ is semisimple {\em and } $\rho_y$ determines $X_y$ up to isogeny. 
Our approach gives less, but it gives results in other cases too, such as the hypersurface
family discussed above. However, in that setting, 
  the issue of semisimplicity proves harder to control,
and what we prove instead is the following hybrid of the two statements in (*): 
the map 
 \begin{equation} \label{rhorho} y \in Y(K) \longrightarrow \mbox{restriction of $\rho_y^{\mathrm{ss}}$ to $G_{K_v}$}\end{equation}
 considered as a mapping from $Y(K)$ to isomorphism classes of $G_{K_v}$-representations, has fibers that are not Zariski dense.  (It is crucial, in the above equation, that we semisimplify $\rho_y$ as a global Galois representation and  {\em then} restrict to $G_{K_v}$.) 

For the remainder of the current \S \ref{subsec:outline},  we will explain \eqref{rhorho0} in more detail.

Our analysis uses  $p$-adic Hodge theory. However we make no use of $p$-adic Hodge theory in families:  we need only the statements over a local field. Under the correspondence 
of $p$-adic Hodge theory, the restricted representation $\rho_{y,v}$ corresponds to a filtered $\phi$-module, namely the de Rham cohomology of $X_y$  
over $K_v$  equipped with its Hodge filtration and a semilinear Frobenius map. The variation of this filtration is
described by a period mapping; in this setting, this is a $K_v$-analytic mapping
 \begin{equation} \label{res map}  \mbox{ residue disk in $Y(K_v)$}  \longrightarrow \mbox{$K_v$-points of a flag variety,}\end{equation}
Therefore, the variation of the $p$-adic representation $\rho_{y,v}$ with $y$ is controlled by \eqref{res map}. 
The basic, and very naive,  ``hope'' of the proof is that injectivity of the period map   \eqref{res map}  
should force \eqref{rhorho0} to be injective.

However,  \eqref{rhorho0} does {\em not}  follow directly from injectivity of the period map, that is to say, from  Torelli-type theorems.

Different filtrations on the underlying $\phi$-module can give filtered $\phi$-modules which are abstractly isomorphic, the isomorphism being given by a linear endomorphism commuting with $\phi$.
Hence, one needs to know 
not only that the period mapping \eqref{res map} is injective,   but that its image   has {\em finite intersection} with an orbit on the period domain   of the centralizer $\mathrm{Z}(\phi)$ of $\phi$.  In other words, we must analyze a question of ``exceptional intersections'' between the image of a period
map and an algebraic subvariety.   

To illustrate how this is done, let us restrict to the case when $Y$ is a curve. 
Assuming that we have shown that the $\mathrm{Z}(\phi)$-orbit on the ambient flag
variety is a proper subvariety, it will then be sufficient to 
 show that the image of \eqref{res map} is in fact Zariski dense. Then the intersection points between the image of \eqref{res map} and a $\mathrm{Z}(\phi)$-orbit
 amount to zeroes of a nonvanishing $K_v$-analytic function in a residue disc, and are therefore finite.
 
 To check Zariski density, the crucial point is that one can verify the same statement for the {\em complex} period map:
\begin{equation} \label{topmap} \mbox{universal cover of $Y(\C)$} \longrightarrow \mbox{$\C$-points of a flag variety}\end{equation}
 To pass between the $p$-adic and complex period maps, we use the fact that (in suitable coordinates),
 they satisfy the same differential equation coming from the Gauss Manin connection, and so have the same power series.
 This  is a simple but crucial argument, given in Lemma \ref{vCpowerseries}.
 But  -- over the complex numbers -- Zariski density can be verified by topological methods: \eqref{topmap} is now equivariant for an action of $\pi_1(Y)$,
 acting on the right according to the monodromy representation.  It is enough to  verify that the  image of $\pi_1$ under the monodromy
 representation is sufficiently large. %
In the Mordell case, we show that the monodromy
action of $\pi_1(Y)$ extends to a certain mapping class group,
and we deduce large monodromy from the same assertion for the mapping class group (where we can use Dehn twists).
This monodromy argument is related to computations of Looijenga \cite{Looijenga}, Grunewald, Larsen, Lubotzky, and Malestein \cite{GLLM}, 
and Salter and Tshishiku \cite{ST}.

 If $Y$ were not a curve, the argument above says only that the intersection of the image of \eqref{res map}
 and a $\mathrm{Z}(\phi)$-orbit  is 
 a proper 
  $K_v$-analytic subvariety of $Y(K_v)$. 
One wants to get a proper {\em Zariski-closed subvariety}
(for example, this permits one, in principle at least, to  make an inductive argument on the dimension, although we do not try to do so here.) 
We obtain this only by  appealing to a remarkable recent result of Bakker and Tsimerman,
the Ax--Schanuel theorem for period mappings: this is a very powerful and general statement about the transcendence of period mappings.

To summarize, we have outlined the strategy of the proof of \eqref{rhorho0}. 
However, we have omitted one crucial ingredient needed in this proof, and also
 a crucial ingredient needed to get from \eqref{rhorho0} to Mordell:

\begin{itemize}
\item[(a)] Showing that the centralizer $\mathrm{Z}(\phi)$ of $\phi$ is not too large, and
\item[(b)]  Controlling in some {\em a priori} way the extent to which $\rho_y$ can fail to be semisimple. 
\end{itemize}
We now discuss these issues in turn. 
 
\subsection{Problem (a): controlling the centralizer of $\phi$} \label{control_centralizer}
 As we have explained, we need a method to ensure the centralizer of the crystalline Frobenius $\phi$ acting on the cohomology of a fiber $X_y$ is not too large.
 For example, if $K_v = \Q_p$ so that $\phi$ is simply a $\Q_p$-linear map, we must  certainly rule
 out the possibility that $\phi$ is a scalar!
 
This issue, that $\phi$ might have too large a centralizer and thus (*) might fail, already occurs in the simplest possible example. When analyzing the $S$-unit equation, it is natural to take   $Y = \P^1-\{0,1,\infty\}$  and $X \rightarrow Y$
to be the Legendre family, so that $X_t $ is the curve $y^2 = x(x-1)(x-t)$.  
Unfortunately  (*)  {\em fails}: for $t \in \Z_p$,  if we write $\rho_t$ for the representation of the Galois group  $G_{\Q_p}$ on the (rational) Tate module of $X_t$,
then $\rho_t$ belongs to only finitely many isomorphism classes so long as the reduction $\bar{t} \in \F_p$
is not equal to $0$ or $1$. 

Again we proceed in two different ways:
\begin{itemize}
\item[(i)]
In general, 
 Frobenius is a {\em semilinear} operator on a vector space over an unramified extension $L_w$ of $\Q_p$; semilinearity alone gives rise to a nontrivial bound (Lemma \ref{centralizer lemma}) on the size of its centralizer,
which, in effect, becomes stronger as $[L_w:\Q_p]$ gets larger.  

In the application to Mordell, it turns out that we can always put ourselves in a situation where $[L_w: \Q_p]$
is rather large. This forces the Frobenius centralizer to be small.  We explain this at more length below. 

\item[(ii)] In the case of hypersurfaces, we do not have a way to enlarge the base field as in (i). Our procedure is less satisfactory  than in case (i), 
in that it gives much weaker results:

We are of course able to choose the prime $p$, and we choose it (via Chebotarev) so that the crystalline Frobenius at $p$ has centralizer
that is as small as possible.    To do this, we fix an auxiliary prime $\ell$, and  first use the fact (from counting points over extensions of $\F_p$) that crystalline Frobenius at $p$
has the same eigenvalues as Frobenius on $p$ acting on $\ell$-adic cohomology; thus it is enough to choose $p$ such that the latter operator has small centralizer. 
One can do this via Chebotarev, given  a lower bound on the image of the {\em global} Galois representation,
and for this we again use some $p$-adic Hodge theory (cf.\ \cite{Sen}).  Another approach, by point-counting, is outlined in Lemma \ref{end lemma}. 

\end{itemize}

Let us explain point (i) above by example. In  our analysis  of the $S$-unit equation in \S \ref{Sunit}, we replace the Legendre family instead by the family
 with fiber 
 $$X_{t} = \coprod_{z^{2^k} = t} \{ y^2=x(x-1)(x-z) \},$$
 for a suitable large integer $k$. In our situation, the corresponding map $t \mapsto [\rho_t]$ will now only have finite fibers, at least on residue disks where $\bar{t}$ is not a square -- an example  
 of the importance of enlarging $K_v$. 
 
 Said differently, we have replaced the Legendre family $X  \stackrel{\ell}{\rightarrow} \P^1-\{0,1,\infty\}$
with a family with the following composite structure: 
 $$  X' \stackrel{\ell'}{ \rightarrow} \P^1-\{0,\mu_{2^k},\infty\} \rightarrow \P^1 - \{0,1,\infty\}$$ 
 where the second map is given by $u \mapsto u^{2^k}$, and $\ell'$ is simply the restriction of the Legendre family
 over $\P^1-\{0,\mu_{2^k}, \infty\}$.  The composite defines a family over $\P^1-\{0,1,\infty\}$ with geometrically disconnected fibres,
 and this disconnectedness is, as we have just explained, to our advantage. 
 
It turns out that the families introduced by Parshin (see \cite[Proposition 9]{Parshin}),
in his reduction of Mordell's conjecture to Shafarevich's conjecture, 
automatically have a similar structure. That is to say, if $Y$ is a smooth
projective curve, Parshin's families factorize as 
$$X \rightarrow Y' \rightarrow Y,$$
where $Y' \rightarrow Y$ is finite {\'e}tale and $X \rightarrow Y'$ is a relative curve.

 There is in fact a lot of flexibility in this construction; in Parshin's original construction 
the covering $Y' \rightarrow Y$ is obtained by pulling back multiplication by $2$ on the Jacobian,
and as such each geometric fiber is a torsor under  $H^1(Y_{\bar{K}},\mu_2)$.  
We want to ensure that the Galois action on each fiber of $Y' \rightarrow Y$ has large image -- with reference
to the discussion above, this is what allows us to ensure that the auxiliary field $L_w$ is  of large degree.
 We use a variant where each fiber  admits a $G_K$-equivariant map to $H^1(Y_{\bar{K}}, \Z/q\Z)$ (for a suitable auxiliary prime $q$).
 The Weil pairing  alone implies that the Galois action on this is nontrivial, and this (although very weak) is enough to run our argument.

\subsection{Problem (b): how to handle the failure of semisimplicity} \label{handle_nonsemisimple}
Let $y \in Y(K)$. 
The  local Galois representation $\rho_y|_{G_{K_v}}$ can certainly be very far from semisimple,
and thus we cannot hope to use $p$-adic Hodge theory alone to constrain semisimplicity. 

However, the Hodge weights of a global representation are highly constrained by purity (Lemma \ref{globalsimple}).
This means, for example, that any global subrepresentation $W$ of $\rho_y$ 
corresponds, under $p$-adic Hodge theory, to a Frobenius-stable subspace $W_{\dR} \subset H^*_{\dR}(X_y \otimes_{K} K_v)$
whose Hodge filtration is numerically constrained.  Now (assuming we have arranged that the Frobenius has 
small centralizer) there are not too many choices for a Frobenius-stable subspace; on the other hand, the Hodge filtration
varies as $y$ varies $p$-adically. Thus one can at least hope to show that
such a ``bad'' $W_{\dR}$ exists only for finitely many $y \in Y(K_v)$. In this way we can hope to show that $\rho_y$
is simple for all but finitely many $y$.

The purity 
argument is also reminiscent of an argument at the torsion level in Faltings's proof (the use of Raynaud's results on \cite[p. 364]{Faltings}).

We use this argument both for Mordell's conjecture and for hypersurfaces (although for hypersurfaces
we prove a much weaker result,  just bounding from above the failure of semisimplicity). 
The linear algebra involved is fairly straightforward for curves (see {\em Claim 1} and its proof in Section \ref{highergenus})
but becomes very unwieldy in the higher-dimensional case.  To handle it in a reasonably compact way we 
use some combinatorics related to reductive groups (\S \ref{GG combinatorics}).  However this argument is not very efficient and presumably gives
results that are far from optimal.

\subsection{Effectivity; comparison with Chabauty--Kim and Faltings}
It is of interest to compare our method with that of Chabauty,  and the nonabelian generalizations thereof due to Kim \cite{KimSiegel}.
 
Let $Y$ be a projective smooth curve over $K$ with Jacobian $J$. 
Fix a finite place $v$. 
The classical method of Chabauty proceeds by 
considering $Y(K)$ as the intersection of  global points $J(K)$ on the Jacobian and local points $Y(K_v)$ on the curve, inside
$J(K_v)$. If the rank of $J(K)$ is less than the $K_v$-dimension 
of $J$ (i.e.\ the genus of the curve) it is easy to see this intersection is finite.
 
We can reinterpret this cohomologically. Let $T_p$ be the $p$-adic Tate module of $J$, where $p$ is a prime below $v$.  
There is a Kummer map $J(K) \otimes \Q_p \rightarrow H^1(G_K, T_p)$ and 
we obtain a mapping
$$Y(K)   \longrightarrow H^1(G_K, T_p) = \mathrm{Ext}^1(\mathrm{trivial}, T_p),$$
which, explicitly speaking, sends $y \in Y(K)$ to the 
extension between the trivial representation and $T_p$ realized by cohomology of the punctured curve $H^1_{\et}(Y-\{y,y_0\})$ for a suitable basepoint $y_0$.
By this discussion, and its local analogue, we get a diagram
\begin{equation}    
\xymatrix{
Y(K)   \ar[d]  \ar[r]&   J(K) \ar[d] \ar[r]^{S \qquad \qquad \qquad \qquad} & \mbox{space of global Galois representations}   \ar[d]  \\  
Y(K_v)  \ar[r] & J(K_v) \ar[r]^{S_v \qquad \qquad \qquad} &  \mbox{space of local Galois representations}. 
 }
 \end{equation}
 (Here the global and local Galois representations are extensions of $T_p$ by the trivial representation.)
 Kim generalizes this picture, replacing
 $T_p$ by deeper quotients of $\pi_1(Y)$.   The idea of $p$-adic period mappings also plays a key role in his work,  
 see \cite[p.\ 360]{KimSiegel}, \cite[p.\ 93]{KimAlbanese},  \cite[Proposition 1.4]{KimTangential}.
 The key difficulty
 to be overcome is to obtain control over the size of the space of global Galois representations
 (e.g. the rank of $J(K)$).  
 
 Our picture is very much the same: we have a map $y \rightarrow \rho_y$
 from $Y(K)$ to global Galois representations.  
 In the story just described $\rho_y$ arises from the cohomology of an {\em open} variety --
the curve $Y$ punctured at $y$ and an auxiliary point. In the situation of our paper, $\rho_y$ will arise from the cohomology
 of a {\em smooth projective} variety -- a covering of $Y$ branched only at $y$. 
 
What does this gain? Our global Galois  representations
are now pure and (presumably) semisimple.  Therefore our space of 
global Galois representations should be extremely small.   On the other hand, what we
lose is that the map $S_v$ is now no longer obviously injective.

Kim has remarked to one of us (A.V.) that it would be of interest to consider  
combining these methods in some way, in particular that one might replace the role of the pro-unipotent
completion of $\pi_1(Y)$ in Kim's analysis by a {\em relative} completion. 

We expect that our method of proof can be made algorithmic
in the same sense as the method of Chabauty.  
For example, given a curve $C$ as above, 
one would be able to ``compute'' a finite subset $S \subset C(K_v)$ which contains $C(K)$; 
``compute'' means that there is an algorithm that will compute all the elements of $S$   to a specified $p$-adic precision in a finite time.
However, the resulting method is completely impractical, as we now explain. 
 
Firstly, our argument relies on Faltings's finiteness lemma for Galois representations (Lemma \ref{finiteness})
to give a finite list of possibilities for $\rho_y^{\mathrm{ss}}$.
We expect that Faltings's proof can easily be made algorithmic;
but there may be very, very many such representations.
 
Secondly, we would need to explicitly compute the comparisons furnished by $p$-adic Hodge theory.
For a given local Galois representation $\rho_y^{\mathrm{ss}}$,
we need to calculate to some finite precision
the filtered $\phi$-module associated to it by the crystalline comparison isomorphism
of $p$-adic Hodge theory.
We expect that this should be possible, but we are not aware
of any known algorithm to achieve this.
 
To conclude let us compare our method to Faltings's original proof.  That proof gives much more than ours does: it 
gives the full Shafarevich and Tate conjectures for abelian varieties, as well as semisimplicity of the associated Galois representation.
Our proof gives none of these;  it gives nothing about the Tate conjecture, and (at least without further effort)
it does not give the Shafarevich conjecture but only its restriction to a one-dimensional subfamily of moduli of abelian varieties.
Moreover, our proof is also in some sense more elaborate, since it requires the use of tricks and  delicate computations to avoid
the various complications that we have described. Its only real advantage in the Mordell case seems to be that it is in principle algorithmic in the sense described above.
In our view, the real gain of the method is the ability to apply it to 
families of higher-dimensional varieties. Our results about hypersurfaces are quite modest, but we regard them as a proof of concept for this idea.

\subsection{Structure of the paper}

\S \ref{notn} contains notation and preliminaries.

We suggest the reader start with \S \ref{fibers} and \S \ref{Sunit} to get a sense of the argument. 

\S \ref{fibers}  sets up the general formalism and the structure of the argument.  We relate Galois representations to  a $p$-adic period map using crystalline cohomology, and we connect the $p$-adic period map to a complex period map and monodromy.  The section ends with Proposition \ref{finitesetoforbits}, a preliminary form of our main result.

\S \ref{Sunit}  gives a first application: a proof of the $S$-unit theorem, using a variant of the Legendre family. 
This is much simpler than the proof of Mordell and can be considered a ``warm-up.''  

\S\S \ref{Mordelloutline} -- \ref{KP_monodromy} give the proof of the Mordell conjecture.
\S \ref{Mordelloutline} describes the strategy of the proof: we apply a certain refined version  of Proposition \ref{finitesetoforbits},
formulated as   Proposition \ref{finitepointsoncurves}, to a specific family of varieties that we call the Kodaira--Parshin family.   \S \ref{highergenus} is the proof of Proposition \ref{finitepointsoncurves}.  In particular this is where we take advantage of ``geometrically disconnected fibers''; the argument also deals with a technical issue relating to semisimplification.   
 In \S \ref{rationalpoints} we introduce the Kodaira-Parshin family and \S \ref{KP_monodromy}
 is purely topological: it computes the monodromy of the Kodaira--Parshin family.
 
\S\S \ref{hypersurface} --  \ref{bound_frob} study
 families of varieties of higher dimension. 
\S \ref{hypersurface} introduces a recent transcendence result of Bakker and Tsimerman which is needed to study families over a higher-dimensional base.  
\S \ref{Hodge_numbers_funny} proves the main result, Proposition \ref{transprop},  which shows that fibers of good reduction lie in a Zariski-closed subset of the base.  
The argument however invokes a ``general position'' result in linear algebra, Proposition \ref{linalg}, whose proof takes up
\S  \ref{GG combinatorics}. In \S \ref{bound_frob} we suggest an alternative argument, not used in the rest of the paper, to bound the size of the Frobenius centralizer.

\subsection{Acknowledgements}
This paper owes, of course, a tremendous debt to the work of Faltings -- indeed, 
all the main tools come from his work. 
Some of the ideas originated in a learning seminar run at Stanford University on Faltings's proof \cite{Faltings}. 

The 2017 Stanford PhD thesis \cite{BL} 
of B.L.\ contained an earlier version of the arguments of  this paper. In particular,
that thesis presented a proof of the Mordell conjecture conditional on an assumption about monodromy,
and verified that assumption for a certain Kodaira--Parshin family in genus $2$.

We thank Brian Conrad for many helpful conversations and suggestions. 
A.V.\ would like to thank Benjamin Bakker, Andrew Snowden and Jacob Tsimerman for interesting discussions. 
B.L.\ would like to thank Zeb Brady, Lalit Jain, Daniel Litt,  and Johan de Jong.

We received helpful comments and feedback from several people about earlier versions of this paper. We would like to thank, in particular, Dan Abramovich, Pedro A.\ Castillejo, Raymond Cheng, Brian Conrad, Ulrich Goertz, Sergey Gorchinskiy, Kiran Kedlaya, Aaron Landesman, Siyan Daniel Li, Lucia Mocz, Bjorn Poonen, Jack Sempliner, Will Sawin, and Bogdan Zavyalov. We similarly would like to thank the anonymous referee for his or her time and effort.

We thank Brian Conrad for pointing out the proof of Lemma \ref{Artss},
and for simplifying the proof of Lemma \ref{padic BT}.
We thank Jordan Ellenberg for an interesting discussion about monodromy.
 
During much of the work on this paper, B.L.\ was supported by a Hertz fellowship and an NSF fellowship
and A.V.\ was supported by an NSF grant. During the final stages of writing
A.V. was an Infosys member at the Institute for Advanced Study. We thank all these
organizations for their support of our work. 

\newcommand{\Ad}{\mathrm{Ad}}
\newcommand{\geom}{\mathrm{geom}}
\section{Notation and preparatory results}
\label{notn}

We gather here some notation and some miscellaneous lemmas that we will use in the text. 
We suggest that the reader refer to this section only as necessary when reading the main text.

The following notation will be fixed throughout the paper.
 
\begin{itemize}
\item $K$ a number field
\item $\overline{K}$ a fixed algebraic closure of $K$
\item $G_K = \Gal(\overline{K}/K)$ the absolute Galois group
\item $S$ a finite set of finite places of $K$ containing all the archimedean places
\item $\mathcal{O}_S$ the ring of $S$-integers
\item $\mathcal{O} = \mathcal{O}_S$ when $S$ is understood
\item $p$ a (rational) prime number such that no place of $S$ lies above $p$
\item $K_w$ the completion of $K$ at a prime $w$ of $\mathcal{O}$
\item $\overline{K}_w$ a fixed algebraic closure of $K_w$
\item $\F_w$ the residue field at $w$
\item $q_w$ the cardinality of $\F_w$
\item $\overline{\F}_w$ the residue field of $\overline{K}_w$, which is an algebraic closure of $\F_w$
\item $\mathcal{O}_{(w)}$ the localization of $\mathcal{O}$ at $w$
\end{itemize}
 
By a $G_K$-set we mean a (discretely topologized) set with a continuous action of $G_K$.
 
For a variety $X$ over a field $E$ of characteristic zero,  
we denote by $H^*_{\dR}(X/E)$ the de Rham cohomology of $X \rightarrow \Spec(E)$.
If $E' \supset E$ is a field extension, we denote by $H^*_{\dR}(X/E') $ the de Rham cohomology of the base-change $X_{E'}$,
which is identified with $H_{\dR}(X/E) \otimes_{E} E'$.
 
For any scheme $S$, 
a \emph{family over $S$} is an (arbitrary) $S$-scheme $\pi: Y \rightarrow S$. 
A \emph{curve over $S$} is a family over $S$ for which 
$\pi$ is smooth and proper of relative dimension $1$ and each geometric fiber is connected.   (Note that we will also make
use of ``open'' curves, for example in \S \ref{Sunit}, but we will avoid using the word ``curve'' in that context.) 
 
Let $E / \Q_p$ be a finite unramified extension of $\Q_p$, and $\sigma$ the unique automorphism of $E$ inducing the $p$-th power map on the residue field.  By \emph{$\phi$-module (over $E$)} we will mean a pair $(V, \phi)$, with $V$ a finite-dimensional $E$-vector space and $\phi: V \rightarrow V$ a map semilinear over $\sigma$.  A \emph{filtered $\phi$-module} will be a triple $(V, \phi, F^i V)$ such that $(V, \phi)$ is a $\phi$-module and $(F^i V)_i$ is a descending filtration on $V$.  We demand that each $F^i V$ be an $E$-linear subspace of $V$ but require no compatibility with $\phi$.   Note that the filtered $\phi$-modules arising from Galois representations via $p$-adic Hodge theory satisfy
a further condition, admissibility, but we will make no use of it in this paper (see \cite[Expos\'{e} III, \S 4.4]{Asterisque} and \cite[Expos\'{e} III, \S 5.3.3]{Asterisque}). %

\subsection{Linear algebra}
 
 \begin{lemma} \label{centralizer lemma}
 Suppose that $\sigma: E \rightarrow E$ is a field automorphism of finite order $e$,  with fixed field $F$.  
Let  $V$ be an $E$-vector space of dimension $d$, and $\phi: V \rightarrow V$
 a $\sigma$-semilinear automorphism.  Define the centralizer    $\mathrm{Z}(\phi)$ of $\phi$ 
in the ring of $E$-linear endomorphisms of $V$ via
$$ \mathrm{Z}(\phi) = \{ f: V \rightarrow V \mbox{ an $E$-linear map}, \ \ f \phi = \phi f\};$$
it is an $F$-vector space. Then    $$ \dim_F \mathrm{Z}(\phi) = \dim_E \mathrm{Z}(\phi^e),$$
where $\phi^e: V \rightarrow V$ is now $E$-linear. In particular, $\dim_F \mathrm{Z}(\phi) \leqslant (\dim_E V)^2$. \end{lemma}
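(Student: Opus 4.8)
The plan is to recast the centralizer as the fixed points of a semilinear automorphism of an endomorphism algebra and then invoke Galois descent. To begin, I would record the elementary fact that, since $\sigma$ has order $e$ with fixed field $F$, Artin's theorem makes $E/F$ a cyclic Galois extension of degree $e$ with $\Gal(E/F) = \langle \sigma \rangle$. Next, set $M = \End_E(V)$ and define $\Psi \colon M \to M$ by $\Psi(f) = \phi \circ f \circ \phi^{-1}$ (here $\phi^{-1}$ makes sense because $\phi$ is assumed to be an automorphism). A short direct check shows that $\Psi(f)$ is again $E$-linear, that $\Psi$ is a ring automorphism of $M$, and that it is $\sigma$-semilinear in the sense $\Psi(\lambda f) = \sigma(\lambda)\Psi(f)$ for $\lambda \in E$. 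By construction $\mathrm{Z}(\phi) = M^{\Psi}$, the $F$-subspace of $\Psi$-fixed elements; this observation also re-proves the assertion in the statement that $\mathrm{Z}(\phi)$ is an $F$-vector space.

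Now $\Psi^e(f) = \phi^e \circ f \circ \phi^{-e}$ is conjugation by the $E$-linear automorphism $\phi^e$, so $M^{\Psi^e} = \mathrm{Z}(\phi^e)$; call this $E$-algebra $N$. Since $\Ad(\phi^e)$ is $E$-linear, $N$ is an $E$-linear subspace of $M$, and since $\Psi$ commutes with $\Psi^e$ it preserves $N$. The key point is that $\phi^e$ is central in $N$ — it lies in the centre of its own centralizer — so $(\Psi|_N)^e = \Ad(\phi^e)|_N = \mathrm{id}_N$. Hence $\sigma^i \mapsto (\Psi|_N)^i$ is a well-defined semilinear action of $\Gal(E/F)$ on the $E$-vector space $N$. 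Galois descent (Speiser's theorem, i.e.\ Hilbert 90 for the cyclic extension $E/F$) then provides an isomorphism $N^{\Psi} \otimes_F E \xrightarrow{\ \sim\ } N$, whence $\dim_F N^{\Psi} = \dim_E N$. Finally $N^{\Psi} = M^{\Psi}$ because $M^{\Psi} \subseteq M^{\Psi^e} = N$, so $\dim_F \mathrm{Z}(\phi) = \dim_F M^{\Psi} = \dim_E N = \dim_E \mathrm{Z}(\phi^e)$. The concluding bound is immediate: $\mathrm{Z}(\phi^e) \subseteq \End_E(V)$ has $E$-dimension at most $(\dim_E V)^2$.

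I expect the only genuine subtlety to be the verification that $\Psi|_N$ has order dividing $e$, i.e.\ that $\phi^e$ is central in $\mathrm{Z}(\phi^e)$; the semilinearity checks and the descent statement are routine. One could instead run the argument concretely after choosing a basis — writing $\phi = A\sigma$ with $A \in \GL_d(E)$ and studying the twisted-conjugation equation $\sigma(B) = A^{-1} B A$ on matrices $B$ — but the coordinate-free version above seems cleaner and makes the role of the hypothesis $\sigma^e = \mathrm{id}$ transparent.
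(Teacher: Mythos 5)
Your proof is correct, and the route is genuinely different from the paper's. The paper argues by base extension to an algebraic closure $\bar F$ of $F$: it decomposes $\bar V = V \otimes_F \bar F$ along the idempotents of $E \otimes_F \bar F \cong \bar F^{\Sigma}$ (with $\Sigma$ the set of $F$-embeddings $E \hookrightarrow \bar F$), notes that $\bar\phi$ cyclically permutes the summands $\bar V^{\tau}$, and then observes that projection to a single summand identifies $Z(\bar\phi)$ with the centralizer of $\bar\phi^e$ on that summand, which is the base extension of $Z(\phi^e)$. Your argument instead stays over $E$ and $F$, recasts $Z(\phi)$ as the fixed points of the $\sigma$-semilinear automorphism $\Psi = \mathrm{Ad}(\phi)$ of $M = \End_E(V)$, observes that $N := M^{\Psi^e} = Z(\phi^e)$ is an $E$-subspace on which $\Psi$ defines a genuine semilinear $\Gal(E/F)$-action, and then invokes Galois descent (Speiser's lemma / Hilbert 90). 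Conceptually these are the same phenomenon — Galois descent for vector spaces is usually itself proved by exactly the kind of scalar extension the paper performs — but your packaging is cleaner: no algebraic closure is needed, and the argument makes transparent exactly where the hypothesis $\sigma^e = \mathrm{id}$ is used (namely, to turn $\Psi|_N$ into a descent datum). The paper's version is more self-contained for a reader who does not want to cite descent as a black box. One small remark on your closing paragraph: the "subtlety" you flag — that $\phi^e$ is central in $Z(\phi^e)$, so $(\Psi|_N)^e = \mathrm{id}_N$ — is actually immediate from the definitions, since $N = M^{\Psi^e}$ means precisely that $\Psi^e$ acts as the identity on $N$; there is nothing to verify there.
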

\proof
Let $\bar{F}$ be an algebraic closure of $F$, and let $\Sigma$ be the set of $F$-embeddings $E \hookrightarrow \bar{F}$.  
Then $\bar{V} = V \otimes_{F} \bar{F}$ 
is a $E \otimes_{F} \bar{F} \simeq \bar{F}^{\Sigma}$-module, and splitting by idempotents of $E \otimes_{F} \bar{F}$
we get a decomposition
$$ \bar{V} = \bigoplus_{\tau \in \Sigma} \bar{V}^{\tau},$$
where $\bar{V}^{\tau}$ consists of $\bar{v} \in \bar{V}$ such that  
$e \bar{v} = \tau(e) \bar{v}$ for all $e \in E$. 
(Here the multiplication $e \bar{v}$ is for the $E$-module structure,
and $\tau(e) \bar{v}$ for the $\bar{F}$-module structure, on $\bar{V}$.)
Moreover, $\phi$ extends to an $\bar{F}$-linear endomorphism $\overline{\phi}$ of $\bar{V}$; this endomorphism carries $\bar{V}^{\tau}$ to $\bar{V}^{\tau \sigma^{-1}}$. 

Fix $\tau_0 \in \Sigma$; then  projection to the $\tau_0$ factor induces an isomorphism
$$ \mathrm{Z}(\overline{\phi})  \simeq \mbox{centralizer of $\overline{\phi}^e$ on $\bar{V}^{\tau_0}$}.$$
Now $(\bar{V}^{\tau_0}, \overline{\phi}^e)$ is obtained by base extension $\tau_0: E \rightarrow \bar{F}$
from the $E$-linear map $\phi^e: V \rightarrow V$; in particular, the dimension of the centralizer
on the right is the same as $\mathrm{Z}(\phi^e)$, 
whence the result. 
\qed

\subsection{Semisimplicity}

\begin{lemma} \label{semisimple induction}
Let $H \leqslant G$ be a finite-index inclusion of groups, and 
let $\rho: H \rightarrow \GL_n(F)$ be a semisimple representation of the group $H$
over the characteristic-zero field $F$. Then the induction $\rho^G = \mathrm{Ind}_H^G \rho$ is also semisimple.
\end{lemma}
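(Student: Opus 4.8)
The plan is to deduce semisimplicity of $\rho^G = \mathrm{Ind}_H^G \rho$ from semisimplicity of $\rho$ by exhibiting a complement to every invariant subspace, using an averaging/projection operator adapted to the finite index $[G:H]$. First I would record the standard reduction: a representation of a group over a characteristic-zero field is semisimple if and only if every invariant subspace admits an invariant complement, so it suffices to split an arbitrary $G$-submodule $W \subseteq \rho^G$.

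The core of the argument is a transfer-style construction. Viewing $\rho$ as semisimple, there is an $H$-equivariant projection $\pi_0 \colon \rho^G \to \rho$ onto the canonical copy of $\rho$ inside $\rho^G$ (Frobenius reciprocity gives the inclusion $\rho \hookrightarrow \mathrm{Res}_H^G \rho^G$, and semisimplicity of $\mathrm{Res}_H^G\rho^G$ — which holds since, as a restriction to the finite-index subgroup $H$, it is a sum of conjugates of $\rho$, each of which is semisimple as $F$ has characteristic zero and these are conjugate to $\rho$ — provides the $H$-equivariant splitting). Given a $G$-submodule $W$, choose any $F$-linear projection $q \colon \rho^G \to W$; then average over coset representatives $g_1,\dots,g_m$ of $H$ in $G$ to build
\begin{equation}
P = \frac{1}{m}\sum_{i=1}^m \rho^G(g_i)\, \iota\, \pi_0\, \rho^G(g_i)^{-1},
\end{equation}
or rather the analogous averaging applied to a starting $H$-equivariant projection onto $W \cap (\text{a chosen }H\text{-piece})$; the upshot is a $G$-equivariant projection of $\rho^G$ onto $W$, whose kernel is the desired $G$-invariant complement. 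Alternatively — and this is probably the cleanest route — I would invoke that $\mathrm{Res}_H^G \rho^G$ is semisimple (by Clifford-type reasoning as above), apply the well-known fact that a $G$-module which is semisimple on restriction to a finite-index subgroup in characteristic zero is itself semisimple (again an averaging argument over cosets: push an $H$-invariant complement around by the $G$-action and average the associated projections), and conclude.

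I expect the main obstacle to be purely bookkeeping: making precise the identification of $\mathrm{Res}_H^G \mathrm{Ind}_H^G \rho$ with $\bigoplus_i {}^{g_i}\rho$ (Mackey's formula in its simplest form, since we induce and restrict along the same subgroup), checking each summand ${}^{g_i}\rho$ is semisimple (immediate: it is isomorphic as an $H$-representation to $\rho$ composed with an inner-twist of $H$, hence semisimple because conjugation is an automorphism of $H$ — more carefully, ${}^{g_i}\rho$ is a representation of $g_i H g_i^{-1}\cap H$, but one only needs semisimplicity of the whole restriction, which follows once one knows $\rho$ extended appropriately is), and verifying the averaged operator is genuinely $G$-equivariant and idempotent onto $W$. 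None of these steps is deep; the characteristic-zero hypothesis is used exactly once, to divide by $m = [G:H]$ (and to know semisimplicity is equivalent to complete reducibility for these finite-type situations). The only subtlety worth care is that we are not assuming $G$ finite, so "averaging over $G$" is illegitimate — all averaging must be over the finite coset set $G/H$, which is precisely what the finite-index hypothesis supplies.
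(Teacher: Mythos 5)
Your overall plan --- establish that the restriction of $\rho^G$ to some finite-index subgroup is semisimple, and then lift semisimplicity via coset-averaging of projections --- is the right structure, and the averaging step itself is sound (and indeed does not require normality of the subgroup). The gap is in the choice of subgroup. You propose to restrict to $H$ and assert that $\mathrm{Res}_H^G\rho^G$ is a direct sum of conjugates of $\rho$. That is true only when $H$ is normal in $G$. In general Mackey's decomposition reads
\[
\mathrm{Res}_H^G\,\mathrm{Ind}_H^G\,\rho \;\cong\; \bigoplus_{g\in H\backslash G/H}\mathrm{Ind}_{H\cap{}^{g}H}^{H}\bigl(\mathrm{Res}_{H\cap{}^{g}H}\,{}^{g}\rho\bigr),
\]
and each summand is itself an induction from a finite-index subgroup $H\cap{}^{g}H$ up to $H$. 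Proving those summands semisimple is precisely the statement of the lemma being proved, now with $H\cap{}^{g}H\leqslant H$ in place of $H\leqslant G$; your parenthetical (``one only needs semisimplicity of the whole restriction, which follows once one knows $\rho$ extended appropriately is'') names the difficulty but resolves it by an appeal that is circular.

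The repair is to restrict not to $H$ but to its normal core $G_1=\bigcap_{g\in G}gHg^{-1}$, which is what the paper does. This $G_1$ is normal and of finite index in $G$, and --- the decisive feature --- it lies inside \emph{every} conjugate ${}^{g}H$, so the intersections $G_1\cap{}^{g}H$ all collapse to $G_1$ and Mackey degenerates to a genuine direct sum $\bigoplus_{g\in G/H}\mathrm{Res}_{G_1}\,{}^{g}\rho$ with no residual inductions. Each piece is the restriction to the finite-index normal subgroup $G_1$ of a semisimple ${}^{g}H$-module, hence semisimple (for $V$ irreducible, the $G$-translates of an irreducible $G_1$-submodule span $V$, exhibiting $V|_{G_1}$ as a quotient of a semisimple module). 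Your averaging argument over $G/G_1$ then lifts semisimplicity back to $G$. So the fix is local --- replace $H$ by its normal core before invoking Mackey --- but without it the argument does not close.
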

\proof   This follows readily from the fact that a representation $\rho$ of $G$ is semisimple
if and only if its restriction to a finite-index normal subgroup $G_1 \leqslant G$ is semisimple:
take $G_1$ to be the intersection of conjugates of $H$.

For ``if'' one can promote a splitting from $G_1$ to $G$ by averaging; for ``only if''
we take an irreducible $G$-representation $V$, an irreducible $G_1$-subrepresentation $W \subset V$,
and note that $G$-translates of $W$ must span $V$, exhibiting $V|_{G_1}$ as a quotient of a semisimple module.   \qed

\subsection{Global Galois representations}  \label{SS}

\begin{lemma} (Faltings) \label{finiteness}
Fix integers $w,d \geqslant 0$, and fix $K$ and $S$ as above.  
There are, up to conjugation,  only finitely many semisimple Galois representations $\rho: G_{K} \rightarrow
\GL_d(\Q_p)$ such that
\begin{itemize}
\item[(a)] $\rho$ is unramified outside $S$, and
\item[(b)] $\rho$ is pure of weight $w$, i.e.
  for every prime $\wp \notin S$ the characteristic polynomial of Frobenius at $\wp$ has   all roots algebraic, with complex absolute value $q_{\wp}^{w/2}$.
  \item[(c)] For $\wp$ as above the characteristic polynomial of Frobenius at $\wp$ has integer coefficients. 
\end{itemize} 
\end{lemma}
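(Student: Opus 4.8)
The plan is to reduce the statement to the classical Hermite--Minkowski finiteness theorem by pinning down each such representation by a finite amount of arithmetic data. First I would observe that it suffices to bound the \emph{traces} of Frobenius: a semisimple representation of $G_K$ over $\Q_p$ is determined up to isomorphism by its character, and by Chebotarev density (together with Brauer--Nesbitt) it is determined by the traces $\tr \rho(\Frob_\wp)$ for $\wp \notin S$; in fact, by the Bogomolov--Rajan style argument, finitely many such traces already suffice once one controls ramification. So the real task is (i) to bound the field of definition of the traces, (ii) to bound the denominators of the traces, and (iii) to bound how many $\wp$ one needs.

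The key step uses hypotheses (b) and (c). By purity, for each $\wp \notin S$ the characteristic polynomial $P_\wp(T)$ of $\Frob_\wp$ has all roots of absolute value $q_\wp^{w/2}$ under every complex embedding, so its coefficients are bounded in terms of $d$, $w$, and $q_\wp$; combined with the integrality hypothesis (c), there are only \emph{finitely many} possibilities for $P_\wp(T)$ for each fixed $\wp$. In particular each trace $\tr\rho(\Frob_\wp)$ lies in $\Z$ and is bounded by $d\, q_\wp^{w/2}$. Thus the ``space of traces'' is, prime by prime, a finite set. What remains is to show one only needs finitely many primes $\wp$: this is where I would invoke the standard effective Chebotarev / Faltings argument. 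Let $L/K$ be the fixed field of $\ker(\rho \bmod \ell)$ for a suitable auxiliary prime $\ell$ (or more simply, work with the mod-$\ell$ reductions of all lattices): since $\rho$ is unramified outside $S$ and valued in $\GL_d$, the mod-$\ell$ representations are unramified outside $S \cup \{\ell\}$ with bounded image, so there are finitely many of them by Hermite--Minkowski applied to the corresponding number fields (the extensions of $K$ they cut out have bounded degree and bounded ramification). Picking one representative from each conjugacy class of $G_K / \ker$ and lifting to Frobenius elements $\Frob_{\wp_1}, \dots, \Frob_{\wp_r}$ at unramified primes $\wp_i \notin S$, the tuple $(P_{\wp_1}, \dots, P_{\wp_r})$ of characteristic polynomials determines the semisimple lift $\rho$ up to conjugacy (by Chebotarev, these Frobenii topologically generate, so the characters agree everywhere). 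Since each $P_{\wp_i}$ ranges over a finite set, there are only finitely many possibilities for $\rho$.

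The main obstacle is the passage from ``unramified outside $S$ with bounded dimension'' to ``the mod-$\ell$ reduction cuts out one of finitely many number fields''. This is exactly the content of Hermite--Minkowski together with the observation that a representation into $\GL_d(\Z_\ell)$ unramified outside a finite set $S'$ factors, mod $\ell$, through $\Gal(M/K)$ where $M/K$ is unramified outside $S' \cup \{\ell\}$ and $[M:K] \leqslant |\GL_d(\F_\ell)|$; the number of such $M$ is finite, hence so is the set of mod-$\ell$ representations, hence (choosing $\wp_i$ as above and using that each $P_{\wp_i}$ lies in a finite set) the set of $\rho$'s. One subtlety to handle carefully is that different $\Z_p$-lattices in the same $\Q_p$-representation, or non-conjugate semisimplifications with the same character, should not be overcounted --- but Brauer--Nesbitt guarantees that the character determines the semisimplification up to isomorphism, so conditioning on the finite list of possible character values (equivalently, the finite list of Frobenius characteristic polynomials at the chosen primes) really does yield a finite list of isomorphism classes.
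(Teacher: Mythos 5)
The paper itself does not supply a proof; it cites Faltings (Satz 5 of \cite{Faltings}) and \cite{WustholzFaltings}. Your sketch aims to reconstruct that standard argument, and most of the ingredients are in place: purity together with integrality bounds the characteristic polynomial of each Frobenius to a finite set, the mod-$p$ image cuts out one of finitely many Galois extensions by Hermite--Minkowski, and Brauer--Nesbitt handles the passage from characters to semisimple representations. One small correction: since the representations take values in $\GL_d(\Q_p)$, the prime modulo which you reduce must be $p$ itself (after conjugating a $G_K$-stable lattice into $\GL_d(\Z_p)$), not a freely chosen auxiliary prime $\ell$.

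The genuine gap is the step where you assert that the characteristic polynomials at finitely many primes $\wp_1,\dots,\wp_r$ determine $\rho$. The justification you offer---``by Chebotarev, these Frobenii topologically generate, so the characters agree everywhere''---does not hold up: finitely many conjugacy classes do not topologically generate $G_K$, and the $\Q_p$-valued function $\tr\rho$ does not factor through the finite quotient $G_K/\ker(\bar\rho)$, so knowing its values on coset representatives does not determine it globally. The actual argument of Faltings replaces this by a Nakayama-type trick. Given two such representations $\rho_1,\rho_2$ conjugated into $\GL_d(\Z_p)$, let $M \subset M_{2d}(\Z_p)$ be the $\Z_p$-subalgebra generated by the image of $\rho_1\oplus\rho_2$. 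Because that image is a group, $M$ equals its $\Z_p$-linear span, so $M/pM$ is spanned over $\F_p$ by the finite group $\bar G$, the image of $G_K$ in $(M/pM)^\times$; the fixed field of the kernel has degree over $K$ bounded in terms of $d$ alone and is unramified outside $S\cup\{p\}$, so Hermite--Minkowski gives finitely many candidates, and one chooses $\wp_1,\dots,\wp_r\notin S$ so that the $\Frob_{\wp_i}$ cover every conjugacy class in the Galois group of the compositum. Then $f := \tr\rho_1 - \tr\rho_2$ is a conjugation-invariant $\Z_p$-linear functional on $M$ that vanishes on all $\rho(\Frob_{\wp_i})$; if $f\neq 0$, dividing by the largest power of $p$ in $f(M)$ produces a nonzero functional on $M/pM$ vanishing on the spanning set $\bar G$, a contradiction. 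It is precisely this step, and not Chebotarev alone, that lets a \emph{finite} set of primes control a $p$-adically valued character; without it, your reduction is incomplete even though the remaining steps (boundedness of $P_\wp$ via (b) and (c), and Brauer--Nesbitt) are correct.
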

\proof
This is a consequence of Hermite--Minkowski finiteness; 
see the proof of \cite[Satz 5]{Faltings}, or \cite[V, Proposition 2.7]{WustholzFaltings}. 
\qed
  
We want to explain how to adapt this proof to a reductive target group.  First we recall the notion of ``semisimple''
with general reductive target, and some allied notions. 

Let $K$ be a field of characteristic zero. 
First of all, recall that if $\mathbf{G}$ is a reductive algebraic group over  $K$
and $\rho: \Gamma \rightarrow \mathbf{G}(K)$ is a representation
of the group $\Gamma$, there are natural notions of ``irreducible'' and ``semisimple''
adapted to $\mathbf{G}$, as described by  Serre \cite[3.2]{Serre}: 

\begin{quote}
the representation $\rho$ is $G$-ir, or irreducible relative to $G$, if the image $\rho(\Gamma)$ is not contained
in a proper parabolic subgroup $P \leqslant G$ defined over $K$.
\end{quote}

For example, if $\mathbf{G}$ is an orthogonal or symplectic group,
this assertion amounts to saying that there is no {\em isotropic} $\Gamma$-invariant subspace. 
Next  
\begin{quote}
the representation $\rho$ is $G$-c.r., or completely reducible relative to $G$,  if for any parabolic subgroup $P \leqslant G$ defined over $K$
containing the image $\rho(\Gamma)$, there exists  a Levi factor $L \leqslant P$,  defined over $K$, 
which also contains this image.   
\end{quote}

We will also refer to $G$-c.r. as ``semisimple'' when the target group is clear. Let $\rho: \Gamma \rightarrow \mathbf{G}(K)$ be an arbitrary representation. Let $P$ be a $K$-parabolic subgroup that contains the image of $\rho$
and which is minimal for this property. Then the projection of $\rho$ to a Levi factor $M \subset P$
is independent, up to $G$-conjugacy, of the choice of $M$; see \cite[Proposition 3.3]{Serre}.   This resulting representation is called the semisimplification of $\rho$, relative to the ambient group $\mathbf{G}$,
and will be denoted by $\rho^{\mathrm{ss}}$. 
The Zariski closure of this semisimplification is a reductive group, at least for $K$ in characteristic zero:
see \cite[Proposition 4.2]{Serre}.\footnote{In \S 4 of \cite{Serre} the assumption is stated that $K$ is algebraically closed, but this is not used in the proof of Proposition 4.2. Alternately \cite[Theorem 5.8]{GCR}
can be used to pass from $K$ to $\bar{K}$.}

Later on we will use the following observation: 
\begin{lemma} \label{Artss}
For any $\gamma \in \Gamma$, $\rho^{\mathrm{ss}}(\gamma)$ and  $\rho(\gamma)$   have the same semisimple part up to conjugacy.  
\end{lemma}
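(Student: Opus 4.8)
The plan is to follow the Jordan decomposition through the construction of $\rho^{\mathrm{ss}}$. Recall that $\rho^{\mathrm{ss}} = \pi \circ \rho$, where $P \leqslant \mathbf{G}$ is a $K$-parabolic minimal among those containing $\rho(\Gamma)$, $M \leqslant P$ is a $K$-Levi factor, and $\pi : P \to M$ is the projection whose kernel is the unipotent radical $U$ of $P$. Fix $\gamma \in \Gamma$ and set $g = \rho(\gamma) \in P(K)$. Since $K$ has characteristic zero it is perfect, so $g$ has a Jordan decomposition $g = g_s g_u$ defined over $K$; this decomposition is intrinsic and in particular unaffected by the closed immersion $P \hookrightarrow \mathbf{G}$, so $g_s, g_u \in P(K)$. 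As $\pi$ is a homomorphism of linear algebraic groups it preserves Jordan decompositions, whence $\rho^{\mathrm{ss}}(\gamma)_s = \pi(g)_s = \pi(g_s)$.

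It therefore suffices to prove that $g_s$ and $\pi(g_s)$ are $\mathbf{G}$-conjugate, and for this I would use the structure theory of parabolics in characteristic zero. The closure of the cyclic group generated by the semisimple element $g_s$ is a $K$-subgroup of multiplicative type of $P$; being reductive, it is contained in some $K$-Levi subgroup of $P$ (Mostow), and all $K$-Levi subgroups of $P$ lie in a single $U(K)$-conjugacy orbit. Hence there is $u \in U(K)$ with $m := u^{-1} g_s u \in M(K)$. Using that $\pi$ restricts to the identity on $M$ and is trivial on $U$, we get $\pi(g_s) = \pi(u m u^{-1}) = m = u^{-1} g_s u$. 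Combining with the previous paragraph, $\rho^{\mathrm{ss}}(\gamma)_s = \pi(g_s) = u^{-1} \rho(\gamma)_s u$, which exhibits the asserted conjugacy, indeed by an element of $\mathbf{G}(K)$.

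The only real content lies in the conjugacy statement of the second paragraph: that a semisimple $K$-point of a $K$-parabolic can be conjugated into a chosen $K$-Levi by a $K$-point of the unipotent radical. Over an algebraically closed field this is routine; the delicate part is the rationality of the conjugating unipotent element, which in characteristic zero follows from Mostow's theorem on Levi decompositions (or, alternatively, from the facts that a semisimple $K$-point of the connected group $P$ lies in a maximal $K$-torus, that a maximal torus of $P$ lies in a Levi subgroup of $P$, and that the $K$-Levi subgroups of $P$ are permuted transitively by $U(K)$). Everything else in the argument is formal bookkeeping of Jordan decompositions under the homomorphism $\pi$.
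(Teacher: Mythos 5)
Your proof is correct and follows essentially the same route as the paper's: reduce via the Jordan decomposition (which the projection $\pi:P\to M$ respects) to showing a semisimple $K$-point of $P$ is $U(K)$-conjugate into the Levi factor $M$, using that semisimple elements sit inside $K$-Levi subgroups and that these are all $U(K)$-conjugate. The only cosmetic difference is your appeal to Mostow's theorem where the paper instead cites Borel for the facts about maximal tori and Levi conjugacy, but the underlying argument is the same.
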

\proof Indeed, let $P$ be as above, and factorize $P = MU$ into a Levi factor $M$
and $U$ the unipotent radical of $P$.  We must prove that 
for $p=mu \in P(K)$, with $m \in M(K)$ and $u \in U(K)$,  the semisimple parts of $p$ and $m$  are conjugate within $P$.
 To prove this  take a commuting factorization
$p = p^{ss} p^{u}$, and similarly for $m$.  By functoriality, $m^{ss}$ is the image of $p^{ss}$. 
We are reduced to the case of $m$ and $p$ semisimple:
\begin{equation} \label{well known statement} \mbox{a semisimple element $p=mu$ in $P(K)$ is $P(K)$-conjugate to $m$,}\end{equation}
and clearly it is enough to be able to conjugate $p$ into $M$. 

 The element $p$ is contained in some maximal torus $T$ (\cite[10.6,11.10]{Borel}) which is contained in a Levi subgroup of $P$. However all Levi subgroups are conjugate under $U(K)$ \cite[Proposition 20.5]{Borel}
 we may therefore conjugate $p$ into $M$ as desired.
\qed 

In passing we also record: %
\begin{lemma} \label{wks2}
Suppose $P=MU$ is a parabolic subgroup of the reductive $K$-group $G$.

Let $S \leqslant P$ be a
$K$-torus, then $S$ is conjugate under $U(K)$ to its projection to $M$. 
   
In particular, $\chi: \mathbf{G}_m \rightarrow P$ be a character; then $\chi$
is conjugate, under $P(K)$, to its projection to $M$. 
\end{lemma}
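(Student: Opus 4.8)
The plan is to reduce everything to the classical rationality statement that the maximal tori of a connected solvable group over a perfect field are conjugate by an element of its unipotent radical (we are in characteristic zero, so $K$ is perfect). Write $\pi\colon P\to M$ for the projection with kernel $U$; it restricts to the identity on $M$, and by ``projection to $M$'' we mean the image under $\pi$ of a subgroup, respectively the composite with $\pi$ of a homomorphism. Put $H=S\cdot U\subseteq P$; since $S$ normalizes the normal subgroup $U$, this is a closed connected subgroup, and it is solvable because $U$ is unipotent while $H/U\cong S/(S\cap U)=S$ is a torus (a torus meets a unipotent group trivially). In particular $U$ is the unipotent radical of $H$.

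Next I would observe that $\pi(S)\subseteq H$: for $s\in S$ the element $s^{-1}\pi(s)$ lies in $\ker\pi=U$ because $\pi(s)\in M$ and $\pi|_M=\mathrm{id}$, so $\pi(s)=s\cdot(s^{-1}\pi(s))\in S\cdot U$. Moreover $\pi|_S$ is injective (its kernel is $S\cap U=1$), so $S$ and $\pi(S)$ are subtori of $H$ of the same dimension $\dim S=\dim H-\dim U$, hence both are maximal tori of $H$. Applying the conjugacy theorem quoted above inside $H$ gives an element $u\in U(K)$ with $uSu^{-1}=\pi(S)$. Finally, for any $s\in S$ the element $usu^{-1}$ lies in $M$, so $\pi(usu^{-1})=usu^{-1}$; but also $\pi(usu^{-1})=\pi(s)$ since $\pi(u)=e$. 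Hence $usu^{-1}=\pi(s)$ for all $s$, which is precisely the assertion that conjugation by $u\in U(K)$ carries $S$ onto its projection to $M$ compatibly with $\pi$.

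For the ``in particular'': given $\chi\colon\mathbf{G}_m\to P$, apply the above to the $K$-torus (possibly trivial) $S:=\chi(\mathbf{G}_m)$, obtaining $u\in U(K)\subseteq P(K)$ with $uSu^{-1}=\pi(S)\subseteq M$. Then for each $t$ the two identities above give $u\chi(t)u^{-1}=\pi(\chi(t))$, i.e.\ $u\chi u^{-1}=\pi\circ\chi$, as claimed.

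The only substantive input is the rationality statement invoked for $H$ (conjugacy of maximal $K$-tori in a connected solvable $K$-group via the unipotent radical); the rest is bookkeeping, essentially the same as in the proof of \eqref{well known statement}. One could instead try to deduce the lemma from \eqref{well known statement} applied to a topological generator of $S(K)$, but producing such a generator needs genericity assumptions on $K$ (e.g.\ infiniteness), so I would prefer the solvable-group route, which is uniform in $K$.
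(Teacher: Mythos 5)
Your proof is correct, and it takes a genuinely different route from the paper's. The paper's one-liner reduces to the case that $S$ is a maximal torus of $P$ and then reuses the argument already given for Lemma \ref{Artss}: a maximal torus of $P$ lies in a Levi subgroup, and Levi subgroups of $P$ are $U(K)$-conjugate (Borel, Prop.\ 20.5). You instead form the connected solvable subgroup $H = S\cdot U$, note that $U = R_u(H)$ and that $S$ and $\pi(S)$ are both maximal $K$-tori of $H$ (each of dimension $\dim H - \dim U$), and invoke the $U(K)$-conjugacy of maximal $K$-tori of a connected solvable group over a perfect field. Both routes finish with the same bookkeeping step: once $u \in U(K)$ satisfies $uSu^{-1}\subseteq M$, applying $\pi$ to $u s u^{-1}$ gives $u s u^{-1}=\pi(s)$ pointwise, which is the strong form needed for the cocharacter assertion. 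What your approach buys is that it works directly with $S$ and $U$, with no need to enlarge $S$ to a maximal torus of $P$ or to discuss Levi subgroups of $P$ at all, at the price of quoting a different classical rationality theorem; the paper's version is shorter on the page only because it recycles the sentence already present in the proof of Lemma \ref{Artss}. Your closing caution about why one should not try to deduce the lemma from \eqref{well known statement} applied to a topological generator of $S(K)$ is well taken.
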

\proof 
We may assume that $S$ is a maximal torus, and then the claim follows from the argument above.
 \qed

Faltings' finiteness theorem continues to apply in this context: 
  
\begin{lemma} \label{faltings_finiteness_reductive} Let $\mathbf{G} \subset \GL_n$ be a reductive group, $K$ a number field, $S$ a finite set of places.
Consider all representations
$$\rho: G_{K} \longrightarrow \mathbf{G}(\Q_{p})$$
which, when considered as representations into $\GL_n(\Q_p)$, satisfy conditions (a), (b), (c) of Lemma \ref{finiteness}
(i.e. $S$-unramified, pure of weight $w$, integral).  %
 
Then there are only finitely many possibilities for the $\mathbf{G}(\Q_p)$-conjugacy class of $\rho^{\mathrm{ss}}$. 

Indeed, there are only finitely many possibilities up to $\mathbf{G}(\Q_p)$-conjugacy
for pairs $(\mathbf{Q}, \rho: G_{\Q} \rightarrow \mathbf{L}_Q(\Q_p))$ where
$\mathbf{Q}$ is a $\Q_p$-parabolic subgroup with Levi quotient $\mathbf{L}_Q$,   the image of $\rho$ is irreducible in $\mathbf{L}_Q$,
and $\rho$ 
again satisfies the conditions of Lemma \ref{finiteness}. 
\end{lemma}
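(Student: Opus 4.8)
The plan is to deduce the lemma from Faltings's finiteness theorem for $\GL_n$ (Lemma \ref{finiteness}), applied to the composite of $\rho$ with a fixed faithful embedding $\iota\colon\mathbf{G}\hookrightarrow\GL_n$, and then to refine the equivalence relation from $\GL_n(\Q_p)$-conjugacy down to $\mathbf{G}(\Q_p)$-conjugacy using standard finiteness results for linear algebraic groups over $\Q_p$ and over $\overline{\Q}_p$. Throughout I would work with the $G$-completely-reducible formalism of \cite{Serre} recalled above, together with Lemmas \ref{Artss} and \ref{wks2}.

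First I would reduce the opening assertion to the ``Indeed'' clause. Given $\rho$ satisfying (a), (b), (c), let $\mathbf{Q}$ be a $\Q_p$-parabolic subgroup of $\mathbf{G}$ minimal among those containing $\rho(G_K)$, and let $\bar\rho\colon G_K\to\mathbf{L}_Q(\Q_p)$ be the composite with the projection to the Levi quotient. Minimality of $\mathbf{Q}$ forces $\bar\rho$ to be $\mathbf{L}_Q$-irreducible; up to $\mathbf{G}(\Q_p)$-conjugacy, $\rho^{\mathrm{ss}}$ is the composite of $\bar\rho$ with an identification $\mathbf{L}_Q\xrightarrow{\sim}\mathbf{L}\hookrightarrow\mathbf{Q}\hookrightarrow\mathbf{G}$ for a Levi subgroup $\mathbf{L}$ of $\mathbf{Q}$, so $\rho^{\mathrm{ss}}$ is recovered from the pair $(\mathbf{Q},\bar\rho)$ (independence of the choice of $\mathbf{L}$ being Lemma \ref{wks2}); and $\iota\rho^{\mathrm{ss}}$ has the same Frobenius characteristic polynomial as $\iota\rho$ at every $\wp\notin S$ by Lemma \ref{Artss}, and is unramified outside $S$ since $\rho$ is. Hence $(\mathbf{Q},\bar\rho)$ satisfies the hypotheses of the ``Indeed'' clause, and the first assertion follows from it. Since there are only finitely many $\mathbf{G}(\Q_p)$-conjugacy classes of $\Q_p$-parabolic subgroups, and since $N_{\mathbf{G}}(\mathbf{Q})=\mathbf{Q}$ (for $\mathbf{G}$ connected; otherwise up to a finite factor) while $\mathbf{Q}(\Q_p)$-conjugacy of homomorphisms into the Levi quotient coincides with $\mathbf{L}(\Q_p)$-conjugacy, the ``Indeed'' clause reduces to: \emph{for a fixed Levi subgroup $\mathbf{L}\subseteq\GL_n$ (a Levi of one of finitely many representative parabolics), there are finitely many $\mathbf{L}$-irreducible $\bar\rho\colon G_K\to\mathbf{L}(\Q_p)$ with $\iota\bar\rho$ satisfying (a), (b), (c), up to $\mathbf{L}(\Q_p)$-conjugacy.}

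To prove this, note first that $\iota\bar\rho$ is a semisimple $\GL_n(\Q_p)$-representation: being $\mathbf{L}$-completely reducible, the Zariski closure $\mathbf{H}$ of its image in $\mathbf{L}$ is reductive (\cite[Prop.\ 4.2]{Serre}), hence linearly reductive in characteristic zero, so the standard representation of $\GL_n$ restricts to a semisimple representation of $\mathbf{H}$. By Lemma \ref{finiteness}, there are finitely many possibilities for $\iota\bar\rho$ up to $\GL_n(\Q_p)$-conjugacy, hence also up to $\GL_n(\overline{\Q}_p)$-conjugacy (these two finite sets corresponding bijectively, since over any field a semisimple representation is determined by its character). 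It remains to show the map $[\bar\rho]_{\mathbf{L}(\Q_p)}\mapsto[\iota\bar\rho]_{\GL_n(\overline{\Q}_p)}$ has finite fibers, and I would factor it through $[\bar\rho]_{\mathbf{L}(\overline{\Q}_p)}$. The arrow $[\bar\rho]_{\mathbf{L}(\Q_p)}\mapsto[\bar\rho]_{\mathbf{L}(\overline{\Q}_p)}$ is finite-to-one by the standard twisting argument: if $\bar\rho'=g\bar\rho g^{-1}$ with $g\in\mathbf{L}(\overline{\Q}_p)$ then, $\bar\rho$ and $\bar\rho'$ being $\Q_p$-valued, $\tau\mapsto{}^{\tau}g\cdot g^{-1}$ is a $1$-cocycle of $\Gal(\overline{\Q}_p/\Q_p)$ valued in the $\Q_p$-group $Z_{\mathbf{L}}(\mathbf{H})$, and its class in $H^1(\Q_p,Z_{\mathbf{L}}(\mathbf{H}))$ is the obstruction to $\bar\rho,\bar\rho'$ being $\mathbf{L}(\Q_p)$-conjugate --- and $H^1(\Q_p,Z_{\mathbf{L}}(\mathbf{H}))$ is finite by finiteness of Galois cohomology of a $p$-adic field with coefficients in a linear algebraic group. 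The arrow $[\bar\rho]_{\mathbf{L}(\overline{\Q}_p)}\mapsto[\iota\bar\rho]_{\GL_n(\overline{\Q}_p)}$ is finite-to-one by rigidity of reductive subgroups: the subgroup $\mathbf{H}\subseteq\mathbf{L}$ is constrained to a single $\GL_n(\overline{\Q}_p)$-conjugacy class, of which there are only finitely many up to $\mathbf{L}(\overline{\Q}_p)$-conjugacy; and for fixed $\mathbf{H}$, two $\mathbf{L}$-irreducible $\bar\rho$ with image-closure $\mathbf{H}$ and $\GL_n(\overline{\Q}_p)$-conjugate $\iota\bar\rho$ differ by an automorphism of $\mathbf{H}$, so --- modulo inner automorphisms, which are realized by $\mathbf{L}(\overline{\Q}_p)$-conjugation --- lie in at most $|\mathrm{Out}(\mathbf{H})|$ distinct $\mathbf{L}(\overline{\Q}_p)$-conjugacy classes.

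I expect the one genuinely non-routine ingredient to be this rigidity input --- that a fixed $\GL_n(\overline{\Q}_p)$-conjugacy class of reductive subgroups of $\GL_n$ meets the fixed reductive subgroup $\mathbf{L}$ in only finitely many $\mathbf{L}(\overline{\Q}_p)$-conjugacy classes, equivalently that $\mathbf{L}(\overline{\Q}_p)\backslash\{g\in\GL_n(\overline{\Q}_p):g\mathbf{H}g^{-1}\subseteq\mathbf{L}\}/N_{\GL_n}(\mathbf{H})(\overline{\Q}_p)$ is finite for a fixed reductive $\mathbf{H}$. This is the point where one pays for the refinement to $\mathbf{G}$-conjugacy; it is a standard finiteness statement about conjugacy classes of reductive subgroups, but it may be cleanest to establish it by a direct argument for the particular groups that arise. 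The remaining steps --- that Faltings's argument for Lemma \ref{finiteness} in fact yields finitely many representations up to $\GL_n(\Q_p)$-conjugacy, the local Galois cohomology descent, the validity over the non-closed field $\Q_p$ of the facts from \cite{Serre} and \cite{Borel} used above (cf.\ the footnote in \S\ref{SS}), and the bookkeeping for possibly disconnected $\mathbf{G}$ --- should all be routine.
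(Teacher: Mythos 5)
Your proof shares the paper's architecture: after applying Faltings's finiteness (Lemma \ref{finiteness}) through the embedding $\iota$, one must refine from $\GL_n(\Q_p)$-conjugacy down to $\mathbf{G}(\Q_p)$-conjugacy, and both proofs do this by first working over $\overline{\Q_p}$ and then descending via finiteness of $H^1(\Q_p, -)$ for the relevant stabilizers. Your organization is inverted (you reduce the first assertion to the ``Indeed'' clause, while the paper proves the first assertion directly and then deduces the ``Indeed'' clause from it), but that is cosmetic.

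The genuine gap is exactly the step you flag as ``the one genuinely non-routine ingredient'' and then defer: the claim that $\mathbf{L}(\overline{\Q_p})\backslash\{g\in\GL_n(\overline{\Q_p}) : g\mathbf{H}g^{-1}\subseteq\mathbf{L}\}/N_{\GL_n}(\mathbf{H})(\overline{\Q_p})$ is finite. You call this a ``standard finiteness statement'' to be established ``by a direct argument for the particular groups that arise,'' but this is precisely where the real work lies, and a case analysis is not a proof. The paper handles this point by replacing the image-closure by its identity component, passing to Lie algebras, and invoking Richardson's theorem \cite[Theorem 7.1]{Richardson}: over an algebraically closed field of characteristic zero, the set $\{g\in\GL_n : \Ad(g)\mathfrak{l}\subset\mathfrak{g}\}$ is a finite union of $\mathbf{G}$-orbits. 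This is the citation you are missing, and once it is in hand the rest of your outline goes through essentially as you wrote it.

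A smaller issue: you appeal to ``$|\mathrm{Out}(\mathbf{H})|$'' as though it were a finite bound, but $\mathrm{Out}(\mathbf{H})$ is infinite whenever the radical of the connected reductive group $\mathbf{H}$ is a torus of rank at least two (it then contains a copy of $\GL_r(\Z)$). What you actually need is that the image of $N_{\GL_n}(\mathbf{H})$ in $\mathrm{Out}(\mathbf{H})$ is finite; this holds because the connected group $N_{\GL_n}(\mathbf{H})^{\circ}$ maps into $\mathrm{Inn}(\mathbf{H})$, the identity component of the automorphism functor. The point is repairable, but it needs to be said --- and in any case it becomes superfluous once Richardson's theorem, which already controls the relevant orbit count at the Lie-algebra level, is in place.
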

  
\proof  %
Note first that for such $\rho$, the $\mathbf{G}$-semisimplification $\rho^{\mathrm{ss}}$ is also semisimple considered as a representation
with target $\GL_n$ (since its Zariski closure is reductive, as noted above).

By Lemma \ref{finiteness} is enough to check that,
for any fixed such $\rho_0$, there are only finitely many $\mathbf{G}(\Q_p)$-orbits 
on the set of $\GL_n(\Q_p)$-conjugates of $\rho_0^{\mathrm{ss}}$ with image in $\mathbf{G}$. 
Let $\mathbf{L}$ be the Zariski closure of the image of $\rho_0^{\mathrm{ss}}$. %
It is a reductive $\Q_p$-subgroup of $\mathbf{G}$. 
Then  for $g \in \GL_n(\Q_p)$ the image of $\Ad(g) \rho_0$ belongs to $\mathbf{G}$
if, and only if, $\Ad(g) \mathbf{L} \subset \mathbf{G}$.  In other words, it is enough to verify
that the set
$$ \{g \in \mathbf{GL}_n(\Q_p): \Ad(g) \mathbf{L} \subset \mathbf{G} \}$$
consists of finitely many double cosets under $(\mathbf{G}(\Q_p), \mathbf{L}(\Q_p))$, or equivalently
finitely many $\mathbf{G}(\Q_p)$-orbits. 

We may replace $\mathbf{L}$ by its connected component, and then 
it is enough to verify this assertion at the level of Lie algebras, i.e.\ to prove the same assertion for the set
$$ \{g \in \mathrm{GL}_n(\Q_p): \Ad(g) \mathfrak{l} \subset \mathfrak{g}\}$$
 According to  Richardson's theorem
 \cite[Theorem 7.1]{Richardson}
 this forms finitely many $\mathbf{G}$ orbits   over the algebraic closure $\overline{\Q_p}$.    The result then follows from finiteness of  the Galois cohomology
$H^1(\Q_p, \mathbf{S})$ for any linear algebraic group $\mathbf{S}$ (\cite[III \S 4, Theorem 4]{SerreGalois}). 

To see the validity of the refinement, 
note that there are finitely many conjugacy classes of parabolic subgroups $\mathbf{P}$ defined over $\Q_p$,
and for each such $\mathbf{P}$
there are -- by what we just proved, applied to a Levi factor -- only finitely many $\mathbf{P}(\Q_p)$-conjugacy classes
of  (pure of weight $w$, unramified outside $S$) irreducible representations $G_K \rightarrow \mathbf{L}_P(\Q_p)$.
 \qed

\subsection{Friendly places}
For our later applications it is convenient to have available a class of
``friendly'' places of a number field $K$
at which the local behavior of homomorphisms $G_K \rightarrow \Q_p^*$
is particularly simple.   (Actually, in our applications, it would be enough to do this for $K=\Q$,
for which everything is quite straightforward, and to always use Lemma \ref{globalsimple2} with $K=\Q$.
However, it makes our arguments a little easier to write to have friendly places available for a general number field $K$).
 
 First we recall some structural theory \cite[II.3.3]{Serreladic}. Let $\mathcal{C} \subset  G_{\Q} = \Gal(\overline{\Q}/\Q)$   be the conjugacy class of complex conjugation, and let $H^+ = \langle \mathcal{C} \rangle$, the normal subgroup generated by $\mathcal{C}$;
 there is a unique nontrivial homomorphism $H^+ \rightarrow \{ \pm 1\}$ and we let $H$ be its kernel.  A subfield $K \subset \overline{\Q}$
 is totally real if and only if it is fixed by $H^+$.
 It is CM if and only if it is fixed by $H$ but not $H^+$. 
  
  For an arbitrary number field $K \subset \overline{\Q}$ 
 let $E$ and $E^+$ be, respectively, the subfields of $K$ defined by fixed fields of $G_K \cdot H$ and $G_K \cdot H^+$, respectively
 (where $G_K$ is the Galois group of $\overline{\Q}$ over $K$). 
Then $E^+$ is the largest totally real subfield of $K$, and either $E^+=E$ is totally real, 
{\em or} $E$ is CM and is the largest CM subfield of $K$.

 \iffriendly
 \begin{dff} \label{friendly def} (Friendly places). 
 Let $K$ be a number field.  
 \begin{itemize}
 \item  If $K$ has a CM subfield, then let $E$ be  its maximal CM subfield  and $E^+$ the maximal totally real
 subfield of $E$.  In this case, we say that a place $v$ of $K$ is {\em friendly} if it is unramified over $\Q$, and it lies above a place of $E^+$ that is inert in $E$.
\item  If $K$ has no CM subfield, any place  $v$ of $K$ which is unramified over $\Q$ will be understood to be friendly.
\end{itemize}
 \end{dff}
 
 Clearly, infinitely many friendly places exist; however, if $K$ has a CM subfield, they have Dirichlet density $0$.   
 \fi

Consider, now, a continuous character $\eta: \Gal(\overline{K}/K) \longrightarrow \Q_p^*$,
ramified at only finitely many places; by class field theory it corresponds to a homomorphism
$\mathbf{A}_K^*/K^* \rightarrow \Q_p^*$. In particular, its restriction to places
above $p$ gives rise to a homomorphism $\eta_p: (K \otimes \Q_p)^* \longrightarrow \Q_p^*$.
As usual, we say this is locally algebraic if it agrees, in a neighbourhood of the identity,
with the $\Q_p$-points of an algebraic homomorphism $\mathrm{Res}_{(K \otimes \Q_p)/\Q_p} \mathbf{G}_m \longrightarrow \mathbf{G}_m$
of $\Q_p$-algebraic groups, cf.\ \cite[Chapter III]{Serreladic}. 
 This condition is implied by  being Hodge--Tate at primes above $p$,
 by a theorem of Tate \cite[Chapter III, Appendix]{Serreladic}.
 Moreover, since $\eta$ is finitely ramified, it follows that $\eta_p$
is trivial on a finite-index subgroup of the units $\mathcal{O}_K^*$, 
embedded into $(K \otimes \Q_p)^*$. 

 For such $\eta$, we say that $\eta$ is pure of weight $w$ 
 when it satisfies the condition explained in  Lemma \ref{finiteness}.

\begin{lemma} \label{friendlyplace} Let $v$ be any friendly place of $K$, lying above the prime $p$ of $\Q$.  
 For any
continuous character $ \eta: \Gal(\bar{K}/K) \longrightarrow \Q_p^*$,
ramified at only finitely many places, 
pure of weight $w$, and  locally algebraic at each prime above $p$, 
one has  
$$ \eta^2|_{K_{v}^*} = \chi \cdot \mathrm{Norm}_{K_{v}/\Q_p}^{w},$$
where $\chi$ has finite order.  In particular, $w$ is even and the Hodge--Tate weight of $\eta$ at the place $v$ equals $w/2$.  
\end{lemma}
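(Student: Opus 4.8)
The plan is to reduce to local class field theory and then to pin down the behaviour of $\eta$ at $v$ from three ingredients: local algebraicity at $v$, purity of weight $w$, and the friendly hypothesis. Under the reciprocity map $K_v^*\to G_{K_v}^{\mathrm{ab}}$ the character $\eta|_{K_v^*}$ corresponds to $\eta|_{G_{K_v}}$, while $\norm_{K_v/\Q_p}$ corresponds (up to the customary sign) to the cyclotomic character $\chi_{\mathrm{cyc}}$; so it is enough to show that $\eta^2\cdot\chi_{\mathrm{cyc}}^{-w}$, restricted to $G_{K_v}$, has finite order, the assertions that $w$ is even and that the Hodge--Tate weight of $\eta$ at $v$ equals $w/2$ being then formal.

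Next I would exploit local algebraicity together with the fact that $\eta$ is valued in $\Q_p^*$, not merely in $\overline{\Q}_p^*$. Since $\Gal(\overline{\Q}_p/\Q_p)$ acts transitively on $\Hom_{\Q_p}(K_u,\overline{\Q}_p)$ for every place $u\mid p$ (because $K_u$ is a field), the only $\Q_p$-rational algebraic homomorphisms $\mathrm{Res}_{K_u/\Q_p}\mathbf{G}_m\to\mathbf{G}_m$ are the integral powers of $\norm_{K_u/\Q_p}$; hence for each $u\mid p$ the restriction $\eta|_{K_u^*}$ coincides, on a finite-index subgroup of $\mathcal{O}_{K_u}^*$, with $\norm_{K_u/\Q_p}^{n_u}$ for a single integer $n_u$, namely the Hodge--Tate weight of $\eta$ at $u$. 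Restricting to $\mathcal{O}_{K_v}^*$ then shows that $\eta^2\norm_{K_v/\Q_p}^{-w}$ is finite-order on the units \emph{if and only if} $2n_v=w$ (using that $\norm_{K_v/\Q_p}$ surjects onto an open, hence infinite, subgroup of $\Z_p^*$). So the lemma reduces to two points: (i) that $2n_v=w$, which immediately yields that $w$ is even and the Hodge--Tate weight is $w/2$; and (ii) that the residual unramified twist, i.e.\ the value of $\eta^2\norm_{K_v/\Q_p}^{-w}$ on a uniformizer of $K_v$, is a root of unity.

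Point (i) is the heart of the matter, and the one place where the friendly hypothesis is used; I expect the reconciliation of the friendly condition with the complex-conjugation bookkeeping to be the main obstacle. Here I would appeal to the theory of locally algebraic abelian representations \cite[Ch.\ III]{Serreladic}: $\eta$ arises from an algebraic Hecke character $\psi$ of $K$, and --- because $\eta$ is $\Q_p^*$-valued --- the infinity type of $\psi$ is $\sum_{\sigma\colon K\hookrightarrow\C} n_{u(\sigma)}\,\sigma$, where $u(\sigma)$ is the place above $p$ determined by $\sigma$ through fixed embeddings $\overline{\Q}\hookrightarrow\C$, $\overline{\Q}\hookrightarrow\overline{\Q}_p$, and $n_u$ is as above. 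Purity of weight $w$ forces $n_\sigma+n_{\bar\sigma}=w$ for every $\sigma$, with $\bar\sigma$ the complex-conjugate embedding. The friendly condition is precisely what makes complex conjugation fix the place $v$: complex conjugation restricts to the canonical involution of the maximal CM subfield $E\subseteq K$, independently of the embedding; $v$ lies over a place $v^+$ of the maximal totally real subfield that is inert in $E$; hence the unique place of $E$ above $v^+$ is fixed by that involution, and unwinding this forces $\bar v=v$ (when $K$ has no CM subfield one takes $E$ totally real and uses $\bar\sigma=\sigma$ directly). Consequently $2n_v=n_v+n_{\bar v}=w$.

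Finally, for (ii) I would return to purity, now globally: $\eta^2\chi_{\mathrm{cyc}}^{-w}$ is pure of weight $0$, so its values on Frobenius elements are algebraic numbers whose archimedean absolute values are all $1$; combined with its finite ramification and the local-algebraicity constraint already established at $v$, a Kronecker/Chebotarev-type argument pins the residual unramified character at $v$ down to finite order. Assembling (i) and (ii) gives $\eta^2|_{K_v^*}=\chi\cdot\norm_{K_v/\Q_p}^{w}$ with $\chi$ of finite order, and (i) records that $w$ is even with Hodge--Tate weight $w/2$.
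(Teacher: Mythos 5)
Your overall architecture --- pass via local class field theory, use local algebraicity together with the fact that $\eta$ is $\Q_p^*$-valued (not merely $\overline{\Q}_p^*$-valued) to see that the Hodge--Tate weight at each place $u\mid p$ is a single integer $n_u$, and reduce to the equality $2n_v=w$ plus a finite-order statement on a uniformizer --- is sound and close in spirit to what the paper does. But step (i), as written, has a genuine gap, and the parenthetical for the non-CM case is worse.

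The problematic sentence is ``unwinding this forces $\bar v=v$.'' The friendly hypothesis controls the action of complex conjugation, which is canonical only on the maximal CM subfield $E$, on the place $v|_E$ of $E$: it gives $c_E(v|_E)=v|_E$. It does \emph{not} give anything about $v$ itself as a place of $K$. Complex conjugation does not act canonically on $K$, and once you fix $\overline{\Q}\hookrightarrow\C$ and $\overline{\Q}\hookrightarrow\overline{\Q}_p$ (so that both $\bar\sigma$ and $u(\sigma)$ make sense), the place of $K$ induced by $\bar\sigma$ for $\sigma$ inducing $v$ is some $v'$ lying over $v|_E$, and if $K/E$ is not totally inert at $v|_E$ there may be several such $v'$, with no reason $v'=v$. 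From $n_\sigma+n_{\bar\sigma}=w$ you can then only conclude $n_v+n_{v'}=w$ for some $v'$ over $v|_E$, not $2n_v=w$. The non-CM parenthetical (``uses $\bar\sigma=\sigma$ directly'') is literally false: $\bar\sigma=\sigma$ says $\sigma$ is a real embedding, and a field can have complex embeddings while having no CM subfield (e.g.\ $\Q(2^{1/3})$).

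The missing input is exactly what the paper singles out as ``the key point'': the theorem of Artin and Weil that the infinity type of an algebraic Hecke character of $K$ trivial on a finite-index subgroup of the units factors through the norm to the maximal CM (or totally real) subfield $E$; equivalently, $n_\sigma$ depends only on $\sigma|_E$, not merely on the $p$-adic place $u(\sigma)$. Granting this, your argument closes: any two places of $K$ over the same place of $E$ have the same $n$, so $n_{v'}=n_v$, and purity gives $2n_v=w$. The paper encodes this via the Serre torus --- the isogeny $\mathbf{S}_K\to\mathbf{S}_E$ is precisely Artin--Weil --- reduces to $K=E$, and kills the remaining character on $E_v^*$ by observing it factors through a $\Q_p$-anisotropic torus. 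That anisotropic-torus step also takes care of your point (ii) for free, giving triviality on all of $E_v^*$, not merely on a finite-index subgroup of units; by contrast, the Kronecker/Chebotarev sketch you offer for (ii) still needs an integrality input (Kronecker's theorem requires the number with all archimedean absolute values equal to one to be an algebraic integer), which is not among the stated hypotheses of the lemma.
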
 

In other words, the restriction of globally pure characters to friendly places is of a standard form. 
Note that if the coefficients are enlarged from $\Q_p^*$ to $\Q_{p^2}^*$,
the statement above is no longer true; an example is given by
the idele class character associated to a CM elliptic curve.

The proof of this result is routine. The key point  is due to Artin and Weil: an algebraic Hecke character factors through the norm map to the maximal CM subfield. 
 \proof 
Being locally algebraic, $\eta$ gives rise to an algebraic character  
of $\mathrm{Res}_{K/\Q} \G_m$, which is trivial
on a finite-index subgroup of $\mathcal{O}^*$. 
 Said differently, 
 we obtain a $\Q_p$-rational  character 
 $ \mathbf{S} \longrightarrow \G_m$ of the Serre torus $\mathbf{S}$; we will denote this also by $\eta$. 
(Note that $\eta$ is forced to be $\Q_p$-rational since it carries $\mathbf{S}(\Q_p)$ into $\Q_p^*$). 
 Here $\mathbf{S}$ is the quotient of $\mathrm{Res}_{K/\Q} \G_m$ by the Zariski closure of (a sufficiently deep finite-index subgroup of) the units. 
Because of the purity assertion, %
if $\lambda \in K^*$ is a unit at all ramified primes for $\eta$, then $\eta(\lambda)$ 
is an algebraic number all of whose conjugates have absolute value $\mathrm{N}_{K/\Q}(\lambda)^{w/2}$. 
 
  The structure of this torus was in effect computed by Weil \cite{Weil},  and in detail by Serre: 
  If $K$ admits no CM subfield, then the norm map $\mathbf{S} \rightarrow \mathbf{G}_m$ is in fact an isogeny.
  So $\eta$ is (up to finite order) the norm raised to the power $w/2$. 
 The result follows. 
  
Thus we suppose that $K$ has a CM subfield; now let $E$ be the largest CM subfield of $K$, and let $E^+$
 be the totally real subfield of $E$. Then the norm map
 $ \mathbf{S} \rightarrow \mathbf{S}_{E}$ is an isogeny; in other words, a suitable power $\eta^k$ 
 factors through the norm from $K$ to $E$.   
 Therefore
it is enough to prove the Lemma for $K=E$, replacing $v$ by the place  of $E$ below it. 
In  particular,  by definition, $v$ lies above an inert prime of $E/E^+$.

Now there is a norm map $\mathbf{S}_E \rightarrow \mathbf{G}_m$. 
Write $x \mapsto \bar{x}$ for the complex conjugation on $E$. 
The map
$x \mapsto x/\bar{x}$, from $E^*$ to $E^*$, 
is trivial on a finite-index subgroup of the units,
and its image consists entirely of elements whose norm (to $E^+$) equals $1$.  Indeed
for any $\Q$-algebra $R$ the rule  $x \mapsto x/\bar{x}$  defines a map
$(E \otimes R)^* \rightarrow (E \otimes R)^*$, corresponding to 
a unique  map of $\Q$-algebraic groups
$$ \theta:  \mathbf{S}_E \rightarrow (\Res_{E/\Q} \G_m)^1$$
where the superscript $1$ denotes the kernel of the norm to $E^+$. Together with the norm map
this gives an isogeny
$ \mathbf{S}_E \longrightarrow \G_m \times  (\Res_{E/\Q} \G_m)^1$.
Raising the character $\eta$ to a suitable power we can
suppose that it factors through the right-hand side; twisting it 
by a power of the cyclotomic character, we can arrange that 
it is trivial on the $\G_m$ factor.   

In other words,  we are reduced to checking the case
where $\eta$ factors through $\theta$. 
Now the weights of $x\mapsto \eta(x)$ and $x \mapsto \eta(\bar{x})$  
coincide, but their product is trivial;  so the weight of $\eta$ is zero. 
Also $\eta$ is trivial on $E_v^*$: consider
$$E_{v}^*  \subset (E \otimes \Q_p)^* \rightarrow \mathbf{S}(\Q_p)  \stackrel{\theta}{\rightarrow} (E \otimes \Q_p)^{1} =  \left(\prod_{w|p} E_w^*\right)^{1}.$$ 
The image of $E_{v}^*$ is contained inside 
$ \{ y \in E_{v}^*: y \bar{y} = 1\}$;  this is contained in a  $\Q_p$-anisotropic subtorus of $(\Res_{E/\Q} \G_m)^1 $.
Therefore, any $\Q_p$-rational character of $(\Res_{E/\Q} \G_m)^1$ is trivial upon pullback to $E_v^*$.
This is exactly what we wanted to prove (since, as we just saw, once $\eta$ factors through $\theta$ its weight is zero).  %
\qed

\subsection{Reducibility of global Galois representations}
\label{red_galois}
We now give some lemmas which limit the reducibility of a global  pure Galois representation. The mechanism is as follows: purity
passes to subrepresentations, and then leads to restrictions on the sub-Hodge structure. 
 
For a  decreasing filtration $F^{\bullet} V$ on a vector space $V$ (with $F^0 V =V$)  we define the \emph{weight} of the filtration to be
\begin{equation} \label{weight definition}  \mathrm{weight}_F(V) = \frac{ \sum_{p \geqslant 0}  p \dim \mathrm{gr}^p(V)  }{\dim V},\end{equation}
where $\mathrm{gr}^p(V) = F^p(V)/F^{p+1}(V)$ is the associated graded. 
\footnote{Here and in Section \ref{Hodge_numbers_funny}, the symbol $p$ is used abusively to refer to the indexing on a Hodge filtration.
We hope this will not cause confusion.}
For the other $p$-adic Hodge theory terms that appear in the following result, see \cite[\S 6]{Conrad} or \cite[Expose III]{Asterisque}.

\begin{lemma} \label{globalsimple} 
Let $K$ be a number field and $v$ a friendly place. 
Let $V$ be a Galois representation of $G_K$ on a $\Q_p$-vector space 
which is crystalline at all primes above $p$, and pure of weight $w$. 
 
Let $V_{\dR} = (V \otimes_{\Q_p} B_{\mathrm{cris}})^{G_{K_v}}$ be the filtered\footnote{Here, and in other contexts, we will write $V_{\dR}$ even
 though we are using the crystalline functor, because in our applications it will be helpful to think of it in terms of de Rham cohomology.}
$K_v$-vector space  that is associated to 
$\rho|_{K_v}$ by the $p$-adic Hodge theory functor $\underline{D}_{\mathrm{cris}}$ of \cite[Expose III]{Asterisque}. 

Then the weight of the Hodge filtration on $V_{\dR}$  equals $w/2$. 
\end{lemma}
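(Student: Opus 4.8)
The plan is to exploit the interaction between three facts: (1) the Hodge--Tate weights of a crystalline representation at a friendly place are visible on the determinant; (2) purity of weight $w$ pins down the absolute values of Frobenius eigenvalues and hence, via the product formula applied to $\det V$, the weight of $\det V$ as a Galois character; and (3) Lemma \ref{friendlyplace}, which tells us exactly how a globally pure locally algebraic character restricts to a friendly place. Concretely, set $d = \dim_{\Q_p} V$ and pass to the one-dimensional representation $\eta = \det V = \Lambda^d V$. This $\eta$ is again crystalline at all primes above $p$ (as an exterior power of a crystalline representation), hence Hodge--Tate there, hence locally algebraic at primes above $p$ by Tate's theorem (cited in the excerpt). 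It is ramified at only finitely many places since $V$ is. And it is pure: if $V$ is pure of weight $w$ then $\det V$ is pure of weight $dw$, because at each good prime $\wp\notin S$ the Frobenius eigenvalues on $V$ are algebraic of complex absolute value $q_\wp^{w/2}$, so their product (the Frobenius eigenvalue on $\det V$) is algebraic of absolute value $q_\wp^{dw/2}$; integrality of the characteristic polynomial likewise passes to the determinant.

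Now apply Lemma \ref{friendlyplace} to $\eta$: it gives $\eta^2|_{K_v^*} = \chi\cdot \mathrm{Norm}_{K_v/\Q_p}^{dw}$ with $\chi$ of finite order, and in particular that the Hodge--Tate weight of $\eta$ at $v$ is $dw/2$. On the other hand, the Hodge--Tate weight of $\eta = \det V$ at $v$ is, by compatibility of $\underline{D}_{\mathrm{cris}}$ with exterior powers and determinants, exactly the sum of the Hodge--Tate weights of $V$ at $v$, i.e.\ $\sum_{p\geqslant 0} p\dim\mathrm{gr}^p(V_{\dR})$. (Here one uses that the filtered $\phi$-module attached to $\det V$ is $\det$ of the filtered $\phi$-module attached to $V$, so its unique jump of the filtration occurs in degree $\sum_p p\dim\mathrm{gr}^p(V_{\dR})$.) Combining the two computations of the Hodge--Tate weight of $\eta$ gives
\begin{equation*}
\sum_{p\geqslant 0} p\,\dim\mathrm{gr}^p(V_{\dR}) \;=\; \frac{dw}{2},
\end{equation*}
and dividing by $d = \dim V_{\dR}$ yields $\mathrm{weight}_F(V_{\dR}) = w/2$, which is the assertion.

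The routine parts are the stability of ``crystalline,'' ``pure,'' ``finitely ramified,'' and ``locally algebraic'' under $V \mapsto \det V$, and the compatibility of $\underline{D}_{\mathrm{cris}}$ with determinants — all standard, and the first of these is essentially the only place purity is used (it "passes to $\det V$" rather than to an arbitrary subrepresentation, but that is all that is needed for this particular statement). The one genuine point to get right is that $\underline{D}_{\mathrm{cris}}(\det V) = \det \underline{D}_{\mathrm{cris}}(V)$ as \emph{filtered} $\phi$-modules, so that the filtration jump of the determinant equals the sum of the filtration jumps of $V$ weighted by dimension; this is where the quantity $\sum_p p\dim\mathrm{gr}^p$ enters, matching the numerator in the definition \eqref{weight definition} of $\mathrm{weight}_F$. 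The main obstacle, such as it is, is simply invoking Lemma \ref{friendlyplace} in the correct normalization and checking that the weight of $\det V$ is $dw$ and not, say, $w$ — a bookkeeping point rather than a conceptual one.
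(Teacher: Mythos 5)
Your proof is correct and is precisely the paper's one-line argument (``Apply Lemma~\ref{friendlyplace} to $\det(V)$''), unpacked: the weight of $\det V$ is $dw$, Lemma~\ref{friendlyplace} gives Hodge--Tate weight $dw/2$ at $v$, and compatibility of $\underline{D}_{\mathrm{cris}}$ with determinants identifies this with $\sum_p p\,\dim\mathrm{gr}^p(V_{\dR})$, whence the claimed weight $w/2$ after dividing by $d$.
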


\proof Apply Lemma \ref{friendlyplace}  to $\det(V)$. 
\qed
 
\begin{lemma} \label{globalsimple2}
Let $K$ be a number field, and $L \supset K$ a finite extension. 
Let $\rho: G_L \rightarrow \GL_n(\Q_p)$ be a representation of $G_L$
that is crystalline at  all primes above $p$, and pure of weight $w$; let $a_u(\rho)$ be the weight of the associated Hodge filtration at each such prime $u$. Then, for any friendly prime $v$ of $K$ above $p$,  $$ \sum_{u | v} [L_u:K_v] a_u(\rho) =  [L:K]  \frac{w}{2}.$$
\end{lemma}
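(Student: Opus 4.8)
The plan is to reduce to the one-dimensional case and quote Lemma \ref{friendlyplace}. Put $\widetilde{\rho} = \mathrm{Ind}_{G_L}^{G_K}\rho$, a representation of $G_K$ of dimension $n[L:K]$, and let $\eta = \det\widetilde{\rho}$, a continuous character $G_K \to \Q_p^*$. The key local input is that $D_{\dR}$ commutes with induction: an easy computation with the projection formula and Frobenius reciprocity shows that, for any de Rham representation $V$ of $G_{L_u}$, there is a natural isomorphism of filtered $K_v$-vector spaces $D_{\dR}^{K_v}\big(\mathrm{Ind}_{G_{L_u}}^{G_{K_v}} V\big) \cong D_{\dR}^{L_u}(V)$, where the right-hand side is regarded as a $K_v$-vector space by restriction of scalars and its filtration by $L_u$-subspaces is regarded as a filtration by $K_v$-subspaces. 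Combining this with Mackey's formula $\widetilde{\rho}|_{G_{K_v}} \cong \bigoplus_{u \mid v} \mathrm{Ind}_{G_{L_u}}^{G_{K_v}}\big(\rho|_{G_{L_u}}\big)$, and using that each $\rho|_{G_{L_u}}$ is crystalline (hence de Rham), we obtain an isomorphism of filtered $K_v$-vector spaces
$$ D_{\dR}^{K_v}\big(\widetilde{\rho}|_{G_{K_v}}\big) \;\cong\; \bigoplus_{u \mid v} \big(D_{\dR}^{L_u}(\rho|_{G_{L_u}}) \text{ viewed over } K_v\big). $$
In particular $\widetilde{\rho}$ is de Rham above $p$, so $\eta$ is Hodge--Tate there, hence locally algebraic by Tate's theorem; and $\eta$ is visibly ramified at only finitely many places.

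Next I would check that $\eta$ is pure of weight $W := n w [L:K]$. For a prime $\wp$ of $K$ outside $S$ and unramified in $L$, the standard identity $\det(1 - \widetilde{\rho}(\mathrm{Frob}_\wp) T) = \prod_{u \mid \wp}\det\big(1 - \rho(\mathrm{Frob}_u) T^{f(u/\wp)}\big)$, together with purity of $\rho$, gives under any embedding $\overline{\Q} \hookrightarrow \C$ that $|\eta(\mathrm{Frob}_\wp)| = \prod_{u \mid \wp} q_u^{nw/2} = q_\wp^{nw[L:K]/2}$, since $\sum_{u \mid \wp} f(u/\wp) = [L:K]$. (The same identity shows $\eta$ has integral characteristic polynomials, although that is not needed here.)

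Now I would apply Lemma \ref{friendlyplace} to $\eta$ at the friendly place $v$: it gives that the Hodge--Tate weight of $\eta$ at $v$ --- that is, the single jump of the Hodge filtration on the $K_v$-line $D_{\dR}^{K_v}(\eta|_{G_{K_v}})$, equivalently its $\mathrm{weight}_F$ --- equals $W/2 = nw[L:K]/2$. On the other hand, since $\eta = \det\widetilde{\rho}$, compatibility of $D_{\dR}$ with determinants identifies $D_{\dR}^{K_v}(\eta|_{G_{K_v}}) = \det_{K_v} D_{\dR}^{K_v}(\widetilde{\rho}|_{G_{K_v}})$, whose jump equals $\sum_i i \dim_{K_v}\mathrm{gr}^i D_{\dR}^{K_v}(\widetilde{\rho}|_{G_{K_v}})$. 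Feeding in the displayed isomorphism above, and noting that passing from an $L_u$-vector space to the underlying $K_v$-vector space multiplies all dimensions of graded pieces by $[L_u:K_v]$, this jump equals
$$ \sum_{u \mid v} [L_u:K_v] \sum_i i \dim_{L_u}\mathrm{gr}^i D_{\dR}^{L_u}(\rho|_{G_{L_u}}) \;=\; n \sum_{u \mid v} [L_u:K_v]\, a_u(\rho), $$
by the definition $a_u(\rho) = \mathrm{weight}_F\big(D_{\dR}^{L_u}(\rho|_{G_{L_u}})\big)$ and $\dim_{L_u} D_{\dR}^{L_u}(\rho|_{G_{L_u}}) = n$. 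Equating the two expressions for the jump and dividing by $n$ yields $\sum_{u \mid v}[L_u:K_v] a_u(\rho) = [L:K] w/2$, as desired.

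The main obstacle to be careful about is the behavior of $D_{\dR}$ under induction when $L/K$ is ramified at $v$: in that case $\mathrm{Ind}_{G_{L_u}}^{G_{K_v}}(\rho|_{G_{L_u}})$ need only be de Rham, not crystalline, so the whole argument must be run with the de Rham functor and the restriction-of-scalars operation on filtered modules, rather than with crystalline Frobenius modules; one must also pin down the convention that the ``Hodge--Tate weight'' appearing in Lemma \ref{friendlyplace} is exactly this single jump, equivalently the $\mathrm{weight}_F$ of the line. Everything else is routine bookkeeping with weights of filtrations.
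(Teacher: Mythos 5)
Your proof is correct and follows essentially the same route as the paper: induce $\rho$ to $G_K$, use Mackey to decompose the local de Rham module over $K_v$ as a sum over $u \mid v$, and apply Lemma \ref{friendlyplace} to the determinant to pin down the total weight. The only difference is cosmetic — you inline the determinant step rather than citing Lemma \ref{globalsimple}, and you explicitly flag the de Rham (rather than crystalline) bookkeeping needed when $L/K$ ramifies at $v$, which the paper glosses over — but the argument is the same.
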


\proof We apply Lemma \ref{globalsimple} to  $\mathrm{Ind}_{G_L}^{G_K} \rho$ and to the place $v$.
Applying the functor of $p$-adic Hodge theory to its restriction to $K_v$, we obtain
$$ (\mathrm{Ind}^K_L \rho \otimes_{\Q_p} B_{\mathrm{dR}})^{G_{K_v}} \simeq \bigoplus_{u|v} ( \rho \otimes_{\Q_p} B_{\mathrm{dR}})^{G_{L_u}} $$
(considered now as a filtered $K_v$-vector space), 
and its weight is therefore  $\frac{\sum_{u | v} [L_u:K_v] a_u(\rho)}{[L:K]}$.
\qed

\subsection{The affine group $\Aff(q)$} 
Let $q \geqslant 3$ be a prime number and let $\Sym(\mathbf{F}_q)$ be the symmetric group on the $q$ elements of $\mathbf{F}_q$.  
Let $\Aff(q) \subseteq \Sym(\mathbf{F}_q)$ be the   subgroup consisting of permutations of $\F_q$ of the form
$ x \mapsto a x+b$ 
where $a \in \F_q^*$ and $b \in \F_q$. Thus\footnote{We use $\F_q^+$ to denote the additive group $\F_q$.} $\Aff(q) \cong (\F_q)^+ \rtimes (\F_q)^*$; this group
has important applications in the theory of qualifying examinations. We shall make extensive use of it as a Galois group for certain auxiliary coverings of curves.
 
\begin{lemma} \label{com map}
For any $s \geqslant 1$ consider the map $f: \Aff(q)^{2s} \longrightarrow \F_q^+$
given by
$$f: \mathbf{g} =  (g_1, g_1', \cdots, g_s, g_s') \mapsto [g_1, g_1'] \cdot [g_2, g_2'] \cdot \cdots \cdot [g_s, g_s']$$
(here $[x,y]$ is the commutator $xyx^{-1}y^{-1}$). 
 The image of the map
\begin{equation} \label{im im} \{ \mathbf{g} \in \Aff(q)^{2s}: f(\mathbf{g}) \neq 0, \mbox{ $\mathbf{g}$ generates $\Aff(q)$} \} \rightarrow \left[ \F_q^* \right]^{2s}\end{equation}
(sending each $g_i$ to its image in the abelian quotient $\F_q^*$)
consists precisely of those $(2s)$-tuples in $\F_q^*$ whose entries generate $\F_q^*$.  The fiber above any point in the image  has the same size.  \end{lemma}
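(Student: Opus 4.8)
The plan is to analyze the map $f$ and the passage to the abelianization $\Aff(q)^{2s} \to (\F_q^*)^{2s}$ fiber by fiber, using the semidirect product structure $\Aff(q) = \F_q^+ \rtimes \F_q^*$. Write $g_i = (b_i, a_i)$ with $a_i \in \F_q^*$, $b_i \in \F_q^+$; the action is $(b,a)\cdot x = ax+b$, so the group law is $(b,a)(b',a') = (b + ab', aa')$. First I would compute the commutator $[g,g'] = (c, 1)$ explicitly: a direct calculation gives $c = c(g,g')$ as an expression in $a,a',b,b'$, and more precisely, fixing $a, a'$, the map $(b,b') \mapsto c(g,g')$ is an $\F_q$-\emph{affine} (indeed linear, since $c$ vanishes when $b=b'=0$) function of $(b,b')$ whose image is all of $\F_q$ as soon as $(a,a') \neq (1,1)$, and is $\{0\}$ when $a = a' = 1$. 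Since $[g_i,g_i'] \in \F_q^+ \trianglelefteq \Aff(q)$ and the quotient $\F_q^*$ is abelian, the product $f(\mathbf g) = \prod_i [g_i,g_i']$ lies in $\F_q^+$, and expanding the product using the conjugation action of $\F_q^*$ on $\F_q^+$ shows $f(\mathbf g)$ is again an $\F_q$-linear function of the vector $(b_1, b_1', \dots, b_s, b_s') \in \F_q^{2s}$, with coefficients depending only on $\mathbf a = (a_1,a_1',\dots,a_s,a_s')$.

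Now fix a target tuple $\mathbf a \in (\F_q^*)^{2s}$ and ask: for how many choices of $\mathbf b \in \F_q^{2s}$ is it true that (i) $f(\mathbf g) \neq 0$ and (ii) $\mathbf g$ generates $\Aff(q)$? For (i): by the previous paragraph $\mathbf b \mapsto f(\mathbf g)$ is a linear functional $\ell_{\mathbf a}$ on $\F_q^{2s}$; I claim $\ell_{\mathbf a} \not\equiv 0$ precisely when some $(a_i, a_i') \neq (1,1)$, equivalently when not all $a_i = a_i' = 1$, equivalently when $\mathbf a$ generates $\F_q^*$ is \emph{not} automatically forced — wait: if all $a_i = 1$ then $\mathbf a$ does not generate $\F_q^*$ (as $q \geq 3$), so on the locus where $\mathbf a$ generates $\F_q^*$ we indeed have $\ell_{\mathbf a} \not\equiv 0$, hence $\#\{\mathbf b : \ell_{\mathbf a}(\mathbf b) \neq 0\} = q^{2s} - q^{2s-1}$, independent of $\mathbf a$. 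For (ii): the subgroup generated by $\mathbf g = ((b_i,a_i))$ surjects onto $\F_q^*$ iff $\mathbf a$ generates $\F_q^*$; and given that, the subgroup is all of $\Aff(q)$ iff it contains a nonzero element of $\F_q^+$, iff it contains all of $\F_q^+$ (since $\F_q^*$ acts on $\F_q^+\setminus\{0\}$ transitively — here $q$ prime, $\F_q^* $ acts by scaling). So the failure locus for (ii), within the $\mathbf a$-fiber, is the set of $\mathbf b$ for which $\langle \mathbf g\rangle \cap \F_q^+ = 0$, i.e. $\langle \mathbf g \rangle$ is conjugate into the torus $\F_q^*$; this is a linear (affine-linear) condition on $\mathbf b$ as well — being conjugate into a point stabilizer $\F_q^*_{x_0}$ means all $g_i$ fix a common point $x_0 \in \F_q$, i.e. $b_i = (1-a_i)x_0$ for a single $x_0$. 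Counting: for each $x_0$ there are, among $\mathbf b$ with $\langle \mathbf g\rangle$ fixing $x_0$, exactly one choice (given $\mathbf a$), and distinct $x_0$ give distinct $\mathbf b$ unless all $a_i = 1$ (excluded), so the bad locus for (ii) has size exactly $q$, again independent of $\mathbf a$.

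Combining: over an $\mathbf a$ generating $\F_q^*$, the set of $\mathbf b$ satisfying both (i) and (ii) has size $(q^{2s} - q^{2s-1}) - (\text{size of bad-(ii) locus contained in } \{\ell_{\mathbf a} \neq 0\})$. To finish I need that the bad-(ii) locus (the $q$ tuples $\mathbf b$ with all $g_i$ fixing a common point) is entirely contained in $\{f(\mathbf g) = 0\}$ — which is clear, since if all $g_i$ fix $x_0$ then $\langle \mathbf g\rangle$ is abelian-ish... no: it lies in the stabilizer of $x_0$, which is a \emph{conjugate of $\F_q^*$}, hence abelian, hence all commutators $[g_i,g_i']$ are trivial and $f(\mathbf g) = 0$. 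Therefore the good locus has size exactly $q^{2s} - q^{2s-1} - q$, which is $> 0$ and independent of the choice of generating tuple $\mathbf a$; and it is empty whenever $\mathbf a$ fails to generate $\F_q^*$ (condition (ii) then fails outright). This gives both claims: the image of \eqref{im im} is exactly the generating tuples of $\F_q^*$, and all fibers have the common size $q^{2s-1}(q-1) - q$. The main obstacle I anticipate is bookkeeping in the commutator-expansion step — verifying cleanly that $f$ is $\F_q$-linear in $\mathbf b$ with the nonvanishing behavior of $\ell_{\mathbf a}$ exactly as claimed, and making sure the ``bad-(ii)'' count of $q$ is exact (no coincidences among the $x_0$), for which the primality of $q$ and $q \geq 3$ are used; everything else is linear algebra over $\F_q$.
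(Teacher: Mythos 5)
Your approach is the same as the paper's: fix the image tuple $\mathbf a$, observe the fiber in $\Aff(q)^{2s}$ is an affine $\F_q$-space in the translation coordinates $\mathbf b$, and use that $f$ is a nontrivial $\F_q$-linear functional of $\mathbf b$ whenever $\mathbf a$ generates. The key observation --- that if all $g_i$ fix a common point then $\langle\mathbf g\rangle$ lies in an abelian point-stabilizer so $f(\mathbf g)=0$, hence $\{f\neq 0\}$ automatically forces generation of $\Aff(q)$ once $\mathbf a$ generates $\F_q^*$ --- is exactly what the paper uses when it says ``any preimage $\mathbf g$ with $f(\mathbf g)\neq 0$ necessarily generates $\Aff(q)$.''

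However, your final count contradicts your own reasoning. You correctly show the bad-(ii) locus (size $q$) is \emph{contained} in $\{f=0\}$, so its intersection with $\{\ell_{\mathbf a}\neq 0\}$ is empty and contributes nothing to subtract; yet you then write the good locus as $q^{2s}-q^{2s-1}-q$. The correct count is simply $\#\{\ell_{\mathbf a}\neq 0\} = q^{2s}-q^{2s-1} = q^{2s-1}(q-1)$, matching the paper. This arithmetic slip does not affect the lemma's conclusion (the fiber size is still visibly independent of $\mathbf a$), but the stated value is off by $q$.
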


\proof 
Note that, for such a fiber to be nonempty, the 
element $\mathbf{y} = (y_1, y_1', \dots, y_s, y_s')$ of the target must have the property that the $y_i$ and $y_i'$ generate $\F_q^*$. 
In this case, any preimage $\mathbf{g} \in \Aff(q)^{2s}$ with the property that $f(\mathbf{g}) \neq 0$ necessarily generates
$\Aff(q)$. 
The fiber of $\Aff(q)^{2s}$ above $\mathbf{y}$ is
(in obvious coordinates) an affine space over $\F_q$, 
and the map $f$ is a nontrivial affine-linear map; each fiber thus has size $q^{2s-1}(q-1)$.
\qed

\subsection{Symplectic groups} 

Let $K$ be a field of characteristic zero. 
As usual if $V$ is a symplectic space over a field $K$, with nondegenerate alternating bilinear form $\langle -, - \rangle$, we write
$\Sp(V)$ for the algebraic group of  automorphisms of $V$ preserving the bilinear form.

The following statement is an algebraic version of Goursat's lemma (cf. \cite[Lemma 5.2.1]{Ribet}).
One uses the fact that the Lie algebra $\mathfrak{sp}_V$ of $\Sp(V)$ is simple,
and that all the automorphisms of $\mathfrak{sp}_V$ are inner.

 \begin{lemma}
\label{goursat}
Suppose $G$ is an algebraic subgroup of $\Sp(V)^N$, satisfying the following conditions.
\begin{itemize}
\item For $1 \leqslant i \leqslant N$, the projection $\pi_i: G \rightarrow \Sp(V)$ onto the $i$-th factor is surjective.
\item For $1 \leqslant i, j \leqslant N$, there exists $g \in G$ such that $\pi_i(g)$ and $\pi_j(g)$ are unipotent with 
fixed spaces of different dimensions.
 \end{itemize}
Then $G$ is all of $\Sp(V)^N$.
\end{lemma}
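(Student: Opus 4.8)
The plan is to argue at the level of Lie algebras, reduce to the connected case, and then run the classical Goursat argument using the simplicity of $\mathfrak{sp}_V$ and the fact that all its automorphisms are inner. First I would pass to the identity component $G^\circ$ and its Lie algebra $\mathfrak g \subseteq \mathfrak{sp}_V^{\oplus N}$. Since each projection $\pi_i\colon G \to \Sp(V)$ is surjective and $\Sp(V)$ is connected, each projection $\mathfrak g \to \mathfrak{sp}_V$ is surjective as well. The goal becomes to show $\mathfrak g = \mathfrak{sp}_V^{\oplus N}$; then $G \supseteq G^\circ$ is a subgroup of $\Sp(V)^N$ containing the connected group with full Lie algebra, hence contains $\Sp(V)^N$ (which is connected), so $G = \Sp(V)^N$.

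Next I would prove the two-factor case: if $\mathfrak h \subseteq \mathfrak{sp}_V \oplus \mathfrak{sp}_V$ projects onto each factor, then either $\mathfrak h$ is the whole thing, or $\mathfrak h$ is the graph of an isomorphism $\mathfrak{sp}_V \to \mathfrak{sp}_V$. This is the standard Goursat dichotomy: consider $\mathfrak k = \mathfrak h \cap (\mathfrak{sp}_V \oplus 0)$, an ideal of $\mathfrak{sp}_V$ (using surjectivity of the second projection and the fact that $\mathfrak h$ is a subalgebra); by simplicity $\mathfrak k$ is $0$ or all of $\mathfrak{sp}_V$. In the latter case symmetry gives $\mathfrak h = \mathfrak{sp}_V^{\oplus 2}$; in the former $\mathfrak h$ is the graph of an isomorphism, which since $\mathrm{Out}(\mathfrak{sp}_V)$ is trivial must be induced by conjugation by some $g \in \Sp(V)$. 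Now the hypothesis on unipotent elements with fixed spaces of different dimensions rules out the graph case for any pair $(i,j)$: an element $g$ of $G$ with $\pi_i(g), \pi_j(g)$ unipotent of different fixed-space dimensions is not conjugate in $\Sp(V)$ (conjugation preserves the dimension of the fixed space of a unipotent element), so the projection $\mathfrak g \to \mathfrak{sp}_V \oplus \mathfrak{sp}_V$ onto the $(i,j)$-factors cannot be a graph, hence is everything. So every two-factor projection of $\mathfrak g$ is surjective.

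Finally I would bootstrap from pairs to the full product by induction on $N$, using simplicity once more: suppose $\mathfrak g$ surjects onto the product of the first $N-1$ factors (inductive hypothesis, applied after checking the hypotheses pass to sub-collections of factors) and onto each pair. Let $\mathfrak n = \mathfrak g \cap (\mathfrak{sp}_V^{\oplus(N-1)} \oplus 0)$; it is an ideal in $\mathfrak{sp}_V^{\oplus(N-1)}$, hence a sub-sum $\bigoplus_{i \in I}\mathfrak{sp}_V$ for some $I \subseteq \{1,\dots,N-1\}$. If $I$ is a proper subset, pick $i \notin I$; then the projection of $\mathfrak g$ to the $(i,N)$ factors is not surjective (the $N$-th coordinate is determined by the $i$-th up to the image of $\mathfrak n$, which is trivial in coordinate $i$), contradicting the pairwise surjectivity. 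So $I = \{1,\dots,N-1\}$, whence $\mathfrak n = \mathfrak{sp}_V^{\oplus(N-1)}\oplus 0 \subseteq \mathfrak g$, and combined with surjectivity onto the last factor this gives $\mathfrak g = \mathfrak{sp}_V^{\oplus N}$. The main obstacle I anticipate is being careful that the hypotheses of the lemma — in particular the pairwise condition on unipotent fixed-space dimensions — are exactly what is needed to exclude all the graph-type degenerations that could otherwise survive; once that is set up cleanly, the Lie-algebra simplicity arguments are routine. One should also double-check the passage between $G$ and $G^\circ$: the unipotent elements in question may lie in non-identity components, but their Lie-theoretic consequence is only used to obstruct graphs via conjugacy invariants, which is a statement about $\Sp(V)$-conjugacy of elements and does not require them to lie in $G^\circ$.
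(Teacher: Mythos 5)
Your argument is correct, and it matches the paper's (very terse) indication exactly: the paper does not give a proof, only notes that one uses simplicity of $\mathfrak{sp}_V$ and innerness of its automorphisms, and your write-up is a correct fleshing out of that hint via the Lie-algebra Goursat dichotomy and induction on $N$.

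One point worth tightening is your final remark on the passage from $G^\circ$ to $G$. You say the unipotent elements ``may lie in non-identity components'' and that the conjugacy argument works anyway; but as phrased this is not quite a proof, since the Lie-algebra statement ``$\mathfrak{g}_{ij}$ is a graph of $\mathrm{Ad}(h)$'' only pins down the group $\pi_{ij}(G)^\circ$ (as the group-theoretic graph of conjugation by $h$), while $\pi_{ij}(G)$ itself can be a nontrivial finite central extension of that graph, e.g.\ by $\{1\}\times\{\pm 1\}$, whose nonidentity component contains elements $(u,-huh^{-1})$ that are \emph{not} conjugate pairs. The clean way to close this is to invoke the standard fact that, over a field of characteristic zero, every unipotent element of a linear algebraic group lies in its identity component (its Zariski closure is a vector group, hence connected). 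Since your witnessing element has $\pi_i(g)$ and $\pi_j(g)$ both unipotent, the pair $\bigl(\pi_i(g),\pi_j(g)\bigr)$ is a unipotent element of $\pi_{ij}(G)$, hence lies in $\pi_{ij}(G)^\circ = \{(u,huh^{-1})\}$, and the fixed-space dimensions then agree, giving the contradiction you need without any case analysis on extra components.
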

 
Any unipotent element of $\Sp(V)$ whose fixed space has codimension $1$ is of the form 
\begin{equation} \label{transv} T_v^r: x \mapsto x + r \langle v, x \rangle v\end{equation}
for some $v \in V, r \in K$.  We call $T_v^r$ a transvection with center $v$, and write   $T_v$ for $T_v^1$. 

\begin{lemma}
\label{transvection_generation}
Let $V$ be a symplectic space over $\QQ$.  Suppose $v_1, v_2 \in V$ are linearly independent and satisfy
$$\langle v_1, v_2 \rangle \neq 0.$$ The Zariski closure of the subgroup generated by $T_{v_1}, T_{v_2}$ also contains $T_v$ for every $v \in \operatorname{Span} ( v_1, v_2 )$. 
\end{lemma}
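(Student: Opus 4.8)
The plan is to work inside the two-dimensional symplectic subspace $W = \operatorname{Span}(v_1, v_2)$, on which the form $\langle -, - \rangle$ restricts to a nondegenerate alternating form (nondegeneracy being exactly the hypothesis $\langle v_1, v_2\rangle \neq 0$). A transvection $T_v^r$ with $v \in W$ acts as the identity on the symplectic-orthogonal complement $W^\perp$, so the whole question reduces to the group $\Sp(W) \cong \SL_2$ over $\QQ$: I want to show that the Zariski closure $H$ of $\langle T_{v_1}, T_{v_2}\rangle$ inside $\Sp(W)$ contains $T_v$ for every $v \in W$. After rescaling $v_1, v_2$ we may assume $\langle v_1, v_2 \rangle = 1$, i.e.\ $(v_1, v_2)$ is a symplectic basis, so $T_{v_1}$ and $T_{v_2}$ become the standard unipotent generators $\begin{pmatrix} 1 & 1 \\ 0 & 1\end{pmatrix}$ and $\begin{pmatrix} 1 & 0 \\ -1 & 1 \end{pmatrix}$ (up to sign conventions) of $\SL_2$.

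First I would identify $H$ as an algebraic subgroup of $\SL_2$ containing two distinct one-parameter unipotent subgroups: indeed $H$ contains $T_{v_1}^r$ for all $r \in \QQ$ by taking the Zariski closure of the cyclic group $\langle T_{v_1}\rangle$ (a nontrivial unipotent element generates a Zariski-dense subgroup of its one-parameter unipotent subgroup $\mathbf{G}_a$), and likewise $T_{v_2}^r$ for all $r$. So $H \supseteq \langle U_1, U_2\rangle$ where $U_1, U_2$ are two distinct root subgroups of $\SL_2$. Now I would invoke the classification of closed subgroups of $\SL_2$: a connected algebraic subgroup containing two distinct maximal unipotent subgroups cannot be contained in a Borel (a Borel contains a unique maximal unipotent), cannot be a torus or normalizer of a torus (no unipotents), and is not finite; hence it is all of $\SL_2$. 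Concretely, $U_1$ and $U_2$ generate a subgroup whose Lie algebra contains the two nilpotent elements $e$ and $f$, hence their bracket $h = [e,f]$, hence all of $\mathfrak{sl}_2$, so $H$ is $\SL_2 = \Sp(W)$; in particular $H$ contains every transvection with center in $W$.

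The steps in order: (1) restrict attention to $W = \operatorname{Span}(v_1,v_2)$ and note $W \cap \ker\langle v_1, -\rangle \cap \ker \langle v_2, -\rangle$ considerations show the form is nondegenerate on $W$, and that $T_v$ for $v \in W$ lies in $\Sp(W) \hookrightarrow \Sp(V)$ (extended by the identity on $W^\perp$); (2) normalize to a symplectic basis, reducing to $\SL_2$; (3) take Zariski closures of the individual cyclic groups $\langle T_{v_1}\rangle$, $\langle T_{v_2}\rangle$ to get two full root subgroups inside $H$; (4) a Lie-algebra or subgroup-classification argument shows these generate all of $\SL_2$; (5) conclude that every $T_v$, $v \in W$, lies in $H$. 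The only mild subtlety — and the step to be careful with rather than a genuine obstacle — is (4): one must make sure the argument is valid over $\QQ$ rather than an algebraically closed field, but since we are dealing with the Zariski closure and $\SL_2$ is $\QQ$-split with its root subgroups defined over $\QQ$, the Lie-algebra computation $\langle e, f\rangle_{\mathrm{Lie}} = \mathfrak{sl}_2$ goes through verbatim over $\QQ$, and $\SL_2$ is connected so a connected subgroup with full Lie algebra is everything. No genuinely hard input is needed; this is essentially the observation that two distinct transvections in a symplectic plane generate a Zariski-dense subgroup of $\SL_2$.
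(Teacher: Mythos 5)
Your proof is correct and follows essentially the same route as the paper: reduce to the two-dimensional symplectic summand $W = \operatorname{Span}(v_1,v_2)$ (using that the group fixes $W^\perp$ pointwise), then observe that $\SL_2$ is generated as an algebraic group by the two unipotent root subgroups. The paper states this in two sentences; you have simply spelled out the Zariski-closure step (a nontrivial unipotent generates a Zariski-dense subgroup of its one-parameter subgroup) and the Lie-algebra computation $[e,f]=h$, both of which are the intended justifications.
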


\begin{proof}
The subgroup in question preserves the splitting $V = \langle v_1, v_2 \rangle \oplus \langle v_1, v_2 \rangle^{\perp}$,
and so we reduce to the case that $V$ is $2$-dimensional. %
The statement then amounts to the fact that $\SL(2)$ is generated, as an algebraic group, by upper and lower triangular matrices. 
\end{proof}

\begin{lemma}
\label{transvection_graph}
Let $V$ be a symplectic space over $\QQ$.  Let $S$ be a set of vectors $v \in V$.  Make a graph whose set of vertices are $S$, having an edge between $v_1$ and $v_2$ if and only if $\langle v_1, v_2 \rangle \neq 0$.  If this graph is connected, then the Zariski closure of the group generated by the transvections $T_v$, for $v \in S$, contains $T_w$ for any $w$ in the span of $S$.
\end{lemma}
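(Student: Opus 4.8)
The plan is to bootstrap Lemma \ref{transvection_generation} along paths in the graph. First I would let $W$ denote the span of $S$ and let $H$ be the Zariski closure of the group generated by $\{T_v : v \in S\}$; the goal is to show $T_w \in H$ for every $w \in W$. The basic move is: if $v_1, v_2 \in S$ are joined by an edge, then by Lemma \ref{transvection_generation} the Zariski closure of $\langle T_{v_1}, T_{v_2}\rangle$ — which is contained in $H$ — contains $T_v$ for every $v$ in $\mathrm{Span}(v_1, v_2)$. So $H$ contains $T_v$ for every $v$ lying in the span of an edge of the graph.

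Next I would propagate this along walks. Let $S' \supseteq S$ be the set of all vectors $v$ such that $T_v \in H$. I claim $S'$ contains the span of any connected subgraph. The key point is that $S'$ is not just a set of vectors but behaves well under the edge-spanning operation: if $u, u' \in S'$ and $\langle u, u' \rangle \neq 0$, then applying Lemma \ref{transvection_generation} to the pair $(u, u')$ (whose transvections both lie in $H$) shows $\mathrm{Span}(u, u') \subseteq S'$. So starting from $S$, which is connected, I would argue inductively: pick a vertex $v_0 \in S$; walking out one edge at a time, I get that $S'$ contains the span of $v_0$ and its neighbors, then — provided I can find a vector in that span that pairs nontrivially with the next vertex — the span of that vertex too, and so on. The subtlety is that a newly reached vertex $v_{k+1}$ is joined by an edge to some already-processed vertex $v_k \in S \subseteq S'$, so $\langle v_k, v_{k+1} \rangle \neq 0$ and I can directly enlarge $S'$ to include $\mathrm{Span}(v_k, v_{k+1})$. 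Iterating over a spanning tree of the (connected) graph, I conclude that $S'$ contains every vector of the form $\sum_i c_i v_i$ obtainable by successively combining along tree edges.

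The one genuine obstacle is passing from "$S'$ contains the span of each edge, and is closed under the edge-spanning move on its own elements" to "$S'$ contains all of $W = \mathrm{Span}(S)$". This requires knowing that $S'$ is closed under addition of arbitrary (not necessarily nonzero-pairing) elements. Here I would exploit the structure of $H$: the transvections $T_v$ with $v$ ranging over a subspace $U \subseteq W$ on which the symplectic form is nonzero generate (Zariski-densely) a copy of $\Sp(U)$ inside $H$ — this follows by the same reduction to $\SL_2$ as in Lemma \ref{transvection_generation}, or more carefully by noting that transvections generate the symplectic group. Once $H$ contains $\Sp(U)$ for $U = \mathrm{rad}$-free part of $W$, and $H$ contains $T_w$ for $w$ in the radical of the form restricted to $W$ (handled separately, since such $w$ lies in $\mathrm{Span}$ of edge-vectors orthogonal to everything — one checks the radical is spanned by differences... actually the cleanest route: restrict attention to $V$ replaced by $W + W^\perp$-complement and note the form on $W$ may be degenerate, in which case pick a complement), I get all $T_w$, $w \in W$. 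I expect the degenerate-form case to be the fiddly part, and I would handle it by first reducing to the nondegenerate case via choosing, inside $V$, a symplectic subspace containing $S$ on which the form restricted to $\mathrm{Span}(S)$ has the same radical, then arguing the radical vectors are still reached because each radical vector of $W$ is a limit/combination of edge-spans — but the honest fix is: enlarge $S$ is not allowed, so instead observe $W$ decomposes and every $w \in W$ is a sum of at most two elements each lying in some $\mathrm{Span}(v_i, v_j)$ with $\langle v_i, v_j\rangle \neq 0$, using connectivity to route through nonzero-pairing edges. I would write this final combinatorial reduction carefully, as it is where the connectivity hypothesis is really used.
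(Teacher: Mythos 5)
Your first two paragraphs correctly set up the problem (define $S'=\{v: T_v\in H\}$, note it is closed under the ``edge-spanning'' move via Lemma~\ref{transvection_generation}), and you correctly identify the genuine gap: this closure property alone does not give all of $\operatorname{Span}(S)$, because adding two vectors of $S'$ with zero pairing need not stay in $S'$. But your proposed fixes do not close the gap. The appeal to ``$H\supset \Sp(U)$'' doesn't help by itself: knowing $H$ contains a symplectic group on some subspace $U$ tells you nothing about transvections centered at vectors outside $U$, and you still need to identify which $v$ have $T_v\in H$ in the first place. The ``honest fix'' --- that every $w\in W$ is a sum of at most two elements, each in an edge span --- is unjustified as stated, and more importantly it would not finish the argument even if true: having $T_a, T_b\in H$ with $w=a+b$ gives $T_w\in H$ only when $\langle a,b\rangle\neq 0$ (that is exactly the hypothesis of Lemma~\ref{transvection_generation}), so such a decomposition is useless precisely in the degenerate situation you are trying to handle. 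Your route into radicals, decompositions, and case analysis is heading somewhere considerably messier than necessary, and as written does not constitute a proof.

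The missing idea is a Zariski-density argument, which is how the paper resolves the issue cleanly. Reduce to $S$ finite and induct on $|S|$: remove a vertex $v$ so that the graph on $S_0=S\setminus\{v\}$ remains connected (possible by removing a leaf of a spanning tree). By induction $T_w\in H$ for every $w\in W_0:=\operatorname{Span}(S_0)$. For $w\in W_0$ with $\langle w,v\rangle\neq 0$, Lemma~\ref{transvection_generation} gives $T_{w+v}\in H$. Now the crucial step: the assignment $x\mapsto T_x$ is a morphism $V\to \Sp(V)$ (the entries of $T_x$ are quadratic in $x$), so $\{x\in V: T_x\in H\}$ is Zariski closed because $H$ is. The set $\{w\in W_0:\langle w,v\rangle\neq 0\}$ is nonempty (connectivity gives some $v_0\in S_0$ with $\langle v_0,v\rangle\neq 0$) and hence Zariski-dense in the affine subspace $W_0$; translating by $v$, the set of $x=w+v$ with $T_x\in H$ is dense in $v+W_0$, hence all of $v+W_0$ lies in the closed set. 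Rescaling (using $T_{cv}=T_v^{c^2}$ and the one-parameter unipotent subgroup through $T_{w+v}$) then yields $T_x\in H$ for all $x\in W_0+\QQ v=\operatorname{Span}(S)$. This density trick is what replaces your combinatorial reduction and avoids any discussion of the radical of the form on $W$.
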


\begin{proof}
We can assume $S$ is finite, and then use induction on $\left| S \right|$, using Lemma \ref{transvection_generation} for the inductive step.

In detail: Suppose $S = S_0 \cup \{v\}$, with the graph on $S_0$ connected.
By inductive hypothesis we obtain all transvections centered at vectors in  $W := \mathrm{span}(S_0)$.  
 It is enough to verify that the Zariski closure in question contains the transvection $T_x$ for each vector $x$ of the form $w +v \ (w \in W)$;
 this is so  when  $\langle w, v \rangle \neq 0$ by the prior Lemma. 
The condition $\langle w, v \rangle \neq 0$ defines a    Zariski-dense subset of $W$ 
and so we also get the remaining transvections $T_x$ when $\langle w, v \rangle = 0$ in the Zariski closure of them. 
\end{proof}

\section{Fibers with good reduction in a family} \label{fibers}

In this section we give a general criterion (Proposition \ref{finitesetoforbits})
which controls, in a given family of smooth proper varieties, the collection of fibers
that have good reduction outside a fixed set of primes.  The Proposition
simply translates (using $p$-adic Hodge theory) the finiteness statement of   Lemma \ref{finiteness}
into a restriction on the image of the period map. 
\subsection{Basic notation} \label{bas note}
We use notation $K, \mathcal{O},  \mathcal{O}_{(w)}, S, G_K, \F_w$ as in \S \ref{notn}. 

Let $Y$ be a smooth $K$-variety, and $\pi: X \rightarrow Y$ a proper smooth morphism. 

Suppose that this admits a good model over $\mathcal{O}$,
i.e.\ it extends to a proper smooth morphism $\pi: \mathcal{X} \rightarrow \mathcal{Y}$ 
of smooth $\mathcal{O}$-schemes. Suppose, moreover, that all the cohomology sheaves
$\mathbf{R}^q \pi_* \Omega^p_{\mathcal{X}/\mathcal{Y}}$
are sheaves of locally free $\mathcal{O}_Y$-modules, and that the same is true of
the relative de Rham cohomology $\mathscr{H}^q = \mathbf{R}^q \pi_* \Omega^{\bullet}_{\mathcal{X}/\mathcal{Y}}$. 
There is no harm in these assumptions, because the sheaves in question are coherent $\mathcal{O}_Y$-modules
which are free over the generic point of $\mathcal{O}$ \cite[Theorem 5.5]{DeligneDegeneration}; so the assumptions can always be achieved by possibly enlarging the set $S$ of primes.

The generic fiber of $\mathscr{H}^q$ is equipped with the Gauss--Manin connection (by  \cite[Theorem 1]{KatzOda})
and,  again by enlarging $S$ if necessary,  we may suppose that this extends to a morphism
\begin{equation} \label{GMcondef} \mathscr{H}^q \rightarrow \mathscr{H}^q \otimes \Omega^1_{\mathcal{Y}/\mathcal{O}}.\end{equation}

For any  $y \in Y(K)$, we shall denote by 
 $X_y = \pi^{-1}(y)$  the fiber of $\pi$ above $y$; it is a smooth proper variety over $K$.  
 Our goal in this section is to bound $\mathcal{Y}(\mathcal{O})$. We will do this by studying 
  the $p$-adic properties of the Galois representation   attached to $X_y$,  for $y \in \mathcal{Y}(\mathcal{O}) \hookrightarrow Y(K)$.  
  Fixing a degree $q \geqslant 0$, 
we  denote by $\rho_y$ the  representation of the Galois group $G_K$ on the  {\'e}tale cohomology group of $(X_y)_{\bar{K}}$: 
\begin{equation} \label{grep} \rho_y : G_K \rightarrow \Aut \  H^q_{\mathrm{et}}(X_y \times_{K} \bar{K}, \Q_p).\end{equation}

Fix an archimedean place $\iota: K \hookrightarrow \C$, 
and fix a finite place $v: K \hookrightarrow  K_v$ satisfying:

\begin{itemize}
\item if $p$ is the rational prime below $v$, then $p > 2$, and 
\item $K_v$ is unramified over $\Q_p$, and 
\item no prime above $p$ lies in $S$. 
\end{itemize}
 
  Fix $y_0 \in \mathcal{Y}(\mathcal{O})$. In what follows, we will analyze the set
\begin{equation} \label{Udef} U := \{y \in \mathcal{Y}(\mathcal{O}): y \equiv y_0 \mbox{ modulo $v$}. \}\end{equation}
and give criteria for the finiteness of $U$ in terms of the associated period map.   Clearly if $U$ is finite for each choice of $y_0$, then $\mathcal{Y}(\mathcal{O})$ is finite too. 

Finally, we put
$$X_0 = \pi^{-1}(y_0)$$ to be the fiber above $y_0$. 

\subsection{The cohomology at the basepoint $y_0$}  
  
For any $K$-variety $Z$, we shall denote by $Z_{\C}$   its base change to $\C$
via $\iota$, and by $Z_{K_v}$ its base change to $K_v$ via $v$. 

Let \begin{equation} \label{Vdef} V =    H^q_{\dR}(X_0/K).\end{equation} 
Let $d = \dim_K V$. We will also denote by $V_v$ and $V_{\inft}$
the $K_v$- and $\C$-vector spaces obtained by $\otimes_{K} K_v$ or $\otimes_{(K,\iota)} \C$. 
Then $V_{\inft}$ is naturally identified with
the de Rham cohomology of the variety $X_{0,\C}$, which is also (by the comparison theorem)
identified with the singular cohomology of $X_{0,\C}$ with complex coefficients:
$$V_{\C} \simeq H^q_{\mathrm{sing}}(X_{0,\C}, \C).$$
 
In particular, 
monodromy defines a representation 
$ \mu: \pi_1(Y_{\C}(\C), y_0) \longrightarrow \GL(V_{\inft}),$
whose Zariski closure we denote by $\Gamma$:
\begin{equation} \label{Gdef} \Gamma = \mbox{ Zariski closure of $\mathrm{image}(\mu)$},\end{equation}
an algebraic subgroup of $\GL(V_{\inft})$.   Note that both $V_{\inft}$ and $\Gamma$
depend on the choice of archimedean place $\iota$, although this dependence is suppressed in our notation.

\subsection{The Gauss--Manin connection}  \label{GMsubsec}

The connection \eqref{GMcondef} allows us to identify the cohomology of nearby fibers.
This is true both for the $K_v$ and $\C$ topologies. However, as we now discuss, both identifications
can be described as the evaluation of a single power series with $K$ coefficients,
which is convergent both for the $K_v$ and $\C$ topology.

Specifically, if we fix a local basis $\{v_1, \dots, v_r\}$  for $\mathscr{H}^q$
in a neighborhood of   some point of the scheme $\mathcal{Y}$, 
and write $\nabla v_i = \sum_j A_{ij} v_j$, where  $A_{ij} $ are sections of $ \Omega^1_{\mathcal{Y}}$,
then a  local section $\sum f_i v_i$ is flat exactly when it solves the equation 
\begin{equation} \label{flat f} d(f_i) = - \sum_{j} A_{ji} f_j.\end{equation}

In particular,  if $y_0 \in \mathcal{Y}(\mathcal{O})$ and the place $v$ is as before,
let $\overline{y_0} \in \mathcal{Y}(\F_v)$ be the reduction,
and choose a system of parameters  $p, z_1, \dots, z_m \in \mathcal{O}_{\mathcal{Y}, \overline{y_0}}$ for the local ring of $\mathcal{Y}$ at $\overline{y_0}$;
we may do this so that $(z_1, \dots, z_m)$ generate the kernel of the morphism $\mathcal{O}_{\mathcal{Y},\overline{y_0}} \rightarrow \mathcal{O}_{(v)}$ 
corresponding to $y_0$. 
The completed local ring $\widehat{\mathcal{O}}_{\mathcal{Y}, \overline{y_0}}$ at $\overline{y_0}$ is therefore identified with $\mathcal{O}_v[[z_1, \dots, z_m]]$,
and the image of $\mathcal{O}_{\mathcal{Y},\overline{y_0}}$ in it is contained in $\mathcal{O}_{(v)}[[z_1, \dots, z_m]]$. 
 
Fix a basis  $\{\bar{v}_1, \dots, \bar{v}_r\}$ for $\mathscr{H}^q$ at $\overline{y_0}$,
which we assume to be compatible with the Hodge filtration, i.e.\
each step of the Hodge filtration $F^i \mathscr{H}^q$ at $\overline{y_0}$ is spanned by a subset of $\{\bar{v}_i\}$. 
Then by lifting we obtain a similar basis $\{v_1, \dots, v_r\}$
for $\mathscr{H}^q$ over the  local ring $\mathcal{O}_{\mathcal{Y}, \overline{y_0}}$ of $\mathcal{Y}$ at $\overline{y_0}$.
With respect to such a basis $v_i$, the coefficients $A_{ij}$ of \eqref{flat f} are    
of the form $A_{ij} =\sum_{k=1}^{m} a_{ij,k} dz_k$, where $a_{ij,k}  \in \mathcal{O}_{\mathcal{Y}, \overline{y_0}}$.
In particular, the coefficients of $a_{ij,k}$, considered as formal power series in the $z_i$,   lie in $\mathcal{O}_{(v)}$.

We may write down a formal solution to \eqref{flat f}, 
where the $f_i$ are given by formal power series  
in $K[[z_1, \dots, z_m]]$. By direct computation we see that these are $v$-adically absolutely convergent for  $|z_i|_v  < |p|_v^{1/(p-1)}$ (where $p$ is the residue characteristic of $\mathcal{O}_v$)
and $\iota$-adically absolutely convergent for sufficiently small $|z_i|_{\C}$. %

By assumption, we have $p>2$, and $v$ is unramified above $p$.  Thus we obtain an identification  
\begin{equation} \label{vGM}  \mathrm{GM}:  H^q_{\dR} (\mathcal{X}_{y_0} / K_v)  \stackrel{\sim}{\rightarrow} H^q_{\dR}(\mathcal{X}_y / K_v) \end{equation}
whenever $y \in \mathcal{Y}(\mathcal{O}_v)$ satisfies $y \equiv y_0$ modulo $v$, and
\begin{equation}\label{topGM} \mathrm{GM}: H^q_{\dR}(X_{y_0,\C} / \C) \stackrel{\sim}{\rightarrow} H^q_{\dR}(X_{y, \C} / \C),\end{equation}
when $y \in Y_{\C}(\C)$ is sufficiently close to $y_0$.  
 In the coordinates of the basis $v_i$ fixed above,  $\mathrm{GM}$ is given by an $r \times r$ matrix with entries
$$A_{ij}(z_1, \dots, z_m) \in  \mathcal{O}_{(v)}[[z_1, \dots, z_m]],$$ 
convergent in the regions noted above.
 
  The fiber over the $\mathcal{O}$-point $y_0$ of $\mathcal{Y}$ gives a smooth proper $\mathcal{O}$-model $\mathcal{X}_0$  for $X_0$. 
For $y \in Y(\mathcal{O}_v)$ with $y \equiv y_0$ modulo $v$,  we have a commutative diagram
     \begin{equation}   \label{GMconnection}
 \xymatrix{
V_v = H^q_{\dR}(X_{y_0}/K_v)   \ar[dd]^{\mathrm{GM}} \ar[rd]^{\sim} &    \\  
&  H^q_{\mathrm{cris}}(\overline{\mathcal{X}_0}) \otimes_{\mathcal{O}_v} K_v. \\
H^q_{\dR}(X_{y}/K_v) \ar[ru]^{\sim} &       \\ 
 }
 \end{equation}
 where $\mathrm{GM}$ denotes the map induced by the Gauss--Manin connection, 
 $H^q_{\cris}$ is the crystalline cohomology of $\overline{\mathcal{X}_0}$ (as a reference for crystalline cohomology, see \cite{Berthelot, BOgus}), 
the  diagonal arrows are the canonical identification  \cite[Corollary 7.4]{BOgus} of crystalline cohomology with the de Rham cohomology of a lift,
and the commutativity of the diagram can be deduced from the results of  \cite[Chapter V]{Berthelot} (see Proposition 3.6.4 and prior discussion). 

This crystalline cohomology  $V_v=H^q_{\dR}(X_0/K)$ is equipped %
with a Frobenius operator
$$\phi_v: V_v \longrightarrow V_v,$$ 
which is semilinear with respect to the Frobenius on the unramified extension $K_v/\Q_p$.  By the isomorphisms of \eqref{GMconnection}, this $\phi_v$ acts on $H^q_{\dR}(X_{y}/K_v)$ and $H^q_{\dR}(X_{y_0}/K_v)$ as well, \emph{in a manner compatible with the map $\mathrm{GM}$}. 

 \subsection{The period mappings in a neighbourhood of $y$} \label{periodmaps}
  
 Now $V=H^q_{\dR}(X_0/K)$ is equipped with a Hodge filtration:
\begin{equation} \label{Hodge} V=F^0 V \supset F^1 V \supset \dots \end{equation}
 Let $\mathcal{H}$ be the $K$-variety parameterizing flags in $V$
with the same dimensional data as \eqref{Hodge}, and let $h_0 \in \mathcal{H}(K)$ be the point corresponding to the Hodge filtration on $V$.

Base changing by means of $v$ and $\iota$, we get  a $K_v$-variety $\mathcal{H}_v$  
and a $\C$-variety $\mathcal{H}_{\inft}$.
We denote by $h_0^{\iota} \in \mathcal{H}_{\inft}(\C)$ the image of $h_0$. 
 
Let $\Omega_{\inft}$ be a contractible analytic neighbourhood of $y_0 \in Y_{\C}^\an$. 
The Gauss-Manin connection defines an isomorphism $H_{\dR}(X_t/ \C) \simeq H_{\dR}(X_0/ \C)$ for each $t \in \Omega_{\inft}$. 
In particular, the Hodge structure on the cohomology of $X_t$ defines a point of $\mathcal{H}_{\C}(\C)$; this gives rise to the complex period map
$$\Phi_{\C}: \Omega_{\inft} \longrightarrow \mathcal{H}_{\C}(\C).$$
Indeed, $\Phi_{\C}$ extends to a map from the universal cover of $Y_{\C}^\an$ to $\mathcal{H}_{\C}(\C)$
and this map is equivariant for the monodromy action of $\pi_1(Y_{\C}^\an, y_0)$ on $\mathcal{H}_{\C}(\C)$. We conclude that the image of the period map can be bounded below by monodromy. \begin{lem}
Suppose given a family $X \rightarrow Y$, and take notation as above; in particular, $\Gamma$ is the Zariski closure of monodromy, and $h_0^{\iota} = \Phi_{\C}(y_0)$. 
Then we have the containment
\begin{equation} \label{GZ} \Gamma \cdot  h_0^{\iota} \subset  \mbox{the Zariski closure of $\Phi_{\C}(\Omega_{\inft})$ inside $\mathcal{H}_{\inft}$}.\end{equation}
\end{lem}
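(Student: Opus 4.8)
The plan is to pull the period map back to the universal cover of $Y_{\C}^{\an}$, use the identity principle to see that the Zariski closure of the image does not change on passing from $\Omega_{\inft}$ to the whole cover, and then promote the monodromy image to its Zariski closure $\Gamma$ by a routine orbit-map argument.

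First I would set up notation. Let $\widetilde Y$ be the universal cover of $Y_{\C}^{\an}$ (replacing $Y_{\C}^{\an}$ by the connected component of $y_0$ if necessary, so that $\widetilde Y$ is connected), and let $\widetilde\Phi\colon \widetilde Y\to\mathcal H_{\inft}(\C)$ be the extension of $\Phi_{\C}$ recalled just above the statement, equivariant for monodromy in the sense that $\widetilde\Phi(\gamma\cdot\tilde y)=\mu(\gamma)\cdot\widetilde\Phi(\tilde y)$, where $\mu$ is the monodromy representation and $\mu(\gamma)$ acts on the flag variety $\mathcal H_{\inft}$ through its action on $V_{\inft}$. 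Since $\Omega_{\inft}$ is contractible it lifts homeomorphically to an open subset $\widetilde\Omega\subset\widetilde Y$, on which $\widetilde\Phi$ agrees with $\Phi_{\C}$; fix the lift $\tilde y_0\in\widetilde\Omega$ of $y_0$, so $\widetilde\Phi(\tilde y_0)=h_0^{\iota}$. Write $Z\subseteq\mathcal H_{\inft}$ for the Zariski closure of $\Phi_{\C}(\Omega_{\inft})=\widetilde\Phi(\widetilde\Omega)$, i.e. the variety on the right-hand side of \eqref{GZ}.

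The key step is to show $\widetilde\Phi(\widetilde Y)\subseteq Z$. For this, observe that $\widetilde\Phi^{-1}(Z)$ is a closed analytic subset of $\widetilde Y$: locally on $\mathcal H_{\inft}$ the subvariety $Z$ is cut out by polynomial equations, and their pullbacks under the holomorphic map $\widetilde\Phi$ are holomorphic functions cutting out $\widetilde\Phi^{-1}(Z)$. This analytic set contains the nonempty open set $\widetilde\Omega$, and $\widetilde Y$ is connected, so the identity principle forces $\widetilde\Phi^{-1}(Z)=\widetilde Y$, i.e. $\widetilde\Phi(\widetilde Y)\subseteq Z$. Consequently, for every $\gamma$ in the fundamental group, equivariance gives $\mu(\gamma)\cdot h_0^{\iota}=\mu(\gamma)\cdot\widetilde\Phi(\tilde y_0)=\widetilde\Phi(\gamma\cdot\tilde y_0)\in\widetilde\Phi(\widetilde Y)\subseteq Z$; hence $\mathrm{image}(\mu)\cdot h_0^{\iota}\subseteq Z$.

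Finally I would upgrade $\mathrm{image}(\mu)$ to $\Gamma$. The orbit morphism $o\colon\GL(V_{\inft})\to\mathcal H_{\inft}$, $g\mapsto g\cdot h_0^{\iota}$, is a morphism of $\C$-varieties, so $o^{-1}(Z)$ is Zariski closed in $\GL(V_{\inft})$; by the previous paragraph it contains $\mathrm{image}(\mu)$, hence it contains the Zariski closure $\Gamma$ of $\mathrm{image}(\mu)$, and applying $o$ yields $\Gamma\cdot h_0^{\iota}=o(\Gamma)\subseteq Z$, which is exactly \eqref{GZ}. The only point that demands genuine care is the identity-principle step — one must check that $\widetilde\Phi^{-1}(Z)$ really is a closed analytic subset, so that the statement ``a closed analytic subset of a connected complex manifold with nonempty interior is the whole manifold'' applies; everything else is formal once one has the equivariant extension $\widetilde\Phi$, which the statement already supplies.
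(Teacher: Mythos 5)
Your argument is correct and follows the same route as the paper: lift the period map to the universal cover, observe that $\widetilde\Phi^{-1}(Z)$ is a complex-analytic subset containing the open set $\widetilde\Omega$ and hence all of the connected cover, deduce that the monodromy orbit of $h_0^{\iota}$ lies in $Z$, and finally pass to the Zariski closure $\Gamma$. The extra care you take with the identity-principle step (verifying $\widetilde\Phi^{-1}(Z)$ is a closed analytic subset) and the explicit orbit-map argument for the last step are just more detailed versions of what the paper does in a single sentence.
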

\begin{proof}
The preimage  $\Phi_{\C}^{-1} Z$ of any algebraic subvariety $Z \subset \mathcal{H}_{\C}$, with $Z \supset \Phi_{\C}(\Omega_{\inft})$, 
is a complex-analytic subvariety of $\widetilde{Y_{\C}^\an}$ containing $\Omega$ and thus all of $\widetilde{Y_{\C}^{\an}}$;
therefore
$$\pi_1(Y_{\C}, y_0) \cdot h_0^{\iota} \subset Z$$ 
and then $Z$ contains the Zariski closure of the right hand side, which is $\Gamma \cdot h_0^{\iota}$. 
\end{proof}
 
We need a $v$-adic analogue. 
Again, if $y \in \mathcal{Y}(\mathcal{O}_v)$ satisfies $y \equiv y_0$ modulo $v$,  the Gauss--Manin connection
\eqref{GMconnection} allows one to identify the Hodge filtration on $H^q_{\dR}(X_{y}/K_v)$
with a filtration on $V_v$, and thus with a point of $\mathcal{H}(K_v)$.  This gives rise to a $K_v$-analytic function
$$ \Phi_v:  \Omega_v \longrightarrow \mathcal{H}(K_v), \mbox{ where }\Omega_v = \{ y \in \mathcal{Y}(\mathcal{O}_v): y \equiv y_0 \mbox{ modulo $v$} \}.$$
  
The following simple Lemma plays a crucial role. It allows us to analyze the Zariski closure of the $p$-adic period map
in terms of the Zariski closure of the complex period map; for the latter we can use monodromy. 
  
\begin{lemma} \label{vCpowerseries}
Suppose given power series $B_0, \dots, B_N \in K[[z_1, \dots, z_m]]$ such that all $B_i$
are absolutely convergent, with no common zero,  both in the $v$-adic and complex disks %
$$U_v= \{\underline{z} : |z_i|_v < \epsilon\}  \mbox{ and } U_{\C} = \{\underline{z} : |z_i|_{\C} < \epsilon\}.$$
Write
$$\underline{B}_v: U_v \rightarrow \mathbf{P}^N_{K_v}$$
$$\underline{B}_{\C}: U_{\C} \rightarrow \mathbf{P}^N_{\C}$$
for the corresponding maps.

Then there exists a  $K$-subscheme $\mathcal{Z} \subset \mathbf{P}^N$
whose base extension to $K_v$ (respectively $\C$) gives the Zariski closure
of $\underline{B}_{\C}(U_{\C}) \subset \mathbf{P}^N_{\C}$
(respectively $\underline{B}_v(U_v) \subset \mathbf{P}^N_{K_v}$).  In particular, these Zariski closures have the same dimension. 
\end{lemma}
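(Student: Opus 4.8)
\textbf{Proof strategy for Lemma \ref{vCpowerseries}.}
The plan is to define $\mathcal{Z}$ as the Zariski closure of $\underline{B}_v(U_v)$ inside $\mathbf{P}^N_{K_v}$, and then to show that this closure is in fact defined over $K$ — i.e.\ arises by base change from a $K$-subscheme — and that the same $K$-subscheme base-changes to the closure over $\C$. The point that makes everything work is that the power series $B_0, \dots, B_N$ have coefficients in the \emph{common} field $K$, so the vanishing of a homogeneous polynomial $P \in K_v[x_0, \dots, x_N]$ on the analytic image $\underline{B}_v(U_v)$ is equivalent to a collection of polynomial equations on the coefficients of the $B_i$; these equations are defined over $K$, hence so is their solution locus.

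Concretely, I would first reduce to the affine chart. After permuting coordinates we may assume $B_0$ does not vanish identically; then, shrinking $\epsilon$, we may assume $B_0$ is nonvanishing on both disks, and the map is $\underline{z} \mapsto (1, B_1/B_0, \dots, B_N/B_0)$ into $\mathbf{A}^N$. Next, the key algebraic step: let $I \subset K[x_1, \dots, x_N]$ be the ideal of polynomials $P$ such that $P(B_1/B_0, \dots, B_N/B_0) = 0$ \emph{as a formal power series identity} over $K$ (equivalently, after clearing the denominator $B_0^{\deg P}$, a polynomial identity in $K[[z_1, \dots, z_m]]$). I claim that for a polynomial $P$ with coefficients in $K_v$, $P$ vanishes on $\underline{B}_v(U_v)$ if and only if $P$ lies in $I \otimes_K K_v$, and similarly over $\C$. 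Indeed, an analytic function on a nonempty open $v$-adic (resp.\ complex) polydisk vanishes iff all its Taylor coefficients vanish; the Taylor coefficients of $P(B_1/B_0, \dots, B_N/B_0) \cdot B_0^{\deg P}$ depend $K$-linearly on the coefficients of $P$ and are elements of $K$ times those coefficients, so the condition ``$P$ vanishes on the image'' is cut out from the space of coefficient-vectors of $P$ by $K$-linear equations. Therefore the vanishing ideal of $\underline{B}_v(U_v)$ is exactly $I \otimes_K K_v$, and that of $\underline{B}_{\C}(U_{\C})$ is exactly $I \otimes_K \C$. Taking $\mathcal{Z}$ to be the closed subscheme of $\mathbf{A}^N_K$ (then its closure in $\mathbf{P}^N_K$) defined by $I$ gives the result, and since $\mathcal{Z}_{K_v}$ and $\mathcal{Z}_{\C}$ differ only by flat base change from $\mathcal{Z}_K$, they have the same dimension.

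The one genuine subtlety — and the step I would be most careful about — is the claim that an analytic function on a $v$-adic polydisk that vanishes identically there has all Taylor coefficients zero. This is standard (a convergent power series over a complete nonarchimedean field that is not the zero series has isolated zeros on any sufficiently small polydisk, or more simply: its Taylor coefficients are recovered from its values by the usual archimedean-style argument adapted to the nonarchimedean setting), but it does use that the polydisk is genuinely open and nonempty, which is guaranteed by the hypothesis that all $B_i$ converge on $U_v$ and $U_{\C}$. A second minor point to handle cleanly is the passage from ``no common zero'' to the statement that after reindexing and shrinking $\epsilon$ some $B_i$ is nonvanishing on the whole (smaller) polydisk: since the $B_i$ have no common zero, some $B_i$ is nonzero at the center $\underline{z} = 0$ — or, if they all vanish at the center, we can translate, or simply note that by the identity principle not all $B_i$ vanish identically, pick one that does not, and shrink the disk around a point where it is nonzero while keeping the center inside. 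With these points dispatched, the rest is the routine linear-algebra bookkeeping sketched above, and one concludes that $\mathcal{Z}$ as constructed has all the asserted properties.
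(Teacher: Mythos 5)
Your proof is correct and follows essentially the same route as the paper's: both define $\mathcal{Z}$ by the ideal $I \subset K[\underline{x}]$ of polynomials that annihilate the $B_i$ as formal power series, and both exploit the key observation that vanishing of a polynomial on the analytic image is a system of $K$-linear conditions on its coefficients, so that the $K_v$- (or $\C$-) solution space is the base extension of the $K$-solution space. The only cosmetic difference is that you pass to an affine chart before running the argument, whereas the paper works directly with homogeneous polynomials on $\mathbf{P}^N$, which avoids your small detour about shrinking $\epsilon$ (note that the ``all $B_i$ vanish at the center'' case you worry about cannot actually occur, since the ``no common zero'' hypothesis applies in particular at $\underline{z}=0$).
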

\proof

We take $I$ the ideal of $\mathcal{Z}$ to be that generated by all homogeneous polynomials $Q \in K[x_0, \dots, x_N]$
such that $Q(B_0, \dots, B_N)$ is identically zero. 

To verify the claim (for $K_v$; the proof for $\C$ is identical) we just need to verify
that if a homogeneous polynomial $Q_v \in K_v[x_0, \dots, x_N]$ vanishes on $\underline{B}_v(U_v)$ then 
$Q_v$ lies in the $K_v$-span of $I$. 
But if $Q_v$ vanishes on  $\underline{B}_v(U_v)$
then $Q_v(B_0, \dots, B_N) \equiv 0$ in $K_v[[z_1, \dots, z_m]]$. The identical vanishing of $Q_v(B_0, \dots, B_N)$
is an infinite system of linear equations on the coefficients of $Q_v$, with coefficients in $K$. Any $K_v$-solution of such a linear system
is, of course, a $K_v$-linear combination of $K$-solutions. 
\qed 
  
By embedding $\mathcal{H}$ into a projective space $\mathbf{P}^N$, and applying the prior two Lemmas, we deduce: 
 
\begin{lemma} \label{Zarclos}
The dimension of the Zariski closure (in the $K_v$-variety $\mathcal{H}_{K_v}$) of 
$\Phi_v(\Omega_v)$
is at least the (complex) dimension of $\Gamma \cdot h_0^{\iota}$. 

In particular, if $\mathcal{H}_v^{\mathrm{bad}} \subset \mathcal{H}_v$ is a Zariski-closed subset of dimension less than $\dim_{\C}(\Gamma \cdot h_0^{\iota})$,   
then $\Phi_v^{-1}(\mathcal{H}_v^{\mathrm{bad}})$ is contained in a proper $K_v$-analytic subset of $\Omega_v$, by which we mean a subset cut out by $v$-adic power series converging absolutely on $\Omega_v$.   \end{lemma}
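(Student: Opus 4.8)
The plan is to deduce both assertions formally from Lemma \ref{vCpowerseries} and the containment \eqref{GZ}, once we exhibit the two period maps $\Phi_v$ and $\Phi_{\C}$ as the $v$-adic and complex specializations of a single system of power series with coefficients in $K$. First I would fix a closed immersion $\mathcal{H} \hookrightarrow \P^N$ defined over $K$ --- say a Pl\"ucker--Segre embedding of the flag variety --- so that $h_0$ maps to a $K$-point and $h_0^{\iota}$ to its complex image. Working with the local coordinates $z_1, \dots, z_m$ and the basis $v_1, \dots, v_r$ of $\mathscr{H}^q$ from \S\ref{GMsubsec}: the Hodge sub-bundles $F^i \mathscr{H}^q$, read off in terms of the $v_i$, are cut out by regular functions on the local ring $\mathcal{O}_{\mathcal{Y}, \overline{y_0}}$, hence by power series in the $z_k$ with coefficients in $\mathcal{O}_{(v)}$; composing with the Gauss--Manin matrix $\big(A_{ij}(z)\big)$, whose entries likewise lie in $\mathcal{O}_{(v)}[[z_1, \dots, z_m]]$, and passing to Pl\"ucker coordinates (polynomial with $\Z$-coefficients) expresses the period map as a tuple of homogeneous power series $B_0, \dots, B_N \in K[[z_1, \dots, z_m]]$ with no common zero. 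Since $p > 2$ and $v$ is unramified over $\Q_p$, the Gauss--Manin series converges $v$-adically on the residue disk (there $|z_k|_v \le |p|_v < |p|_v^{1/(p-1)}$), so $\underline{B}_v$ reproduces $\Phi_v$ on $\Omega_v$; and it converges complex-analytically on a small polydisk $U_{\C} \subset \Omega_{\inft}$, where it reproduces $\Phi_{\C}$.

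Then Lemma \ref{vCpowerseries} applies verbatim: there is a single $K$-subscheme $\mathcal{Z} \subset \P^N$ whose base change to $K_v$ (resp. to $\C$) is the Zariski closure of $\Phi_v(\Omega_v)$ (resp. of $\Phi_{\C}(U_{\C})$), so in particular these two Zariski closures have the same dimension. Since $\Omega_{\inft}$ is connected and $\Phi_{\C}$ is holomorphic, the identity theorem forces the Zariski closure of $\Phi_{\C}(U_{\C})$ to coincide with that of $\Phi_{\C}(\Omega_{\inft})$; by \eqref{GZ} the latter contains $\Gamma \cdot h_0^{\iota}$, hence has dimension at least $\dim_{\C}(\Gamma \cdot h_0^{\iota})$. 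Carrying this back through $\mathcal{Z}$ gives the first assertion.

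For the ``in particular'' clause: $\Phi_v^{-1}(\mathcal{H}_v^{\mathrm{bad}})$ is, by construction, the common zero set inside $\Omega_v$ of the absolutely convergent power series $Q(B_0, \dots, B_N)$ as $Q$ runs over the homogeneous ideal of $\mathcal{H}_v^{\mathrm{bad}}$; so it is a $K_v$-analytic subset of $\Omega_v$ in the stated sense, and it suffices to show it is proper. If it were all of $\Omega_v$, then each $Q(B_0, \dots, B_N)$ would vanish identically, forcing $\Phi_v(\Omega_v) \subseteq \mathcal{H}_v^{\mathrm{bad}}$, whence the Zariski closure of $\Phi_v(\Omega_v)$ would have dimension $\le \dim \mathcal{H}_v^{\mathrm{bad}} < \dim_{\C}(\Gamma \cdot h_0^{\iota})$, contradicting the first part. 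I do not expect a real obstacle here; the one point that needs care is the bookkeeping that makes Lemma \ref{vCpowerseries} applicable --- that the Hodge bundle, the Gauss--Manin matrix, and the embedding of $\mathcal{H}$ can all be taken over $K$ (indeed over $\mathcal{O}_{(v)}$), and that the common region of $v$-adic and complex convergence genuinely contains $\Omega_v$ and a complex polydisk --- but this was essentially arranged in \S\ref{GMsubsec}--\S\ref{periodmaps}, so it is assembly rather than new work.
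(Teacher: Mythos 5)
Your argument is correct and is exactly the route the paper intends, which it compresses into the single sentence ``By embedding $\mathcal{H}$ into a projective space $\mathbf{P}^N$, and applying the prior two Lemmas, we deduce:''. You have simply unpacked the assembly: the $K$-rationality and $v$-adic/complex convergence of the Gauss--Manin data from \S\ref{GMsubsec}, Lemma \ref{vCpowerseries} applied to the Pl\"ucker coordinates, the identity-theorem step to pass from the small polydisk of convergence to all of $\Omega_{\inft}$ before invoking \eqref{GZ}, and the evident deduction of the ``in particular'' clause.
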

 
One can do better than this using the results of Bakker and Tsimerman, replacing ``proper $K_v$-analytic'' by ``Zariski-closed.'' See \S \ref{hypersurface}. 
We do not need this improvement for the applications to Mordell.

\subsection{Hodge structures} \label{HSsec}
We use $p$-adic Hodge theory to relate Galois representations to crystalline cohomology.  A good reference is \cite{Conrad} or \cite{Asterisque}.
  
For each $y\in U$ the representation $\rho_y$ (see \eqref{grep}) is crystalline upon restriction to $K_v$,  because of the existence of the model $\mathcal{X}_y$
for $X_y$.  By $p$-adic Hodge theory,
there  is \cite[Proposition 9.1.9]{Conrad} a fully faithful embedding of categories:
\begin{equation} \label{CCR} \mbox{crystalline representations of $\Gal_{K_v}$ on $\Q_p$ vector spaces} \hookrightarrow \mathcal{FL},\end{equation}
where the  objects of $\mathcal{FL}$ are triples $(W, \phi, F)$ of a $K_v$-vector space $W$,
a Frobenius-semilinear automorphism $\phi: W \rightarrow W$, and a descending filtration $F$ of $W$. 
The morphisms in the category $\mathcal{FL}$ are morphisms of $K_v$-vector spaces
that respect $\phi$ and filtrations \cite[Expose III, \S 4.3]{Asterisque}.

By the crystalline comparison theorem of Faltings \cite{Faltings_pHT}, the embedding \eqref{CCR} carries $\rho_y$ to the triple  $ \left( H_{\dR}^q(X_y/ K_v), \phi_v, \mbox{ Hodge filtration for $X_y$} \right)$.  But \eqref{GMconnection} induces an isomorphism in $\mathcal{FL}$: 
$$ \left( H_{\dR}^q(X_y/ K_v), \phi_v, \mbox{ Hodge filtration for $X_y$} \right) \simeq (V_v, \phi_v, \Phi_v(y)),$$
 
As a sample result of what we can now show, we give the following. 
We will use the method of proof  again and again, so it seems useful to present it in the current simple context.
 
\begin{proposition}
\label{finitesetoforbits}
Notation as above: in particular $X \rightarrow Y$ is a smooth proper family over $K$, 
$V$  is  the degree $q$ de Rham cohomology of a given fiber $X_0$ above $y_0 \in Y(K)$, 
$\mathcal{H}$ a space of flags in $V$,   $$\Phi_v:  \{\mbox{$y \in \mathcal{Y}(\mathcal{O}_v): y \equiv y_0$}\} \longrightarrow \mathcal{H}(K_v)$$
is the $v$-adic period mapping, $\Gamma \subset \GL(V_{\C})$ is the Zariski closure of the monodromy group, and $h_0 = \Phi(y_0)$ is the image of $y_0$ under the period mapping. 

Suppose that 
\begin{equation} \label{dim_ineq} \dim_{K_v}\left( \mathrm{Z}(\phi_v^{[K_v:\Q_p]}) \right)   < \dim_{\C} \ \Gamma \cdot h_0^{\iota}\end{equation}
where the left-hand side $\mathrm{Z}(\dots)$ denotes the centralizer, in $\Aut_{K_v}(V_v)$, of the  $K_v$-linear operator  $\phi_v^{[K_v:\Q_p]}$. 
    
Then the set 
\begin{equation} \label{Uss} \{y \in Y(\mathcal{O}): y \equiv y_0 \mbox{ modulo $v$}, \rho_y\mbox{ semisimple} \} \end{equation}
 is contained in a proper $K_v$-analytic subvariety of the residue disk of $Y(K_v)$ at $y_0$.   
 \end{proposition}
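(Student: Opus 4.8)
The plan is to combine the finiteness input of Lemma~\ref{finiteness} with the $p$-adic Hodge theory dictionary of \S\ref{HSsec} and the Zariski-density estimate of Lemma~\ref{Zarclos}. First I would recall that, for $y \in U$, the representation $\rho_y$ is pure of some fixed weight $w$ (by smooth-proper base change and the Weil conjectures applied to the good reduction $\mathcal{X}_y$), unramified outside $S$, and has integral Frobenius characteristic polynomials; hence by Lemma~\ref{finiteness} there are only finitely many possibilities for $\rho_y^{\mathrm{ss}}$ up to conjugacy, and in particular — restricting attention to those $y$ in \eqref{Uss} for which $\rho_y$ is already semisimple — only finitely many possibilities for $\rho_y$ itself up to isomorphism. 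So it suffices to fix one such isomorphism class $[\rho]$ and show that $\{y \in U : \rho_y \cong \rho,\ \rho_y \text{ semisimple}\}$ lies in a proper $K_v$-analytic subvariety of $\Omega_v$.

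Next I would run the filtered $\phi$-module translation. Fix $y_1$ in this set (if it is empty we are done). By the crystalline comparison theorem, the embedding \eqref{CCR} carries $\rho_y|_{G_{K_v}}$ to $(V_v,\phi_v,\Phi_v(y))$, and since $\rho_y \cong \rho_{y_1}$ as global — hence also as local — Galois representations, the filtered $\phi$-modules $(V_v,\phi_v,\Phi_v(y))$ and $(V_v,\phi_v,\Phi_v(y_1))$ are isomorphic in $\mathcal{FL}$. An isomorphism in $\mathcal{FL}$ is a $K_v$-linear automorphism $g$ of $V_v$ commuting with $\phi_v$ (so $g \in \mathrm{Z}(\phi_v)$, viewed inside $\Aut_{K_v}(V_v)$) that carries the flag $\Phi_v(y_1)$ to the flag $\Phi_v(y)$. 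Therefore $\Phi_v(y)$ lies in the orbit $\mathrm{Z}(\phi_v)\cdot \Phi_v(y_1)$ inside $\mathcal{H}(K_v)$, where $\mathrm{Z}(\phi_v)$ is here the group of $K_v$-points of the $\Q_p$-algebraic centralizer; its dimension as a $\Q_p$-variety is $\dim_F \mathrm{Z}(\phi_v)$, which by Lemma~\ref{centralizer lemma} equals $\dim_{K_v}\mathrm{Z}(\phi_v^{[K_v:\Q_p]})$. Consequently the orbit $\mathrm{Z}(\phi_v)\cdot \Phi_v(y_1)$, as a $K_v$-subvariety of $\mathcal{H}_v$, has dimension at most $\dim_{K_v}\mathrm{Z}(\phi_v^{[K_v:\Q_p]})$, which by hypothesis \eqref{dim_ineq} is strictly less than $\dim_{\C}\Gamma\cdot h_0^{\iota}$.

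Now I would take $\mathcal{H}_v^{\mathrm{bad}}$ to be the Zariski closure of this orbit (or, to be safe, the union over the finitely many isomorphism classes $[\rho]$ and over a choice of $y_1$ for each, of the corresponding orbit closures — still finitely many, still of dimension $< \dim_{\C}\Gamma\cdot h_0^{\iota}$). By construction, $\Phi_v(y) \in \mathcal{H}_v^{\mathrm{bad}}$ for every $y$ in \eqref{Uss}. Since $\dim \mathcal{H}_v^{\mathrm{bad}} < \dim_{\C}\Gamma\cdot h_0^{\iota}$, the second assertion of Lemma~\ref{Zarclos} tells us that $\Phi_v^{-1}(\mathcal{H}_v^{\mathrm{bad}})$ is contained in a proper $K_v$-analytic subset of $\Omega_v$ — one cut out by $v$-adic power series convergent on $\Omega_v$ — and since \eqref{Uss} is contained in this preimage, we are done.

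The main obstacle is the orbit-dimension bookkeeping in the middle step: one must be careful that the centralizer relevant to morphisms in $\mathcal{FL}$ is the centralizer of the \emph{semilinear} operator $\phi_v$, that the appropriate notion of dimension for its orbit on the flag variety is the $\Q_p$-dimension (equivalently $\dim_F$), and that Lemma~\ref{centralizer lemma} is exactly what converts this into the quantity $\dim_{K_v}\mathrm{Z}(\phi_v^{[K_v:\Q_p]})$ appearing in \eqref{dim_ineq}. A secondary point requiring care is the reduction to a single isomorphism class: one needs that \emph{semisimple} $\rho_y$ are determined up to isomorphism (not merely up to semisimplification) by the data constrained in Lemma~\ref{finiteness}, which is immediate since for semisimple $\rho_y$ one has $\rho_y = \rho_y^{\mathrm{ss}}$; the more delicate interplay between semisimplification and local restriction — flagged in the introduction as a separate issue — is not needed here precisely because we have restricted to the locus where $\rho_y$ is already semisimple.
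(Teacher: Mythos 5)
Your proposal follows the paper's strategy closely — Faltings finiteness, the $\mathcal{FL}$-translation via crystalline comparison, a dimension estimate on the Frobenius-centralizer orbit, and Lemma~\ref{Zarclos} to conclude. The finiteness reduction, the remark that $\rho_y = \rho_y^{\mathrm{ss}}$ on the semisimple locus, and the final appeal to Lemma~\ref{Zarclos} are all correct.

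There is, however, a gap in the middle step, precisely at the ``Consequently'' that converts a $\Q_p$-dimension count into a $K_v$-dimension bound. The group $\mathrm{Z}(\phi_v)$ is the centralizer of a $\sigma$-semilinear operator, hence only a $\Q_p$-algebra (equivalently, only the $\Q_p$-points of a $\Q_p$-algebraic subgroup of $\mathrm{Res}_{K_v/\Q_p}\GL(V_v)$), not a $K_v$-algebraic group. Its orbit $\mathrm{Z}(\phi_v)\cdot\Phi_v(y_1)$ is therefore a $\Q_p$-constructible object, not {\em a priori} a $K_v$-subvariety of $\mathcal{H}_v$, and it is not automatic that the $K_v$-Zariski closure of such a set has $K_v$-dimension bounded by its $\Q_p$-dimension. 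Lemma~\ref{centralizer lemma} gives a numerical \emph{equality}
$$\dim_{\Q_p}\mathrm{Z}(\phi_v)=\dim_{K_v}\mathrm{Z}\bigl(\phi_v^{[K_v:\Q_p]}\bigr),$$
but an equality of two numbers does not by itself produce a $K_v$-subvariety of the right dimension containing the orbit. What does the work is a \emph{containment}: anything commuting with $\phi_v$ also commutes with $\phi_v^{[K_v:\Q_p]}$, so
$$\mathrm{Z}(\phi_v)\subset \mathrm{Z}\bigl(\phi_v^{[K_v:\Q_p]}\bigr)(K_v),$$
and the right-hand side really \emph{is} the $K_v$-points of a $K_v$-algebraic group, because $\phi_v^{[K_v:\Q_p]}$ is $K_v$-linear. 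Its orbit on $\mathcal{H}_v$ is a $K_v$-constructible set of $K_v$-dimension at most $\dim_{K_v}\mathrm{Z}(\phi_v^{[K_v:\Q_p]})$, and the orbit of the smaller group sits inside it. This is the route the paper takes, and it makes the step immediate. (One \emph{can} also justify your claim directly, by base-changing the $\Q_p$-orbit to $K_v$ and projecting via the evaluation map $\mathrm{Res}_{K_v/\Q_p}\mathcal{H}_v\times_{\Q_p}K_v\to\mathcal{H}_v$; but that argument is longer than, and essentially equivalent to, the containment observation.) Note also that the paper's proof of this Proposition does not invoke Lemma~\ref{centralizer lemma} at all; that lemma earns its keep later, in \S\ref{Sunit} and \S\ref{highergenus}, where one needs the \emph{inequality} $\dim_F\mathrm{Z}(\phi)\leqslant(\dim_E V)^2$ rather than the group containment.
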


\proof 
For any $y$ as in \eqref{Uss} the Galois representation $\rho_y$ belongs to a finite set of isomorphism classes (Lemma \ref{finiteness}). 
By our previous discussion the triple  $(V_v, \phi_v, \Phi_v(y))$ also belongs to a finite set of isomorphism classes (now in the category $\mathcal{FL}$). Choosing representatives 
$(V_v, \phi_v, h_i)$ for these isomorphism classes,  we must have
$$ \Phi_v(y) \in  \bigcup_{i} \mathrm{Z}(\phi_v) \cdot h_i,$$
where $\mathrm{Z}(\phi_v)$ is the subgroup of elements in $\GL_{K_v}(V_v)$
which commute with $\phi_v$. 

Now certainly $\mathrm{Z}(\phi_v) \subset \mathrm{Z}(\phi_v^{[K_v:\Q_p]})$, and the right-hand side
is  now the $K_v$-points of a  $K_v$-algebraic subgroup of $\GL_{K_v}(V_v)$.   Therefore, any $y$ as in \eqref{Uss} is contained in the preimage, under $\Phi_v$, of a proper Zariski-closed
subset of $\mathcal{H}_v$ with dimension the left hand side of \eqref{dim_ineq}. This is obviously a $K_v$-analytic subvariety as asserted. It is proper because  of Lemma \ref{Zarclos}. 
\qed
 
In conclusion we note that we really have bounded $\mathcal{Y}(\mathcal{O})$ rather than  the set of $y \in Y(K)$ for which the 
abstract Galois representation $\rho_y$ has good reduction outside $S$. To bound the
latter set, we would have to deal with the possibility that such $y$ would be nonintegral at $S$; this would require a more detailed analysis
``at infinity'' and we have not attempted it. 

\section{The $S$-unit equation} \label{Sunit}
As a first application,  and a warm-up to the more complicated case of curves of higher genus, we  will show finiteness of the set of solutions to the $S$-unit equation.
This argument is not logically necessary for the later proofs but we hope it will serve as a useful introduction to them. 

\begin{thm}
The set
$$ U= \{ t \in \mathcal{O}_S^*:  1-t \in \mathcal{O}_S^*\}$$
is finite.
\end{thm}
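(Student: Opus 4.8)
The plan is to instantiate the machinery of \S\ref{fibers}, concretely Proposition \ref{finitesetoforbits}, with a suitable family over $Y = \mathbf{P}^1 - \{0,1,\infty\}$, following the strategy sketched in \S\ref{control_centralizer}. As explained there, taking the Legendre family directly does not work because the crystalline Frobenius can have too large a centralizer (it can essentially be a scalar on a $2$-dimensional space over $\Q_p$). So first I would replace the Legendre family $E_t: y^2 = x(x-1)(x-z)$ by the family whose fiber over $t$ is the disjoint union $X_t = \coprod_{z^{2^k}=t}\{y^2 = x(x-1)(x-z)\}$ for a large integer $k$, i.e.\ the pushforward along $u \mapsto u^{2^k}$ of the Legendre family over $\mathbf{P}^1 - \{0,\mu_{2^k},\infty\}$. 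On a residue disk where the reduction $\bar t$ is \emph{not} a $2^k$-th power (equivalently, where $\F_v(\bar t^{1/2^k})$ is a large unramified extension of $\F_v$), the crystalline Frobenius $\phi_v$ acts semilinearly over an unramified extension $L_w/\Q_p$ of large degree, so Lemma \ref{centralizer lemma} forces $\dim_{L_w}\mathrm{Z}(\phi_v^{[L_w:\Q_p]})$ to be small relative to $[L_w:\Q_p]$.

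Next I would compute the monodromy side. Here $q = 1$ and the relevant cohomology is $H^1$ of the fiber; because the fiber is a disjoint union of Legendre curves indexed by a torsor under $\mu_{2^k}$, the monodromy of this family contains the monodromy of the Legendre family (an arithmetic subgroup of $\SL_2$, generated by unipotents/Dehn twists giving $\Gamma \supseteq \SL_2$ up to finite index) together with the permutation action on the $2^k$ sheets coming from $\pi_1$ acting through $\mu_{2^k}$. The upshot I want is that $\dim_\C \Gamma \cdot h_0^\iota$ is of order $2^k$ (it grows linearly in the number of sheets, since the period map records the Hodge line in each of the $2^k$ copies of $H^1$ of an elliptic curve, and the monodromy moves these essentially independently once we quotient out the permutation action appropriately). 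Comparing with the centralizer bound, which grows only like a fixed constant times $2^k$ but with a better constant thanks to the semilinearity gain in Lemma \ref{centralizer lemma}, I would choose $k$ large enough that the inequality \eqref{dim_ineq} of Proposition \ref{finitesetoforbits} holds on every residue disk with $\bar t \notin (\F_v^*)^{2^k} \cup \{0,1,\infty\text{-type reductions}\}$.

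Two further points need attention. First, semisimplicity: $\rho_y$ here is $H^1$ of a disjoint union of elliptic curves, hence a direct sum of $2$-dimensional Tate-module representations, which is automatically semisimple (or one invokes Lemma \ref{semisimple induction}), so the set \eqref{Uss} in Proposition \ref{finitesetoforbits} is simply the whole residue disk intersected with $\mathcal{Y}(\mathcal{O})$ — no separate argument for problem (b) is needed in this toy case. Second, the residue disks where $\bar t$ is a $2^k$-th power, or where $\bar t$ reduces into the bad locus, are not covered by a single prime $p$; I would handle these by varying $p$, or more simply by noting that for the $S$-unit equation one may enlarge $S$ and use several primes $p_1,\dots,p_r$ chosen so that for every $t \in \mathcal{O}_S^*$ with $1-t \in \mathcal{O}_S^*$ there is some $p_i$ at which $\bar t$ avoids the bad set — a standard covering/pigeonhole argument, since an element cannot be a $2^k$-th power modulo all primes unless it is rather special. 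Applying Proposition \ref{finitesetoforbits} then shows that on each good residue disk the set of $S$-unit solutions is cut out by a nonzero $K_v$-analytic function, hence is finite; covering $\mathcal{O}_S^*$ by finitely many such residue disks (over finitely many primes) gives finiteness of $U$.

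The main obstacle I expect is the monodromy computation — specifically, verifying that $\dim_\C \Gamma\cdot h_0^\iota$ genuinely grows fast enough in $k$, i.e.\ that the monodromy group of the $2^k$-sheeted family is ``as large as possible'' given the constraint that $\pi_1$ acts on the sheets through the abelian quotient $\mu_{2^k}$. One must show the image is not confined to some small subgroup (e.g.\ that the various copies of $\SL_2$ are not locked together), which is exactly the kind of topological monodromy input the paper isolates; in the $S$-unit toy case this should be tractable by direct analysis of the braid/Dehn-twist generators, but it is the step where the real content lies, and it is the baby version of the Kodaira--Parshin monodromy computation of \S\ref{KP_monodromy}. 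A secondary, more bookkeeping-type nuisance is checking the centralizer bound carefully enough (tracking the exact degree $[L_w:\Q_p]$ on each residue disk as a function of how $\bar t^{1/2^k}$ generates the residue field) so that \eqref{dim_ineq} is strict.
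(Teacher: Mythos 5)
Your proposal identifies the right family (the $2^k$-fold pushforward of the Legendre family) and the right general machine (Proposition~\ref{finitesetoforbits}, Lemma~\ref{centralizer lemma}, monodromy), but there are two genuine gaps where you have hand-waved past what is actually the core content.

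\textbf{The reduction to a good prime.} You propose to handle residue disks where $\bar t$ is a $2^k$-th power by ``varying $p$'' and invoking a pigeonhole argument, claiming that ``an element cannot be a $2^k$-th power modulo all primes unless it is rather special.'' This does not work as stated: you need a \emph{single finite set} of primes $p_1,\dots,p_r$ (or places $v$) such that every $t\in U$ has, at some $v_i$, the property that $K_{v_i}(t^{1/2^k})/K_{v_i}$ has the full degree $2^k$; and you still have to deal with those $t$ that genuinely are $2^k$-th powers in $K$. Moreover, $\bar t$ not being a $2^k$-th power in $\F_v$ does not make $[K_v(t^{1/2^k}):K_v]$ large — that degree is the order of $t$ in $K_v^*/(K_v^*)^{2^k}$ and could well be $2$. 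The paper's actual route is an elementary but essential reduction: enlarge $K$ to contain $\mu_8$, repeatedly extract square roots in $K$ to reduce to $t\notin (K^*)^2$ (iterated square roots of $S$-units are again $S$-units), observe that then $K(t^{1/m})$ is a cyclic degree-$m$ Kummer extension with only finitely many possibilities (Hermite--Minkowski), fix one such $L$, and \emph{then} choose $v$ by Chebotarev to be inert in $L/K$. That choice guarantees $K_v(t^{1/m})=L_v$ has degree exactly $m$ for \emph{every} $t$ in the corresponding class, which is what makes Lemma~\ref{centralizer lemma} bite. Without this reduction you cannot pin down a single residue disk where the argument closes.

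\textbf{Semisimplicity is not automatic.} You assert that $\rho_y$ is ``a direct sum of $2$-dimensional Tate-module representations, which is automatically semisimple.'' This is false as a formal statement: a $2$-dimensional Tate module of an elliptic curve need not be semisimple a priori (that it is, for non-CM curves, is a deep theorem of Serre, and invoking it would be circular in spirit here). Nor does Lemma~\ref{semisimple induction} help, since it requires the representation being induced to be semisimple in the first place. The set \eqref{Uss} in Proposition~\ref{finitesetoforbits} only controls the \emph{semisimple} $\rho_y$, so you must separately bound the $t$ for which $T_p(E_{t^{1/m}})$ is reducible. The paper does this with Lemma~\ref{sunit_nonsimple} (Generic Simplicity): if the Tate module has a one-dimensional subrepresentation $W$, then by the purity constraint (Lemma~\ref{globalsimple2}) the Hodge line must coincide with the slope-$1$ Frobenius eigenline, a condition which, as the Hodge line moves nonconstantly under the $p$-adic period map (Torelli), can hold for only finitely many $t$. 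This soft argument is exactly the ``problem (b)'' mechanism described in \S\ref{handle_nonsemisimple}, and it cannot be skipped here.

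Your monodromy discussion is less problematic: you correctly identify that one needs $\prod_z \SL(H^1(E_z))$, which is Lemma~\ref{monolemma}; the paper proves it using transitivity of the sheet-permutation from local monodromy at $t=0$, surjection onto each $\SL_2$ factor from the Legendre monodromy, a unipotent supported on one factor from local monodromy at $t=1$, and a Goursat-type lemma. What you call the ``bookkeeping nuisance'' of tracking $[L_w:\Q_p]$ is in fact not a nuisance at all once the elementary reduction is done — the degree is forced to be exactly $m$ uniformly.
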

 
\subsection{Reductions}
We begin with some elementary reductions.

We may freely enlarge both $S$ and $K$. Thus, we may suppose that 
$S$ contains all primes above $2$ and that $K$ contains the $8$th roots of unity.
Let $m$ be the largest power of $2$ 
dividing the order of the group of roots of unity in $K$.  By assumption $m \geqslant 8$. 

First of all, it suffices to prove finiteness of the set 
$$ U_1 = \{ t \in \mathcal{O}_S^*:  1-t \in \mathcal{O}_S^*, t \notin (K^*)^2\},$$
because $U \subset  U_1 \cup U_1^2 \cup U_1^4 \cup \dots \cup U_1^m.$
To see this, we take $t \in U$ and try to repeatedly extract its square root; observe that such a square root, if in $K$,  also belongs to $U$.
 If we cannot extract an $m$th root of $t$, we are done; otherwise, write $t=t_1^{m}$
and adjust $t_1$ by a primitive $m$th root of unity to ensure that $t_1$ is nonsquare. %

Suppose that $t \in U_1$. Since $t$ is a nonsquare and $\mu_m \subset K$ the order
of $t$ in the group $(K^*)/(K^*)^{m}$ is exactly $m$.  
Otherwise there is some proper divisor $k > 1$ of $m$, and an element $a \in K^*$, 
such that $t^k = a^m$, i.e.\ $t \in a^{m/k} \mu_k$, contradicting the fact that $t$ is nonsquare. 
 
Fixing  $t^{1/m}$  an $m$th root of $t$ in $\overline{K}$, the field $K(t^{1/m})$ is Galois over $K$, and Kummer theory guarantees that its Galois group  is $\Z/m\Z$. 
There are (Hermite--Minkowski) only finitely many possibilities for $K(t^{1/m})$.
Enumerate them; call them $L_1, \dots, L_r$, say. 
Each $L_i$ is a cyclic degree-$m$ extension of $K$, and it is sufficient to prove finiteness of the set 
\begin{equation} \label{brr} U_{1,L} = \{ t \in U_1,  K(t^{1/m}) \simeq L\}.\end{equation}
for a fixed field $L \in \{L_1, \dots, L_r\}$; here we understand $K(t^{1/m})= K[x]/(x^m-t)$.

Fix an $L$ as above.  $L$ is cyclic of degree $m$ over $K$. Choose
a prime $v$ of $K$ such that: 
\begin{itemize}
\item[(i)]  the class of Frobenius 
at $v$ generates $\mathrm{Gal}(L/K)$; 
\item[(ii)]  the prime $p$ of $\Q$ below $v$ is unramified in $K$. 
\item[(iii)] no prime of $S$ lies above $p$.
\end{itemize}
In particular, $v$ is inert in $L/K$; thus, if $t \in U_{1,L}$
then $t$ is not a square in $K_v$, for otherwise $L \otimes_{K} K_v \simeq K_v[x]/(x^m-t)$
would not be not a field. 
 
In summary, it is enough to prove the following lemma. 
\begin{lemma}
\label{Sunit_reduced}
Suppose $K$ contains the $8$th roots of unity, and $S$ contains all primes above $2$. 
Fix a cyclic field extension $L/K$,
a place $v \not \in S$ as above, and a basepoint $t_0 \in \mathcal{O}_S$.   %
Let $U_{1, L}$ be as above.  Then the set
\begin{equation} \label{br} \{t \in U_{1,L}: t \equiv t_0 \mbox{ modulo $v$}\}\end{equation}
is finite.
\end{lemma}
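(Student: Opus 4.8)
The plan is to deduce this from the general setup of \S\ref{fibers}, applied to a Parshin-style family, together with a monodromy computation; a mild refinement of Proposition~\ref{finitesetoforbits} will be needed. First I would set up the family. Enlarging $S$ so that $m$ is a unit in $\mathcal{O}=\mathcal{O}_S$, put $\mathcal{Y}=\Spec\mathcal{O}[t,t^{-1},(1-t)^{-1}]$ and let $\pi\colon\mathcal{X}\to\mathcal{Y}$ have fibre over $t$ the disjoint union $\coprod_{z^m=t}\{y^2=x(x-1)(x-z)\}$ of $m$ Legendre elliptic curves, permuted by Galois. Equivalently this factors as $\mathcal{X}\to\mathcal{Y}'\to\mathcal{Y}$, where $\mathcal{Y}'\to\mathcal{Y}$ is the finite \'etale degree-$m$ cover $w\mapsto w^m=t$ and $\mathcal{X}\to\mathcal{Y}'$ is the Legendre family pulled back along it, so $\mathcal{X}$ has geometrically connected fibres over $\mathcal{Y}'$. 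After a further enlargement of $S$ one may assume the hypotheses of \S\ref{bas note} hold and that every $E_z$ appearing in a fibre over a point of $U_{1,L}$ has good reduction away from $S$. If the set~\eqref{br} is empty there is nothing to prove, so assume $t_0\in U_{1,L}$; then $K(t_0^{1/m})\cong L$, and for any $t\in U_{1,L}$ with $t\equiv t_0$ mod $v$ the fibre $X_t$ is the $L$-elliptic curve $E_{t^{1/m}}$ regarded as a $K$-scheme, so with cohomological degree $q=1$ the attached Galois representation is the induced representation $\rho_t=\mathrm{Ind}_{G_L}^{G_K}H^1_{\et}\bigl((E_{t^{1/m}})_{\overline{K}},\Q_p\bigr)$.

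Since $\rho_t$ is induced from the (semisimple) $H^1$ of an elliptic curve it is semisimple by Lemma~\ref{semisimple induction}; it is pure of weight~$1$ and integral; so by Lemma~\ref{finiteness} there are only finitely many possibilities for it as $t$ ranges over~\eqref{br}. By the crystalline comparison (\S\ref{HSsec}) the filtered $\phi$-module $(V_v,\phi_v,\Phi_v(t))$ then lies in only finitely many isomorphism classes, say with representatives $(V_v,\phi_v,h_1),\dots,(V_v,\phi_v,h_r)$, so that $\Phi_v(t)\in\bigcup_i\mathrm{Z}(\phi_v)\cdot h_i$ for every such $t$. The mechanism of Proposition~\ref{finitesetoforbits} would now want the $\mathrm{Z}(\phi_v)$-orbits to be small relative to the image of the period map, but as noted below this fails on the nose and one has to work inside a subvariety.

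Because the fibres are disjoint unions, the Hodge filtration always respects the decomposition of $V$ into the $m$ Legendre pieces, so both $\Phi_v$ and the complex period map take values in the closed subvariety $\mathcal{H}^{\mathrm{prod}}\subset\mathcal{H}$ of flags compatible with this decomposition; $\mathcal{H}^{\mathrm{prod}}$ is defined over $K$ (being stable under the Galois permutation of the pieces) and is a product of $m$ copies of $\mathbf{P}^1$. The first key input is that $\Phi_\C(\Omega_\infty)$ is Zariski dense in $\mathcal{H}^{\mathrm{prod}}$; by the inclusion~\eqref{GZ} together with Lemma~\ref{vCpowerseries}, this reduces to showing that the Zariski closure of the monodromy of $\mathcal{X}\to\mathcal{Y}'$ acting on the $m$ pieces is the full product of symplectic groups $\Sp_2^m=\SL_2^m$. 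This is exactly the hypothesis of Goursat's Lemma~\ref{goursat}: each factor projection is onto (the Legendre monodromy is Zariski dense in $\SL_2$, and so is any finite-index subgroup of it), and the $m$ local systems $H^1$ of the families $w\mapsto E_{\zeta^jw}$ on $\mathcal{Y}'$ are pairwise non-isomorphic and each carries a Dehn twist — explicitly, the $j$-th of them has nontrivial unipotent local monodromy exactly at the puncture $\zeta^{-j}\in\mu_m$ and is unramified at the other points of $\mu_m$, so for every ordered pair of indices some monodromy element has its two projections unipotent with fixed spaces of different dimensions. Hence $\Gamma\cdot h_0^\iota=\SL_2^m\cdot h_0^\iota=\mathcal{H}^{\mathrm{prod}}$.

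It then remains — arguing as in Lemma~\ref{Zarclos} but inside $\mathcal{H}^{\mathrm{prod}}$ — to show that each $\mathrm{Z}(\phi_v)\cdot h_i$ meets $\mathcal{H}^{\mathrm{prod}}$ in a \emph{proper} Zariski-closed subset: for then~\eqref{br} lies in the $\Phi_v$-preimage of a proper closed subset of $\mathcal{H}^{\mathrm{prod}}$, hence, since $\Phi_v(\Omega_v)$ is Zariski dense in $\mathcal{H}^{\mathrm{prod}}$ and the residue disk $\Omega_v$ is one-dimensional, in a finite set. This is where the inertness of $v$ in $L/K$ is used: it makes $V_v$ free of rank $2$ over the unramified extension $L_v/K_v$, of large degree $m[K_v:\Q_p]$, with $\phi_v$ semilinear over $L_v$, so that over $\overline{K_v}$ the $K_v$-linear operator $\phi_v^{[K_v:\Q_p]}$ cyclically permutes the $m$ pieces and its $m$-th power is the crystalline Frobenius of the elliptic-curve reduction; Lemma~\ref{centralizer lemma} then pins down the size of $\mathrm{Z}(\phi_v)$. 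I expect this linear-algebra step to be the main obstacle: one must bound $\mathrm{Z}(\phi_v)\cdot h_i\cap\mathcal{H}^{\mathrm{prod}}$ using this block-cyclic structure, and, where needed, choose $v$ via Chebotarev (after enlarging $K$) so that $[K_v:\Q_p]$ is large and the reduction at $v$ is generic; it should be a much simpler instance of the ``general position'' arguments of \S\ref{GG combinatorics}. Note finally that the bare dimension inequality~\eqref{dim_ineq} of Proposition~\ref{finitesetoforbits} genuinely fails here — one has $\dim_{K_v}\mathrm{Z}(\phi_v^{[K_v:\Q_p]})\geq 2m>m=\dim_\C\Gamma\cdot h_0^\iota$ — which is precisely why one must cut down to the product subvariety $\mathcal{H}^{\mathrm{prod}}$.
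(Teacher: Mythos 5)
Your setup of the family is exactly the paper's, and your monodromy computation matches Lemma~\ref{monolemma}. But there are two genuine gaps, one of which defeats the purpose of the argument.

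First, you deduce semisimplicity of $\rho_t$ by invoking Lemma~\ref{semisimple induction} together with the parenthetical assertion that $H^1_{\et}$ of an elliptic curve over a number field is semisimple as a Galois module. That assertion is a theorem of Faltings-type depth (semisimplicity of Tate modules of abelian varieties), and the paper is constructed precisely to avoid assuming it. The paper's substitute is Lemma~\ref{sunit_nonsimple} (``generic simplicity''), a soft Torelli-style argument proving that $T_p(E_z)$ is \emph{simple} for all but finitely many $z$, using only purity of Hodge weights (Lemma~\ref{globalsimple2}) and the nonconstancy of the elliptic period map. Without that lemma, or something replacing it, your appeal to semisimplicity is circular in the context of this paper.

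Second, the end of your argument is left open. You correctly observe that the naive inequality~\eqref{dim_ineq} fails here and that one must work inside the product locus $\mathcal{H}^{\mathrm{prod}}$; but you then set yourself the problem of bounding $\mathrm{Z}(\phi_v)\cdot h_i\cap\mathcal{H}^{\mathrm{prod}}$, where $\mathrm{Z}(\phi_v)$ is the centralizer in $\GL_{K_v}(V_v)$, and you defer this to ``a simpler instance of \S\ref{GG combinatorics}.'' This is where the paper diverges, and more cleanly: it never forms the $K_v$-linear centralizer orbit at all. Instead it fixes the isomorphism class of the pair $\bigl(K(t^{1/m}),\,\rho_t|_{G_{K(t^{1/m})}}\bigr)$ and passes to the corresponding filtered $\phi$-module \emph{over} $L_v=K_v(t_0^{1/m})$, so that the relevant automorphisms are $L_v$-linear by construction. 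The period map then lands directly in $\P^1_{L_v}$ (which is $\mathcal{H}^{\mathrm{prod}}$ under your identification), and the centralizer to control is the $L_v$-linear centralizer of $\Frob_v$, to which Lemma~\ref{centralizer lemma} (applied to $L_v/K_v$) gives $\dim_{K_v}Z\leqslant (\dim_{L_v}V_v)^2 = 4 < m$. No separate intersection or general-position step is needed. Your route is not obviously unsalvageable, but it is unfinished exactly where the real work happens, and it introduces an unnecessary difficulty that the paper's bookkeeping (tracking the $L_v$-module structure from the start) is designed to avoid. Finally, the suggestion to enlarge $K$ or to choose $v$ via Chebotarev at this point is out of place: in the lemma $K$, $L$, and $v$ are fixed; the degree $[L_v:K_v]=m$ is already large by the inertness of $v$ in $L$, which is all that is used.
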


The proof of this Lemma will occupy the rest of the section.  Throughout the proof, $p$ is the prime of $\Q$ below $v$, and ``Tate module'' always refers
to $p$-adic Tate module.

\subsection{A variant of the Legendre family}
\label{Legendre_section}
As discussed in the Introduction, we apply Proposition
\ref{finitesetoforbits} not to the Legendre family, but to a modification of it: 
Let   $ \mathcal{Y} =\P^1_{\mathcal{O}}-\{0,1, \infty \} $
(where $0,1, \infty$ denote the corresponding sections over $\operatorname{Spec} \mathcal{O}$)
and let $\mathcal{Y}' = \P^1_{\mathcal{O}} - \{0, \mu_m,\ \infty \}$; let $\pi: \mathcal{Y}' \rightarrow \mathcal{Y}$
be the map $u \mapsto u^m$. 

Let $\mathcal{X} \rightarrow \mathcal{Y}'$ be the Legendre family,
so that its fiber over $t$ is the curve $y^2=x(x-1)(x-t)$;
and consider the composite
$$ \mathcal{X} \longrightarrow \mathcal{Y}' \stackrel{\pi}{\longrightarrow} \mathcal{Y}.$$ 
We will apply our prior results to the family $\mathcal{X} \rightarrow \mathcal{Y}$;  
also, as before, we denote by $X$ and $Y$ the fibers of $\mathcal{X}$ and $\mathcal{Y}$
over $\Spec(K)$.  Thus the geometric fiber $X_t$  of $X \rightarrow Y$ over $t \in Y(K)$ is the disjoint union of the  curves
$y^2 =x(x-1)(x-t^{1/m})$ over all $m$th roots of $t$. 
 
\subsection{Proof of Finiteness}  \label{proof of finiteness Sunit}

Assume for the moment the following two Lemmas; they will be proved in \S \ref{proofs_of_two_lemmas}.

\begin{lemma}[Big monodromy]  \label{monolemma}
Consider the family of curves over $\C-\{0,1\}$
whose fiber over $t \in \C$ is the union of the elliptic curves $E_z: y^2=x(x-1)(x-z)$, over all $m^{\text{th}}$ roots $z^m=t$. 
Then the action of monodromy  
\begin{equation} \label{oinker} \pi_1(\C-\{0,1\}, t_0) \longrightarrow  \Aut \left( \bigoplus_{z^m = t_0} H^1_B(E_z, \Q)\right) \end{equation}
has Zariski closure  containing $\prod_{z} \SL(H^1_B(E_z, \Q))$.
\end{lemma}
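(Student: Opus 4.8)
The plan is to reduce the statement to the classical fact that the monodromy of the Legendre family over $\C-\{0,1,\infty\}$ is Zariski-dense in $\SL_2$, and then to promote this to the product by a Goursat-type argument. First I would set $t_0 \in \C-\{0,1\}$ and pick an $m$th root $z_0$ with $z_0^m = t_0$; the covering $u \mapsto u^m$ from $\C-\{0,\mu_m,\infty\}$ to $\C-\{0,1,\infty\}$ identifies $\pi_1(\C-\{0,1\},t_0)$ with a group containing, as a finite-index subgroup, the image of $\pi_1(\C-\{0,\mu_m,\infty\}, z_0)$. Concretely the deck group is cyclic of order $m$, acting by permuting the $m$ sheets $\{z : z^m = t_0\}$, hence permuting the summands $H^1_B(E_z,\Q)$ transitively; and the restriction of \eqref{oinker} to the finite-index subgroup $\pi_1(\C-\{0,\mu_m,\infty\})$ is the direct sum over $z$ of the monodromy representations of the Legendre family on each $H^1_B(E_z,\Q)$.

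The next step is to identify the Zariski closure $G$ of the image of $\pi_1(\C-\{0,\mu_m,\infty\})$ inside $\prod_z \SL(H^1_B(E_z,\Q)) \cong \SL_2^m$ (each factor lands in $\SL_2$ because the monodromy preserves the symplectic/intersection form on $H^1$ of a curve). Each projection $G \to \SL_2$ is surjective (this is the classical big-monodromy statement for the Legendre family, which one gets from the explicit Picard--Lefschetz transvections at the three punctures $0,1,\infty$ of each base copy, together with the fact that two transvections whose centers pair nontrivially generate a Zariski-dense subgroup of $\SL_2$ — this is essentially Lemma \ref{transvection_generation}). To see that $G$ is the \emph{full} product, I would invoke the algebraic Goursat lemma in the form of Lemma \ref{goursat}: it suffices to exhibit, for each pair of indices $i \neq j$, an element of $G$ acting as a unipotent with one-dimensional fixed space on one factor and trivially (fixed space two-dimensional) on the other. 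But a small loop around a point $\zeta \in \mu_m$ is a branch point only for the single curve $E_\zeta$ (it is a regular point of the Legendre family on the other sheets), so its monodromy is a single Picard--Lefschetz transvection in the $\zeta$-factor and the identity elsewhere; running over $\zeta \in \mu_m$ and using transitivity of the $\Z/m$-action to move between factors gives exactly the elements needed to apply Lemma \ref{goursat}. (Alternatively one can first reindex: the $\SL_2$ factors corresponding to distinct $m$th roots $z$ all carry genuinely different local behavior at the punctures $\mu_m$, which is what prevents the diagonal or any proper product subgroup.) Hence $G = \prod_z \SL(H^1_B(E_z,\Q))$, and a fortiori the Zariski closure of the image of the larger group $\pi_1(\C-\{0,1\},t_0)$ contains this product, which is the assertion.

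The main obstacle I expect is the verification that the pairwise-unipotent hypothesis of Lemma \ref{goursat} genuinely holds, i.e.\ that the local monodromy at a root of unity $\zeta \in \mu_m$ really is supported on a single factor and is a transvection there. This requires knowing that over $\C-\{0,1,\infty\}$ the three punctured points pull back under $u\mapsto u^m$ to: the $m$th roots of unity $\mu_m$ (each a simple branch point of exactly one Legendre fiber $E_\zeta$), the point $0$ (totally ramified), and $\infty$ (totally ramified). The points $0$ and $\infty$ contribute monodromy that mixes all factors (via the $\Z/m$-permutation times a transvection), so they are not directly usable in Lemma \ref{goursat}; it is precisely the \emph{unramified} branch points $\mu_m$ that give the clean ``one factor at a time'' transvections. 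Making this local picture precise — in particular checking that the fiber of $\mathcal{X}\to\mathcal{Y}$ really does degenerate at $\zeta\in\mu_m$ only in its $E_\zeta$ component, with an ordinary vanishing cycle there — is the one genuinely geometric input; everything else is the formal group theory of Lemma \ref{goursat} together with the classical $\SL_2$-density for Legendre.
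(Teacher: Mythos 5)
Your proposal is correct and takes essentially the same route as the paper's proof: transitive permutation of factors from local monodromy at $t=0$, surjection onto each $\SL_2$-factor from the Legendre big monodromy, a transvection supported in a single factor from local monodromy at $t=1$, and Goursat (Lemma~\ref{goursat}). Your main packaging difference --- restricting first to the finite-index subgroup $\pi_1(\C-\{0,\mu_m,\infty\})$, whose image genuinely lands in $\prod_z \SL_2$ so that Lemma~\ref{goursat} applies verbatim --- is arguably cleaner than the paper's invocation of a ``slight variant'' for the full group with its permutation part.

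One local inaccuracy is worth fixing, since it concerns the step you yourself flag as the geometric input. The Legendre curve $E_z : y^2 = x(x-1)(x-z)$ degenerates only at $z \in \{0,1,\infty\}$; a nontrivial $m$th root of unity $\zeta \neq 1$ is an ordinary point of the Legendre family, and $E_\zeta$ is smooth. So the claim that ``each $\zeta \in \mu_m$ is a simple branch point of exactly one Legendre fiber $E_\zeta$'' is not correct as stated. The accurate picture: take a small loop $\gamma$ around $\zeta \in \mu_m$ in $Y'$, based at a lift $z_0$ of $t_0$. Its monodromy on the summand $H^1(E_{\omega z_0})$ is parallel transport along the deck-translated loop $\sigma_\omega \circ \gamma$, a small loop around $\omega\zeta$; the only relevant degeneration is at $z=1$, so this is a Picard--Lefschetz transvection precisely on the single factor with $\omega = \zeta^{-1}$ and the identity elsewhere. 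Thus your conclusion --- each loop around a $\zeta \in \mu_m$ yields a unipotent supported on exactly one factor, and running over $\zeta$ covers all factors --- is right, but the transvection is not naturally ``in the $\zeta$-factor,'' and the mechanism is the single degeneration at $z=1$ seen through the deck action, not a degeneration at each root of unity. (Equivalently, as the paper does: take the loop around $t=1$ once, obtain one element of the form $(1,\dots,u,\dots,1)$, and propagate it to all factors by conjugating with the $\Z/m$-permutation coming from the loop around $t=0$.)
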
 

\begin{lemma}[Generic simplicity]
\label{sunit_nonsimple}
Let $L$ be a number field and $p$  a rational prime, larger than $2$, and unramified in $L$.  There are only finitely many $z \in L$
such that $z, 1-z$ are both $p$-units, but 
for which the Galois representation of $G_L$ on the Tate module $T_p(E_z) = H^1_{\et}(E_{z, \bar{L}}, \Q_p)$ of
the elliptic curve 
 $$E_z: y^2=x(x-1)(x-z),$$  fails to be simple.
 \end{lemma}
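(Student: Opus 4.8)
\emph{Proof strategy.} The set to be bounded is contained in $\{z\in L : E_z\text{ has complex multiplication over }\overline{L}\}$, and the plan is to show that already this larger set is finite; bounding it clearly suffices. It is worth noting in advance that the hypothesis that $z$ and $1-z$ are $p$-units will not really be needed, and that the resulting finiteness is uniform in $p$ --- this is exactly what makes the lemma the ``easy'' instance of the mechanism of \S\ref{handle_nonsimple}.

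The argument I would give has three steps. First, I would show that if $\rho_z$ fails to be simple then $E_z$ has complex multiplication. Indeed ``not simple'' means the $G_L$-action on $V_p(E_z)=T_p(E_z)\otimes\Q_p$ preserves a $\Q_p$-line, so $\rho_z(G_L)$ lies in a Borel subgroup of $\GL_2$ and in particular is not Zariski-dense in $\GL_2$; were $E_z$ without complex multiplication over $\Qbar$, Serre's open image theorem would force $\rho_z(G_L)$ to be open in $\GL_2(\Z_p)$, hence Zariski-dense, a contradiction. (Alternatively one could first apply Faltings's finiteness, Lemma~\ref{finiteness}, to the semisimple, pure-of-weight-one, integral representation $\rho_z^{\ss}$, which is unramified outside the finite set of places dividing $2p$ precisely because $z(z-1)$ is a $p$-unit, to reduce to finitely many possibilities for $\rho_z^{\ss}$; but invoking Serre is cleaner and makes the $p$-unit hypothesis superfluous.) Second, I would use the finiteness of CM points in a fixed number field: if $E_z$ has complex multiplication then $\Q(j(E_z))$ has degree over $\Q$ equal to the class number of the CM order $\End_{\overline{L}}(E_z)$, so that class number is at most $[L:\Q]$; by Heilbronn's theorem there are only finitely many imaginary quadratic orders with bounded class number, hence $j(E_z)$ ranges over a finite subset of $L$. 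Third, $j(E_z)=2^{8}(z^2-z+1)^3/(z^2(z-1)^2)$ is a rational function of $z$ of degree $6$, so each value of $j(E_z)$ is attained for at most $6$ values of $z$. Chaining the three steps gives the finiteness claimed.

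There is no serious obstacle here: the only substantive external inputs are Serre's open image theorem and the (ineffective) lower bound for class numbers of imaginary quadratic orders, both classical and logically independent of Faltings's work. The conceptual point --- and the reason this case is so much softer than the higher-genus analogue --- is that here failure of simplicity is an \emph{algebraic} condition on $z$ (it confines $z$ to the finite preimage of the singular moduli), whereas in the Kodaira--Parshin setting one has no such luxury and must instead play the $p$-adic variation of the Hodge filtration against purity, obtaining only finiteness on a residue disk.
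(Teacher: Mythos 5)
Your proof is correct, but it deliberately invokes exactly the heavy machinery the paper is trying to avoid here. The paper itself flags this right after the lemma: ``much stronger results than Lemma~\ref{sunit_nonsimple} are known'' and ``the point here is that we prove this in a `soft' fashion, using the Torelli theorem as a substitute for more sophisticated arguments.'' The paper's proof is a miniature dress rehearsal for the argument of Lemma~\ref{claim1}: it applies $p$-adic Hodge theory plus the purity constraint (Lemma~\ref{globalsimple2}) to pin the putative $1$-dimensional subrepresentation $W_z$ to the Hodge line $F^1 H^1_{\dR}(E_z/L_w)$, then observes via a Newton/Hodge comparison that the $L_w$-linear Frobenius is regular semisimple with the slope-$1$ eigenline forced to coincide with the Hodge line, and finally uses Gauss--Manin plus the (trivial) Torelli theorem for elliptic curves to see that the Hodge line is a nonconstant $w$-adic analytic function of $z$ in a residue disk, hence meets that fixed eigenline only finitely often. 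Your route instead reduces non-simplicity to complex multiplication via Serre's open image theorem, then finiteness of singular moduli in $L$ via class-number lower bounds (Heilbronn), then degree-$6$ of the $j$-map. This buys you a stronger, more uniform conclusion (no $p$-unit hypothesis, finiteness uniform over $p$) and a shorter path, but at the cost of inputs (open image, Heilbronn) that are both deep and tied to the abelian-variety-with-endomorphisms picture, which is precisely what the paper's method is designed to bypass: in the Kodaira--Parshin setting (Lemma~\ref{claim1}) and especially for hypersurface families (Proposition~\ref{linalg}), there is no CM-type rigidity to appeal to, and one must run the purity-vs-$p$-adic-variation argument on the nose. So your proof is a valid alternative for this lemma in isolation, but it does not illuminate --- and indeed sidesteps --- the mechanism the section is meant to rehearse.
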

Of course much stronger results than Lemma \ref{sunit_nonsimple} are known. 
The point here is that we prove this in a ``soft'' fashion, using the Torelli theorem as a substitute for more sophisticated arguments;
although we use the specific feature of Hodge weights $0$ and $1$, the argument is robust enough to generalize 
(although with a little added complexity, see e.g.\ Lemma \ref{generalsimple2}).

\begin{proof}[Proof of Lemma \ref{Sunit_reduced} assuming Lemmas \ref{monolemma} and \ref{sunit_nonsimple}]
This argument is similar to the proof of Proposition \ref{finitesetoforbits},
with added complication coming from the interaction of the fields $K$ and $L$.
Recall that we have fixed $t_0 \in U_{1,L}$ and we must verify the finiteness of the set
of $t \in U_{1,L}$ with $t \equiv t_0$ modulo $v$. 

By Lemmas \ref{sunit_nonsimple} and \ref{finiteness}, it is enough to verify the finiteness
of the subset of such $t$ where the pair
$(K(t^{1/m}), \rho_t | G_{K(t^{1/m})})$
lies in a fixed isomorphism class; in particular
$(K_v(t^{1/m}), \rho_t|G_{K_v(t^{1/m})})$
lies in a fixed isomorphism class. 

Under the correspondence of $p$-adic Hodge theory,
$\rho_t$ restricted to $K_v(t^{1/m})$ corresponds to the
filtered $\phi$-module
\begin{equation} \label{dat}  \left( H^1_{\dR}(X_{t, K_v} / K_v) \mbox{ as $K_v(t^{1/m})$-module}, \mbox{Frobenius, filtration}\right),\end{equation}
where we equip $H^1_{\dR}(X_{t, K_v} / K_v)$  
with the structure of $2$-dimensional vector space over $K_v(t^{1/m})$
that arises from the scheme structure of $X_t$ over $K(t^{1/m})$.

Let us clarify this vector space structure over $K_v(t^{1/m})$, which is crucial to our argument.
Although {\em a priori} a $K$-scheme,  the factorization $X \rightarrow Y' \rightarrow Y$
induces on $X_t$ the structure of $K(t^{1/m})$-scheme, i.e.\
arising from the morphism $X_t \rightarrow (Y')_t \simeq \Spec K(t^{1/m})$. 
Now the de Rham cohomology of $X_t$ is the same
whether we consider it as a $K(t^{1/m})$-variety or as a $K$-variety.
If we consider it as $K$-variety, we can   recover its structure of 
 $K(t^{1/m})$-vector space by means of the natural map
$$K(t^{1/m}) = H^0_{\dR}(Y'_t/K) \rightarrow H^0_{\dR}(X_t/K).$$
 The same picture works with $K$ replaced by $K_v$ everywhere. 
 
(Similarly, there are two natural interpretations  for ``Frobenius'' in \eqref{dat},
but they are equivalent: 
As just explained, we can consider the space $H^1_{\dR}$
as the de Rham cohomology of either a $K_v(t^{1/m})$-scheme, or of the associated
$K_v$-scheme obtained simply by restricting the scalars.  Both of these schemes have evident integral models, over
$\mathcal{O}_v[x]/(x^m-t)$ and $\mathcal{O}_v$ respectively. Accordingly,
the de Rham cohomologies can be identified with the crystalline cohomologies of the special fibers;
these crystalline cohomologies are identified, in a fashion that respects the semilinear Frobenius endomorphisms.) 

The Gauss--Manin connection for the family $X \rightarrow Y$ induces
\begin{equation} \label{above} H^1_{\dR}(X_{t, K_v} / K_v) \simeq H^1_{\dR}(X_{t_0, K_v} K_v)\end{equation}
which, by compatibility of Gauss--Manin connection with the cup product, is compatible with their module structures over the corresponding $H^0$s.
The corresponding identification of $H^0$s induces the standard identification   $K_v(t^{1/m}) \simeq K_v(t_0^{1/m})$ and therefore the isomorphism \eqref{above} is compatible with structures of $K_v(t^{1/m}) \simeq K_v(t_0^{1/m})$-modules.
 
Therefore, under the identification of \eqref{above}, the $F^1$-step of the filtration on $H^1_{\dR}(X_{t, K_v} / K_v)$ is identified with a $K_v(t_0^{1/m})$-line inside $H^1_{\dR}(X_{t_0, K_v} / K_v)$.
Call this line $\Phi(t)$.
The variation of this line gives a $K_v$-analytic period mapping 
\begin{equation} \label{permap}
\xymatrix{
\Phi: \{t \in K_v,  t \equiv t_0 \mbox{ modulo $v$} \} \ar[r] &   \mbox{$K_v(t^{1/m})$-lines in $H^1_{\dR}(X_{t_0, K_v} / K_v)$}   \ar[d]  \ar[r]^{\qquad \qquad  \simeq} &  \mathbf{P}^1_{K_v(t_0^{1/m})}  \ar[d] \\
& \mbox{$K_v$-subspaces in $H^1_{\dR}(X_{t_0, K_v} / K_v)$} \ar[r]^{\qquad \qquad \simeq} & \mathrm{Gr}(2m,m)_{K_v}.
}
\end{equation}
(The period mapping  for the family $X \rightarrow Y$ {\em a priori} takes values in the bottom row, but we have just seen that it factors through the top row.
See \S \ref{GMsubsec} for a more detailed discussion
of the radius of convergence; in particular it defines a rigid analytic function on a domain
containing $\{ t \in K_v, t \equiv t_0 \mbox{ modulo $v$}\}$ i.e.\ the $K_v$-points in a residue disk.)
 
Therefore (applying the Gauss--Manin connection to identify \eqref{dat} with similar data over $t_0$) the  isomorphism class of the  quadruple
 $$  \left(  K_v(t_0^{1/m}), \mbox{$H^1_{\dR}(X_{t_0, K_v} / K_v)$ as $K_v(t_0^{1/m})$-module}, \Phi(t), \mbox{Frob}_v \right) $$
is determined from (\ref{dat}) and therefore the triple
$$ \left( H^1_{\dR}(X_{t_0, K_v} / K_v) \mbox{ as $K_v(t_0^{1/m})$-module}, \Phi(t), \mbox{Frob}_v \right)$$
lies in a  finite set of isomorphism classes for filtered $\phi$-modules over $K_v(t_0^{1/m})$ 
(coming from the finitely many automorphisms of $K_v(t_0^{1/m})$ over $K_v$). 
Therefore, $\Phi(t)$ lies in a finite collection of orbits for  
$$ Z = \mbox{centralizer of $\Frob_v$ in $K_v(t_0^{1/m})$-linear automorphisms of $H^1_{\dR}(X_{t_0, K_v} / K_v)$}.$$
Now we can apply Lemma \ref{centralizer lemma} to 
the field extension $K_v(t_0^{1/m})/K_v$ and the $K_v$-linear automorphism
$\Frob_v^{[K_v:\Q_p]}$ of $H^1_{\dR}(X_{t_0, K_v} / K_v)$.   This gives us that
$$\dim_{K_v} Z \leqslant  (\dim_{K_v(t_0^{1/m})} H^1_{\dR} )^2 =4.$$ 

Our analysis thus far has shown 
that the set of $t \in U_{1,L}$ such that $t \equiv t_0$ modulo $v$
is contained in
$$ \Phi^{-1}\left( \mathcal{Z}\right),$$
where $\Phi$ is the period map as in \eqref{permap} and $\mathcal{Z} \subset \mathrm{Gr}_{K_v}(2m,m)$ has dimension at most $4$. 
By Lemma \ref{Zarclos}, this set is finite so long as we verify an assertion about the complex period map, namely,  that the dimension
of the orbit of the algebraic monodromy group over $\C$ is strictly greater than $4$.  As in Lemma \ref{Zarclos},
we fix an embedding $K \hookrightarrow \C$ throughout the following discussion.

As mentioned, the vector space $V = H^1_{\dR}(X_{t_0}/K)$ has the natural  structure
of a $2$-dimensional vector space over $K(t_0^{1/m})$. 
The splitting
of $X_{t_0, \C}$ into geometric components induces a splitting
\begin{equation} \label{VCsplit}  V_{\C} = \bigoplus_{i=1}^m  V_i,\end{equation}
where each $V_i$ is a $2$-dimensional complex vector space;
moreover the Hodge filtration on $H^1_{\dR}(X_{t_0}/K) \otimes \C$ also splits along this decomposition. 
Lemma \ref{monolemma}
shows that the algebraic monodromy group $\Gamma$ contains $\prod_{i=1}^m \SL(V_i)$. 
The pertinent flag variety  $\mathcal{H}  \simeq \mathrm{Gr}(V, m)$ 
is the  variety of $m$-dimensional subspaces in $V$; the splitting
\eqref{VCsplit} induces a natural inclusion $\prod_{i=1}^m \P V_i \hookrightarrow \mathcal{H}_{\C}$. 
Therefore the  
the orbit $\Gamma h_0^{\iota}$ is all of $\prod_{i=1}^m \P V_i$ and, in particular, has dimension $m \geqslant 8$. 
Lemma \ref{Zarclos} now gives the desired finiteness.
 
In conclusion, assuming Lemmas \ref{monolemma} and \ref{sunit_nonsimple}, we have shown that the set described in  \eqref{brr} is finite. 
\end{proof}

\subsection{Big Monodromy and Generic Simplicity} \label{proofs_of_two_lemmas}

In this section we prove Lemmas \ref{monolemma} and \ref{sunit_nonsimple}.

\proof[Proof of Lemma \ref{monolemma}]  Write $\Gamma$ for the Zariski closure in question.  It preserves the splitting of \eqref{oinker}, 
although not the individual summands.  Then:
\begin{itemize}
\item[-]  $\Gamma$ transitively permutes the factors on the right-hand side of \eqref{oinker},
by considering the action of local monodromy near $t=0$; 

\item[-] $\Gamma \cap \SL(2)^m$ projects to $\SL(2)$ in each factor:
 indeed, this projection contains a finite-index subgroup of the algebraic monodromy group of the Legendre family. 
\item[-] 
$\Gamma$ contains an element of the form
$$(1, 1, \dots, 1, u, 1, \dots, 1)$$
where $u \in \SL(2)$ is a nontrivial unipotent element, 
as we see by considering the action of local monodromy near $t=1$. 
\end{itemize}
We now apply a slight variant of Lemma \ref{goursat} to conclude that  $\Gamma \supset  \SL(2)^m$. 
\qed

\proof[Proof of Lemma \ref{sunit_nonsimple}]
Fix $z_0 \in L$ with the quoted $p$-integrality properties; in particular, $E_{z_0}$ 
has good reduction at all primes of $L$ above $p$. 

It is enough to show the same finiteness when we restrict to the set
$$ V_L = \{z \in L:  z \equiv z_0 \mbox{ modulo $v$, for all $v|p$}\}.$$
If $T_p(E_z)$ is reducible there exists a one-dimensional subrepresentation $W_z \subset T_p(E_z)$.
By Lemma  \ref{globalsimple2} (applied with $K = \Q$)
there is a place  $w$ of $L$ above $p$ such that $F^1 (W_z^{\dR}) = W_z^{\dR}$;
here $W_z^{\dR}$ is the filtered $L_w$-vector space associated to $W_z$ by $p$-adic Hodge
theory over the $p$-adic field $L_w$.

Because the Newton and Hodge polygons of $W_z^{\dR}$ have the same endpoint, the slope of semilinear Frobenus acting on $W_z^{\dR}$
is equal to $1$; by the same reasoning for $H^1_{\dR}(E_z/L_w)$,  the sum of slopes for the semilinear Frobenius acting on $H^1_{\dR}(E_z/L_w)$ is $1$,
so it has another slope equal to $0$. 

In particular, 
the $L_w$-linear Frobenius $\Frob_w^{[L_w:\Q_p]}$ has distinct eigenvalues.

Also, the $L_w$-line $W_z^{\dR}$ must coincide with  $F^1 H^1_{\dR}(E_z/L_w)$,  
so that the latter space
is the slope-1 eigenline for the semilinear Frobenius $\Frob_w$.
 
As in the discussion around \eqref{GMconnection}, 
Gauss--Manin induces an identification 
\begin{equation} \label{gmc2} H^1_{\dR}(E_{z_0}/L_w) \simeq H^1_{\dR}(E_z/L_w)\end{equation}
of $L_w$-vector spaces with semilinear Frobenius action. 
But the position of the Hodge line $F^1 H^1_{\dR}(E_z/L_w)$  
varies $w$-adic analytically  inside the disk  $V_L$ -- here 
we  use \eqref{gmc2} to identify this line to a line inside the fixed space $H^1_{\dR}(E_{z_0}/L_w)$ -- and  the associated $w$-adic analytic function is nonconstant (by the  -- trivial -- Torelli theorem for elliptic curves). 
It follows there are at most finitely many $z \in V_L$ for which $F^1 H^1_{\dR}(E_z/L_w)$
is the slope-1 Frobenius eigenline.
Taking the union over possible $w$ we still see that the exceptional set is finite. 
\qed

\newcommand{\Xo}{\overline{X}_{\bar{o}}} 
\newcommand{\size}{\mathrm{size}_v}

\section{Outline of the argument for Mordell's conjecture} \label{Mordelloutline}

The proof of the Mordell conjecture is substantially harder than the $S$-unit equation. 
To try to assist the reader, we summarize the proof here,
and then elaborate on the ingredients over the next three sections.
  
First of all, we will make crucial use of the type of structure that occurred in \S \ref{Legendre_section}, to which we give a name:
    
\begin{dff} \label{aff family}
An \emph{abelian-by-finite family} over $Y$ is
a sequence of morphisms
$$X \longrightarrow Y' \stackrel{\pi} \longrightarrow Y$$
where $\pi$ is finite {\'e}tale, and $X \rightarrow Y'$ is (equipped with the structure of) a polarized abelian scheme.

A \emph{good model} for such a family,
over an $S$-integer ring $\mathcal{O} \subset K$,  is a family $\mathcal{X} \rightarrow \mathcal{Y}' \rightarrow \mathcal{Y}$
of smooth, proper $\mathcal{O}$-schemes,
satisfying the same conditions  and also 
 the assumptions at the start of \S \ref{bas note},    and  recovering $X \rightarrow Y' \rightarrow Y$ on base change to $K$. 
\end{dff}
Of course the polarization on $X \rightarrow Y'$ is an additional structure but for brevity we do not explicitly include it in the notation. 
 
 For any such abelian-by-finite family $X \rightarrow Y' \rightarrow Y$
take a  complex point $y_0 \in Y(\C)$ and 
consider the action of the topological fundamental group
$\pi_1(Y(\C),  y_0)$ on  
$$ H^1_B(X_{y_0}, \Q) \simeq \bigoplus_{\pi(\tilde{y}) = y_0} H^1_B(X_{\tilde{y}}, \Q),$$
where the sum is taken over $\tilde{y} \in Y'(\C)$ lying over $y_0$. 
We say that the family has {\em full monodromy}
if the Zariski closure of $\pi_1(Y, y_0)$, in its action on the right-hand side, 
contains the product of symplectic groups:  
\begin{equation}
\label{fullmonodromy}
\overline{  \left( \mbox{image of $\pi_1(Y(\C), y_0)$} \right)} \supset  \prod_{\pi(\tilde{y})=y_0} \mathrm{Sp}\left( H^1_{B}(X_{\tilde{y}}, \Q), \omega \right),
\end{equation}
where the symplectic group is with reference to the form $\omega$ defined by the polarization.  

The key reason to use abelian-by-finite families is that we can guarantee that the Galois orbits
on any fiber of $Y' \rightarrow Y$, above a $K$-rational point of $Y$, are ``large.''  In fact, what we need
(see discussion in Introduction) is that most points in the fiber above $y_0 \in Y(K)$
cannot be defined over ``small'' extensions of $K_v$. 
To quantify the notions of large and small we introduce the following quantity:  
 
 \begin{dff}
Let $E$ be a $G_K$-set and $v$ a place of $K$ such that the $G_K$-action on $E$ is unramified at $v$. 
Let
\begin{equation} \label{sizedef} \size(E) =  \frac{ \mbox{number of elements of $E$ that belong to $\mathrm{Frob}_v$-orbits of size $< \five$}}{\mbox{number of elements of $E$}}\end{equation}
If $E$ is a zero-dimensional $K$-scheme, we will write $\size(E)$ instead of $\size(E(\bar{K}))$. 
\end{dff}
Note that if $E \rightarrow E'$ is a morphism of $G_K$-sets, and all fibers have the same cardinality, then 
\begin{equation} \label{shrink} \size(E)  \leqslant \size(E').\end{equation} 

The next result is, in essence, a variant of Proposition \ref{finitesetoforbits},
but it requires some careful indexing.  It will be proved in \S \ref{highergenus}.

\begin{proposition}
\label{finitepointsoncurves}
 Let $Y$ be a curve over $K$ of genus $g \geqslant 2$. 
 
 Let $X \rightarrow Y' \stackrel{\pi}{\rightarrow} Y$ be an abelian-by-finite family over $Y$, 
 with full monodromy  (see Definition \ref{aff family} and subsequent discussion).
Let $d$ be the relative dimension of $X \rightarrow Y'$. 
 Suppose that $X \rightarrow Y' \stackrel{\pi}{\rightarrow} Y$ admits a good model over the ring $\mathcal{O}$ of $S$-integers of $K$.
Let $v \notin S$ be a friendly place of $K$ (Definition \ref{friendly def}). 
 
  Let $\size$ be as in \eqref{sizedef}. 
 Then the set
 $$ Y(K)^* :=   \left \{y \in Y(K): \size(\pi^{-1} (y))   < \frac{1}{d+1}\right \}$$
 is finite. 
\end{proposition}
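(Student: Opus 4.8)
The plan is to run the strategy of Proposition \ref{finitesetoforbits}, but with the Galois representation $\rho_y = H^1_{\et}(X_y \times_K \bar K, \Q_p)$ decomposed according to the étale cover $Y' \to Y$, so that the ``size'' hypothesis can be converted into a statement that most summands of the period map live over a large extension of $K_v$. First I would fix $y_0 \in Y(K)$ and work in the residue disk of $y_0$ in $Y(K_v)$; by Hermite--Minkowski it suffices to bound this residue disk, and by the standard reductions we may assume the fiber $\pi^{-1}(y_0)$ is nice. For $y$ in the disk, $\pi^{-1}(y)$ is a finite $G_K$-set; write it as a disjoint union of $G_K$-orbits, i.e.\ of points $\tilde y_j \in Y'(K_j)$ for finite extensions $K_j/K$, and correspondingly $\rho_y = \bigoplus_j \mathrm{Ind}_{G_{K_j}}^{G_K} H^1_{\et}(X_{\tilde y_j} \times \bar K, \Q_p)$. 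Lemma \ref{semisimple induction} handles semisimplicity of inductions; and the key point of the Kodaira--Parshin setup (full monodromy) is that for the $v$-adic analysis we should restrict each induced piece back to a decomposition group, where $\mathrm{Ind}_{G_{K_j}}^{G_K}(-)|_{G_{K_v}} = \bigoplus_{w | v} \mathrm{Ind}_{G_{(K_j)_w}}^{G_{K_v}}(-)$, and the hypothesis $\size(\pi^{-1}(y)) < \tfrac{1}{d+1}$ ensures that a large fraction of the local places $w$ have residue degree $\ge \five$ over $\Q_p$.

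Next I would invoke $p$-adic Hodge theory exactly as in \S\ref{HSsec}: the crystalline comparison carries $\rho_y|_{G_{K_v}}$ to the filtered $\phi$-module $(V_v, \phi_v, \Phi_v(y))$, where $V_v = H^1_{\dR}$ of the fiber over $y_0$ (identified via Gauss--Manin) and $\Phi_v$ is the $v$-adic period map. By Faltings finiteness (Lemma \ref{finiteness}), applied after semisimplification, $\rho_y^{\ss}$ lies in finitely many isomorphism classes; a global sub-Hodge-structure argument as in \S\ref{handle_nonsemisimple} and the purity Lemmas \ref{globalsimple}, \ref{globalsimple2} should let me conclude that outside a finite set $\rho_y$ is actually semisimple (this is ``Claim 1'' referenced for the body). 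Granting that, $\Phi_v(y)$ lies in one of finitely many orbits $Z(\phi_v) \cdot h_i$ on the flag variety $\mathcal H_v$. The centralizer $Z(\phi_v)$ of the semilinear Frobenius respects the decomposition of $V_v$ into the pieces indexed by the local places $w$, and on each piece it is controlled by Lemma \ref{centralizer lemma}: over a place $w$ of residue degree $f_w$ the $F$-dimension of the centralizer is $\dim_{E}\mathrm{Z}(\phi^{f_w})$, which for the pieces with $f_w \ge \five$ is \emph{much smaller} than the ``generic'' bound $(2d)^2$ one would otherwise use. Summing, I expect $\dim_{K_v} Z(\phi_v) \le$ (something like) $\tfrac{1}{d+1}$ times the naive bound plus a bounded correction, and in any case $< \dim_\C \Gamma \cdot h_0^\iota$.

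The last step is the monodromy/Zariski-density input. The full-monodromy hypothesis \eqref{fullmonodromy} says $\Gamma \supset \prod_{\tilde y} \Sp(H^1_B(X_{\tilde y},\Q))$, so the orbit $\Gamma \cdot h_0^\iota$ on $\mathcal H_\C$ is (at least) a product of Lagrangian Grassmannians $\prod_{\tilde y} \LGr$, whose dimension is $\sum_{\tilde y} \binom{d+1}{2} \gg$ the centralizer bound above, because $Y$ has genus $\ge 2$ and the fiber $\pi^{-1}(y_0)$ is large. Then by Lemma \ref{vCpowerseries} and Lemma \ref{Zarclos}, the $v$-adic period map $\Phi_v$ has Zariski-dense image of dimension $\ge \dim_\C \Gamma \cdot h_0^\iota$, so $\Phi_v^{-1}$ of a Zariski-closed set of smaller dimension is a proper $K_v$-analytic subset of a residue disk in the one-dimensional $Y(K_v)$, hence finite. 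Covering $Y(K_v)$ by finitely many residue disks finishes the proof. The main obstacle I anticipate is the bookkeeping in the semilinearity estimate: one must track how the $\size$ inequality propagates through the various inductions and the place-by-place decomposition of $\phi_v$, and extract a clean comparison $\dim Z(\phi_v) < \dim \Gamma\cdot h_0^\iota$; this is where the specific constants ($\five$, the $\tfrac{1}{d+1}$ threshold, and the genus-$\ge 2$ hypothesis) all get used, and it is genuinely delicate — as the authors themselves flag, the linear-algebra part is ``straightforward for curves but unwieldy in higher dimension.''
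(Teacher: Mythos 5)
Your plan breaks down at the step where you try to aggregate a bound on $\dim Z(\phi_v)$ across all places $w\mid v$ and compare it to the total dimension of $\mathcal H_v$. First, Lemma~\ref{centralizer lemma} is being misread: the $K_v$-dimension of the centralizer of the semilinear $\phi$ on a rank-$2d$ module over $E = K(y_0')_w$ is bounded by $(\dim_E V)^2 = 4d^2$ \emph{independently} of the residue degree $[E:K_v]$. Large residue degree does not shrink that bound; what it does is enlarge the corresponding Grassmannian factor $\dim_{K_v}\mathcal H_{(y_0',w)} = [E:K_v]\cdot\tfrac{d(d+1)}{2}$. Second, even granting a per-place centralizer bound and that these bounds sum, the aggregate comparison fails numerically. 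You would need roughly
$$ k \cdot 4d^2 \;<\; F\cdot \tfrac{d(d+1)}{2},$$
where $k$ is the number of places above $v$ and $F = \#\pi^{-1}(y_0)_{\bar K}$. The hypothesis $\size(\pi^{-1}(y)) < \tfrac{1}{d+1}$ controls only the proportion of \emph{elements} in short Frobenius orbits, not the ratio $k/F$. In the worst admissible configuration (all ``good'' places of residue degree exactly $\five$, and ``bad'' places of residue degree $1$ occupying a fraction just under $\tfrac{1}{d+1}$ of the elements) the displayed inequality reduces to $6d < 1$, which fails for every $d\ge 1$. So the aggregate comparison you propose does not close. There is a further worry you brush past: the centralizer of $\phi_v$ inside $\GL_{K_v}(V_v)$ need not preserve the $w$-decomposition at all — only the $E_{0,v}$-linear centralizer does — so it is not even clear the per-place bounds sum to a bound on $Z(\phi_v)$.

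The paper avoids both problems by never running Lemma~\ref{finiteness} on $\rho_y$ as a $G_K$-representation, and never forming an aggregate. It decomposes $\pi^{-1}(y)$ into Galois orbits $y'$, applies Lemma~\ref{finiteness} to each $\rho_{y'}$ as a $G_{K(y')}$-representation, and projects the period map to a \emph{single} factor $\mathcal H_{(y_0',w_0)}$ for one pair $(y_0',w_0)$ with $[K(y_0')_{w_0}:K_v]\ge\five$. There the dimension comparison is clean and genuinely uses the large residue degree: $4d^2 < [K(y_0')_{w_0}:K_v]\cdot\tfrac{d(d+1)}{2}$. The $\size$ hypothesis is spent elsewhere, on Lemma~\ref{claim1}: a purity argument (Lemma~\ref{globalsimple2} applied over $K(y')$, combined with the weight constraint on subrepresentations) shows that for all but finitely many $y$ there \emph{exists} at least one pair $(y',w)$ with both large residue degree and $\rho_{y'}$ simple. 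This is also why your goal of showing ``$\rho_y$ is semisimple outside a finite set'' is miscalibrated: the factors $y'$ sitting over places of small residue degree may well fail to give semisimple $\rho_{y'}$, and the argument never needs them to — one good factor suffices.
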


In \S \ref{rationalpoints}, we introduce a specific abelian-by-finite family $X_q \rightarrow Y_q' \stackrel{\pi}{\rightarrow} Y$
for each prime $q \geqslant 3$, referred to as the ``Kodaira--Parshin family for the group $\Aff(q)$.''
Roughly, $Y_q'$ is a Hurwitz space for $\Aff(q)$ and $X_q$ is the Prym of the universal curve.
It has the following properties: \label{KP prop}
\begin{itemize}  
\item[(i)] It has full monodromy (Theorem \ref{monodromy theorem}).  
\item[(ii)]
The relative dimension $d_q$ of $X_q \rightarrow Y_q'$ is given by $d_q = (q-1)(g-\frac{1}{2})$.
\item[(iii)] For each $y_0 \in Y(K)$  there is a $G_K$-equivariant identification
of $\pi^{-1}(y_0)$ with the conjugacy classes of  surjections $\pi_1^{\mathrm{geom}}(Y-y_0, *) \twoheadrightarrow \Aff(q)$
that are nontrivial on a loop around $y_0$.   
\end{itemize}
 
Note that we can identify $\pi_1^{\mathrm{geom}}$
with the profinite completion of a free group on $2g$ generators $x_1, x_1', \dots, x_g, x_g'$ %
in such a way that the loop around $y_0$ corresponds to the conjugacy class of $[x_1, x_1'] [x_2, x_2'] \dots [x_g, x_g']$. 
Therefore, the  set of surjections $\pi_1^{\mathrm{geom}}(Y-y_0, *) \twoheadrightarrow \Aff(q)$
nontrivial on a loop around $y_0$ 
is identified with the left-hand side of \eqref{im im}. 
  
There is probably nothing very special about the use of $\Aff(q)$, but it is simple enough that we can compute everything explicitly.  
  
Assuming these things we can prove:  
\begin{thm} \label{main theorem}
Let $Y$ be a curve over the number field $K$ with genus $g \geqslant 2$. Then $Y(K)$ is finite.
\end{thm}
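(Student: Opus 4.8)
The plan is to deduce Theorem \ref{main theorem} from Proposition \ref{finitepointsoncurves} together with the properties (i)--(iii) of the Kodaira--Parshin family. First I would choose an auxiliary prime $q \geqslant 3$ and pass to the Kodaira--Parshin family $X_q \to Y_q' \xrightarrow{\pi} Y$, which by (i) has full monodromy and by (ii) has relative dimension $d_q = (q-1)(g - \tfrac12)$. After possibly enlarging $S$ so that the family admits a good model over $\mathcal{O} = \mathcal{O}_S$, and choosing a friendly place $v \notin S$ (which exists by Definition \ref{friendly def}), Proposition \ref{finitepointsoncurves} tells us that
$$ Y(K)^* = \left\{ y \in Y(K) : \size(\pi^{-1}(y)) < \tfrac{1}{d_q+1} \right\} $$
is finite. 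So it suffices to show that \emph{every} $y \in Y(K)$ lies in $Y(K)^*$ once $q$ is chosen large enough, i.e.\ that $\size(\pi^{-1}(y))$ can be made uniformly small.

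The heart of the matter is therefore a lower bound on the size of $\Frob_v$-orbits on the fiber $\pi^{-1}(y)$, uniform in $y \in Y(K)$. Using property (iii), $\pi^{-1}(y)$ is identified $G_K$-equivariantly with the set of conjugacy classes of surjections $\pi_1^{\geom}(Y - y, *) \twoheadrightarrow \Aff(q)$ nontrivial on the loop around $y$; after fixing a presentation of $\pi_1^{\geom}$ on generators $x_1, x_1', \dots, x_g, x_g'$, this is the left-hand side of \eqref{im im} in Lemma \ref{com map}. The strategy is to map this set to $[\F_q^*]^{2g}$ by recording the images of the generators in the abelianization $\Aff(q) \to \F_q^*$; by Lemma \ref{com map} the image consists exactly of the $2g$-tuples whose entries generate $\F_q^*$, and the fibers all have the same size, so by \eqref{shrink} it is enough to bound $\size$ of the target $G_K$-set of generating tuples. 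The $G_K$-action on this quotient is through the action on $\F_q^* = \mu_{q-1}$ coming (via the Weil pairing, cf.\ the discussion following (iii) and in \S\ref{control_centralizer}) from the cyclotomic character, so $\Frob_v$ acts on a generating $2g$-tuple $(y_1,\dots,y_{2g}')$ by $y_i \mapsto y_i^{\,q_v}$ where $q_v$ is the residue cardinality at $v$. The orbit of such a tuple under $\Frob_v$ has size equal to the multiplicative order of $q_v$ modulo the order of the subgroup generated by the $y_i$ — and since the $y_i$ generate all of $\F_q^*$, this is the order of $q_v$ in $(\ZZ/(q-1)\ZZ)^*$, which is the same for \emph{every} point in the image. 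Thus all $\Frob_v$-orbits on the target, hence (up to a common multiplicative factor coming from the constant fiber size) all orbits on $\pi^{-1}(y)$, have size at least this common value, \emph{independently of $y$}.

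The remaining step is a counting/elementary number theory argument: we must choose $q$ and $v$ so that this common orbit size is $\geqslant \five$ while also controlling the fraction $\size$. In fact the cleanest route is to pick $q$ large and then pick the friendly place $v$ (via Chebotarev, applied to the cyclotomic extension $K(\mu_{q-1})/K$) so that $q_v$ has large multiplicative order modulo $q-1$ — e.g.\ so that $q_v$ is a primitive root mod a large prime factor of $q-1$, which forces the orbit size to exceed $\five$. Since then \emph{no} element of $\pi^{-1}(y)$ lies in a $\Frob_v$-orbit of size $< \five$, we get $\size(\pi^{-1}(y)) = 0 < \tfrac{1}{d_q+1}$ for all $y$, so $Y(K)^* = Y(K)$, which is therefore finite by Proposition \ref{finitepointsoncurves}. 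The main obstacle — modest, given the machinery assembled — is the bookkeeping needed to pass correctly between $\pi^{-1}(y)$, the generating-tuple quotient, and the $\Frob_v$-action, making sure the "same size of fibers" hypothesis in \eqref{shrink} and in Lemma \ref{com map} really applies and that the $G_K$-action is genuinely the cyclotomic one; once that is in place the choice of $q$ and $v$ is routine.
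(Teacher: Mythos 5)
There is a genuine gap at the heart of your argument. You claim that after passing to the abelianization $\Aff(q) \to \F_q^*$, the $G_K$-action on the quotient set is ``through the cyclotomic character,'' so that $\Frob_v$ acts coordinatewise by $y_i \mapsto y_i^{\,q_v}$ on a generating tuple. This is not correct. The $G_K$-equivariant map \eqref{surj} lands in $M = H^1_{\et}(Y_{\bar K}, \Z/(q-1))$, a free rank-$2g$ module over $\Z/(q-1)$, and the Galois action on $M$ is the genuine Galois action on the \'{e}tale cohomology of the curve (equivalently, a mod-$(q-1)$ avatar of the Galois action on the Tate module of the Jacobian). This action has image inside the group of symplectic similitudes $\mathrm{GSp}_{2g}(\Z/(q-1))$ and is in general \emph{far} from scalar; in particular, $\Frob_v$ is some similitude matrix $T$, not multiplication by $q_v$. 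What the Weil pairing gives you is only the similitude relation $\langle Tv_1, Tv_2\rangle = q_v^{-1}\langle v_1,v_2\rangle$, which is a much weaker constraint than ``$T$ is scalar.'' Because you started from a wrong description of the Frobenius action, your conclusion that every $\Frob_v$-orbit on $\pi^{-1}(y)$ has size $\geqslant \five$, hence $\size(\pi^{-1}(y)) = 0$ for all $y$, does not follow; indeed $T$ can have nontrivial fixed vectors, and generically $\ker(T^i-1)$ will be a nonzero submodule.

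The paper's proof deals with exactly this difficulty, and it is the actual content of the argument. Rather than trying to show the orbits are uniformly large, one shows that the set of ``bad'' elements (those lying in $\Frob_v$-orbits of size $<\five$), namely $\bigcup_{i<\five}\ker(T^i-1)$, is \emph{small relative to} $M$. The similitude relation forces $\ker(T^i-1)$ to be isotropic for the Weil pairing up to $2$-torsion: if $T^i m_1=m_1$, $T^i m_2=m_2$ then $(q_v^{-i}-1)\langle m_1,m_2\rangle=0$, and the congruence conditions on $q$ and the Chebotarev choice of $v$ (which you correctly anticipate) ensure $q_v^{-i}-1$ is a unit away from $2$. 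Isotropy plus perfectness of the pairing then gives $|\ker(T^i-1)|\leqslant 2^g(q-1)^g$. Comparing this to the $\gtrsim\tfrac12(q-1)^{2g}$ generating tuples in the image $\Upsilon$, one obtains $\size(\pi^{-1}(y))\leqslant\five\cdot 2^{g+1}/(q-1)^g$, which is small but typically \emph{nonzero}. The final step is then to choose $q$ large enough (condition (iii) on $q$ in the proof) so that this bound beats $\tfrac1{d_q+1}$, bearing in mind that $d_q$ also grows with $q$. So the structure you set up is right, and your instinct to use the Weil pairing and Chebotarev is exactly on target, but you have conflated the constraint imposed by the Weil pairing with the (false) statement that Frobenius is scalar. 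The fix is to replace the ``uniform lower bound on orbit size'' strategy with a ``count and bound the fixed locus via isotropy'' strategy.
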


\proof 

We apply Proposition \ref{finitepointsoncurves} to the Kodaira--Parshin family with parameter $q$.  What we will 
show is that we may choose $q$ and the place $v$ in such a way that $v$ is friendly and
\begin{equation} \label{empty} \size(\pi^{-1} (y)) < \frac{1}{d_q+1} \ \  \mbox{ for all  $y \in Y(K)$.} \end{equation}
The key point is to use the mapping \eqref{surj} below and the Weil pairing to give an upper bound on $\size(\pi^{-1}(y))$. 

We choose $q$ 
with the following properties:  
\begin{itemize}
\item[(i)] $q-1$ is not divisible by $4$ or by any odd primes less than $\five [K:\Q]$.
\item[(ii)]  The Galois closure $K'$ of $K$ is linearly disjoint from $\Q(\zeta_{q-1})$ over $\Q$. 
\item[(iii)] $ \frac{\five \cdot 2^{g+1}}{(q-1)^{g}} < \frac{1}{ (g-1/2) (q-1) +1}$.
\end{itemize}
This is possible by Dirichlet's theorem: 
we choose $q$ such that $q$ is not congruent to $1$ mod $\ell$
for any prime $\ell$ that either
divides the discriminant of $K$, or that is less than $\five [K:\Q]$, and also $q$ is not congruent to $1$ mod $4$. 
Then linear disjointness follows: for ramification reasons  $K' \cap \Q(\zeta_{q-1}) = \Q$. 
Such a $q$ can be chosen arbitrarily large; in particular it can be chosen to satisfy the third condition. 

Now  form the Kodaira--Parshin family $X=X_q \longrightarrow Y'_{q} \longrightarrow Y$
for the 
group $\Aff(q)$ and choose a set $S$ such that it has a good model over the ring
of $S$-integers.

Next we show that there exists a place $v \notin S$ of $K$ such that: 
\begin{itemize}
\item[(i)] $v$ is friendly (in the sense of Definition \ref{friendly def}) %
\item[(ii)] $(q_v, q-1)=1$ (recall that $q_v$ was the cardinality of the residue field at $v$)
\item[(iii)] For any odd prime factor $r$ of $q-1$,
 the class of $q_v$ in $(\Z/r)^*$ has order at least $\five$. 
 \end{itemize}
Note that the latter two conditions depend only on the residue class of $q_v$ modulo $q-1$.
We will produce $v$ by the Chebotarev density theorem, applied to $\Gal(K'(\zeta_{q-1})/\Q)$.
By hypothesis, $K'$ and $\Q(\zeta_{q-1})$ are linearly disjoint over $\Q$, so the map
$$ \Gal(K'(\zeta_{q-1})/\Q) \longrightarrow \Gal(K' / \Q) \times  \Gal (\Q (\zeta_{q-1}) / \Q).$$ 
is an isomorphism.

If $K$ has no CM subfield, choose $\sigma \in \Gal(K'/\Q)$ arbitrarily. Otherwise 
let $E$ be the maximal CM subfield of $K$, and let $E^+$ the maximal totally real subfield;
choose some $\sigma \in \Gal(K' / E^+) \subseteq \Gal(K'/\Q)$ inducing the nontrivial automorphism of $E$ over $E^+$.

By the Chinese Remainder Theorem, we can choose a residue class $a \in (\Z/(q-1))^*$ whose reduction modulo $r$ is a primitive root for $(\Z/r)^*$ for every prime factor $r$ of $(q-1)$.

By Chebotarev density, there is
a place $\wp$ of $K'(\zeta_{q-1})$ such that the Frobenius $\mathrm{Frob}_{\wp}$ 
is the element $(\sigma, a)$ of $\Gal(K'(\zeta_{q-1})/\Q) \simeq \Gal(K' / \Q) \times (\Z / (q-1))^*$.
Let $p$ be the prime of $\Q$ below $\wp$; thus $p \equiv a$ modulo $q-1$.  The place $v$ of $K$ below $\wp$ has residue field of size $q_v  = p^i$,
with $i \leqslant [K: \Q]$;
therefore, if $r$ is an odd prime factor of $(q-1)$, the order of $\mbox{$q_v$ mod $r$}$
is at least $\left \lceil \frac{r-1}{[K:\Q]} \right \rceil \geqslant \five$.  For the last inequality we used property (i) of $q$. 

If $K$ admits a CM subfield then the place of $E^+$ below $\wp$ is inert in $E$, by choice of $\sigma$. 
This shows that there indeed exists $v$ as desired. 

Now consider the Kodaira--Parshin family $X=X_q \longrightarrow Y'_{q} \longrightarrow Y$
for the group $\Aff(q)$ 
and write $d_q$   for the relative dimension of $X \rightarrow Y$. 
For any $y \in Y(K)$
property (iii) of Kodaira--Parshin covers (page \pageref{KP prop}), and the surjection $\Aff(q) \twoheadrightarrow \F_q^* \simeq \Z/(q-1)$,  gives rise to  a map
of $G_K$-sets
\begin{equation}
\label{surj}
\pi^{-1}(y)  \longrightarrow  \underbrace{ H^1_{\et}(Y_{\bar{K}},  \Z/(q-1)).}_{M}
\end{equation}
Let $\Upsilon \subseteq M$ be the image of the map. In explicit coordinates,
the map \eqref{surj} has been studied in Lemma \ref{com map} (see also remark after (iii) on page \pageref{KP prop}). 
Therefore, by Lemma \ref{com map}, all fibers of the map have the same size.
Therefore, in view of \eqref{shrink}, it is enough to show that
$\size(\Upsilon)  <  \frac{1}{d_q+1}.$
 
Now $M$ has the structure of a $(2g)$-dimensional free module over $\Z/(q-1)$.
On choosing an identification of $M$ with $(\Z / (q-1))^{2g}$, the set $\Upsilon$
consists of those elements $(y_1, y_1', \dots, y_g, y_g')$ such that
the elements $y_1, y_1', \ldots, y_g, y_g'$ generate $(\Z / (q-1))$.
(This is shown in the proof of Lemma \ref{com map}.)

$M$ is also equipped with a Galois-equivariant Weil pairing 
$$\langle -, - \rangle: M  \times M \rightarrow \mu_{q-1}^{\vee} := \Hom(\mu_{q-1}, \Z/(q-1)\Z).$$
The Weil pairing is perfect, i.e.\ the corresponding map 
$M \rightarrow \Hom(M, \mu_{q-1}^{\vee})$
is an isomorphism.
The Frobenius at $v$ induces, in particular, an automorphism $T: M \rightarrow M$ that satisfies
$$ \langle T v_1, T v_2 \rangle = q_v^{-1} \langle v_1, v_2 \rangle.$$ 

We want to bound the number of elements of $M$ belonging to $T$-orbits of size less than $\five$. %
These elements are contained in the union of the submodules $\ker(T^i-1)$  for $1 \leqslant  i  \leqslant \five$. 
If $m_1, m_2 \in \ker(T^i-1)$ then $(q_v^{-i} - 1) \langle m_1, m_2 \rangle = 0$.  For every odd prime factor $r$ of $q-1$ we know that
$q_v^i$ is not congruent to $1$ modulo $r$; therefore
$(q_v^i-1)$ is relatively prime to $r$. 
Thus $2\langle m_1, m_2 \rangle = 0$
for any $m_1, m_2 \in \ker(T^i-1)$.

Now if $A$ is a finite abelian group endowed with a nondegenerate pairing $A \times A \rightarrow \Q/\Z$  
 then any subgroup $B \subset A$ such that $\langle B, B \rangle = 0$ has order at most $\sqrt{A}$.
Applying this to $2M$ we find 
$$\left | 2\ker(T^i-1) \right | \leqslant \left( \frac{q-1}{2} \right)^g \implies \left| \ker(T^i-1) \right | \leqslant 2^g (q-1)^g.$$
Hence, the number of elements of $M$ contained in the union of the submodules $\ker(T^i-1)$ for $1 \leqslant  i  \leqslant \five$ is at most $\five \cdot 2^{g} (q-1)^g$.

It remains to give an upper bound for the ``$\size$'' of $\Upsilon$, the image of \eqref{surj}. The number
of generating $(2g)$-tuples in $\Z/N$ equals
$\# (\Z/N)^* \times \mathbf{P}^{2g-1}(\Z/N)$, which equals 
 $ N^{2g} \cdot \prod_{p|N} (1-p^{-2g}) \geqslant \frac{1}{2} N^{2g}.$
So $\Upsilon$ has at least $\frac{1}{2} (q-1)^{2g}$ elements, of which at most $\five \cdot 2^{g} (q-1)^g$ belong to
Frobenius orbits of size $\five$ or smaller.
It follows that
$$\size(\pi^{-1} (y))  \stackrel{\eqref{shrink}}{\leqslant} \size(\Upsilon)  \leqslant \frac{\five \cdot 2^{g} (q-1)^g}{\frac{1}{2} (q-1)^{2g}  } =   \frac{\five \cdot 2^{g+1}}{(q-1)^{g}} < \frac{1}{\underbrace{(g-1/2) (q-1)}_{d_q} +1},$$
the last inequality by property (iii) of the prime $q$. This 
concludes the proof of \eqref{empty}.  
\qed

\newcommand{\bad}{W}
\section{Rational points on the base of an abelian-by-finite family}
\label{highergenus}

In this section we
prove Proposition \ref{finitepointsoncurves}, which
is in essence
a variant of Proposition \ref{finitesetoforbits}, and which we rewrite for the reader's convenience.       
\begin{mainprop*}

\end{mainprop*}

Here's what happens in the proof. There are two central lemmas, Lemmas \ref{claim1} and \ref{claim2}.
\begin{itemize}
\item 
The assumption that $\size(\pi^{-1} (y)) < \frac{1}{d+1}$ guarantees that most points in the fiber $\pi^{-1} (y)$
are defined over fields of large degree over $\Q_p$.
As discussed in \S \ref{control_centralizer}, 
we will use the fact that an extension $K_v$ of $\Q_p$ is of large degree 
to bound the centralizer of Frobenius for a variety defined over $K_v$. 

Some care is required with indexing since we only have \emph{most} points;
in particular, we need to identify the fibers over $p$-adically nearby points $y$.
The discussion of indexing occupies the first part of the proof;
the bound on the Frobenius centralizer is in the proof of Lemma \ref{claim2}.
\item Lemma \ref{claim1} handles 
 the possible failure of semisimplicity (see  discussion in \S \ref{handle_nonsemisimple}). 
 As in Lemma \ref{sunit_nonsimple}, we use constraints on Hodge weights coming from global representations
(Lemma \ref{globalsimple}) to show that only finitely many fibers can give rise to non-semisimple
Galois representations.   This requires
a general position argument in linear algebra (Lemma \ref{finaux}). 
\end{itemize}
 
\proof  
Through the proof, we denote by $p$ the prime of $\Q$ below $v$; ``Tate module'' always means ``$p$-adic Tate module,'' 
and ``{\'e}tale cohomology'' means geometric {\'e}tale cohomology taken with $\Q_p$ coefficients.
 
Recall  also that we have fixed an algebraic closure $\overline{K}$ with Galois group $G_K$. 
Fix 
an extension of $v$ to that field; the completion of $\overline{K}$ gives an algebraic closure $\overline{K_v}$
of $K_v$.   In particular, if $L \subset \overline{K}$ is unramified at $v$, we obtain a Frobenius element
$\Frob_v \in \mathrm{Gal}(L/K)$. 

Fix $y_0 \in Y(K)^*$.    It is sufficient to show that there are only finitely many points of $Y(K)^*$   that lie in the residue disk $$ \Omega_v = \{y \in Y(K_v): y  \equiv y_0 \mbox{ modulo } v\},$$
which we are regarding as a $K_v$-analytic manifold.  

For each $y \in Y(K)$, 
let $E_y$ be the ring of regular functions on the zero-dimensional scheme $\pi^{-1}(y)$;
this is an {\'e}tale $K$-algebra and
$\Hom(E_y, \bar{K})$ is identified with the $G_K$-set
$\pi^{-1}(y)_{\bar{K}}$ 
of preimages of $y$ under $\pi$.
By our assumptions, the $G_K$-set $\pi^{-1}(y)_{\bar{K}}$ is unramified at $v$. Write $E_0$ for $E_{y_0}$.

The fiber $X_y$ of $X \rightarrow Y$ above $y \in Y(K)$ 
is {\em a priori} a $K$-scheme, but the factorization $X \rightarrow Y' \rightarrow Y$
gives it the structure of an $E_y$-scheme;
in particular its de Rham cohomology $H^1_{\dR}(X_y/K)$
has the structure of a free $E_y$-module.  Moreover,  the polarization on $X$
induces an {\em $E_y$-bilinear} symplectic pairing
$$H^1_{\dR}(X_y/K) \times H^1_{\dR}(X_y/K) \longrightarrow E_y.$$

Write  $E_{0,v} = E_0 \otimes_{K} K_v$, and $V_v := H^1_{\dR}(X_{y_0}/K_v)$. Then $V_v$ is a free $E_{0,v}$-module equipped 
with an ($E_{0,v}$-bilinear) symplectic form.   Denote by $\mathcal{H}_v \subset \mathcal{G}_v$  the $K_v$-schemes  defined by Weil restriction: %
$$ \mathcal{G}_v = \mathrm{Res}^{E_{0,v}}_{K_v} \  \mathrm{Gr}(V_v,g)$$
$$ \mathcal{H}_v =  \mathrm{Res}^{E_{0,v}}_{K_v} \  \LGr(V_v,\omega).$$
Here $\mathrm{Res}^{E_{0, v}}_{K_v}$ denotes Weil restriction of scalars, $\mathrm{Gr}(V_v,g)$ classifies free $E_v$-submodules of rank $g$ inside $V_v$, and $\LGr$ classifies free rank-$g$ submodules on which  the symplectic pairing is trivial.

Then the period map at $y_0$ gives a $K_v$-analytic function
$$\Phi_v:  \Omega_v \longrightarrow \mathcal{H}_v$$
(see \S \ref{GMsubsec} for a more detailed discussion 
of the radius of convergence; in particular it defines a rigid analytic function on a domain
containing $\Omega_v$, i.e.\ the $K_v$-points in a residue disk). 

{\em A priori}, this period mapping is valued in a suitable Lagrangian Grassmannian
of $K_v$-linear subspaces inside $V_v$, but, just as in the discussion of \S \ref{proof of finiteness Sunit}, each of these Lagrangian subspaces are actually $E_{0,v}$-stable, so that the period mapping
 actually takes values inside $\mathcal{H}_v$. 
Lemma \ref{Zarclos}, and the assumption of full monodromy, 
imply  that $\Phi_v(\Omega_v)$ is Zariski-dense in $\mathcal{H}_v$.  

To proceed further, as we discussed in the proof sketch, we need to carefully index the points above $y$. 
Firstly, $E_y$ decomposes as a product of fields:
$$ E_y = \prod_{y'} K(y')$$
where the product is over points 
$y'$ of the scheme $Y'$ lying above $y$.
For any such $y'$, the fiber $X_{y'}$ of $X \rightarrow Y'$ above $y'$ is a $d$-dimensional abelian variety over the field $K(y')$;
write $\rho_{y'}$ for the corresponding $2d$-dimensional $p$-adic Galois representation of the absolute Galois group of $K(y')$. 

The base change $E_y \otimes_K K_v$ splits as a product of fields
\begin{equation} \label{first splitting} E_{y} \otimes_{K} K_v =  \prod_{y', w} K(y')_w\end{equation}
indexed by pairs $(y', w)$,
where $y'$ is a closed point of $\pi^{-1}(y)$ as above,  and $w$ is a place of $K(y')$ over $v$.
In this situation we will say, for short, that
$(y', w)$ is above $(y,v)$.   

Write $X_{y',w}$ 
for the base change of $X_{y}$ along $E_y \rightarrow K(y')_w$,
and $\rho_{y',w}$ for the $G_{K(y')_w}$-representation
on its {\'e}tale cohomology.
The de Rham cohomology $V_v = H^1_\dR(X_{y} / K_v)$ over $K_v$
splits as a product 
\begin{equation} \label{second splitting} V_v = \prod_{y',w} V_{y',w},  \ \ V_{y',w} = H^1_{\dR}(X_{y',w}/K(y')_w)\end{equation}
in a fashion that is compatible with the $E_{y} \otimes_K K_v$-module structure and \eqref{first splitting}.
The dimension of each $V_{y',w}$ over $K(y')_w$ is the same, namely, $2d$. 

Crystalline cohomology of the reduction modulo $v$ (or, phrased differently, the Gauss--Manin connection for $Y' \rightarrow Y$)
gives an isomorphism
\begin{equation} \label{hh1} E_y \otimes_{K} K_v \stackrel{\sim}{\longrightarrow} E_{0,v} = E_0 \otimes_{K} K_v\end{equation}
whenever $y$ belongs to the residue disk $\Omega$ of $y_0$.  
In particular this induces a bijection
\begin{equation} \label{hh} \ \mbox{$(y', w)$ above $(y,v)$} \stackrel{\sim}{ \longleftrightarrow}
\mbox{$(y_0',w_0)$ above $(y_0, v)$}  
\end{equation}
 since both sides are identified with the spectrum of the common algebra of \eqref{hh1}. 
 Moreover, 
the identification \eqref{hh1} is compatible with  the Gauss-Manin isomorphism
\begin{equation} \label{GM1} H^1_{\dR}(X_y/K_v)  \stackrel{\mathrm{GM}}{\longrightarrow} H^1_{\dR}(X_{y_0}/K_v).\end{equation}

If $(y', w)$ corresponds to $(y_0', w_0)$ under this identification, then  \eqref{hh1}
and \eqref{GM1} induce  
\begin{equation} \label{bij iso} K(y')_w \simeq K(y_0')_{w_0}, \ \  H^1_{\dR}(X_{y', w}/K(y')_w) \simeq H^1_{\dR}(X_{y_0', w_0}/K(y_0')_w).\end{equation}

Also, \eqref{second splitting} induces the splitting of the variety $\mathcal{H}_v$  as a product
 $$\mathcal{H}_v = \prod_{(y_0', w)}   \mathcal{H}_{(y_0', w)},$$
where  the product is taken over $(y_0', w)$ above $(y_0, v)$, and where 
$$ \mathcal{H}_{(y_0', w)} = \mathrm{Res}^{K(y_0')_w}_{K_v} \  \LGr(V_{y_0', w},\omega).$$ 
We have a similar decomposition $\mathcal{G}_v = \prod_{(y_0', w)} \mathcal{G}_{(y_0', w)}.$

If $y \in \Omega_v$, and if $(y',w)$ above $(y, v)$ corresponds to $(y_0', w_0)$ under \eqref{hh}, then 
\begin{equation} \label{Hodgepodge} \mbox{projection to $\mathcal{H}_{(y_0', w_0)}$ of $\Phi_v(y)$} =  F^1  H^1_{\dR}(X_{y',w}).\end{equation}
where we identify $F^1 H^1_{\dR}(X_{y', w})$ with a Lagrangian in the $K(y_0')_{w_0}$-vector space $V_{y_0', w_0}$ using the Gauss--Manin connection \eqref{bij iso}. 
This result \eqref{Hodgepodge} comes down to the fact, already noted, that \eqref{hh} and \eqref{GM1} are compatible. 
  
We will establish the following two lemmas. 

\begin{lemma}[Generic Simplicity] \label{claim1}
There is a finite subset $F \subset \Omega_v \cap Y(K)^*$ such that, for $y \in \left( \Omega_v \cap Y(K)^* \right) - F$, 
there exists $(y',w)$ above $(y,v)$ such that:
\begin{itemize}
\item[(i)] $[K(y')_w:K_v] \geqslant \five$ 
\item[(ii)] $\rho_{y'}$ is   simple as a $G_{K(y')}$-representation.
\end{itemize}
\end{lemma}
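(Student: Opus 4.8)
The plan is to fix $y_0\in Y(K)^*$ together with its residue disk $\Omega_v=\{y\in Y(K_v):y\equiv y_0\bmod v\}$ and work entirely inside $\Omega_v$, reducing everything to a transversality statement between the period map $\Phi_v$ and an auxiliary ``bad locus'' in the flag variety $\mathcal H_v$. First comes the bookkeeping. For $y\in\Omega_v\cap Y(K)$ the Gauss--Manin bijection \eqref{hh} matches the pairs $(y',w)$ above $(y,v)$ with the pairs $(y_0',w_0)$ above $(y_0,v)$, and \eqref{bij iso} shows the matched local fields are isomorphic; in particular $[K(y')_w:K_v]=[K(y_0')_{w_0}:K_v]$, so being a ``big'' place (local degree $\geqslant\five$) depends only on the position $(y_0',w_0)$, not on $y$ — and, for the same reason, $\size(\pi^{-1}(y))=\size(\pi^{-1}(y_0))$ for all $y\in\Omega_v$, so $\Omega_v\cap Y(K)^*=\Omega_v\cap Y(K)$. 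Since $\size(\pi^{-1}(y_0))<\tfrac1{d+1}<1$ there is at least one big position. It therefore suffices to show: for each fixed big position $(y_0',w_0)$ the set $F_{(y_0',w_0)}$ of $y\in\Omega_v\cap Y(K)$ for which the matched $\rho_{y'}$ fails to be $G_{K(y')}$-simple is finite. Granting this, take $F=\bigcup F_{(y_0',w_0)}$ over the finitely many big positions; for $y\notin F$ pick any big position, and the matched $(y',w)$ satisfies (i) by the degree equality and (ii) because $y\notin F_{(y_0',w_0)}$.

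For the main point, fix a big position $(y_0',w_0)$, write $e=[K(y_0')_{w_0}:K_v]\geqslant\five$, and suppose $\rho_{y'}$ is not simple for some $y\in\Omega_v\cap Y(K)$, so there is a proper nonzero $G_{K(y')}$-subrepresentation $W\subset\rho_{y'}=H^1_{\et}(X_{y',\overline{K(y')}},\Q_p)$. Because no prime of $S$ lies above $p$ and $Y(K)=\mathcal Y(\mathcal O)$ by properness of the good model, the abelian variety $X_{y'}$ has good reduction at every prime of $K(y')$ over $p$; hence $\rho_{y'}$ is crystalline at all such primes and pure of weight $1$ (Weil), and the same holds for the subrepresentation $W$. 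Applying Lemma \ref{globalsimple2} to $W$ and the friendly place $v$ of $K$ gives $\sum_{u\mid v}[K(y')_u:K_v]\,a_u(W)=\tfrac12[K(y'):K]$, which, together with $a_u(W)\in[0,1]$ and $\sum_u[K(y')_u:K_v]=[K(y'):K]$, forces the Hodge numbers of the induced filtered $\phi$-module $W_{\dR,w_0}$ — equivalently, the way the Lagrangian $\Phi_v^{(y_0',w_0)}(y)$ meets a rank-$m$ subspace, where $m=\dim_{\Q_p}W$ — to be ``non-generic'' at our chosen place (here one uses that $W$ is a \emph{proper} subrepresentation, exactly as in the proof of Lemma \ref{sunit_nonsimple}). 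Transporting everything to the basepoint by Gauss--Manin as in \eqref{bij iso}--\eqref{Hodgepodge} — which is compatible with Frobenius — $W_{\dR,w_0}$ becomes a $\phi$-stable $K(y_0')_{w_0}$-subspace of the fixed space $V_{y_0',w_0}=H^1_{\dR}(X_{y_0',w_0})$ whose Hodge filtration is cut out by the $(y_0',w_0)$-component of $\Phi_v(y)$. Thus non-simplicity of $\rho_{y'}$ forces $\Phi_v(y)$ into the locus $\mathcal H_v^{\mathrm{bad}}\subset\mathcal H_v$ of flags whose $(y_0',w_0)$-component is Schubert-incident, in one of these finitely many non-generic ways, to some $\phi$-stable subspace of $V_{y_0',w_0}$.

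It remains to bound $\mathcal H_v^{\mathrm{bad}}$. The collection of $\phi$-stable subspaces of $V_{y_0',w_0}$ of a given rank is governed by the centralizer $\mathrm Z(\phi_{w_0})$, and Lemma \ref{centralizer lemma}, applied to the extension $K(y_0')_{w_0}/\Q_p$ and the $K(y_0')_{w_0}$-linear operator $\phi_{w_0}^{e[K_v:\Q_p]}$, gives $\dim_{\Q_p}\mathrm Z(\phi_{w_0})\leqslant (2d)^2$ — a bound independent of $e$. On the other hand, full monodromy (Definition \ref{aff family} and the subsequent discussion) and Lemma \ref{Zarclos} imply that $\Phi_v(\Omega_v)$ is Zariski-dense in $\mathcal H_v$, whose $(y_0',w_0)$-factor $\mathrm{Res}^{K(y_0')_{w_0}}_{K_v}\LGr(V_{y_0',w_0})$ has $K_v$-dimension $e\binom{d+1}2\geqslant\five\binom{d+1}2=4d(d+1)>4d^2$. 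A general-position argument in linear algebra — this is Lemma \ref{finaux} — turns the centralizer bound plus the non-generic-incidence condition into the statement that $\mathcal H_v^{\mathrm{bad}}$ has dimension strictly less than $\dim_{K_v}\mathcal H_v=\dim_{\C}\Gamma\cdot h_0^{\iota}$. Lemma \ref{Zarclos} then exhibits $\Phi_v^{-1}(\mathcal H_v^{\mathrm{bad}})$ as a proper $K_v$-analytic subset of $\Omega_v$, and since $\Omega_v$ is a residue disk in the curve $Y(K_v)$ such a subset is finite; hence $F_{(y_0',w_0)}\subseteq\Phi_v^{-1}(\mathcal H_v^{\mathrm{bad}})$ is finite, as required.

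The main obstacle is the linear-algebra input (Lemma \ref{finaux}): one must convert ``$W$ a proper subrepresentation with Hodge weights balancing to $\tfrac12$'' into a genuinely non-generic incidence condition at \emph{some} place over $v$, and then carry out the dimension count — bounding the locus of Lagrangians incident to a $\phi$-stable subspace of constrained Hodge type, unioned over a possibly positive-dimensional family of such subspaces (since $\phi_{w_0}^{e}$ may have repeated eigenvalues), against the large dimension $\geqslant\five\binom{d+1}2$ coming from full monodromy over a big place. This is where the hypotheses ``$y_0\in Y(K)^*$'' (guaranteeing a big place and controlling $d$ against $\five$) and the large local degree (feeding Lemma \ref{centralizer lemma}, which is also the source of the companion bound in Lemma \ref{claim2}) are spent. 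The secondary difficulty is purely organizational: keeping the indexing of the pairs $(y',w)$ consistent under Gauss--Manin, and remembering that ``$\rho_{y'}$ simple'' is a property of $y'$ alone while the degree lives at $w$, so that the two conclusions of the lemma are extracted at one and the same point $y'$.
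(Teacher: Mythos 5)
Your overall skeleton---reduce to a fixed big position $(y_0',w_0)$, exhibit a bad Zariski-closed locus in $\mathcal{H}_{(y_0',w_0)}$, and use Zariski density of the period image together with Lemma~\ref{finaux}/\ref{generalsimple2}---matches the paper's strategy, and your observations that $\size(\pi^{-1}(y))=\size(\pi^{-1}(y_0))$ on $\Omega_v$ and that $\Omega_v\cap Y(K)^*=\Omega_v\cap Y(K)$ are correct and used implicitly in the text. But there is a genuine gap at the central step.

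You claim: ``non-simplicity of $\rho_{y'}$, together with Lemma~\ref{globalsimple2}, forces the Hodge filtration of $W_{\dR,w_0}$ to be non-generic \emph{at the chosen place} $w_0$.'' This does not follow. Lemma~\ref{globalsimple2} gives only the \emph{global balance}
$$\sum_{u\mid v}[K(y')_u:K_v]\,a_u(W)=\tfrac12[K(y'):K],$$
a weighted average over \emph{all} places of $K(y')$ above $v$. It places no constraint on the individual $a_{w_0}$: the ``anomaly'' (i.e.\ some $a_u\geqslant\tfrac12$) could sit entirely at a place $u$ of small local degree, with $a_{w_0}$ arbitrarily small; in that case the period point at $(y_0',w_0)$ sits in the generic locus and your $\Phi_v^{-1}(\mathcal{H}_v^{\mathrm{bad}})$ argument does not catch $y$. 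Note that Lemma~\ref{finaux} (and hence Lemma~\ref{generalsimple2}) requires $r\geqslant 5$; at a small place the ``bad'' incidence condition may fail to be a \emph{proper} Zariski-closed subvariety, so an anomaly hidden there gives no finiteness. Your appeal to Lemma~\ref{sunit_nonsimple} is actually the tell: in that proof the conclusion is that there exists \emph{some} place $w$ with $F^1W_z^{\dR}=W_z^{\dR}$, not a prescribed one, and one then takes a union over the finitely many places.

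What is missing is exactly the paper's \emph{Sublemma}: a pigeonhole/summation argument over \emph{all} pairs $(y',w)$ above $(y,v)$, combined with integrality ($\dim W^{\dR}_{y',w}\leqslant d$ gives $a_w\leqslant\tfrac12-\tfrac1{2d}$ if $a_w<\tfrac12$) and the hypothesis $\size(\pi^{-1}(y))<\tfrac1{d+1}$, to derive a contradiction if no big place carries the anomaly. In effect: the total ``deficit'' produced by the big places if they were all generic is $\geqslant\tfrac1{2d}\sum_{\mathrm{big}}[K(y')_w:K_v]$, while the small places can contribute at most $\tfrac12\sum_{\mathrm{small}}[K(y')_w:K_v]$, and the $\size$ bound makes the latter too small to compensate. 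Thus the anomaly must lie at \emph{some} big place, though which one may vary with $y$; one then unions the (finite) exceptional sets over the finitely many big positions. Without this argument the assertion ``$F_{(y_0',w_0)}$ is finite'', as you define $F_{(y_0',w_0)}$ (the $y$ with the matched $\rho_{y'}$ non-simple), is unproven, and your proof does not close.

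A secondary, minor point: the centralizer bound from Lemma~\ref{centralizer lemma} is what drives the companion Lemma~\ref{claim2}; in the proof of Lemma~\ref{claim1} the properness of $\mathcal{H}_{(y_0',w_0)}^{\mathrm{bad}}$ comes directly from Lemma~\ref{generalsimple2}/\ref{finaux}, not from a dimension comparison against $\dim Z(\phi)$. This doesn't affect correctness of the conclusion you state, but your explanation of where $\five$ and the centralizer bound ``are spent'' conflates the two lemmas.
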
 
 
Observe that  for $y \in \left(\Omega_v \cap Y(K)^* \right) - F$, 
and $y'$ above $y$, there are only finitely many possibilities
for the isomorphism class of the field $K(y')$. 
Thus, by Lemma \ref{finiteness}, there are only finitely many possibilities
for the isomorphism class of the pair $(K(y)', \rho_{y'})$,
and so also only finitely many possibilities for the isomorphism class of any pair $(K(y')_w, \rho_{y'}|_{K(y)'_w})$
arising from $(y', w)$ as in Lemma \ref{claim1}.  The proof of Proposition 
\ref{finitepointsoncurves} will then follow from Lemma \ref{claim1} above and Lemma \ref{claim2} below. \qed

\begin{lemma}[Galois representations really do vary in our family] \label{claim2} 
Fix a finite field extension $K'_v$ of $K_v$, with $[K'_v: K_v] \geqslant \five$, 
and a Galois representation $\rho'$ of the absolute Galois group of $K'_v$.  

There are only finitely many $y \in \Omega_v \cap Y(K)$ for which 
there exist $(y',w)$ satisfying conditions (i) and (ii) of Lemma \ref{claim1}
and moreover the pair
$$ (K(y')_w, \rho_{y', w}) \mbox{ is isomorphic to }(K'_v,\rho')$$
i.e.\ there is an isomorphism $K(y)'_w \rightarrow K'_v$
carrying the isomorphism class of $\rho'$ to that of $\rho_{y',w}$. 
\end{lemma}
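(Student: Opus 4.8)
The plan is to run, factor-by-factor, the same argument as in Proposition~\ref{finitesetoforbits}, exploiting the product decomposition $\mathcal{H}_v=\prod_{(y_0',w)}\mathcal{H}_{(y_0',w)}$ and the hypothesis $[K'_v:K_v]\geqslant\five$ to make the relevant Frobenius centralizer small relative to a single factor. First I would reduce to a single index. There are only finitely many pairs $(y_0',w_0)$ above $(y_0,v)$ — they are the maximal ideals of the finite {\'e}tale $K_v$-algebra $E_{0,v}$ — and each $y$ as in the statement selects, via the bijection \eqref{hh}, the pair attached to the chosen $(y',w)$; so it suffices to fix $(y_0',w_0)$. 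Likewise I would fix one of the (finitely many) $K_v$-algebra isomorphisms $\beta\colon K(y_0')_{w_0}\xrightarrow{\sim}K'_v$ (the Gauss--Manin identification $K(y_0')_{w_0}\simeq K(y')_w$ of \eqref{bij iso} composed with the given isomorphism $K(y')_w\simeq K'_v$), and treat each $\beta$ separately.

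Next I would pass to the period map. Since $Y$ (hence the good model $\mathcal{Y}$) is proper, every $y\in Y(K)$ extends to $\mathcal{Y}(\mathcal{O})$, so $X_{y',w}$ has good reduction at $w$ and $\rho_{y',w}$ is crystalline; the crystalline comparison (\S\ref{HSsec}, \eqref{CCR}) identifies its filtered $\phi$-module with $\bigl(H^1_{\dR}(X_{y',w}/K(y')_w),\phi,F^1\bigr)$. Applying \eqref{bij iso} and \eqref{Hodgepodge} — which, crucially, intertwine the semilinear crystalline Frobenii and carry $F^1$ to the projection $\mathrm{pr}_{(y_0',w_0)}\Phi_v(y)$ — turns this into $\bigl(V_{y_0',w_0},\ \phi_{y_0',w_0},\ \mathrm{pr}_{(y_0',w_0)}\Phi_v(y)\bigr)$, where $\phi_{y_0',w_0}$ is a \emph{fixed}, $y$-independent semilinear operator (the Frobenius on the crystalline cohomology of the common special fiber). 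The hypothesis $(K(y')_w,\rho_{y',w})\simeq(K'_v,\rho')$ and full faithfulness of $\underline{D}_{\cris}$ then force this filtered $\phi$-module to be isomorphic over $K(y_0')_{w_0}$ to the $\beta$-pullback of $\underline{D}_{\cris}(\rho')$, a single fixed object $(D,\psi,G)$. In particular $(V_{y_0',w_0},\phi_{y_0',w_0})\simeq(D,\psi)$; absorbing such an isomorphism, $\mathrm{pr}_{(y_0',w_0)}\Phi_v(y)$ lies in the single orbit $\mathrm{Z}(\phi_{y_0',w_0})\cdot h_\beta$ for a fixed $d$-dimensional subspace $h_\beta\subset V_{y_0',w_0}$ (the orbit being independent of the chosen isomorphism, as these form a $\mathrm{Z}(\phi_{y_0',w_0})$-torsor). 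Thus, letting $\beta$ range over its finitely many values, $\Phi_v(y)$ lands in a finite union of subvarieties of $\mathcal{H}_v$, each obtained from $\mathcal{H}_v$ by replacing the single factor $\mathcal{H}_{(y_0',w_0)}$ by the Zariski closure of an orbit $\mathrm{Z}(\phi_{y_0',w_0})\cdot h_\beta$.

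Then comes the dimension count, which is where $[K'_v:K_v]\geqslant\five$ enters. Put $d'=[K(y_0')_{w_0}:K_v]\geqslant\five=8$. The orbit $\mathrm{Z}(\phi_{y_0',w_0})\cdot h_\beta$ is contained in the orbit through $h_\beta$ of the unit group of $\mathrm{Z}\bigl(\phi_{y_0',w_0}^{[K_v:\Q_p]}\bigr)$, which (as the centralizer of a Frobenius-over-$K_v$–semilinear operator) is a genuine $K_v$-algebra acting $K_v$-algebraically on $\mathcal{G}_{(y_0',w_0)}$, and whose $K_v$-dimension is at most $(\dim_{K(y_0')_{w_0}}V_{y_0',w_0})^2=(2d)^2=4d^2$ by Lemma~\ref{centralizer lemma} applied to $K(y_0')_{w_0}/K_v$. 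Hence the orbit closure, intersected with $\mathcal{H}_{(y_0',w_0)}$, has $K_v$-dimension $\leqslant 4d^2$, whereas $\dim_{K_v}\mathcal{H}_{(y_0',w_0)}=d'\cdot\tfrac{d(d+1)}{2}\geqslant 8\cdot\tfrac{d(d+1)}{2}=4d(d+1)>4d^2$ (using $d\geqslant1$). So each of the finitely many subvarieties above is a \emph{proper} Zariski-closed $\mathcal{Z}_\beta\subsetneq\mathcal{H}_v$. Since $\Phi_v(\Omega_v)$ is Zariski-dense in $\mathcal{H}_v$ — full monodromy plus Lemma~\ref{Zarclos}, as already recorded in the proof of Proposition~\ref{finitepointsoncurves} — the set $\Phi_v^{-1}\bigl(\bigcup_\beta\mathcal{Z}_\beta\bigr)$ is a proper $K_v$-analytic subvariety of the one-dimensional residue disk $\Omega_v$, hence finite; and it contains all the $y$ under consideration. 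That completes the proof.

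The main obstacle — essentially the only non-routine point — is making the middle step airtight: one must verify that the Gauss--Manin identifications really do intertwine the semilinear crystalline Frobenii (so that $\phi_{y_0',w_0}$ is honestly $y$-independent), that they are compatible with the splittings \eqref{first splitting}--\eqref{second splitting} and with the period map \eqref{Hodgepodge}, and one must keep careful track of the finitely many choices of $(y_0',w_0)$ and of isomorphism $K(y_0')_{w_0}\simeq K'_v$, each of which yields a different fixed filtered $\phi$-module. All the compatibilities needed are already assembled in \S\ref{GMsubsec}--\S\ref{HSsec} and in \eqref{hh1}--\eqref{Hodgepodge}, so this is bookkeeping rather than new mathematics; the one genuinely new ingredient beyond Proposition~\ref{finitesetoforbits} is the numerical input $[K'_v:K_v]\geqslant\five$, which is exactly what makes $4d^2<\dim_{K_v}\mathcal{H}_{(y_0',w_0)}$.
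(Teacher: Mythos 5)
Your proof is correct and follows essentially the same route as the paper's: reduce to a fixed $(y_0',w_0)$ and a fixed isomorphism class of filtered $\phi$-module, identify the resulting constraint as membership in finitely many orbits of the Frobenius centralizer on $\mathcal{G}_{(y_0',w_0)}$, bound $\dim_{K_v}\mathrm{Z}(\phi^{[K_v:\Q_p]})\leqslant 4d^2$ via Lemma~\ref{centralizer lemma}, and compare against $\dim_{K_v}\mathcal{H}_{(y_0',w_0)}\geqslant\five\cdot\tfrac{d(d+1)}{2}>4d^2$ to conclude via Zariski density of $\Phi_v(\Omega_v)$ and Lemma~\ref{Zarclos}. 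The only difference is presentational — you make the finitely-many choices of index $(y_0',w_0)$ and of field isomorphism $\beta$ explicit, where the paper handles that bookkeeping in the remarks preceding the lemma — but the mathematical content is identical.
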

 
To prove Lemmas \ref{claim1} and \ref{claim2} we shall analyze the period mapping more carefully. 

\proof[Proof of Lemma \ref{claim2}]    
Under the correspondence of $p$-adic Hodge theory, $\rho_{y',w}$ corresponds to 
the $K(y')_w$-vector space $H^1_{\dR}(X_{y'}/K(y')_w)$, together with its natural semilinear Frobenius operator $\phi$, and 
the (two-step) filtration defined by $F^1 H_{\dR}(X_{y'}/K(y')_w)$.   

Suppose that $(y', w)$ corresponds to $(y_0', w_0)$ under \eqref{hh}. 
Using the isomorphism \eqref{bij iso} the triple just described corresponds to 
$$ (H^1_{\dR}(X_{y_0'}/K(y_0')_{w_0}), \mbox{$\phi_v$ = semilinear Frobenius}, \ \mbox{projection of $\Phi_v(y)$ to $\mathcal{H}_{y_0', w_0}$}). $$
It is enough to show that the set of $y$, for which this triple belongs to a fixed isomorphism class, is finite. 

Belonging to a fixed isomorphism class means that the projection of  $\Phi_v(y)$ to $\mathcal{H}_{y_0', w_0}$ lies inside a single orbit for the action of the Frobenius centralizer $Z(\phi_v)$ on $\mathcal{G}_{y_0', w_0}$,
and so also a single orbit of $Z(\phi_v^{[K_v:\Q_p]})$ on $\mathcal{G}_{y_0', w_0}$.  (In both cases, these centralizers are   taken inside $K(y_0')_{w_0}$-linear automorphisms of $V_{y_0', w}$.)

Apply Lemma \ref{centralizer lemma} to the field extension $K(y_0')_w/K_v$ %
to see that this Frobenius centralizer has $K_v$-dimension at most 
$(\dim_{K(y_0')_{w_0}} V_{y_0', w_0})^2=4d^2$.

As noted earlier,  the period map $\Phi_v$ has Zariski-dense image (in the $K_v$-variety $\mathcal{H}_v$; therefore this remains true when projected to $\mathcal{H}_{y_0', w_0}$).
Since $\dim_{K_v} \mathcal{H}_{y_0', w_0} = [K(y')_{w}: K_v] \cdot \frac{d(d+1)}{2}  \geqslant 4 d(d+1) >4d^2$,
Lemma \ref{Zarclos} completes 
the proof of Lemma \ref{claim2}.
\qed

\proof[Proof of Lemma \ref{claim1}]   
Let us call $y \in Y(K)^* \cap \Omega_v$ ``bad''
when, for every  $(y',w)$ above $(y,v)$ such that
$[K(y')_w:K_v] \geqslant \five$,  the representation $\rho_{y'}$ fails to be simple. 
We must show there are only finitely many bad $y \in Y(K)^* \cap \Omega_v$. 
  
\begin{quote} {\bf Sublemma:}  If $y \in Y(K)^* \cap \Omega_v$ is bad, there exists:
\begin{itemize}
\item[-]  $(y', w)$ above $(y,v)$, with  $[K(y')_w:K_v] \geqslant \five$;    \item[-] a  nonzero proper Frobenius-stable subspace  $\bad^{\dR}_{y',w}$ of  $H^1_{\dR}(X_{y'}/K(y')_w)$ 
such that  $\dim F^1 \bad^{\dR}_{y',w} \geqslant \dim(\bad^{\dR}_{y',w})/2$. (Here, and in the discussion below, dimensions are dimensions over $K(y')_w$.)
\end{itemize} 
\end{quote} 

\begin{quote}
{\em Proof of sublemma:}  Take a bad $y \in Y(K)^* \cap \Omega_v$. 
For each $y'$ above $y$  let $\bad_{y'}$ be a nonzero 
subrepresentation of $\rho_{y'}$ of minimal positive dimension.   (It is therefore possible that $\bad_{y'}$ is all of $\rho_{y'}$). 
For each place $w$ of $K(y')$ we define
$\bad^{\dR}_{y',w}$ by applying $p$-adic Hodge theory to $\bad_{y'} \leqslant \rho_{y'}$;
thus $\bad^{\dR}_{y', w}$ is a $\phi$-stable submodule of $H^1_{\dR}(X_{y'}/K(y')_w)$. 
   
Note that 
\begin{equation} \label{st}  \dim \bad^{\dR}_{y',w} \leqslant  d \mbox{ whenever $[K(y')_w:K_v] \geqslant \five$.}\end{equation}
Indeed because $y'$ is bad, the supposition $[K(y')_w:K_v] \geqslant \five$ forces $\rho_{y'}$ to be non-simple;
because it preserves (up to similitude) a bilinear form, we have 
  $\dim \bad_{y'} \leqslant \frac{1}{2} \dim \rho_{y'}$,
thus \eqref{st}. 
 
Now assume that, for each $(y',w)$ 
above $(y,v)$,  satisfying $[K(y')_w: K_v] \geqslant \five$, 
we have $$ \dim F^1 \bad^{\dR}_{y',w}  < \frac{1}{2} \dim \bad^{\dR}_{y',w}.$$
We will derive a contradiction, which will conclude the proof. 

By Lemma \ref{globalsimple2}, applied to $\bad_{y'}$ as a Galois representation of $K(y')$, we have
\begin{equation} \label{above1} \sum_{w|v}   [K(y')_w:K_v]  \frac{ \dim F^1 \bad^{\dR}_{y',w}}{\dim \bad^{\dR}_{y',w}}   =  \frac{1}{2} [K(y'):K]\end{equation}
for any $y'$ a closed point of $\pi^{-1}(y)$.    Sum over $y'$ above $y$;  using
\eqref{st} we get 
  {\small   \begin{equation} \label{above2}  \sum_{[K(y')_w:K_v] \geqslant \five}    [K(y')_w:K_v]   \left(\frac{1}{2} - \frac{1}{2 d}\right)  
   +   \sum_{[K(y')_w:K_v] < \five}    [K(y')_w:K_v] \geqslant  \frac{1}{2}  \sum_{(y',w)} [K(y')_w:K_v] \end{equation}}
Here 
all summations are over $(y', w)$ above $(y,v)$. 
Therefore, 
     {\small   \begin{equation} \label{above3} 
      \sum_{[K(y')_w:K_v] < \five}   \frac{1}{2}   [K(y')_w:K_v] \geqslant   \frac{1}{2d}\sum_{[K(y')_w:K_v] \geqslant \five}    [K(y')_w:K_v]     .\end{equation}}
      Let $e_1, \dots, e_k$ be the cycle structure of $\Frob_v$ acting on the $\bar{K}$ points of $\pi^{-1}(y)$. The inequality above means that
      $$  \frac{1}{2} \sum_{i: e_i  < \five} e_i \geqslant \frac{1}{2d} \sum_{i: e_i \geqslant \five} e_i,$$
      which is to say that $\size(\pi^{-1} y) \geqslant \frac{1}{d+1}$. This contradicts the assumption that $y \in Y(K)^*$. 
   \qed
 \end{quote}      

We now return to the proof of Lemma \ref{claim1}.  Fix any $(y_0', w)$ above $(y_0, v)$
with $[K(y_0')_w:K_v] \geqslant \five$.  Such a $(y_0', w)$ exists because of the assumption that
$y_0 \in Y(K)^*$.
In view of the {\em Sublemma} and \eqref{Hodgepodge}, it is enough to show that there are only finitely many $y \in Y(K) \cap \Omega_v$ 
such  the  projection of $\Phi_v(y)$ to $\mathcal{H}_{(y_0',w)}$ lies in  the  subvariety
$$\mathcal{H}_{(y_0', w)}^{\mathrm{bad}}   \subset \mathcal{H}_{(y_0', w)}$$
defined as the Lagrangian, $K(y_0')_w$-subspaces 
$F \subset V_{y_0',w}$ (recall \eqref{second splitting} for definition) for which there exists a Frobenius-stable   subspace $W \subset V_{y_0',w}$,
satisfying 
\begin{equation} \label{crit} \dim(F \cap W) \geqslant \frac{1}{2} \dim(W), \end{equation}

By the lemmas that follow, $\mathcal{H}_{(y_0', w)}^{\mathrm{bad}}$ is contained in a proper closed  $K_v$-subvariety of $\mathcal{H}_{(y_0', w)}$; %
we conclude as in the proof of Lemma \ref{claim2}.   \qed 

\begin{lemma} \label{generalsimple2}
Suppose $L_w$ is a finite unramified extension of $K_v$ of degree  $r \geqslant \five$.  
Let $(V, \omega)$ be a symplectic $L_w$-vector space, with $\dim_{L_w} V = 2d$;
let $\phi: V \rightarrow V$ be semilinear for the Frobenius automorphism of $L_w/K_v$ 
and bijective. 

Then there is a   Zariski-open $$\mathcal{A} \subseteq  \mathrm{Res}^{L_w}_{K_v} \  \LGr(V,\omega)$$
(where  $\LGr(V, \omega)$ is the Lagrangian Grassmannian, and $\mathrm{Res}^{L_w}_{K_v}$ denotes Weil restriction
of scalars from $L_w$ to $K_v$)
with the following property:

If $F \subset V$ is a Lagrangian $L_w$-subspace,  corresponding to a point of $\mathcal{A}(K_v)$,  there is no $\phi$-invariant $L_w$-subspace $W$ of $V$ satisfying
\eqref{crit}. \end{lemma}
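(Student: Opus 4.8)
\emph{The plan} is a dimension count carried out after base change to $\overline{K_v}$: I will show that the Lagrangians $F$ admitting a bad $\phi$-invariant subspace form a proper Zariski-closed subvariety, and take $\mathcal A$ to be the complement (descended to $K_v$). Throughout, a ``$\phi$-invariant $W$'' means a nonzero proper $\phi$-stable $L_w$-subspace, so $e:=\dim_{L_w}W\in\{1,\dots,2d-1\}$ and \eqref{crit} reads $\dim(F\cap W)\geq\lceil e/2\rceil$. As in the proof of Lemma~\ref{centralizer lemma}, fixing an embedding $\tau_0\colon L_w\hookrightarrow\overline{K_v}$ and splitting $V\otimes_{K_v}\overline{K_v}$ by the idempotents of $L_w\otimes_{K_v}\overline{K_v}$ yields $V\otimes_{K_v}\overline{K_v}=\bigoplus_{j=0}^{r-1}V_j$ with each $V_j$ a $2d$-dimensional symplectic space over $\overline{K_v}$, the linear extension $\bar\phi$ of $\phi$ carrying $V_j$ isomorphically to $V_{j+1}$ (indices mod $r$), and $\psi:=\bar\phi^{\,r}|_{V_0}$ an $\overline{K_v}$-linear automorphism of $V_0$; correspondingly $\mathrm{Res}^{L_w}_{K_v}\LGr(V,\omega)\otimes_{K_v}\overline{K_v}\cong\prod_j\LGr(V_j)$ (with the induced forms), which is irreducible of dimension $r\cdot\tfrac{d(d+1)}{2}$. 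A $\phi$-invariant $W$ of dimension $e$ base-changes to $\bigoplus_j W_j$ with $W_{j+1}=\bar\phi(W_j)$, $\dim W_j=e$; thus $W$ is determined by $W_0$, a $\psi$-invariant $e$-plane in $V_0$, and for any $L_w$-Lagrangian $F$, with associated tuple $(F_j)_j$, one has $\dim(F_j\cap W_j)=\dim_{L_w}(F\cap W)$ for each $j$. Hence $F$ can be bad only if, for some $e$, there is a $\psi$-invariant $e$-plane $W_0\subseteq V_0$ with $\dim(F_j\cap\bar\phi^{\,j}W_0)\geq\lceil e/2\rceil$ for \emph{every} $j$.

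I would then run the incidence count. For each $e$ let $\mathcal W_e\subseteq\mathrm{Gr}(V_0,e)$ be the closed subvariety of $\psi$-invariant $e$-planes, so $\dim\mathcal W_e\leq\dim\mathrm{Gr}(V_0,e)=e(2d-e)$, and set
\[
\mathcal I_e=\Bigl\{(W_0,(F_j)_j)\in\mathcal W_e\times\textstyle\prod_j\LGr(V_j)\ :\ \dim\!\bigl(F_j\cap\bar\phi^{\,j}W_0\bigr)\geq\lceil e/2\rceil\ \text{ for all }j\Bigr\},
\]
a projective variety. By the previous paragraph every bad tuple lies in $\bigcup_e\mathrm{pr}_2(\mathcal I_e)$, which is closed and Galois-stable, hence descends to a closed $K_v$-subvariety of $\mathrm{Res}^{L_w}_{K_v}\LGr(V,\omega)$; so it suffices to prove $\dim\mathrm{pr}_2(\mathcal I_e)<r\cdot\tfrac{d(d+1)}{2}$ for each $e$. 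The fibre of $\mathcal I_e$ over $W_0$ is $\prod_j\{F_j\in\LGr(V_j):\dim(F_j\cap\bar\phi^{\,j}W_0)\geq\lceil e/2\rceil\}$, of dimension $\sum_j(\tfrac{d(d+1)}{2}-c(\bar\phi^{\,j}W_0))$, where for a dimension-$e$ subspace $U$ of a $2d$-dimensional symplectic space I write $c(U):=\codim_{\LGr}\{F:\dim(F\cap U)\geq\lceil e/2\rceil\}$. Therefore $\dim\mathcal I_e\leq e(2d-e)+r\cdot\tfrac{d(d+1)}{2}-r\cdot\min_{\dim U=e}c(U)$, and everything reduces to the single estimate
\[
r\cdot\min_{\dim U=e}c(U)\ >\ e(2d-e)\qquad\text{for all }e\in\{1,\dots,2d-1\}.
\]

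The substance---and the step I expect to be the main obstacle---is proving this estimate, which I would isolate as the ``general position'' Lemma~\ref{finaux}. Its ingredients are: (a) among subspaces $U$ of a fixed dimension $e$, the Schubert locus $\{F:\dim(F\cap U)\geq\lceil e/2\rceil\}$ has maximal dimension (so $c(U)$ is minimal) when $U$ is maximally degenerate, i.e.\ isotropic for $e\leq d$ and coisotropic for $e\geq d$; (b) the case $e\geq d$ reduces to $e\leq d$ via the identity $\{F:\dim(F\cap U)\geq\lceil e/2\rceil\}=\{F:\dim(F\cap U^\perp)\geq\lceil(2d-e)/2\rceil\}$ (using $F^\perp=F$), so $\min_{\dim U=e}c(U)=c(U_0)$ for $U_0$ isotropic of dimension $e_0:=\min(e,2d-e)$; and (c) for $U_0$ isotropic of dimension $e_0\leq d$, writing $k=\lceil e_0/2\rceil$, a direct Schubert computation gives $c(U_0)=k(d-e_0)+\tfrac12 k(k+1)$, after which an elementary inequality shows $8\,c(U_0)>e_0(2d-e_0)=e(2d-e)$ for every such $e_0$ (the boundary case $e_0=d$, $d\to\infty$ is exactly what pins the constant $\five=8$). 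Since $r\geq\five$, the estimate follows, $\bigcup_e\mathrm{pr}_2(\mathcal I_e)$ is a proper closed subvariety, and $\mathcal A$ is its complement, descended to $K_v$; for $F$ a $K_v$-point of $\mathcal A$ no $\phi$-invariant $W$ satisfies \eqref{crit}, as required. Apart from (a)--(c), everything---the splitting of the Weil restriction, the reduction of $\phi$-invariant subspaces to invariant subspaces of the single operator $\psi$, and the Galois descent---is routine.
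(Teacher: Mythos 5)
Your reduction---base change to $\overline{K_v}$, splitting $V\otimes_{K_v}\overline{K_v}=\bigoplus_j V_j$ along idempotents with $\bar\phi$ cycling the factors, tying a $\phi$-invariant $W$ to a single $\psi$-invariant $W_0\subset V_0$, and descending the bad locus by Galois stability---is exactly what the paper does. Where you genuinely diverge is in how the resulting ``general position'' finiteness is established, and this is worth spelling out. The paper reduces to Lemma~\ref{finaux}, a single-symplectic-space statement about $r$ Lagrangians and an arbitrary proper nonzero $W$, and proves it by an explicit construction: in a standard symplectic basis it exhibits four Lagrangians $F_1,\ldots,F_4$ (spans of $e_i$, $e_i'$, $e_i+e_i'$, $e_i+2ie_i'$), shows via the maps $\Phi_{12;3},\Phi_{12;4}$ that only finitely many $W$ can satisfy all four intersection inequalities, and then chooses a generic $F_5$. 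This delivers $r\geq 5$---strictly better than the constant $\five=8$ that the Lemma is stated with. Your incidence-variety count replaces this with the estimate $r\min_{\dim U=e}c(U)>e(2d-e)$, where $c(U)$ is the codimension of the Schubert locus $\{F:\dim(F\cap U)\geq\lceil e/2\rceil\}$. Your closed formula $c(U_0)=k(d-e_0)+\tfrac12k(k+1)$ for isotropic $U_0$ of dimension $e_0\leq d$, $k=\lceil e_0/2\rceil$, agrees with the codimension $|\lambda|$ of the Schubert variety indexed by $\lambda_i=d+1-e_0+k-i$; the worst case is $e_0=d$, where the required $r$ tends to $8$ from below as $d\to\infty$, so $r\geq 8=\five$ is exactly what your method yields (and is the best it can yield uniformly in $d$). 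Two of your ``ingredients'' are routine, but (a)---that among subspaces $U$ of fixed dimension $e$ the isotropic (resp.\ coisotropic) ones minimize $c(U)$---is asserted without argument; it is true, but it wants, say, an upper semi-continuity argument applied to the incidence variety over $\mathrm{Gr}(V,e)$ together with the observation that the isotropic Grassmannian is the unique closed $\Sp$-orbit for $e\leq d$. One modest advantage of your route is that you never identify the $V_j$'s with one another, so you never need $\bar\phi$ to respect the symplectic forms (the Lemma assumes no such compatibility); you simply work in $\prod_j\LGr(V_j)$ and take the minimum of $c(U)$ over all $e$-planes. Your proof is thus sound modulo a written-out proof of (a), somewhat less economical than the paper's (which proves more, $r\geq 5$), but more conceptual and likely more portable to monodromy groups other than $\Sp$.
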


\proof
Just as in Lemma \ref{centralizer lemma},
$V \otimes_{K_v} \overline{K_v}$
splits into $2d$-dimensional spaces $V_1, \dots, V_r$  indexed by embeddings $L_w \hookrightarrow \overline{K_v}$;
we can order them so that $\phi$ induces isomorphisms $V_i \simeq V_{i+1}$ for $1 \leq i \leq r-1$,
and  thus can identify them all with $V_1$ (we do not use the ``cyclic'' isomorphism $V_r \simeq V_1$). 

The base extension $W \otimes_{K_v} \overline{K_v}$ of any $\phi$-invariant $L_w$-subspace
yields a subspace $\bigoplus W_i \leqslant \bigoplus V_i$, where each $W_i$ corresponds to $W_1$ under the above identifications. 
Similarly, the base extension of a  Lagrangian $L_w$-subspace  $F \leqslant V$ 
gives an subspace $\bigoplus F_i \leqslant \bigoplus V_i$, where each $F_i$ is Lagrangian. 
If \eqref{crit} is satisfied, then $\dim(F_i \cap W_i) \geqslant \frac{1}{2} \dim(W_i)$ for each $1 \leqslant i \leqslant  r$.

The next, and final, Lemma shows that the set of $(F_1 \dots, F_r)$
for which such a $W$ exists is a proper, Zariski-closed subset. Thus 
 there is a Zariski-open set inside
in $$\left( \mathrm{Res}^{L_w}_{K_v} \  \LGr(V,\omega) \right) \times_{K_v} \overline{K_v}$$
such that,  if $F$ belongs to this Zariski-open, it has
the property quoted in the statement. Taking the intersection of Galois conjugates
of this set, we get the desired Zariski-open inside  $\mathrm{Res}^{L_w}_{K_v} \  \LGr(V,\omega)$. 
 \qed
 
\begin{lemma}  \label{finaux} Let $(V, \omega)$ be a symplectic vector space over a field of characteristic zero  
 with $\dim(V) =2d$; write $\mathrm{LGr}(V, \omega)$
for the Grassmannian of Lagrangian subspaces. 
  Let $\mathrm{E}$ be the set of $r$-tuples of Lagrangian subspaces
   $$ (F_1, \dots, F_r) \in \LGr(V, \omega)^r$$
for which there exists a  proper nonzero subspace $W \subset V$ such that $\dim(F_j \cap W) \geqslant \frac{1}{2} \dim(W)$ 
for every $j$.
If $r \geqslant \five$  then $\mathrm{E}$ is contained in a proper, Zariski-closed subset of $\LGr(V, \omega)^r$.  
\end{lemma}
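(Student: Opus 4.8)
The plan is to exhibit $\mathrm{E}$ as a finite union of images of proper incidence varieties, and then to win by a dimension count. For each $k$ with $1\le k\le 2d-1$ set $m(k)=\lceil k/2\rceil$ and let
$$ I_k=\{(W,F_1,\dots,F_r)\in \mathrm{Gr}(k,V)\times\LGr(V,\omega)^r:\ \dim(F_j\cap W)\ge m(k)\ \text{for all }j\}. $$
The intersection conditions are rank conditions, so $I_k$ is Zariski closed; since $\mathrm{Gr}(k,V)$ is projective, the projection $\mathrm{E}_k\subset\LGr(V,\omega)^r$ of $I_k$ is Zariski closed, and evidently $\mathrm{E}=\bigcup_{k=1}^{2d-1}\mathrm{E}_k$. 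Because $\LGr(V,\omega)$ is a smooth irreducible homogeneous space under $\Sp(V,\omega)$, so is $\LGr(V,\omega)^r$; hence it suffices to prove $\dim\mathrm{E}_k<r\dim\LGr(V,\omega)=r\binom{d+1}{2}$ for every $k$, and since $\dim\mathrm{E}_k\le\dim I_k$ it is enough to bound $\dim I_k$.

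Next I would estimate $\dim I_k$ by projecting to the $W$-coordinate and stratifying $\mathrm{Gr}(k,V)$ by the rank $2\ell$ of $\omega|_W$ (equivalently by $\dim(W\cap W^\perp)=k-2\ell$). Each stratum $O_{k,\ell}$ is a single $\Sp(V,\omega)$-orbit, and parametrizing $W$ by its isotropic radical $W\cap W^\perp$ followed by a nondegenerate $2\ell$-dimensional subspace of the symplectic space $(W\cap W^\perp)^\perp/(W\cap W^\perp)$ gives an explicit formula for $\dim O_{k,\ell}$. Over a fixed $W\in O_{k,\ell}$ the fibre of $I_k$ is $(Z_W)^r$, where $Z_W=\{F\in\LGr(V,\omega):\dim(F\cap W)\ge m(k)\}$, and by homogeneity $\dim Z_W$ depends only on $(k,\ell)$. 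Parametrizing $F\in Z_W$ by the isotropic subspace $U=F\cap W\subset W$ and then by a Lagrangian of the symplectic space $U^\perp/U$, the standard dimension counts show the maximum is attained when $\dim(F\cap W)=m(k)$, giving $\dim Z_W=\nu_{k,\ell}+\binom{d-m(k)+1}{2}$, where $\nu_{k,\ell}$ is the dimension of the variety of $V$-isotropic $m(k)$-dimensional subspaces of $W$ (itself an elementary Grassmann/isotropic-Grassmann count in $k,\ell$). Thus $\dim I_k\le\max_\ell\bigl(\dim O_{k,\ell}+r\dim Z_W\bigr)$, and the whole Lemma reduces to checking the inequality $\dim O_{k,\ell}<r\,\codim_{\LGr(V,\omega)}(Z_W)$ for all admissible $(k,\ell)$.

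Finally I would verify this inequality. Substituting the formulas turns it into an elementary polynomial inequality in $d,k,\ell$. One checks that if $\dim(F_j\cap W)\ge\lceil k/2\rceil$ for all $j$ then $\dim(F_j\cap W^\perp)\ge\lceil(2d-k)/2\rceil$ for all $j$, so $\mathrm{E}_k=\mathrm{E}_{2d-k}$ and one may assume $k\le d$. The two extreme cases are $W$ Lagrangian (then $k=d$, $\ell=0$, $\dim O_{k,\ell}=\binom{d+1}{2}$, and the inequality reduces, writing $d=2e$ or $2e+1$, to something like $r\binom{e+1}{2}>\binom{d+1}{2}$, i.e.\ $r>4-\tfrac{2}{e+1}$) and $W$ nondegenerate (then $\codim_{\LGr(V,\omega)}(Z_W)=\ell(d-\ell)$ while $\dim O_{k,\ell}=4\ell(d-\ell)$, so the inequality reads $r>4$); the intermediate degeneracy types interpolate between these, and all of them hold once $r\ge\five$.

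I expect the one genuinely delicate point to be purely organizational: since $W$ is an arbitrary proper subspace rather than an isotropic one, one must carry the degeneracy type of $\omega|_W$ through both $\dim O_{k,\ell}$ and $\dim Z_W$, and then confirm that the finite list of resulting polynomial inequalities is satisfied for $r\ge\five$ and every $d\ge 1$. Everything else is routine Grassmannian dimension bookkeeping.
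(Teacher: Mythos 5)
Your approach is genuinely different from the paper's.  Both arguments start the same way: note that $\mathrm{E}$ is the image of a Zariski-closed incidence set under a proper projection, hence itself Zariski-closed.  From there the paper takes the short route: since $\mathrm{E}$ is closed it suffices to \emph{exhibit a single tuple not in $\mathrm{E}$}.  One writes down $F_1,\dots,F_4$ explicitly in a standard symplectic basis ($\operatorname{span}(e_i)$, $\operatorname{span}(e_i')$, $\operatorname{span}(e_i+e_i')$, $\operatorname{span}(e_i+2ie_i')$), shows by a short elementary argument that any admissible $W$ must have $W\cap F_1$ stable under $e_i\mapsto 2ie_i$ (hence only finitely many $W$ can work), and then chooses $F_5$ generically to rule out all of these.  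The paper even notes that this yields $r\geqslant 5$; the lemma is stated with $\five=8$ only because a larger bound is convenient elsewhere.  Your route instead bounds $\dim\mathrm{E}$ stratum by stratum: stratify $W$ by $(\dim W,\operatorname{rank}\omega|_W)$, compute $\dim O_{k,\ell}$, compute $\codim Z_W$, and reduce to the inequality $\dim O_{k,\ell}<r\,\codim Z_W$.  If carried to completion this would give a quantitative codimension bound for $\mathrm{E}$, which the paper's construction does not.

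However, there is a genuine gap: you have \emph{asserted}, not proved, the inequality for intermediate degeneracy types.  You verify it in the two extreme cases ($W$ Lagrangian and $W$ nondegenerate), obtain ratios $4-\frac{2}{e+1}$ and exactly $4$, and then write ``the intermediate degeneracy types interpolate between these'' -- but no argument is given that the ratio $\dim O_{k,\ell}/\codim Z_W$ is in fact bounded by its value at the extremes.  This is not as routine as you suggest.  Both sides depend on $(k,\ell)$ in a non-trivial way: $\dim O_{k,\ell}=k(2d-k)-\binom{k-2\ell}{2}$, while $\codim Z_W$ requires evaluating $\nu_{k,\ell}$, the maximal-dimension stratum of the isotropic $m(k)$-Grassmannian of a \emph{degenerate} symplectic space of dimension $k$ with radical of dimension $k-2\ell$ -- which itself requires a further stratification by $\dim(U\cap\operatorname{rad}W)$ and is not equidimensional.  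Spot checks (I tried several, e.g.\ $d=10$, $k=10$, $\ell=3$, or $d=6$, $k=6$, $\ell=2$, $s=2$) do give ratios below $4$, so the inequality is very likely true for $r\geqslant 5$, but ``plausible after spot checks'' is not a proof, and the required casework is substantially heavier than the paper's explicit construction.  To close the gap you would need to carry out the Grassmannian bookkeeping in full and verify the inequality for every admissible pair $(k,\ell)$ -- or, more efficiently, abandon the dimension count in favor of the paper's direct exhibition of a non-bad tuple.
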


\proof 
In fact our argument will show that $r \geqslant 5$ is enough.

First we argue that $\mathrm{E}$ is Zariski-closed.  Consider the product $\mathrm{Gr}(V) \times \mathrm{LGr}(V, \omega)^r$ parametrizing tuples $(W, F_1, F_2, \ldots, F_r)$ such that each $F_i$ is Lagrangian.  For each $i$, the dimension $\operatorname{dim} F_i \cap W$ is (Zariski) upper semicontinuous; so the set $\tilde{\mathrm{E}}$ of tuples satisfying the conditions described is closed.  Now $\mathrm{E}$ is the image of the closed set $\tilde{\mathrm{E}}$ under a proper map, so it is itself closed.

Since $\mathrm{E}$ is closed it's enough to produce a single tuple $(F_1, \ldots, F_r)$ not in $E$.
  
Take $e_1, \ldots, e_d, e_1', \ldots, e_d'$ a standard symplectic basis for $V$, so $\langle e_i, e_i' \rangle = 1$, and $\langle e_i', e_i  \rangle= -1$, and all other pairings between basis vectors are zero.  Let 
\begin{eqnarray*}
F_1 & = & \operatorname{span} (e_1, e_2, \ldots, e_d)\\
F_2 & = & \operatorname{span} (e_1', e_2', \ldots, e_d')\\
F_3 & = & \operatorname{span} (e_1 + e_1', e_2 + e_2', \ldots, e_d + e_d')\\
F_4 & = & \operatorname{span} (e_1 + 2 e_1', e_2 + 4 e_2', \ldots, e_d + 2d e_d').
\end{eqnarray*}
Now each of these four spaces is maximal isotropic, and any two of them have trivial intersection.  

Write $\pi_{12}: V \rightarrow F_1$ for the projection along the decomposition $V = F_1 \oplus F_2$,
and similarly define $\pi_{21}: V \rightarrow F_2$. 
Both $\pi_{12}$ and $\pi_{21}$ are isomorphisms when restricted to either $F_3$ or $F_4$. 
Write 
$\Phi_{12;3}: F_1 \rightarrow F_2$ for the isomorphism
$$ F_1 \stackrel{\pi_{12}^{-1}}{\longleftarrow} F_3 \stackrel{\pi_{21}}{\longrightarrow} F_2.$$
In explicit coordinates $\Phi_{12;3}$ takes $e_i$ to $e_i'$, and the similar map $\Phi_{12;4}$ takes $e_i$ to $2i e_i'$.

We claim that only finitely many $W$ can satisfy the condition  stated in the Lemma with respect to $F_1, F_2, F_3, F_4$. 
Suppose given such a $W$.  Since $W \cap F_1$ and $W \cap F_2$ have trivial intersection with each other, and they each have dimension at least $\frac{1}{2} \dim(W)$, we have  a direct sum decomposition
\begin{equation} \label{split}
W  =   \left(W \cap F_1\right) \oplus  \left(W \cap F_2\right)
\end{equation}
and an equality $\dim(W \cap F_1) = \dim(W \cap F_2) = \frac{1}{2} \dim{W}$.
Similarly, we find that $\dim(W \cap F_3) = \dim(W \cap F_4) = \frac{1}{2} \dim{W}$.

Next $\pi_{12}$ gives an isomorphism $F_3 \rightarrow F_1$; comparing dimensions, we see the restriction
$$ \pi_{12}: W \cap F_3  \stackrel{\sim}{\longrightarrow} W \cap F_1$$
is an isomorphism as well. 
Similarly $\pi_{21}: W \cap F_3 \stackrel{\sim}{\longrightarrow} W \cap F_2$. 
  
In particular, $\Phi_{12,3}$ carries $W \cap F_1$ isomorphically to $W \cap F_2$.
The same reasoning applies to $\Phi_{12,4}$. Therefore, $W \cap F_1$
is stable under
$\Phi_{12,4}^{-1} \Phi_{12,3}$, which shows that 
$W \cap F_1 \subseteq F_1$ is stable under the map $e_i \mapsto 2 i e_i$.  
  
There are then finitely many possibilities for $W \cap F_1$;
then there are also finitely many possibilities for $W \cap F_2 = \Phi_{12,3} (W \cap F_1)$
and then by \eqref{split} finitely many possibilities for $W$; call them 
$W_1, \ldots, W_N$.

Now, for  each $W_i$, the condition that $\dim(F_5 \cap W_i) \geqslant \frac{1}{2} \dim(W_i)$ cuts out a proper Zariski-closed subset of the Lagrangian Grassmannian parametrizing $F_5$; 
thus we may choose $F_5$ so that no $W$ satisfies the dimension bound.
\qed

\section{The Kodaira--Parshin family}

\label{rationalpoints}

The argument that we have given for Mordell's conjecture in 
Section  \ref{Mordelloutline}
made use of a specific abelian-by-finite family, the Kodaira--Parshin family. In this section
we explain how to construct this family, making use (in effect)  of an algebraic version of the theory of Hurwitz spaces. 
We need this theory only in characteristic zero. 
  
\subsection{Hurwitz spaces for curves} \label{Hurwitz1}
     \begin{prop} \label{hurwitz} 
  Let $Y$ be a curve of genus at least $2$ over a number field $K$, and let $G$  
  be a center-free finite group. 
  Then there is a $K$-curve $Y'$ equipped with an {\'e}tale map $\pi: Y' \rightarrow Y$, 
  and a relative curve $Z \rightarrow Y'$, with the following properties:
  \begin{itemize}
  \item[(i)] ``$Y'$ parameterizes $G$-covers of $Y$ branched at a single point'': For $y \in Y(\bar{K})$,
there is a bijection between $\pi^{-1}(y)$ and the set of $G$-conjugacy classes of surjections $\pi_1^{\mathrm{geom}}(Y- y,*)  \twoheadrightarrow G$
nontrivial on a loop around $y$.   
Moreover, if $y \in Y(K)$, this identification is $G_K$-equivariant. 
  \item[(ii)]   ``$Z$ gives the universal $G$-cover of $Y$ branched at a single point'':   
  There is a morphism $Z \rightarrow  Y' \times Y$
  of relative curves over $Y'$ (here, we are regarding $Y' \times Y$ as 
  the trivial family of curves over $Y'$, with fiber $Y$ everywhere). 
  
Moreover $G$ acts on $Z$ covering the trivial action on $Y' \times Y$. 
This action makes $Z \rightarrow Y' \times Y$ into a $G$-covering away from the graph of $\pi$.  
If we take the fiber of this morphism of relative curves above   $y' \in Y'(\bar{K})$, the resulting
   map $Z_{y'} \rightarrow Y$  of curves is ramified exactly at $\pi(y')$.  The induced homomorphism
   $$\pi_1^{\mathrm{geom}}(Y- y',y_0) \rightarrow \Aut_G(Z_{(y', y_0)}) \cong G$$  
is exactly (in the conjugacy class of) the surjection from (i) classified by $y'$. 
\end{itemize}
\end{prop}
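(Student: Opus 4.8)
The plan is to construct $Y'$ as a Hurwitz space and $Z$ as the universal branched cover, working in the algebraic category in characteristic zero. First I would set up the Hurwitz space: for a center-free finite group $G$, consider pairs $(C \to Y, \text{extra data})$ where $C \to Y$ is a connected $G$-Galois cover branched over a single point of $Y$. Over $\mathbf{C}$ the moduli of such objects is classically the Hurwitz space, which (since $G$ is center-free, so such covers have no nontrivial automorphisms fixing $Y$ pointwise, making the moduli problem rigid) is a manifold finite and \'etale over $Y$ via the branch-point map. To descend this to $K$, I would use the algebraic theory of the \'etale fundamental group: form the scheme $Y' \to Y$ whose fiber over a geometric point $y$ parametrizes the set in (i). Concretely, letting $\eta$ be the generic point, the finite quotients of $\pi_1^{\mathrm{geom}}(Y - \{\text{pt}\})$ isomorphic to $G$ and nontrivial around the puncture form a constructible sheaf on $Y$ (sections over an \'etale open $U$ classify such covers of $Y_U$ branched along a section), and representability of this sheaf by a scheme $Y'$ finite \'etale over $Y$ follows from standard descent for the section category of a finite \'etale sheaf. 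The $G_K$-equivariance in (i) is then automatic from the construction over $K$ rather than $\overline{K}$.

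Next I would build $Z$. Over $Y'$ we have, by construction, a universal family: $Y'$ carries a tautological surjection from the relative geometric fundamental group of the complement of the graph $\Gamma_\pi \subset Y' \times Y$ of $\pi$ (thought of as a section of the constant family $Y' \times Y \to Y'$) onto $G$. Apply Riemann existence / the algebraic theory of the \'etale fundamental group in the relative setting (Grothendieck, SGA1, plus the fact that in characteristic zero tame = \'etale away from the branch locus lifts to the total space): this tautological surjection defines a finite \'etale $G$-cover of $(Y' \times Y) - \Gamma_\pi$, which extends uniquely to a normal — hence, since everything is smooth of relative dimension one over $Y'$ and the branching is tame, smooth — relative curve $Z \to Y' \times Y$, ramified exactly along $\Gamma_\pi$. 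The $G$-action on $Z$ covering the trivial action downstairs, and the identification $\operatorname{Aut}_G(Z_{(y',y_0)}) \cong G$, come along with this construction; that the induced monodromy homomorphism $\pi_1^{\mathrm{geom}}(Y - \{y'\}, y_0) \to G$ is the one classified by $y'$ is exactly the defining property of how $Y'$ represents the moduli functor, so it holds tautologically once the earlier identifications are set up carefully.

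The main obstacle I expect is not any single deep theorem but the careful bookkeeping needed to make the moduli interpretation precise over a non-algebraically-closed base and to guarantee that $Y'$ is literally a scheme (not just a stack or a pro-object) — this is where center-freeness of $G$ is essential, since it rigidifies the moduli problem by killing automorphisms, and where one must invoke the comparison between the topological picture over $\mathbf{C}$ (loops, Hurwitz moves) and the algebraic \'etale $\pi_1$. A secondary technical point is checking that the cover $Z$ extends across the branch divisor $\Gamma_\pi$ to something smooth and proper over $Y'$: this uses tameness (automatic in characteristic zero) and the purity/Abhyankar-type statement that a cover of a smooth surface \'etale outside a smooth divisor, tamely ramified along it, has smooth normalization. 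Both of these are standard, so the proof is essentially an assembly of known results — Riemann existence theorem, rigidity of Hurwitz spaces, and descent — with the work lying in stating the moduli functor precisely enough that (i) and (ii) become formal consequences.
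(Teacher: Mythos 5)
Your proposal follows essentially the same route as the paper: build the Hurwitz space analytically over $\C$ (rigidity from center-freeness), algebraize via GAGA, and then descend to $K$ using the \'etale fundamental group, completing $Z$ by normalization across the branch divisor. The one place where you are materially vaguer than the paper is the descent step. You assert that representability of the moduli sheaf over $Y$, and with it the $G_K$-equivariance in (i), ``follows from standard descent for the section category of a finite \'etale sheaf,'' but this is precisely where the real work lies. The paper handles it by working not over $Y$ but over $Y^2 - \Delta$ (so that one coordinate serves as the branch point and the other as a basepoint), and then invoking the exact sequence of geometric fundamental groups $1 \to \pi_1^{\mathrm{geom}}(Y - y_0) \to \pi_1^{\mathrm{geom}}(Y^2 - \Delta) \to \pi_1^{\mathrm{geom}}(Y) \to 1$ coming from the fibration, together with a group-theoretic uniqueness lemma: the action of $\Gamma \times G$ on the set of surjections $\Gamma \twoheadrightarrow G$ (nontrivial on the distinguished conjugacy class) extends \emph{uniquely} to any overgroup $\widetilde{\Gamma} \supset \Gamma$ normalizing $\Gamma$ and preserving the conjugacy class, \emph{because} $G$ is center-free, so each surjection is determined by its stabilizer in $\Gamma \times G^{\mathrm{op}}$. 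This uniqueness is what lets the complex-analytic cover of $Y^2 - \Delta$ descend canonically to $(Y^2 - \Delta)_K$, and then factoring the quotient by $G$ through $Y'_K \times Y_K$ produces the finite \'etale $Y'_K \to Y_K$. Your sketch acknowledges this bookkeeping as the expected obstacle, so there is no wrong turn in the argument, but a complete proof would need to supply this uniqueness lemma (or something equivalent) rather than cite ``standard descent.''
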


There are several references on this matter that address much more general settings (e.g.  \cite[\S 3.22]{Mochizuki}) but
since none of them give the precise statement we need, we will simply  outline a direct proof,  descending from the complex analytic analogue, in  \S \ref{Hurwitzproof}. 
  
Now we apply this to the group $G =\Aff(q)$: 

\begin{dff} \label{KPF}
  Let $Y$ be a curve of genus at least $2$ over a number field $K$, and let $q$ be a prime number. 
  The    \emph{Kodaira-Parshin  curve family} over $Y$ with parameter $q$ will be
  the sequence of morphisms  
\begin{equation} \label{KP curve} Z_q \longrightarrow  Y'_{q} \longrightarrow Y,\end{equation} 
obtained from Proposition \ref{hurwitz}  applied to the group $G = \Aff(q)$.
\end{dff}

We now want to form an associated abelian-by-finite family to the Kodaira--Parshin curve family.

\subsection{Prym varieties}  \label{Prym1} %
We first describe the situation fiberwise: 

Given a morphism $C_1 \rightarrow C_2$ of curves over an algebraically closed field,
the associated \emph{Prym variety} is the cokernel of the induced map
$\mathrm{Pic}^0(C_2) \rightarrow \mathrm{Pic}^0(C_1)$ on Jacobians. 

Now suppose that that the covering $C_1 \rightarrow C_2$ is 
Galois, with Galois group $\Aff(q)$, and ramified over exactly one point of $C_2$.   The degree of this covering is $q(q-1)$.
Rather than take its Prym directly, however, 
we prefer to use a reduced version. Namely, 
we can form a smaller degree-$q$ covering $C_1' \rightarrow C_2$ using the permutation action of $\Aff(q)$ on $\Z/q\Z$,
and we are interested in the Prym variety of this associated covering: %
\begin{equation} \label{rpdef}  \mathrm{coker}(\Pic^0(C_2) \rightarrow \Pic^0(C_1'))\end{equation}
We emphasize again that this is {\em not} the Prym variety of $C_1 \rightarrow C_2$ but a ``reduced'' version of it
where the role of $C_1$ has been replaced by $C_1'$.

We can reformulate this in terms of $C_1$, rather than the associated curve $C_1'$. 
Both $\mathrm{Pic}^0(C_2)$ and $\mathrm{Pic}^0(C_1')$
map to $\mathrm{Pic}^0(C_1)$, with finite kernel. 
The image of $\mathrm{Pic}^0(C_2)$ in $\mathrm{Pic}^0(C_1)$
is now the connected component of the $\Aff(q)$-invariants; similarly   the image of $\mathrm{Pic}^0(C_1')$ in $\mathrm{Pic}^0(C_1)$
is the connected component of the invariants by the subgroup $H_{q} = (\Z/q\Z)^*$,
which  is a point stabilizer in the permutation  action of $\Aff(q)$ on $\Z/q\Z$. 
 In summary, then, the Prym variety of $C_1' \rightarrow C_2$ is  isogenous to 
cokernel of the  map %
$$  \mbox{connected component of  $\Pic^0(C_1)^{G_q}$} \rightarrow  \mbox{connected component of $\Pic^0(C_1)^{H_q}$}.$$
 This is an abelian variety of dimension $(2g-1) \cdot \frac{q-1}{2}$, isogenous to \eqref{rpdef}. 

We may alternately describe this as follows: Form the idempotent
$$  e := \frac{1}{\# H_{q}} \sum_{h \in H_q} h - \frac{1}{\# \Aff(q)} \sum_{g \in \Aff(q)} g \in \Q[\Aff(q)]$$
and let $e' = 1-e$ be the complementary idempotent.  Then $e'' := \# \Aff(q) \cdot e' \in \Z[\Aff(q)]$
acts on $\Pic^0(C_1)$,  and moreover the connected component of its kernel is isogeneous to the Prym variety described above:
\begin{equation} \label{alternate def} \mbox{connected component of $ \Pic^0(C_1)[e'']$} \stackrel{\mathrm{isog.}}{\longrightarrow} \left(\mbox{Prym for $C_1' \rightarrow C_2$}\right).\end{equation}

Equation \ref{alternate def} gives a way to access the `reduced'' Prym variety (at least up to isogeny) that we can conveniently apply in our relative situation: 
In the situation described in Definition \ref{KPF},  $Z_q \rightarrow Y'_{q}$ is a relative curve over $Y'_{q}$ and it admits a $\Aff(q)$-action, where $\Aff(q)$
acts trivially on the base.  The relative Picard scheme of this curve is an abelian scheme over $Y'_{q}$
equipped with a symmetric and fiberwise ample line bundle. Thus we may form
 $$ X_q =  \mbox{ relative identity component of } \Pic^0_{Z_q \rightarrow Y'_{q}} [ e''],$$
where $[e'']$ means the kernel of $e''$, and for the notion of ``relative identity component,'' see \cite[Proposition 15.6.4]{EGAIV}.
This $X_q$ is an abelian scheme over $Y'_q$, equipped with a symmetric and fiberwise ample line bundle;
its fiber over any $y \in Y'_q(\bar{K})$ coincides with the construction on the left hand side of \eqref{alternate def}; in particular this fiber is isogenous to
the  reduced Prym variety of the associated $\Aff(q)$-covering  $Z_{y} \rightarrow Y$. 

\begin{dff} \label{KPfam} Notation as in the prior definition. 
The {\em Kodaira--Parshin family of Jacobians} over $Y$,  associated to the group $\Aff(q)$, is the 
sequence of morphisms 
 $$X_{q} \longrightarrow Y'_{q} \rightarrow Y,$$
where $X_q$ is, as defined above,  the
reduced relative Prym of  $Z \rightarrow Y_q' \times Y$, considered as a morphism of relative cuves over $Y_q'$. 
  \end{dff}
This is an abelian-by-finite family, in the sense of Definition \ref{aff family}.

\subsection{Proof of Proposition \ref{hurwitz}} \label{Hurwitzproof}
We give the proof of Proposition \ref{hurwitz}. As we have mentioned this is largely for lack of a good reference which states
precisely what we need; certainly much more general statements about Hurwitz schemes exist in the literature. 

We start by supposing that $Y$ is a proper smooth curve over $\C$;
while we work over $\C$ we identify $Y$ with its complex points.

For $y \in Y$ set \label{Sydefpage}
$S(y)$ to be the set of conjugacy classes of surjective homomorphisms from
$\pi_1(Y-y, *) \twoheadrightarrow G$, with the property that a loop around $y$ has nontrivial image.
Equivalently, $S(y)$ is the finite set of isomorphism classes of connected coverings
of  $Y$ with Galois group $G$, branched  precisely at $y$.  

For $y$ near $y^*$ 
there is a natural identification $S(y) \cong S(y^*)$ since we can topologically identify $(Y, y)$ and $(Y, y^*)$. 
Thus the set $\coprod_{y \in Y(\C)} S(y)$ has the structure of a Riemann surface
$Y'$ equipped with a covering map $e: Y' \rightarrow Y$. 
Explicitly, for each $y' \in Y'$, we have $y' \in S(e(y'))$, or in words:
$y'$ classifies a connected $G$-covering of $Y$  branched at $y = e(y')$. 

Moreover, the coverings indexed by the elements of $S(y)$ fit together to a morphism
$$ f: Z \rightarrow Y' \times Y$$
of smooth complex manifolds; here $G$ acts on $Z$, covering the trivial action on $Y' \times Y$. 
More explicitly:
\begin{itemize}
\item   $f$ is a covering map and a $G$-torsor
when restricted to the complement of the analytic divisor
$$ \Delta := \mbox{graph of $e$} \subset Y' \times Y$$ 
\item the pullback of the above morphism along ${y'} \times Y \hookrightarrow Y' \times Y$
(for $y' \in Y'$) is isomorphic to the covering of $Y$
classified by $y'$.   
\end{itemize}
Near the preimage of $\Delta$ on $Z$
the map looks in local coordinates like $(z,w) \mapsto (z, w^n)$ for suitable $n$. 

Now everything can be algebraized, i.e.\ $Z$ and $Y'$ have unique
structures of complex algebraic variety compatible with their analytic structures, and the 
$G$-action on $Z$ as well as the morphisms $Z \rightarrow Y' \times Y$
and $Y' \rightarrow Y$ are
algebraic.  This is clear for $Y'$; also the 
the structure sheaf of $Z$ defines a coherent analytic sheaf on $Y' \times Y$
which can be made algebraic by  GAGA  (\cite[Theorem 3]{GAGA}); similarly
the algebra structure on this coherent analytic sheaf comes from an algebra structure on the
algebraic sheaf   \cite[Theorem 2]{GAGA}.   

We now switch to using the letters $Z, Y, \dots$ for the complex algebraic varieties, rather than the associated analytic spaces. 
So we have defined a sequence of complex algebraic varieties
\begin{equation} \label{ZZ} Z \stackrel{f}{\longrightarrow} Y'\times Y   \stackrel{e \times \mathrm{id}}{\longrightarrow}   Y \times Y\end{equation}
where $f$ is {\'e}tale away from the graph of $e$, and $e$ is {\'e}tale; the composite
$Z \rightarrow Y^2$ is therefore
  {\'e}tale away from  the diagonal $\Delta$.  (Note that it is equivalent to check {\'e}tale in the algebraic
  and analytic settings, see \cite[XII, \S 3]{SGA1}). 
  
Now suppose that $Y$ is actually defined over a subfield $K \subset \C$;
we denote by $Y_K$ the corresponding $K$-scheme (similarly $(Y^2)_K$, etc.);
we want now to descend everything in sight to $K$.   
 
\begin{lemma} \label{kp_claim}
Write $Z^{\circ}$
for the preimage of $Y^2-{\Delta}$ in $Z$.
\begin{itemize}
\item[(1)]  The  {\'e}tale  cover $F: Z^{\circ} \rightarrow Y^2-\Delta$  
can  be uniquely extended to a cover
 $F_K: Z_K^{\circ} \rightarrow (Y^2-\Delta)_K$. (In both cases, these {\'e}tale covers are understood to be {\em equipped with $G$-action}.)
 \item[(2)] Let $(y_1, y_0) \in Y(\bar{K})^2$, with $y_1 \neq y_0$. The geometric fiber
 $$F_K^{-1}(y_1, y_0)/G$$ is identified with the set  $S(y_0)$, as defined above,
 now using {\'e}tale $\pi_1^{\geom}(Y-y_0, y_1)$. 
 If $(y_1, y_0) \in Y(K)^2$ this identification is equivariant for $G_K$. 
 \item[(3)]  The quotient $Z_K^{\circ}/G$ (which is {\'e}tale over $(Y^2-\Delta)_K$)
extends uniquely to an {\'e}tale cover of $Y_K^2$.  This cover is isomorphic to one of the form
 $Y'_K \times Y_K \rightarrow Y_K^2$ for an {\'e}tale cover $Y'_K \rightarrow Y_K$, 
 such that $Y'$ is the base change of $Y'_K$ to $\C$. 
 \end{itemize}
\end{lemma}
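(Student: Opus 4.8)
The plan is to prove Lemma \ref{kp_claim} by descent from $\C$ to $K$, exploiting the fact that the analytic construction of $Z, Y', f, e$ was manifestly functorial and that everything in sight becomes rigid once we pass to the \'etale-covers-with-$G$-action over the appropriate complements of diagonals. Throughout, the guiding principle is that a finite \'etale cover of a normal variety extends uniquely across a divisor once one knows the covering ``closes up'' correctly in codimension one, and that finite \'etale covers of a $K$-variety are a full subcategory of those of its base change to $\C$ that descends along $\Spec\C\to\Spec K$ by faithfully flat descent plus the Artin comparison of \'etale fundamental groups.

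First I would address (1). The cover $F:Z^\circ\to Y^2-\Delta$ is finite \'etale, equipped with a $G$-action making it a $G$-torsor (this is visibly true analytically, hence algebraically over $\C$). Over $\C$ it corresponds to a continuous $G$-torsor on $\pi_1^{\mathrm{top}}(Y^2-\Delta)$; by the comparison isomorphism $\pi_1^{\et}((Y^2-\Delta)_{\C})\cong\widehat{\pi_1^{\mathrm{top}}}$ and the fact that finite \'etale covers of a smooth $\C$-variety with a fixed $K$-model descend iff the corresponding open subgroup of $\pi_1^{\et}$ is $\Gal(\C/K)$-stable, I would argue the $G$-torsor is already defined over $K$. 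Concretely: the $G$-torsor $F$ is determined by a surjection from $\pi_1^{\geom}(Y^2-\Delta, *)$ whose kernel is normal in the full arithmetic $\pi_1^{\arith}$ — this is automatic because the construction of $S(y)$ is purely in terms of the geometric fundamental group of the fiber $Y-y$ and the quotient group $G$, with no auxiliary choices, so it is preserved by the outer action of $G_K$. Thus there is a unique $F_K:Z_K^\circ\to(Y^2-\Delta)_K$ with $G$-action base-changing to $F$; uniqueness is because two such descents differ by an automorphism of the torsor that is trivial over $\C$, hence trivial.

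For (2), I would use the standard identification of the geometric fiber of a finite \'etale cover with the orbit of the corresponding $\pi_1$-set. The fiber $F_K^{-1}(y_1,y_0)$ is, as a $G$-set with commuting $G_{K(y_1,y_0)}$-action, the $G$-torsor $\pi_1^{\geom}(Y^2-\Delta,(y_1,y_0))\to G$ restricted along the inclusion of the fiber; quotienting by $G$ and using that $\pi_1^{\geom}(Y-y_0,y_1)$ surjects onto this (a point, not a loop, on the first factor) recovers exactly the set $S(y_0)$ of conjugacy classes of surjections nontrivial around $y_0$. The $G_K$-equivariance for $(y_1,y_0)\in Y(K)^2$ is then formal from functoriality of the Artin comparison and of $F_K$. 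For (3), $Z_K^\circ/G$ is finite \'etale over $(Y^2-\Delta)_K$ and extends uniquely to a finite \'etale cover of $Y_K^2$, by the same descent argument applied to the already-known analytic extension $Y'\times Y\to Y^2$, plus purity/unique-extension of finite \'etale covers across the smooth divisor $\Delta$ (Zariski--Nagata purity, valid since $Y_K^2$ is regular). That the extension has the product form $Y'_K\times Y_K$ follows because over $\C$ it has the product form $Y'\times Y$ — i.e. the cover is pulled back from the first projection — and ``being pulled back along $\mathrm{pr}_1$'' is a property of the corresponding $\pi_1$-set (the second-factor inertia/monodromy acts trivially) which is Galois-stable; then $Y'_K$ is recovered as the pullback along any $K$-point section $Y_K\to Y_K^2$, and its base change to $\C$ is $Y'$ by construction.

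The main obstacle I anticipate is step (1) — cleanly justifying that the geometric $G$-torsor $F$ descends to $K$ \emph{together with its $G$-action and with the specific branching behaviour encoded}, rather than merely as an abstract \'etale cover. The subtlety is that $G_K$ acts on $\pi_1^{\et}((Y^2-\Delta)_{\C})$ only as outer automorphisms, so ``$G$-torsor'' must be interpreted as a surjection to $G$ well-defined up to conjugacy, and one must check the kernel is genuinely characteristic enough to be $G_K$-stable; the reason it is, is that $S(y_0)$ was defined intrinsically from $(Y-y_0)$ and $G$ with no transcendental input, so the outer Galois action permutes the elements of $S(y_0)$ but does not change the isomorphism type of the total cover $Z^\circ$. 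Once this rigidity is pinned down, the uniqueness statements and parts (2), (3) are bookkeeping with the comparison theorem and purity, and I would keep those brief.
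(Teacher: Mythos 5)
Your plan of attack -- descend from $\C$ to $K$ via comparison of fundamental groups and Galois-stability -- is the right spirit, but there are two substantive problems with the way you have set it up. First, $F: Z^{\circ} \to Y^2-\Delta$ is \emph{not} a $G$-torsor, and treating it as one corrupts the argument throughout. It is a finite \'etale cover of degree $|G|\cdot |S(y)|$ that factors as a $G$-torsor over $Y'\times Y$ followed by a further cover; the fiber is the $\pi_1 \times G$-set $S(\Gamma, c, G)$, not a $G$-torsor. Consequently there is no ``surjection from $\pi_1^{\geom}(Y^2-\Delta,*)$ with normal kernel'' encoding $Z^{\circ}$; the object you must descend is a $\pi_1^{\geom}\times G$-set, and what needs proving is that this action extends to a $\pi_1^{\arith}\times G$-action. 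Second, you correctly flag the outer-action issue as the main obstacle, but your resolution (``$S(y_0)$ is intrinsic, so the outer $G_K$-action permutes it'') is precisely the thing that needs a proof, not the proof itself. The paper supplies two nontrivial ingredients here that your sketch omits: (i) a fibration sequence establishes that $\Gamma = \pi_1^{\geom}(Y-y_0, y_1)$ is \emph{normal} inside the arithmetic $\tilde\Gamma = \pi_1((Y^2-\Delta)_K, \mathbf{y})$, and this requires knowing that profinite completion preserves exactness of the topological fibration sequence (Anderson's theorem is cited); (ii) the $\Gamma\times G$-action on $S$ extends \emph{uniquely} to any overgroup preserving the conjugacy class $c$, because a surjection $\varphi: \Gamma\twoheadrightarrow G$ to a center-free group is determined by its stabilizer in $\Gamma\times G^{\mathrm{op}}$. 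Your ``uniqueness because automorphisms trivial over $\C$'' does not engage with this rigidity at all -- and it is exactly where the hypotheses that $G$ is center-free and $\varphi$ is surjective are consumed. Without the stabilizer argument, the ``unique extension'' could fail.

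For part (3), your route through Zariski--Nagata purity can be made to work (\'etaleness of the normalization can be checked after base change to $\C$), but it is heavier than necessary. The paper instead observes that $\Gamma$ acts trivially on $S/G$, so the $\tilde\Gamma$-action on the quotient factors through $\pi_1(Y_K, y_0)$ via the last map of the fibration sequence; this simultaneously produces the cover $Y'_K \to Y_K$, the product structure $Y'_K\times Y_K$, and the extension across $\Delta$, with no appeal to purity. Your argument that ``being pulled back along a projection'' is a Galois-stable condition is sound in principle, but needs the same normality-of-$\Gamma$ input that your sketch did not supply. In sum: the overall strategy is compatible with the paper's, but the proposal skips the two load-bearing facts (the exactness of the completed fibration sequence, and the rigidity of surjections to a center-free group) and misstates the structure of the cover as a torsor.
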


Assume Lemma \ref{kp_claim} (the proof, which will be given in a moment, will involve only the theory of {\'e}tale $\pi_1$ and group theory).   It produces a sequence $Z_K^{\circ} \rightarrow Y'_K \times Y_K \rightarrow Y_K^2$; 
 we need  to extend $Z_K^{\circ}$ to a $K$-structure on all of $Z$, and extend the first map accordingly.

Let $Z_K \rightarrow Y^2_K$ be the normalization of $Y_K^2$ inside the fraction field of 
$Z_K^{\circ}$.  Then $Z_K$ is normal, and finite over $Y_K^2$. 
The base extension $Z_K \otimes_{K} \C$
is therefore also normal (the extension of a normal scheme
along a field extension in characteristic zero is normal -- see \cite[Tag 037Z]{StacksProject} or \cite[Cor. 6.14.2]{EGAIVb}), 
and it is finite over $Y^2$. Consequently, $Z_K \otimes_{K} \C$
coincides with the normalization of $Y^2$ in the function field of $Z^{\circ}$.
This latter normalization is identified with $Z$, for $Z$ is also
normal and finite over $Y^2$. 

The morphism $Z_K^{\circ} \rightarrow  Z_K^{\circ}/G \rightarrow Y'_K \times Y_K$ now extends to $Z_K \rightarrow Y'_K \times Y_K$,
and the other desired properties can be verified since they are true over $\C$. 

\proof[Proof of Lemma \ref{kp_claim}]
We do this by means of  the theory of the {\'e}tale fundamental group. 
We first formulate the basic point in purely group theoretic terms.

Let $\Gamma, G$ be groups, with $G$  finite center-free,
and $c$ a conjugacy class of morphisms  in $\Hom(\Lambda,\Gamma)$ for some other group $\Lambda$; 
when we apply this, $\Gamma$ will be a $\pi_1$ of a punctured curve,
$\Lambda$ will be the profinite completion of an infinite cyclic group, and $c$ will come 
from monodromy around the puncture.  
Consider the set $S = S(\Gamma, c, G)$ of all surjective homomorphisms
$\varphi: \Gamma \rightarrow G$, with the property that they are nontrivial when pulled back by $c$. 
There are natural commuting actions of $\Gamma$ and $G$ on $S$:
$$ \gamma \cdot \varphi =  \varphi \circ \mathrm{Ad}(\gamma)^{-1} \ (\gamma \in \Gamma), \ \ \varphi \cdot h =     \mathrm{Ad}(h^{-1}) \circ \varphi \ \ (h \in G).$$
where we've written $\Ad(x)$ for the automorphism $g \mapsto x g x^{-1}$. 
 
This  $\Gamma$-action  extends  {\em uniquely}    to an action (commuting with $G$) of any overgroup $\widetilde{\Gamma} \supset \Gamma$ in which
$\Gamma$ is normal and whose  conjugation action preserves $c$. Indeed the extension is described by exactly the same formula;
uniqueness comes from the fact 
that the stabilizer of $\varphi \in S(\Gamma, c, G)$ inside $\Gamma \times G^{\mathrm{op}}$ is given by
$$ \{(\gamma \in \Gamma, h \in G): h^{-1} = \varphi(\gamma)\},$$ 
and so $\varphi$ is determined by its stabilizer in $\Gamma \times  G^{\mathrm{op}}$.  (We used that $\varphi$ is surjective and that $G$ is center-free.)
    
We apply this as follows. 
As above, fix two points $y_0 \neq y_1 \in Y(\C)$;
we will use $\mathbf{y} = (y_1, y_0) $ as  a  geometric basepoint for $Y \times Y$. 
Consider the sequence of pointed schemes:
\begin{equation} \label{pss}  \underbrace{ (Y-\{y_0\}, y_1)}_{\Gamma := \pi_1} \stackrel{p \mapsto (p,y_0)}{\longrightarrow}  \underbrace{ (Y^2 - \Delta,  \mathbf{y}) }_{\tilde{\Gamma}^{\geom} := \pi_1}\stackrel{(y, y') \mapsto y'}{\longrightarrow} (Y, y_0)\end{equation}  
and let $\Gamma, \tilde{\Gamma}^{\geom}$ be defined as the geometric {\'e}tale $\pi_1$ of the first and second spaces, at the specified basepoints. 
Now the  long exact sequence  for homotopy groups of a fibration
gives rise to an exact sequence of  {\em  topological} fundamental groups; in the setting at hand this is short exact because the $\pi_2$ of $Y-\{y_0\}$ vanishes.   
The corresponding sequence of 
geometric {\'e}tale fundamental groups is obtained by profinite completion; it  remains exact 
by  the results of \cite{ProfiniteGroups}. Therefore the first map above  identifies $\Gamma$ with a normal subgroup of $\tilde{\Gamma}^{\geom}$.
It follows easily that, if we write 
$$ \tilde{\Gamma} = \pi_1((Y^2-\Delta)_K,\mathbf{y}),$$
(arithmetic fundamental group) then the map $\Gamma \rightarrow \tilde{\Gamma}^{\geom}$
identifies $\Gamma$ to a normal subgroup of $\tilde{\Gamma}$. 

Now let $S=S(\Gamma, c, G)$ be as above,
where $c$ is the conjugacy class of maps $\widehat{\Z} \rightarrow \pi_1(Y-\{y_0\}, y_1)=\Gamma$
arising from the monodromy around $y_0$.  
The commuting $\Gamma \times G$ actions on $S$ define a cover of $Y-\{y_0\}$, equipped
with an action of $G$ by automorphisms, whose fiber at $y_1$ is identified with $S$.
This cover may be described as follows: it is the disjoint union of all the
connected $G$-covers of $Y$ branched precisely   at $y_0$. 
In other words, it is the restriction of $Z \rightarrow  Y^2-\Delta$
to the fiber $\{y_0\} \times (Y-\{y_0\})$. From the uniqueness just described, the 
extension of this $\Gamma \times G$ action on $S$ to an action
of $\widetilde{\Gamma}^{\geom} \times G$ corresponds to 
the cover $Z \rightarrow Y^2-\Delta$. Therefore, the  (further) unique extension of 
the $\Gamma \times G$-action on $S$ to $\widetilde{\Gamma} \times G$
gives the  statement  (1) in the Claim. 

Statement (2) of the  {\em Claim} (and the $G_K$-equivariance if $y_1, y_0$ are $K$-rational) follows for the specific $(y_1, y_0)$ chosen above; however,
since we showed that the $K$-structure on $Z^{\circ}$ is unique, it must also be true for any choice of $(y_1, y_0)$. 

For statement (3) we notice that the action of $\Gamma$ on $S(\Gamma, c, G)/G$ is in fact trivial. Therefore
the resulting action of $\widetilde{\Gamma}$ factors through the quotient  $\pi_1(Y_K, y_0)$ %
arising from the last map of \eqref{pss}. This amounts to the third assertion. 
   \qed

\newcommand{\MCG}{\mathrm{MCG}}
\newcommand{\good}{singly ramified}
\section{The monodromy of Kodaira--Parshin families}
\label{KP_monodromy}
\subsection{Introduction, Notation, Statement of Main Theorem}

In this section we consider surfaces in the classical topological category: by a ``surface''
we mean the complement of finitely many interior points inside a connected, orientable, compact two-dimensional manifold with boundary.
Thus a surface can have both boundary and punctures. 
 Throughout this section, the letters $Y$ and $Z$ will denote such a surface,
and we will use $y_0$ to denote a base point on $Y$. 
For such a surface $Y$, $\MCG(Y)$ denotes the mapping class group of $Y$.
To emphasize, $Y$ could have ``punctures'' or boundary. 
The book of Farb and Margalit \cite{FM} is a reference on this material that contains all the results we will use.
When we discuss homology or cohomology, the coefficients are always assumed to be the rational numbers $\Q$ unless stated otherwise.

We first reformulate the statement to be proven.

\subsection{Covers and their homology}
 
Let $Y$ be a surface (possibly with punctures or boundary).   An $\Aff(q)$-cover of $Y$ is, by definition, 
a connected surface $Z$ together with a degree $q$ covering map
$$ \pi: Z \longrightarrow Y$$
whose monodromy representation  
on a general fiber is equivalent to the action of $\Aff(q)$ on $\mathbf{F}_q$
(i.e.\ we can label points in the fiber by $\mathbf{F}_q$ in such a way that the monodromy
representation has image $\Aff(q)$). 
We will often abuse notation and refer to this cover simply as $Z$, i.e., regard the map $\pi$ as implicit. 

After choice of basepoint $y_0 \in Y$, such a cover determines 
an $\Aff(q)$-conjugacy class\footnote{A priori, the map is defined up to conjugation by the normalizer of $\Aff(q)$ in $\Sym(\mathbf{F}_q)$.
This normalizer is equal to $\Aff(q)$.}
of maps
\begin{equation} \label{pi1rep} \pi_1(Y, y_0) \twoheadrightarrow \Aff(q).\end{equation}
We define two $\Gq$-covers $(Z_1, \pi_1)$ and $(Z_2, \pi_2)$ to be {\em  isomorphic}
when there is a homeomorphism $Z_1 \simeq Z_2$ commuting with the projections to $Y$;
equivalently, when the associated conjugacy classes of $\pi_1$-representations \eqref{pi1rep}
coincide.

If we have fixed a $\Gq$-cover $Z \rightarrow Y$, we denote by $\monmap
: \pi_1 \rightarrow \Aff(q)$ any homomorphism
in the conjugacy class of \eqref{pi1rep}. For $\eta \in \pi_1$ we can unambiguously talk about the  \emph{cycle decomposition} of $\monmap(\eta)$ in $\Sym(\mathbf{F}_q)$, which we regard as a partition of the positive integer $q$; this cycle decomposition is conjugation-invariant.  
 
Given any covering map $\pi: Z \rightarrow Y$,
the pullback and pushforward on homology define a splitting
$$H_1(Z, \Q) = \pi^* H_1(Y, \Q) \oplus \underbrace{ H_1^{\mathrm{Pr}}(Z, Y; \Q)}_{\mathrm{ker } \left ( \pi_*: H_1(Z) \rightarrow H_1(Y) \right ).}$$
Henceforth we will drop the coefficients $\Q$ from the notation.  The symbol $\mathrm{Pr}$ stands for primitive; alternatively, $\HPr(Z, Y)$ is the homology of a Prym variety.

Now $H_1(Z, \Q)$ and $H_1(Y, \Q)$ are both equipped with skew-symmetric pairings, the intersection pairings.
 The map $\pi^*$ scales the pairing by the degree $q$ of the covering of $Z \rightarrow Y$.  
 If the pairing on $H_1(Z, \Q)$ is nondegenerate, we may identify the primitive homology with
 the orthogonal complement to  $\pi^* H_1(Y, \Q)$
in $H_1(Z, \Q)$, and in particular this primitive homology inherits a skew-symmetric pairing. 
In our case, $Z$ and $Y$ will both be compact surfaces punctured at a single point, 
 and therefore the intersection pairings on $H_1(Z, \Q)$ and $H_1(Y, \Q)$   are perfect.

\subsubsection{The mapping class group and its action on homology; the map $\monsp$}

Clearly the diffeomorphism group of $Y$ acts on the  finite set of isomorphism classes of $\Gq$-covers of $Y$,
and this action factors through the mapping class group $\MCG(Y)$. 
In algebraic terms,  this action is induced from the map $\MCG(Y) \longrightarrow \mathrm{Out}(\pi_1(Y, y_0))$. 
 
Let $\MCG(Y)_Z$ denote the stabilizer of $(Z, \pi)$ for this action. Since $\Aff(q)$ has trivial centralizer in $\Sym(\mathbf{F}_q)$, 
such elements lift uniquely to mapping classes on $Z$, i.e.\ there is a homomorphism
$$\MCG(Y)_Z \longrightarrow \MCG(Z).$$
Namely, fixing a representative $\alpha: Y \rightarrow Y$,
there is a unique $f: Z \rightarrow Z$ that renders the diagram
     \begin{equation}   \label{wah}
 \xymatrix{
 Z  \ar[d]^{\pi}\ar[r]^f & Z \ar[d]^{\pi} \\
 Y  \ar[r]^{\alpha} & Y
 }
 \end{equation}
commutative.  Sending the mapping class of $\alpha$ to the mapping class of $f$ defines the desired homeomorphism.

This construction gives rise to actions of $\MCG(Y)_Z$ on $H_1(Z)$ and $\HPr(Z, Y)$.  This latter action is the \emph{monodromy map}
$$\monsp :  \MCG(Y)_Z \rightarrow \Sp ( \HPr ( Z, Y )).$$

\subsubsection{The main theorem} \label{mtsetup}

Fix a surface $Y$ of genus $g \geqslant 2$, a point $y \in Y$, a prime $q \geqslant 3$; as before, $\Gq$ denotes the group of affine-linear transformations of $\mathbf{F}_q$.

We consider $\Gq$-covers $Z^{\circ}$ of $Y - \{y\}$ such that the monodromy around $y$ is nontrivial (hence a $q$-cycle); the compactification of such a cover is a surface $Z$ of genus $gq  - \frac{q-1}{2}$.  We call such $Z$ \emph{\good} $\Gq$-covers of $Y$.  The notation hides the dependence on the point $y$, which will remain fixed.   There are, up to isomorphism, only finitely many such $Z$; choose a representative for each isomorphism class and call them $Z_1, Z_2, \ldots, Z_{\numcovers}$, and let $\monmap_1, \monmap_2, \ldots, \monmap_{\numcovers}: \pi_1(Y - \{y\}) \rightarrow \Gq$ be 
representatives for the associated monodromy mappings. 
  
 Let $\MCG(Y - \{y\})_0$ denote the intersection of the groups $\MCG(Y - \{y\})_{Z_i}$.  The individual monodromy maps attached to the covers $Z_i$ combine to give a map
\begin{equation} \label{KPmon} \monsp: \MCG(Y - \{y\})_0 \rightarrow \prod_{i=1}^{\numcovers} \Sp( \HPr(Z_i, Y) ).\end{equation}
 The mapping class group of a punctured surface fits in the Birman exact sequence  \cite[Theorem 4.6]{FM} %
\begin{equation} \label{birman} 0 \rightarrow \pi_1(Y, y) \rightarrow \MCG(Y - \{y\}) \rightarrow \MCG(Y) \rightarrow 0. \end{equation}
Let $\pi_1(Y, y)_0$ denote the inverse image of $\MCG(Y - \{y\})_0$ in $\pi_1(Y, y)$; the inclusion $\pi_1(Y, y)_0 \subseteq \pi_1(Y, y)$ is of finite index.

The restriction of \eqref{KPmon} to the subgroup $\pi_1(Y, y)_0$
describes the monodromy of a Kodaira-Parshin family, as in Definition \ref{KPfam}.
We review this connection in more detail in \S \ref{sssTMT}. 
The following statement is equivalent to the large monodromy property of Kodaira--Parshin families,
stated without proof as point (i) before Theorem \ref{main theorem}.

 \begin{thm} \label{monodromy theorem}
Let notation be as above; in particular, $Z_1, \dots, Z_{\numcovers}$ are a set of representatives
for isomorphism classes of singly ramified $\Gq$-covers of $Y$. Then the map
\begin{equation} \label{TBD} \monsp: \pi_1(Y, y)_0 \rightarrow \prod_{i=1}^{\numcovers} \Sp( \HPr(Z_i, Y) )\end{equation}
has Zariski-dense image.
\end{thm}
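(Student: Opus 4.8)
The plan is to prove Theorem \ref{monodromy theorem} by reducing it, via an algebraic Goursat-type argument (Lemma \ref{goursat}), to two facts: first, that for each individual cover $Z_i$ the monodromy image $\monsp(\pi_1(Y,y)_0) \to \Sp(\HPr(Z_i,Y))$ is Zariski-dense; and second, that no two factors are "linked", i.e.\ that one can find elements acting as transvections (or unipotents with fixed spaces of differing dimension) in one factor and trivially — or with a different cycle structure — in another. Both of these will be extracted from the presence of \emph{Dehn twists} in $\MCG(Y-\{y\})$, which is why it is essential that we work with the mapping class group rather than merely with $\pi_1(Y,y)_0$ directly: the subgroup $\pi_1(Y,y)_0 \subseteq \MCG(Y-\{y\})_0$ is of finite index, so Zariski density of the image of $\MCG(Y-\{y\})_0$ implies Zariski density of the image of $\pi_1(Y,y)_0$ (a finite-index subgroup of a group with Zariski-dense image in a connected — or with finitely many components — algebraic group still has Zariski-dense image in the identity component; one checks the $\Sp$'s are connected). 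So it suffices to prove \eqref{KPmon} has Zariski-dense image.

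The key mechanism is the following: let $c$ be a simple closed curve on $Y-\{y\}$, and $T_c \in \MCG(Y-\{y\})$ the associated Dehn twist. A power $T_c^m$ lies in $\MCG(Y-\{y\})_0$ for suitable $m$ (it fixes each cover $Z_i$ because it acts trivially on an appropriate finite quotient of $\pi_1$). Its lift to $\MCG(Z_i)$ is a product of Dehn twists along the components of $\pi_i^{-1}(c)$, and its action on $\HPr(Z_i,Y)$ is therefore a product of transvections $T_v$ centered at the homology classes $v$ of those components (more precisely, of their primitive parts). Thus I would: (1) compute, for a given simple closed curve $c$ on $Y$ and a given cover $Z_i$, the cycle structure of $\monmap_i$ on the loop around $c$, hence the number and genus of the components of $\pi_i^{-1}(c)$; (2) identify the resulting homology classes $v_{c,j}^{(i)} \in \HPr(Z_i,Y)$ and their pairings via Lemma \ref{transvection_graph} — connectedness of the "intersection graph" of a suitable collection of such classes will then show the transvections generate (Zariski-densely) the full symplectic group $\Sp(\HPr(Z_i,Y))$ for each $i$; (3) to separate the factors, exhibit a curve $c$ whose loop has nontrivial monodromy (a $q$-cycle, say) under $\monmap_i$ but \emph{different} cycle structure under $\monmap_j$ for $i \neq j$ — this is possible precisely because the $Z_i$ are pairwise non-isomorphic, so the induced representations $\pi_1(Y-\{y\}) \to \Aff(q)$ are pairwise non-conjugate, and one can find a test curve distinguishing them; this gives the Goursat hypothesis that $\pi_i(g)$ and $\pi_j(g)$ are unipotent with fixed spaces of different dimension.

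For step (1), the curve-and-cover combinatorics: given $c$ a non-separating simple closed curve on $Y$ avoiding $y$, $\monmap_i$ sends the corresponding element of $\pi_1$ to some $\gamma \in \Aff(q)$; the preimage $\pi_i^{-1}(c)$ decomposes according to the orbits of $\langle \gamma \rangle$ on $\F_q$, with one circle of length equal to each orbit size. I expect to arrange, by varying $c$ over a generating set of simple closed curves on $Y$ and possibly composing with mapping classes in $\MCG(Y-\{y\})_0$, that the collected classes $v_{c,j}^{(i)}$ span $\HPr(Z_i,Y)$ with connected intersection graph — this is the concrete computation that requires knowing $\Aff(q)$ well enough, and is exactly the point where the structure $\Aff(q) \cong \F_q^+ \rtimes \F_q^*$ and the choice $q \geqslant 3$ prime are used. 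The main obstacle will be step (2)–(3): verifying that the transvection classes we can produce are numerous enough and their intersection graph connected enough to generate the \emph{full} $\Sp$ of each Prym (not a smaller classical group), and simultaneously that we can distinguish all pairs of covers. I expect this to be handled by a careful bookkeeping of the action of $\MCG(Y)$ on the set of covers — using, e.g., that $\MCG(Y)$ acts transitively on primitive homology classes and that the monodromy around distinct simple closed curves, taken together, already surjects onto $\Aff(q)$ in a "generic" way — rather than by any single clever trick; this is the heart of the topological argument and is presumably where the bulk of \S\ref{KP_monodromy} is spent.
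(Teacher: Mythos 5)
Your plan matches the paper's at nearly every step --- the reduction to Zariski density of the $\MCG(Y-\{y\})_0$ image, the algebraic Goursat argument (Lemma \ref{goursat}), the production of transvections from Dehn twists along liftable curves, the intersection-graph argument (the paper's Lemma \ref{three_stars}, via Lemma \ref{transvection_graph}), and the use of a single simple closed curve with differing cycle structure to separate non-isomorphic covers (the paper's Lemma \ref{item_b}) --- but there is one genuine error in the opening reduction.

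You claim that $\pi_1(Y,y)_0$ has \emph{finite} index in $\MCG(Y-\{y\})_0$, and invoke the usual finite-index argument to pass Zariski density from the mapping-class group to $\pi_1$. This is false: the Birman exact sequence \eqref{birman} has $\MCG(Y)$ as its cokernel, so $\pi_1(Y,y)$ sits as a normal subgroup of \emph{infinite} index in $\MCG(Y-\{y\})$ (the mapping class group of a closed genus-$g\geqslant 2$ surface is infinite), and the same remains true after intersecting with the finite-index subgroup $\MCG(Y-\{y\})_0$. A finite-index argument therefore cannot be used to descend density to $\pi_1(Y,y)_0$.

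What the paper uses instead is that $\pi_1(Y,y)_0$ is \emph{normal} in $\MCG(Y-\{y\})_0$, combined with the almost-simplicity of the symplectic groups. Once Lemma \ref{one_star} shows the Zariski closure of $\monsp(\MCG(Y-\{y\})_0)$ is all of $\prod_i \Sp(\HPr(Z_i,Y))$, the Zariski closure of $\monsp(\pi_1(Y,y)_0)$ is a normal algebraic subgroup of that product; by almost-simplicity, such a subgroup either contains a given factor or projects to it inside its center. So the argument also needs, as an input, the separate lemma (stated without number in the paper, just before Lemma \ref{one_star}) that the image of $\pi_1(Y,y)_0$ in each individual factor is \emph{not central}. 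Your sketch omits this non-centrality input; without it, even the correct normality argument does not close. The paper leaves its topological proof to the reader and sketches an algebro-geometric alternative via Torelli and de Franchis. With that repair, the rest of your outline is correct and follows the paper's strategy.
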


We briefly outline the proof.
We give in \S \ref{GStd} a ``normal form'' for each $\Gq$-cover.
 Using the sequence \eqref{birman},
we reduce to showing a similar assertion with $\pi_1(Y, y)_0$ replaced by $\MCG(Y-\{y\})_0$.
This allows us to use Dehn twists. Using our normal form for $\Gq$-covers,
and constructing a suitable system of curves to Dehn-twist around, 
we can see that the monodromy surjects onto each factor $\Sp(\HPr(Z_i, Y))$. 
A version of Goursat's lemma completes the proof.

When considering the general problem (replacing $\Aff(q)$ or cyclic covers by $G$-covers) the primitive homology must
be further decomposed according to the representation theory of $G$. 
Looijenga \cite{Looijenga} has proven a similar result for cyclic covers of surfaces without monodromy;  in fact, Looijenga determines the exact image of $\monsp$ in this situation. 
See also \cite[Theorem 1.6]{GLLM} for an analogous result for unramified covers of a closed surface,
and \cite{ST} for covers whose covering group is the Heisenberg group.

\subsubsection{Application to Theorem \ref{main theorem}} \label{sssTMT}  For clarity we now write out why Theorem \ref{monodromy theorem}, in the form stated, above, implies 
what is used in Theorem \ref{main theorem}: namely, the Kodaira-Parshin family $X_q \rightarrow Y_q' \stackrel{\pi}{\rightarrow} Y$ for the group $\Aff(q), q \geq 3$, 
has full monodromy.  

In stages:
\begin{itemize}
\item We begin, as in \S  \ref{Hurwitz1} with a family
$Z_q \rightarrow Y_q' \rightarrow Y_q$, with $Z_q \rightarrow Y_q'$ a relative curve.
(The Kodaira-Parshin family was constructed by applying a Prym construction to this, as explicated
in Definition \ref{KPfam}). 

 \item  Fix $y \in Y_q(\C)$.  The fiber of $Y_q' \rightarrow Y_q$  above $y \in Y_q(\C)$ is identified with
 the isomorphism classes of singly ramified $\Aff(q)$-covers of $Y(\C)$, branched at $y$.
  This follows from  property (i) of Proposition \ref{hurwitz}.

\item   Fix $y' \in Y_q'(\C)$ above $y \in Y_q(\C)$.    %
Let $Z_{q,y'}$ be the fiber of $Z_q \rightarrow Y_q'$ over $y'$.

By construction,
$Z_{q,y'} \rightarrow Y(\C)$
is a singly branched $\Aff(q)$-cover,
and we can form the degree $q$ cover associated to the action of $\Aff(q)$ on $\Z/q\Z$,
i.e. $Z_{q,y'}(\C) \times_{\Aff(q)} \Z/q\Z$. 
By our definitions above, we have
$$Z_{q,y'}(\C) \times_{\Aff(q)} \Z/q\Z \simeq Z_i$$
(for some unique $i$ in $\{1, 2, \dots, N\}$) 
as Riemann surfaces over $Y(\C)$. 
 
\item 
The construction of Kodaira--Parshin 
families then gives rise, as in
\eqref{alternate def}, to an isogeny
$$\mbox{fiber  $X_{q,y'}$ of $X_q$ above $y'$} \longrightarrow  \mathrm{Prym}(Z_i \rightarrow Y(\C))   $$  
 This isogeny induces an  isomorphism of the rational homology groups:
$$H_1^{\Pr}(Z_i, Y; \Q) \simeq \mbox{first homology of $X_{q,y'}(\C)$ with rational coefficients.}$$
\item 
This identification is compatible with monodromy, and so 
Theorem \ref{monodromy theorem} translates to definition \eqref{fullmonodromy}
of full monodromy. 
\end{itemize}

\subsection{Dehn twists and liftable curves} 
\label{dehn_homology}

We say that $e$ is a \emph{simple closed curve} in a surface $Y$ if it is the image of a smooth embedding $S^1 \rightarrow Y$; a simple closed curve has no self-intersection.  For us, a simple closed curve will always come with an orientation, namely, the orientation induced from a fixed orientation on $S^1$.  If $y \in Y$ is a point, then we say $\eta \in \pi_1(Y, y)$ is \emph{represented by a simple closed curve} if there is a loop $e$ in $Y$, based at $y$ and representing the class $\eta \in \pi_1(Y, y)$, which is a simple closed curve.  We may say (somewhat imprecisely) that $\eta$ ``is'' a simple closed curve.

If $e$ is a simple closed curve in $Y - \{y\}$, the Dehn twist $D_e$ about $e$ acts on $H_1(Y)$ by the transvection $T_e$;
indeed, we can regard $D_e$ as an element of $\MCG(Y - \{y\})$. 
We want
to study how this lifts to an $\Gq$-cover   $Z \rightarrow Y$:    Let $n_e$ be the order of the image of $e$ in $\Gq$.  Then $D_e^{n_e}$ lifts to an automorphism of the cover $Z$, as we now describe.  Suppose the image of $e$ under $\pi_1(Y, y_0) \rightarrow \Gq \rightarrow \mathrm{Sym}(\mathbf{F}_q)$ has cycle structure $(d_1, \ldots, d_k)$. The preimage of $e$ under $Z \rightarrow Y$ is a disjoint union of circles $e_1, \ldots, e_k$, with the circles $e_i$ in natural bijection with the cycles in the permutation.  Then $D_e^{n_e}$ lifts to the product of commuting Dehn twists
$$\prod_i D_{e_i}^{n_e / d_i}$$
on $Z$.

In our cases, the only possibilities for cycle structure are as follows:
\begin{itemize}
\item[-]
If $e$ maps to an element of $\Gq$ that is not in $\F_q^+$, i.e.\ has nontrivial image $a \in \F_q^*$,
then $(d_1, \dots, d_k) =   (1, \mathrm{ord}_q(a), \mathrm{ord}_q(a), \ldots, \mathrm{ord}_q(a))$. 
\item[-]
If $e$ maps to a nonzero element of $\F_q^+$ then $(d_1, \dots, d_k) = (q)$. 
\item[-]
If $e$ maps to the identity element of $\Gq$, then $(d_1, \dots, d_k) = (1, \dots, 1)$. 
\end{itemize}

Now we note that:
\begin{lemma}
Let $e$ be a simple closed curve in $Y - \{y\}$.
Then the classes of the preimages $[e_1], \dots, [e_k]$ in the homology of $Z$ are linearly independent; projected to $\HPr(Z, Y)$, their span has dimension $k - 1$. 
\end{lemma}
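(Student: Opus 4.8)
The statement has two parts: first, that $[e_1], \dots, [e_k]$ are linearly independent in $H_1(Z)$; second, that their images in $\HPr(Z,Y)$ span a subspace of dimension exactly $k-1$. I would begin with the second claim, since it quickly reduces the first: the preimages $e_1, \dots, e_k$ of $e$ all lie over $e$, so $\pi_*[e_i] = d_i [e]$ (with appropriate orientation conventions), and hence $\pi_*\left(\sum_i c_i [e_i]\right) = \left(\sum_i c_i d_i\right) [e]$ in $H_1(Y)$. Since $[e]$ is nonzero in $H_1(Y)$ — here I would invoke that $e$ is a non-separating simple closed curve, which holds in all the relevant cases (when $e$ has nontrivial image in $\Aff(q)$, a loop around $y$ is detected, and when one pulls back to the punctured surface $[e]\neq 0$; this needs a brief check) — the image of $\operatorname{span}([e_1],\dots,[e_k])$ under $\pi_*$ is one-dimensional, spanned by $[e]$. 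Therefore the intersection of this span with $\HPr(Z,Y) = \ker\pi_*$ has dimension at least $\dim\operatorname{span}([e_i]) - 1$.

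For the linear independence in $H_1(Z)$ I would argue via intersection numbers, which is the cleanest tool here. Pick a simple closed curve $\delta$ in $Y$ meeting $e$ transversally in exactly one point, with $\langle e, \delta\rangle = 1$ (possible since $e$ is non-separating). Its preimage $\pi^{-1}(\delta)$ is a union of curves in $Z$, and for each $i$ the algebraic intersection number $\langle e_i, \pi^{-1}(\delta)\rangle$ can be computed by counting preimages of the single point $e\cap\delta$ lying on $e_i$; this count is controlled by how the cycle of length $d_i$ interacts with the monodromy of $\delta$, but in any case one can arrange (by choosing $\delta$ appropriately, or by using a collection of such $\delta$'s, one ``linked'' to each cyclic block $e_i$ separately) a dual system of classes $\{\gamma_i\}$ in $H_1(Z)$ with $\langle e_i, \gamma_j\rangle = \lambda_i \delta_{ij}$ for nonzero scalars $\lambda_i$. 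Since the intersection pairing on $H_1(Z)$ is perfect (as $Z$ is a once-punctured compact surface, or after compactification), the existence of such a dual family forces $[e_1], \dots, [e_k]$ to be linearly independent.

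Combining the two parts: linear independence gives $\dim\operatorname{span}([e_1],\dots,[e_k]) = k$, and then the computation of $\pi_*$ shows the kernel of $\pi_*$ restricted to this span is exactly the hyperplane $\left\{\sum c_i [e_i] : \sum c_i d_i = 0\right\}$, which has dimension $k-1$. That the image in $\HPr(Z,Y)$ of this hyperplane is again $(k-1)$-dimensional follows because $\HPr(Z,Y)$ is a direct complement of $\pi^* H_1(Y)$ and the projection $H_1(Z)\to\HPr(Z,Y)$ is injective on $\ker\pi_*$. The main obstacle I anticipate is the careful bookkeeping in constructing the dual curves $\gamma_i$: one must ensure that for each cyclic block $e_i$ one can find a class in $H_1(Z)$ hitting $e_i$ nontrivially but the other $e_j$ trivially, which amounts to choosing, for each block, a suitable arc or loop downstairs whose lift links only the intended component. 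This is where the explicit structure of the cycle decompositions listed just before the lemma — all $d_i$ equal to $1$, $\operatorname{ord}_q(a)$, or $q$ — makes the argument tractable, since the symmetry of the covering within each block lets one average a downstairs curve to a class meeting that block alone.
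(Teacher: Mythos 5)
Your overall strategy — use $\pi_*[e_i] = d_i[e]$ to reduce the dimension count to linear independence, then prove independence via the intersection pairing — is a genuinely different route from the paper's, which instead chooses a one-vertex CW structure on $Y$ in which $e$ lies in the $1$-skeleton and the $2$-cell contains $y$; after pulling back, the whole statement becomes a claim about a covering of finite graphs, where the $e_i$ are disjoint embedded cycles and therefore trivially independent as $1$-chains (and the pushforward computation gives the $k-1$). The paper's reduction is noticeably cleaner because it moves the problem to a setting with no $C_2$, so linear independence is a chain-level observation rather than an intersection-theoretic one.

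Your pushforward step is correct (and your closing paragraph does produce the exact dimension $k-1$, since $\operatorname{span}[e_i]\cap\pi^*H_1(Y)$ is spanned by $\pi^*[e]=\sum_i[e_i]$ and is one-dimensional). The genuine gap is in the construction of the dual classes $\gamma_i$, which you yourself flag as the main obstacle. Lifting a single simple closed $\delta$ from $Y$ gives $\langle e_i,\pi^{-1}(\delta)\rangle = d_i\langle e,\delta\rangle$ — always proportional to $d_i$, so classes of the form $\pi^{-1}(\delta)$ only detect one linear combination. To separate the $e_i$ one must use individual components of preimages, and then it is not automatic that those components can be arranged to pair as $\lambda_i\delta_{ij}$: for a general $\delta$, the component $\tilde{\delta}_x$ will cross several of the $e_j$. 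The ``averaging over the covering symmetry within a block'' you invoke does not obviously apply, since the degree-$q$ cover $Z\to Y$ attached to $\Aff(q)\subset\Sym(\mathbf{F}_q)$ is not Galois and has no nontrivial deck group. The way to repair the construction is to choose $\delta$ with $\langle e,\delta\rangle\neq 0$ lying in the kernel of $\pi_1(Y-y)\to\Aff(q)$ (a finite-index subgroup, which therefore has full rational image in $H_1(Y-y;\Q)$); then the lift components $\tilde{\delta}_x$ cross $\pi^{-1}(e)$ at a single preimage each, giving a pairing matrix whose columns are standard basis vectors. This works, but it is a nontrivial additional idea that your sketch leaves open, and some further care is needed about whether such a $\delta$ can be taken simple (or, if not, how to control the crossings). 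You would also want to note that a dual system in the strict sense is more than you need: a rank-$k$ pairing matrix suffices.

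Two smaller points. First, both your argument and the paper's implicitly require $e$ non-separating (i.e.\ $[e]\neq 0$ in $H_1(Y)$): without it the lemma is actually false — e.g.\ when $\operatorname{Cov}(e)$ is a nonzero element of $\mathbf{F}_q^+$ and $e$ separates, $k=1$ and $e_1 = \pi^{-1}(e)$ separates $Z$, forcing $[e_1]=0$. You flag the hypothesis, but your justification (``when $e$ has nontrivial image in $\Aff(q)$, a loop around $y$ is detected'') doesn't hold: a separating $e$ maps into the commutator subgroup $\mathbf{F}_q^+\subset\Aff(q)$ and can still have nontrivial image. The correct observation is that in all applications the curve $e$ is chosen to represent a nonzero class in $H_1(Y)$. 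Second, the intersection form on $H_1(Z;\Q)$ is indeed perfect since $Z$ is the closed surface obtained by compactifying $Z^{\circ}$; your parenthetical about a once-punctured surface is a harmless red herring here.
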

\proof
$Y$ admits the structure of a CW complex with one $2$-cell such that $e$ belongs to the $1$-skeleton.
 The inclusion of this $1$-skeleton into $Y-y$ is a homotopy equivalence.
Correspondingly the inclusion of the preimage  (in $Z$) of this $1$-skeleton into the preimage (in $Z$) of $Y-y$ is
also a homotopy equivalence.  Note also that the inclusion
of $Y-y$ into $Y$ induces an isomorphism on $H_1$, with a similar
statement for $Z - \pi^{-1}(y)$. 

These remarks allow us to reduce the Lemma to corresponding assertions for a covering of a finite graph,  which are clear.
\qed

The action of $\monsp(D_e^{n_e})$ on $\HPr(Z, Y)$ is a unipotent transformation $u$ such that the image of $u-1$ is exactly the span of the classes of the circles $e_i$.  
By the Lemma just proven, this has dimension $k-1$; correspondingly  the fixed space $\monsp(D_e^{n_e})$ on $\HPr(Z, Y)$  has codimension $k-1$.

We record the following consequence:

\begin{lemma}
\label{dehn_index}
Suppose $Z \rightarrow Y$ is an $\Gq$-cover.  Let $e$ be a simple closed curve in $Y$, and take $\bigexp$ such that $D_e^{\bigexp} \in \MCG(Y)_Z$.  
Then the rank of $\monsp(D_e^{\bigexp})-\mathrm{Id}$ acting on $\HPr(Z,Y)$
determines the conjugacy class of $\monmap(e)$ in the symmetric group $\Sym(\mathbf{F}_q)$. 
\end{lemma}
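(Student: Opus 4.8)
The plan is to pin down $\operatorname{rank}\bigl(\monsp(D_e^{\bigexp})-\mathrm{Id}\bigr)$ exactly, as a function of the cycle type of $\monmap(e)$ acting on $\F_q$, and then to observe that this cycle type can be read back off. Write $(d_1,\dots,d_k)$ for the cycle type of $\monmap(e)$ on $\F_q$, so $\sum_i d_i=q$ and — the permutation representation $\Gq\hookrightarrow\Sym(\F_q)$ being faithful — the order $n_e$ of $\monmap(e)$ in $\Gq$ equals $\operatorname{lcm}(d_1,\dots,d_k)$; let $e_1,\dots,e_k$ be the connected components of $\pi^{-1}(e)$, with $e_i\to e$ of degree $d_i$, as in the discussion preceding the lemma.

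First I would check that any admissible exponent $\bigexp$ is a multiple of $n_e$ and that $D_e^{\bigexp}$ then lifts to the product of commuting Dehn twists $\prod_i D_{e_i}^{\bigexp/d_i}$ on $Z$. Since $\Gq$ has trivial centralizer in $\Sym(\F_q)$, an element of $\MCG(Y)_Z$ lifts uniquely, so $\langle D_e\rangle\cap\MCG(Y)_Z$ is a subgroup of $\langle D_e\rangle\cong\Z$ containing $n_e\Z$; if it contained some $D_e^r$ with $0<r<n_e$, then, working in an annular neighbourhood of $e$ (where a lift to the connected $d_i$-fold annular cover over $e_i$ exists only when $d_i\mid r$), one gets $n_e=\operatorname{lcm}(d_i)\mid r$, a contradiction. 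Hence $\bigexp=c\,n_e$ with $c\geq 1$ an integer, and — since $D_e^{n_e}$ already lifts to $\prod_i D_{e_i}^{n_e/d_i}$ as recalled in the text — its $c$-th power $D_e^{\bigexp}$ lifts to $\prod_i D_{e_i}^{\bigexp/d_i}$, all exponents $\bigexp/d_i$ being positive integers.

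Next I would compute the rank on $\HPr(Z,Y)$. The $e_i$ are pairwise disjoint, so the transvections $T_{e_i}$ commute and $\monsp(D_e^{\bigexp})$ acts on $H_1(Z)$ by $x\mapsto x+\sum_i(\bigexp/d_i)\langle e_i,x\rangle[e_i]$; as every coefficient $\bigexp/d_i$ is nonzero, the argument already used in the text for the case $\bigexp=n_e$ applies verbatim and shows that the image of $\monsp(D_e^{\bigexp})-\mathrm{Id}$ on $\HPr(Z,Y)$ is the span, inside $\HPr(Z,Y)$, of the classes $[e_1],\dots,[e_k]$, which by the preceding Lemma has dimension $k-1$. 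Thus $\operatorname{rank}\bigl(\monsp(D_e^{\bigexp})-\mathrm{Id}\bigr)=k-1$, independently of the chosen admissible $\bigexp$.

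Finally I would argue that $k$ recovers the conjugacy class of $\monmap(e)$ in $\Sym(\F_q)$, using the trichotomy of possible cycle structures of elements of $\Gq$ on $\F_q$ recorded just before the lemma: the identity gives cycle type $(1^q)$ and $k=q$; a nonzero translation gives a single $q$-cycle (here $q$ is prime) and $k=1$; and $x\mapsto ax+b$ with $a$ of multiplicative order $m>1$ has one fixed point and $(q-1)/m$ cycles of length $m$, so $k=1+(q-1)/m$. Since $q\geq 3$, the values $k=1+(q-1)/m$ (for $m\mid q-1$, $m\geq 2$) all lie strictly between $1$ and $q$, so the three cases are separated by $k$, and within the third case $k-1=(q-1)/m$ determines $m$ and hence the cycle type. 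Therefore $k-1=\operatorname{rank}\bigl(\monsp(D_e^{\bigexp})-\mathrm{Id}\bigr)$ determines the conjugacy class of $\monmap(e)$. The only genuine subtlety is the first step — ensuring that an \emph{arbitrary} admissible exponent $\bigexp$, not merely $\bigexp=n_e$, still produces rank $k-1$; once the divisibility $n_e\mid\bigexp$ and the positivity of the $\bigexp/d_i$ are in hand, the rest is bookkeeping with cycle types.
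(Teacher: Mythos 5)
The paper never writes out a proof of Lemma~\ref{dehn_index}; it is stated as a corollary of the preceding discussion, which computes (for $\bigexp = n_e$) that $\monsp(D_e^{n_e})-\mathrm{Id}$ has rank $k-1$ on $\HPr(Z,Y)$ where $(d_1,\dots,d_k)$ is the cycle type, and lists the three possible cycle structures of elements of $\Gq$ acting on $\F_q$. Your proof fleshes out exactly this argument: you compute rank $= k-1$ and check that $k \in \{1\} \cup \{1+(q-1)/m : m \mid q-1,\ m\geq 2\} \cup \{q\}$ separates the three cases, with $k-1=(q-1)/m$ further recovering $m$ in the middle case. That part is right and matches the paper's intent.

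The one step where you go beyond the paper — and where I would push back — is your claim that every admissible exponent $\bigexp$ is a multiple of $n_e$. You argue this by noting that $D_e^r$ restricted to an annular neighbourhood $A$ of $e$ lifts to the connected $d_i$-fold annular cover over $e_i$ only when $d_i \mid r$. But the unique lift $f:Z\to Z$ of $D_e^r$ is only required to \emph{cover} $D_e^r$; it need not restrict to the identity on $Z-\pi^{-1}(A)$. Rather, $f|_{Z-\pi^{-1}(A)}$ is a deck transformation of the restricted cover $Z-\pi^{-1}(A)\to Y-A$, and while the deck group of $Z\to Y$ is trivial (since $\Gq$ has trivial centralizer in $\Sym(\F_q)$), the restricted cover over $Y-A$ may in principle have more automorphisms, since the monodromy image $\monmap(\pi_1(Y-A))$ may be a proper subgroup of $\Gq$. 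In that case $f$ could permute the annuli $A_i$ or twist them with non-identity boundary values, and the "only when $d_i\mid r$" criterion no longer applies directly. So as written, the reduction to $\bigexp \in n_e\Z$ has a gap.

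That said, the gap does no real damage. In the paper's application of Lemma~\ref{dehn_index} (to verify the hypothesis of Lemma~\ref{goursat} in the proof of Lemma~\ref{one_star}), one is free to \emph{choose} $\bigexp$ to be a common multiple of the orders $n_e$ across the finitely many covers $Z_1,\dots,Z_N$, in which case all $\bigexp/d_i$ are positive integers, your rank computation applies, and everything goes through. So the honest statement is that the lemma as literally written — with "any $\bigexp$ such that $D_e^{\bigexp}\in\MCG(Y)_Z$" — requires either your divisibility claim (which needs a more careful argument addressing the possible deck transformations of the cut-open cover), or a harmless restriction to $\bigexp$ a multiple of $n_e$. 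Your main computation is correct and is what the paper actually relies on.
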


A particularly important case is when $e$ is a simple closed curve in $Y$ such that $\monmap(e)$ maps to a generator for $\mathbf{F}_q^*$ under the natural map $\Gq \rightarrow \mathbf{F}_q^*$. 
We call such a $e$ a {\em liftable curve}
(for the $\Gq$-cover $Z \rightarrow Y$).  Its preimage in $Z$ splits into a union of simple closed curves $e^+$
of degree $1$ over $e$, and $e^-$ of degree $q-1$ over $e$. 
For liftable $e$ we write
$$ \widetilde{e} := \mbox{ projection of the class of $e^+$ to primitive homology.} $$
According to our discussion above, 
$D_e$ induces a transvection on $\HPr(Z, Y)$, with center $\widetilde{e}$. 

Write $\cdot$ for the intersection pairing on homology. Given liftable curves $A$ and $B$, we have
\begin{equation} \label{phf} \widetilde{A} \cdot \widetilde{B}  =   (A^+ \cdot B^+) - \frac{1}{q} A \cdot B.\end{equation}
Indeed, identifying primitive homology with the kernel of the pushforward,
we have $q\widetilde{A} = q A^+ - \pi^* A$, and so the intersection pairing of $q \widetilde{A}$ with $q \widetilde{B}$ is
$$ (q A^+ - \pi^* A)  \cdot(q B^+ - \pi^* B) = q^2 (A^+ \cdot B^+) -  2q (A \cdot B) + q (A \cdot B),$$
as desired.

\subsection{A normal form for an $\Aff_q$-cover} \label{GStd}

Again, take $Z$ a \good\ $\Gq$-cover of $Y$. We will describe the cover $Z \rightarrow Y$ in a normal form by cutting $Y$ carefully,
using  essentially the fact that $\Aff(q)$ is solvable. 
The end result is roughly that the covering $Z\rightarrow Y$ can be expressed as the sum of a trivial cover of a genus $g-1$ surface and a nontrivial cover on a torus.

Choose a basepoint $y_0 \in Y$.  The map $\monmap: \pi_1(Y - \{y\}, y_0) \rightarrow \Gq$ specifying the cover $Z \rightarrow Y$ induces a map 
$$H_1(Y, \mathbf{Z})  \cong H_1(Y - \{y\}, \mathbf{Z}) \rightarrow \mathbf{F}_q^*$$
on abelianizations.  The group $ \mathbf{F}_q^*$ is cyclic and $H_1(Y, \mathbf{Z})$ is free. %
If we choose a surjection $\mathbf{Z} \twoheadrightarrow \mathbf{F}_q^*$, then the map on abelianizations lifts to a map
$$H_1(Y, \mathbf{Z}) \rightarrow \mathbf{Z} \rightarrow \mathbf{F}_q^*.$$
We can choose this map so that $H_1(Y, \mathbf{Z}) \rightarrow \mathbf{Z}$ is surjective, so  
it is given by intersecting with a primitive integral homology class $\alpha_1$.  

Choose a simple closed curve, which we also call $\alpha_1$, representing this class.
(Indeed, any primitive integral homology class is represented by a simple closed curve: \cite[Proposition 6.2]{FM}.)
In fact, choose two such curves, $\alpha_1^+$ and $\alpha_1^-$, which
pass ``close by'' but on either side of the ramification point $y$, and are parallel to one another. 
Note that, since our cover is ramified at $y$,  and the monodromy at $y$ is a nontrivial element of in $\mathbf{F}_q^+$,   
$\monmap(\alpha_1^+)$ and $\monmap(\alpha_1^-)$ cannot both be trivial. 

\begin{figure}[ht]
\label{alpha1alpha2}
\centering
\includegraphics[height=6cm]{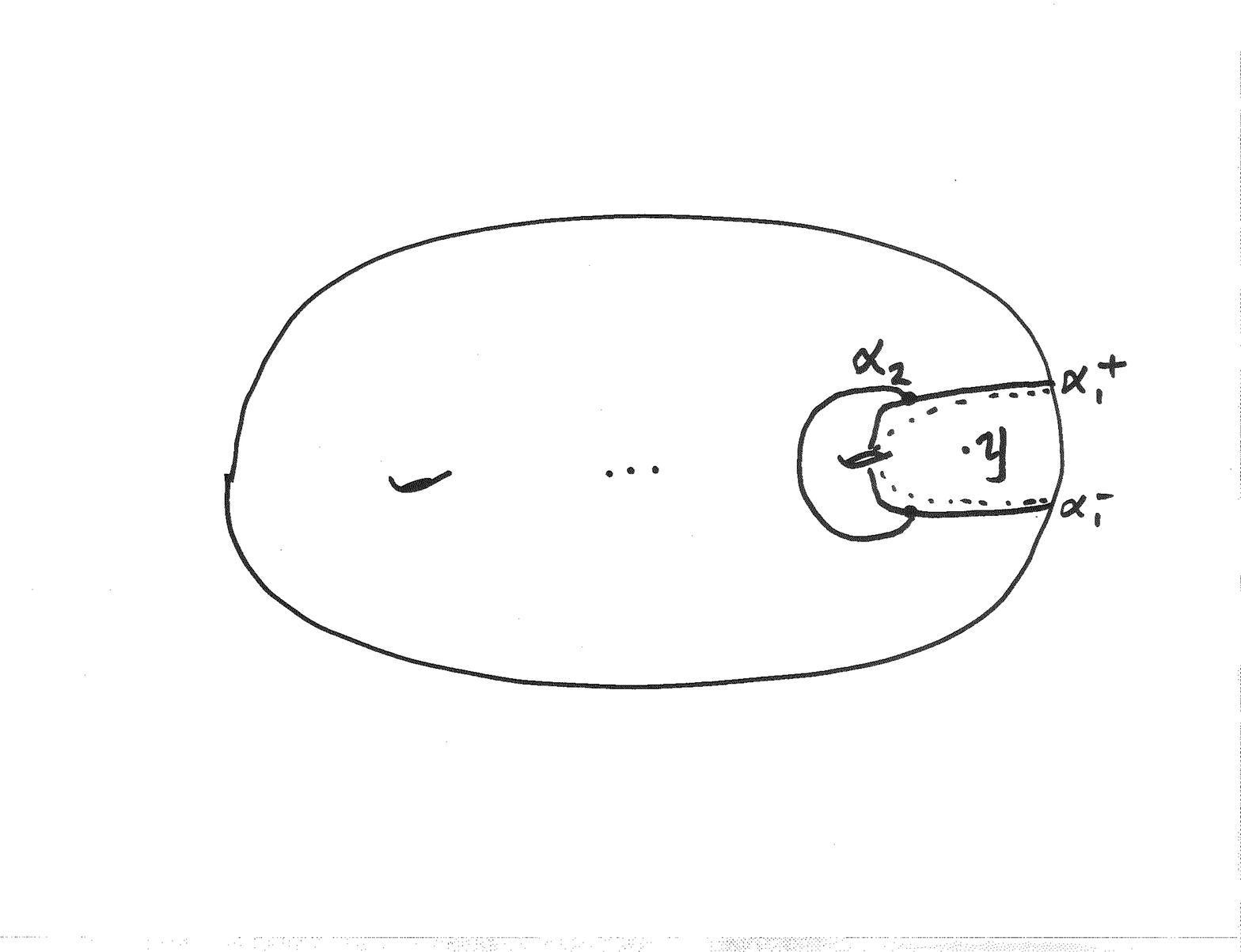}
\caption{The curves $\alpha_1^{\pm}$ and $\alpha_2$ on $Y$.}
\end{figure}

Cutting $Y$ along the curves $\alpha_1^{\pm}$ and discarding the connected component of $y$, we obtain a surface $Y^1$ with two boundary components.
Let $Z^1$ be the pullback of our covering to $Y^1$.  The map $\monmap: \pi_1(Y^1, y_0) \rightarrow \Gq$ has image contained in $\mathbf{F}_q^+ \subseteq \Gq$ by our choice of $\alpha$; so it factors through $H_1(Y^1, \mathbf{Z})$.

The boundary components (with orientations defined by an outward normal)
define classes $b_+, b_- \in H_1(Y^1,\Z)$; these classes satisfy $b_+ + b_- = 0$ because 
their sum is the boundary of $Y^1$.  We saw above that $b_+$ and $b_-$ cannot both have trivial image in $\mathbf{F}_q^+$;
so $\monmap(b_+) = - \monmap(b_-)$ must be nontrivial.
Conjugating by a suitable element of $\Gq$ as necessary, we may as well suppose that $b_+ \in H_1(Y^1,\Z)$ maps to $1 \in \mathbf{F}_q^+$.

For a surface such as $Y^1$ with boundary $\partial Y^1$,  Poincar{\'e} duality
takes the form of a perfect pairing between absolute and relative homology:
\begin{equation} \label{PD} H_1(Y^1, \partial Y^1; \Z) \times H_1(Y^1; \Z) \rightarrow \Z\end{equation}
The map $H_1(Y^1,\Z) \rightarrow \mathbf{F}_q^+$ lifts to a map $H_1(Y^1,\Z) \rightarrow \mathbf{Z}$; since $b_+$ 
is a primitive element of $H_1(Y^1,\Z)$, we can choose such a lift taking $b_+$ to $1$.  
This lift is of the form $x \mapsto \langle x, \alpha_2 \rangle$ for a relative homology class $\alpha_2 \in H_1(Y^1, \partial Y^1; \Z)$. 
Therefore $\alpha_2$ intersects the boundary components with multiplicity $+1$ and $-1$. 

The following lemma readily implies that $\alpha_2$ can be represented by a simple curve,
the image of an immersion $e: [0,1] \rightarrow Y^1$ that meets $\partial Y^1$ only at the endpoints, 
which we also call $\alpha_2$.
Indeed, it implies $MCG(Y)$ acts transitively on that subset of $H_1(Y, \partial Y) \simeq \Hom(H_1(Y,\Z), \Z)$ 
consisting of elements whose pairing with a fixed boundary circle is $1$.  It follows that we can find a mapping
class carrying  the homology class of $e$ to the homology class $\alpha_2$, as desired.

\begin{lemma}
Suppose $Y$ is a surface of genus $g$ with $2$ boundary components, so $V = H_1(Y,  \mathbf{Z})$ is a free $\mathbf{Z}$-module of rank $2g + 1$.
We regard it as equipped with a (degenerate) alternating form via $H_1(Y) \rightarrow H_1(Y, \partial Y)$ and the duality pairing \eqref{PD};  the radical  of  this form is the rank-$1$ submodule $V^0$ generated by $b$, the class of one of the two boundary components of $Y$. 
 
   Let $\Sp(V, b)$ denote the group of automorphisms of $V$ preserving the bilinear form and fixing $b$.  Then the natural map $MCG(Y) \rightarrow \Sp(V, b)$ is surjective.
\end{lemma}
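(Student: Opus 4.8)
The statement is a surjectivity result for the mapping class group onto the subgroup of the symplectic group fixing a boundary class. The natural approach is to reduce it to the known "capping off" results for mapping class groups of surfaces with boundary, so the first move would be to relate $\Sp(V,b)$ to the usual symplectic group of a closed surface. Concretely, since the radical $V^0 = \langle b\rangle$ is one-dimensional and saturated, the quotient $\overline{V} = V/V^0$ carries a perfect alternating form, and there is an exact sequence
\[
1 \to \Hom(\overline{V}, \mathbf{Z}) \to \Sp(V, b) \to \Sp(\overline{V}) \to 1,
\]
where the unipotent piece $\Hom(\overline{V},\mathbf{Z}) \cong \mathbf{Z}^{2g}$ consists of the "transvection-like" maps $v \mapsto v + \langle v, -\rangle b$ that are trivial on $V^0$ and on the quotient. (One should double-check the exactness and the identification of the kernel with $\Hom(\overline V,\mathbf Z)$ — this is elementary linear algebra over $\mathbf{Z}$, using that $b$ is primitive.) So it suffices to hit both the quotient $\Sp(\overline V)$ and the unipotent kernel.

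For the quotient $\Sp(\overline V)$: capping one of the two boundary components of $Y$ with a disk and the other with a once-punctured disk produces a surface $Y'$ which is a genus-$g$ surface with one puncture (equivalently one boundary component up to the appropriate mapping class group), and there is a capping homomorphism $\MCG(Y) \to \MCG(Y')$. Combined with the fact (standard, e.g. \cite[Theorem 6.4]{FM} — the surjectivity of the symplectic representation) that $\MCG$ of a surface with one boundary component surjects onto the full symplectic group of its closed-up first homology, we get that $\MCG(Y)$ surjects onto $\Sp(\overline V)$. One has to be slightly careful that the homology of the capped-off surface is indeed $\overline V = V/\langle b\rangle$: capping the boundary component carrying $b$ kills exactly that class, and the other boundary component contributes nothing new to $H_1$ since $b_+ + b_- = 0$.

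For the unipotent kernel $\Hom(\overline V,\mathbf{Z})$: this is where the two boundary components really matter, and this will be the main obstacle. The elements we need are realized by "boundary twist differences" — given a simple closed curve $e$ in the interior of $Y$ which is separating-to-one-boundary or has prescribed intersection with the boundary, the Dehn twist $D_e$, or more usefully a product $D_e D_{e'}^{-1}$ of twists about two curves homologous in $\overline V$ but differing by $b$, acts trivially on $\overline V$ and acts on $V$ by precisely such a unipotent map. Running over a collection of such curves whose "difference classes" span $\Hom(\overline V,\mathbf{Z})$ — one can arrange this by choosing, for each element of a basis of $\overline V^\vee$, an arc/curve pair that detects it — produces the whole kernel. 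Here the cleanest formulation is to invoke the chain-type relations in $\MCG(Y)$ relating Dehn twists about boundary-parallel curves to products of interior twists (the "lantern relation" or the two-holed torus relations), which express boundary Dehn twists, and hence the desired unipotent elements, in terms of interior data.

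Assembling: given any $M \in \Sp(V,b)$, project to $\Sp(\overline V)$, lift that via the capping argument to some $\phi \in \MCG(Y)$, and then the discrepancy $M \cdot \Sp(\overline V)\text{-lift of }\phi^{-1}$ lies in the unipotent kernel, which is in the image by the Dehn-twist construction; multiplying the two lifts gives a preimage of $M$. The step I expect to be genuinely fiddly is the second one: producing explicit interior curves whose Dehn twists generate the kernel $\Hom(\overline V,\mathbf{Z})$, and checking the homological action formula for these twists against the duality pairing \eqref{PD}. Everything else is either standard ($\MCG$ surjects onto $\Sp$ for one-boundary surfaces) or routine linear algebra over $\mathbf{Z}$.
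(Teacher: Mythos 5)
Your proposal follows essentially the same route as the paper: the exact sequence $1 \to \Hom(\overline V, V^0) \to \Sp(V,b) \to \Sp(\overline V) \to 1$, surjectivity onto the quotient via capping off the boundary and citing surjectivity of the symplectic representation, and filling the kernel by differences of Dehn twists $T_{v+b}T_v^{-1}$ about simple closed curves whose homology classes differ by $b$. The lantern/chain-relation detour you contemplate is unnecessary — once $v$ and $v+b$ are both represented by simple closed curves (which holds for any $v\notin V^0$, possibly after replacing $b$ by $-b$), the composition $T_{v+b}T_v^{-1}$ directly realizes $x \mapsto \langle x, v\rangle b$, and varying $v$ over classes spanning $\overline V$ generates the whole kernel.
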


\begin{proof}

The group $\Sp(V, b)$ fits into an exact sequence
$$1 \rightarrow \Hom(V/V^0, V^0)  \rightarrow \Sp(V, b) \rightarrow \Sp(V/V^0) \rightarrow 1,$$
where the left-hand map is given by $f \mapsto 1+f$.  

Now one obtains a closed surface from $Y$ by capping off both boundary components.
The mapping class group $\MCG(Y)$ surjects onto the mapping class group of this closure \cite[Prop 3.19]{FM}.
Therefore (by the surjectivity of the symplectic representation for a closed surface \cite[Theorem 6.4]{FM}) it surjects onto $\Sp(V/V^0)$.

Now let $v \in V$ be a class, not in $V^0$, which is represented by a simple closed curve in $Y$; and let $b$ be one of the two boundary components of $Y$.  We can represent $v + b$ by a simple closed curve as well (possibly after replacing $b$ with $-b$).   Thus the image of $\MCG(Y)$ contains the transvections $T_v$ and $T_{v+b}$.  The composition $T_{v+b} T_v^{-1}$ is a nontrivial element of $\Sp(V, b)$, coming from the element
$$x \mapsto \langle x, v \rangle b \in \Hom(V/V^0, V^0).$$
These generate $\Hom(V/V^0, V^0)$ so the result follows. 
\end{proof}

Cut $Y^1$ along $\alpha_2$, and let $Y^2$ be the resulting surface; it is a surface of genus $g-1$ with one boundary component. 
The pullback of the cover $Z \rightarrow Y$ to $Y^2$ {\em splits}, i.e., becomes a disjoint union of $q$ copies of $Y^2$.
We can recover $Y$ from $Y^2$ by gluing 
to $Y^2$ a torus with one boundary component. 
Thus our discussion has shown that
  it is possible to put any $\Gq$-cover $Z \rightarrow Y$ into a normal form:
a connected sum of a trivial cover of a genus $g-1$ surface and a nontrivial cover of a torus. 

\begin{figure}[ht]
\centering
\includegraphics[height=6cm]{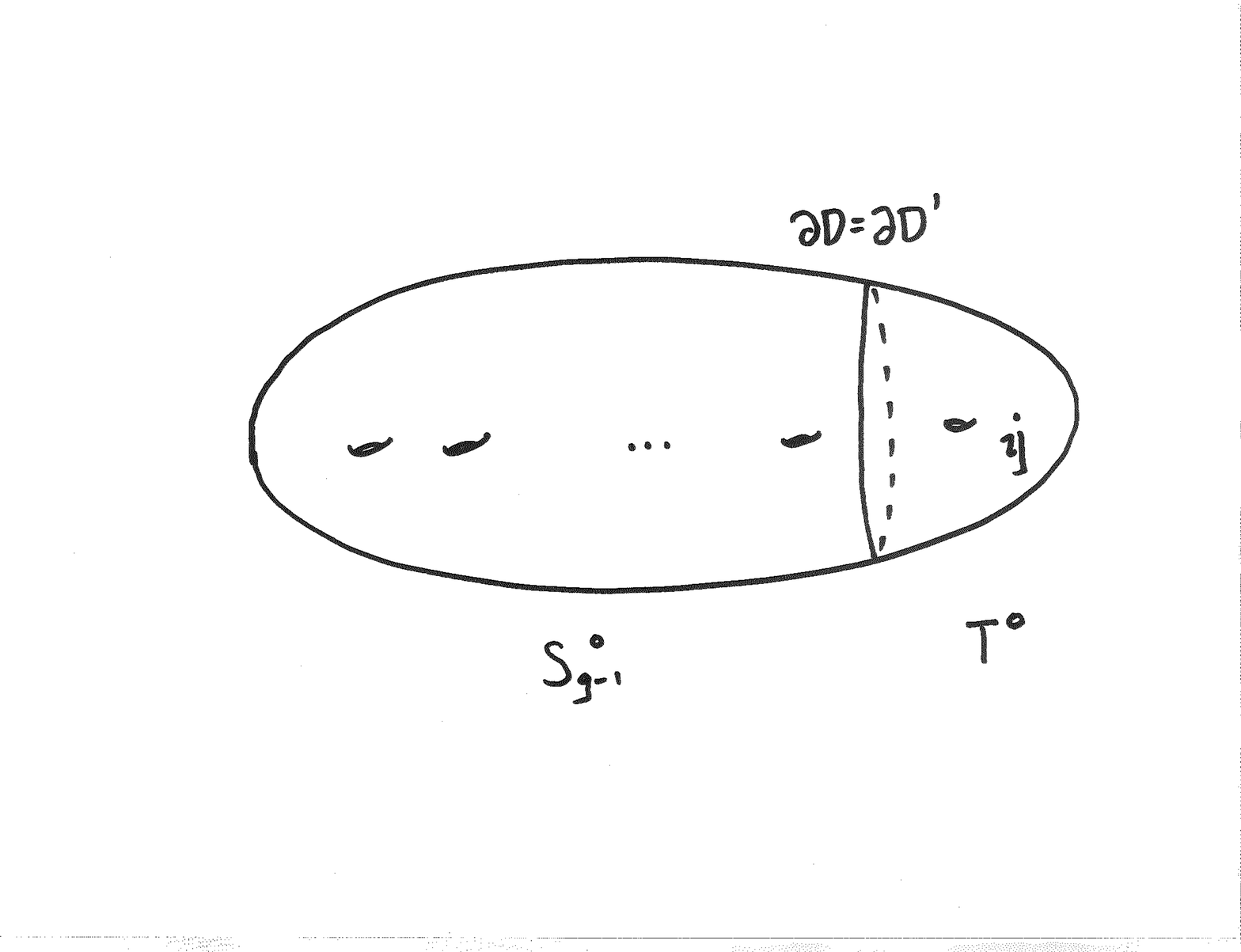}
\caption{$Y$ as a connected sum.}
\end{figure}

To summarize: Let $\mathsf{S}_{g-1}$ be a genus-$(g-1)$ surface
and let $\mathsf{T}$ be a torus.
Fix a small open disk $D$ in $\mathsf{S}_{g-1}$ and $D'$ in $\mathsf{T}$,
and set
\begin{equation} \mathsf{S}_{g-1}^{\circ} = \mathsf{S}_{g-1} - D,\ \ \label{TSdef} \mathsf{T}^{\circ} = \mathsf{T} - D', \end{equation}
so these are, respectively, a surface of genus $g-1$ with one boundary component and a torus with one boundary component. 
We identify $Y$ with the genus-$g$ surface obtained by gluing  
$\mathsf{S}_{g-1}^{\circ}$ to $\mathsf{T}^{\circ}$ along an identification $\partial D' \simeq \partial D$.
(In relation to  the discussion just given, 
 $\mathsf{S}_{g-1}^{\circ}$ is homotopy-equivalent to $Y^2$. )

\begin{prop}[Normal form for $\Gq$-covers]
Let $Z$ be a singly ramified $\Gq$-cover of $Y$.
Then we may write $Y$ as a connected sum: %
$$Y = \mathsf{S}_{g-1} \# \mathsf{T},$$
where $\mathsf{S}_{g-1}$ is a genus-$(g-1)$ surface and $\mathsf{T}$ is a genus-$1$ surface, satisfying the following properties (with notation as above).

\begin{itemize}
\item The ramification point $y$ belongs to the interior of $\mathsf{T}^{\circ}$,
\item the cover $Z \rightarrow Y$ splits over $\mathsf{S}_{g-1}^{\circ}$. 
\item the cover $Z \rightarrow Y$, when restricted to $\mathsf{T}^{\circ}$, extends over $\mathsf{T}$,
i.e.\ has trivial monodromy around the boundary circle of $\mathsf{T}^{\circ}$. 
\item With respect to a standard basis for $\pi_1(\mathsf{T}-y,*)$, a free group
on two generators $\beta_1, \beta_2$, the monodromy of 
the cover sends
\begin{itemize}
\item $\beta_1$ to an element of $\Gq$ projecting to a generator for $\F_q^*$, and
\item $\beta_2$ to an nonzero element of $\F_q^+$. 
\end{itemize}
\end{itemize}
\end{prop}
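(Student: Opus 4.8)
\emph{Proof plan.} The plan is to read the decomposition off directly from the cutting construction already carried out in the discussion preceding the proposition, to deduce the first three bullets from it, and then to establish the one remaining point, the normal form for the monodromy on $\pi_1(\mathsf{T}-\{y\})$.

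First I would recall the construction. One picks a primitive class $\alpha_1 \in H_1(Y,\Z)$ so that intersection with $\alpha_1$ is a $\Z$-valued lift of the abelianized monodromy $H_1(Y,\Z) \to \F_q^*$, represents it by two parallel simple closed curves $\alpha_1^{\pm}$ straddling the ramification point $y$, cuts $Y$ along them, and discards the annular piece carrying $y$; this produces a genus-$(g-1)$ surface $Y^1$ with two boundary circles over which the monodromy takes values in $\F_q^+$, normalized so that one boundary circle maps to $1$. Poincar\'e--Lefschetz duality on $Y^1$ converts a $\Z$-lift of this $\F_q^+$-valued monodromy into a relative class $\alpha_2 \in H_1(Y^1,\partial Y^1;\Z)$, and the lemma just proved (surjectivity of $\MCG(Y^1) \to \Sp(V,b)$) lets one represent $\alpha_2$ by an embedded arc joining the two boundary circles. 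Cutting $Y^1$ along that arc yields $Y^2$, a genus-$(g-1)$ surface with one boundary circle over which the pulled-back cover splits. One recovers $Y$ from $Y^2$ by gluing a surface back along the single boundary circle; an Euler-characteristic count shows this surface is a one-holed torus $\mathsf{T}^{\circ}$, and it contains $y$ (since $y\notin Y^2$). Taking $\mathsf{S}_{g-1}$ and $\mathsf{T}$ to be the closed-up versions, $Y = \mathsf{S}_{g-1}\#\mathsf{T}$ with $\mathsf{S}_{g-1}^{\circ}$ identified with $Y^2$. The first three properties are then immediate: $y$ lies in the interior of $\mathsf{T}^{\circ}$ by construction; the cover splits over $\mathsf{S}_{g-1}^{\circ} = Y^2$; and $\partial\mathsf{T}^{\circ} = \partial Y^2$ bounds in $Y^2$, hence has trivial monodromy, so the cover of $\mathsf{T}^{\circ} - \{y\}$ extends over $\mathsf{T} - \{y\}$ and gives a homomorphism $\monmap : \pi_1(\mathsf{T}-\{y\},*) \to \Gq$.

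For the fourth property I would first check that $\monmap$ is surjective. By van Kampen along $\partial\mathsf{T}^{\circ}$, the image of $\pi_1(Y-\{y\}) \to \Gq$ is generated by the images of $\pi_1(\mathsf{S}_{g-1}^{\circ})$ and $\pi_1(\mathsf{T}^{\circ}-\{y\})$; the former is trivial because the cover splits over $\mathsf{S}_{g-1}^{\circ}$, while the whole image equals $\Gq$ since $Z$ is a $\Gq$-cover. Hence $\pi_1(\mathsf{T}^{\circ}-\{y\})$ — and therefore its quotient $\pi_1(\mathsf{T}-\{y\})$ — surjects onto $\Gq$. Now $\pi_1(\mathsf{T}-\{y\})$ is free of rank $2$, and the composite $\pi_1(\mathsf{T}-\{y\}) \to \Gq \twoheadrightarrow \F_q^*$ factors through $H_1(\mathsf{T}-\{y\},\Z) \cong \Z^2$; since this surjects onto the cyclic group $\F_q^*$, an integral change of basis of $\Z^2$ puts it in the form where the first basis vector maps to a generator of $\F_q^*$ and the second to $0$. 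Lifting that change of basis to an automorphism of the free group $F_2$ (realized by a homeomorphism of $\mathsf{T}-\{y\}$, after a harmless modification to keep it orientation-preserving if desired) and letting $\beta_1,\beta_2$ be the images of the original generators, one gets $\monmap(\beta_1)$ projecting to a generator of $\F_q^*$ and $\monmap(\beta_2) \in \F_q^+$; moreover $\monmap(\beta_2) \neq 0$, since otherwise $\monmap$ would factor through the cyclic group $\langle\beta_1\rangle$, contradicting surjectivity onto the non-abelian (hence non-cyclic) group $\Gq$ (here $q \geq 3$). This is exactly the asserted normal form. If one prefers $\beta_1,\beta_2$ to be a standard geometric basis, one absorbs the change of basis into a homeomorphism of $Y$ supported in $\mathsf{T}$, i.e.\ into the choice of connected-sum decomposition.

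The heavy lifting is thus already contained in the preceding discussion; the only input that is not purely formal is the representability of $\alpha_2$ by an embedded arc, which I take from the lemma proved just before the proposition. I do not expect a serious obstacle — the main work is bookkeeping: keeping track of orientations so that the discarded annular piece really carries $y$ and $Y^1$ really has genus $g-1$, and checking that the abelianized change of basis is realized by a homeomorphism of the once-punctured torus and not merely by an abstract automorphism of $F_2$.
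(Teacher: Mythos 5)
Your proposal is correct and follows the same route as the paper: the decomposition $Y=\mathsf{S}_{g-1}\#\mathsf{T}$ is read off from the two-step cutting construction (cut along $\alpha_1^{\pm}$, then along the arc $\alpha_2$, both as in the preceding discussion), the first three bullets are immediate from that construction, and the only point needing a separate argument is the fourth bullet. There you depart slightly from the paper: the paper simply takes $\beta_1,\beta_2$ to be the geometric curves dual to $\alpha_1,\alpha_2$ (so $\langle\beta_1,\alpha_1\rangle=\pm1$ forces $\beta_1$ onto a generator of $\F_q^*$, and $\langle\beta_2,\alpha_2\rangle=\pm1$ forces $\monmap(\beta_2)\neq 0$ directly), whereas you normalize the abelianized monodromy by a $\mathrm{GL}_2(\Z)$ change of basis realized in $\MCG(\mathsf{T}-\{y\})$ and then deduce $\monmap(\beta_2)\neq0$ from surjectivity onto the noncyclic group $\Gq$. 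Both arguments are valid; the paper's version has the advantage of producing $\beta_1,\beta_2$ as explicit curves dual to $\alpha_1,\alpha_2$, which is the form in which they are reused in the subsequent monodromy computations (Lemma \ref{item_b} and Lemma \ref{scctoruslemma}), so if you adopt your version you would want to note that the resulting $\beta_i$ can still be taken dual to $\alpha_1,\alpha_2$ after absorbing the basis change into the connected-sum data.
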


Here $\beta_1$ is a curve which crosses $\alpha_1$ once and does not cross $\alpha_2$;
and similarly for $\beta_2$.   

\begin{figure}[ht]
\centering
\includegraphics[height=6cm]{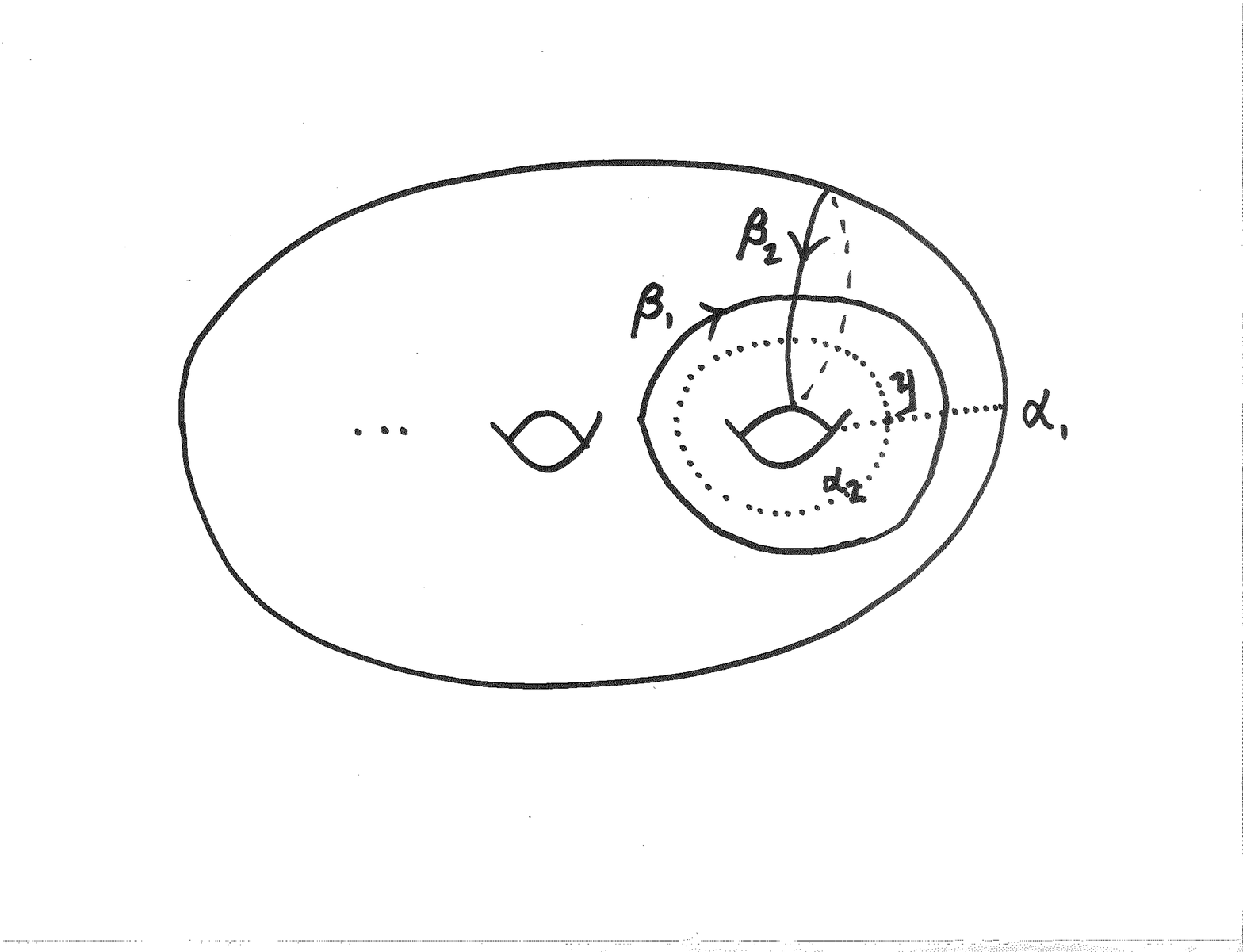}
\caption{The curves $\beta_1$ and $\beta_2$. (The basepoint is the intersection of $\beta_1$ and $\beta_2$.)}
\end{figure}

Thus $Z$ consists of $q$ copies of $\mathsf{S}_{g-1}^{\circ}$
glued to a degree-$q$ cover $\widetilde{\mathsf{T}^{\circ}}$ of $\mathsf{T}^{\circ}$
along $q$ boundary circles. In the sequel we will use
$\widetilde{\mathsf{S}_{g-1}^{\circ}}$ for the cover of $\mathsf{S}_{g-1}^{\circ}$
induced by $Z$.

\subsection{Proof of Theorem \ref{monodromy theorem}}

We must show \eqref{TBD} has Zariski-dense image. 
We will perform a series of reductions;
the main steps are Lemmas \ref{one_star}, \ref{two_stars}, and \ref{three_stars}.

\begin{lemma}
The image of $\pi_1(Y, y)_0$ (see after \eqref{birman} for definition) under the monodromy map 
$$ \pi_1(Y, y)_0 \rightarrow \Sp( \HPr(Z_i, Y) )$$
to any factor $\Sp(\HPr(Z_i, Y))$ of the right-hand side of \eqref{TBD} is not contained in the center of $\Sp$.
\end{lemma}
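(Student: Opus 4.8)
The plan is to produce, for each index $i$, an element of $\pi_1(Y,y)_0$ whose image in the factor $\Sp(\HPr(Z_i,Y))$ is a nontrivial unipotent transformation. This suffices: a unipotent $u\neq 1$ has a nonzero fixed vector, whereas the two central elements $\pm 1$ of $\Sp$ do not, and $\HPr(Z_i,Y)\neq 0$ because $\dim_{\Q}\HPr(Z_i,Y)=(2g-1)(q-1)>0$. The element will be a point-push: via the Birman sequence \eqref{birman}, if $\eta\in\pi_1(Y,y)$ is represented by a simple closed curve through $y$, its point-pushing class has the form $D_{\eta_L}^{\varepsilon}D_{\eta_R}^{-\varepsilon}$ with $\varepsilon=\pm 1$, where $\eta_L,\eta_R$ are the two push-offs of the curve in $Y-\{y\}$; since these are disjoint, $D_{\eta_L}$ and $D_{\eta_R}$ commute, so the point-pushing class of $\eta^m$ is $D_{\eta_L}^{\varepsilon m}D_{\eta_R}^{-\varepsilon m}$ for every $m$.

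First I would choose $\eta$ using the normal form of \S\ref{GStd} for the cover $Z_i$, so that $Y=\mathsf{S}_{g-1}\#\mathsf{T}$ with $y$ in the interior of $\mathsf{T}^{\circ}$ and $Z_i$ split over $\mathsf{S}_{g-1}^{\circ}$. Take a nonseparating simple closed curve $a$ in $\mathsf{S}_{g-1}^{\circ}$ (possible since $g-1\geq 1$), and let $\eta$ be a simple closed curve through $y$, isotopic in $Y$ to $a$ and slid so as to traverse $y$, arranged so that one of its push-offs $\eta_L$ is isotopic to $a$ in $Y-\{y\}$. Then $\eta$ is essential in $Y$, so $\eta^m\neq 1$ for all $m>0$; the monodromy $\monmap_i(\eta_L)=\monmap_i(a)$ is trivial; and the other push-off $\eta_R$ differs from $\eta_L$ by a loop around $y$, so $\monmap_i(\eta_R)$ is conjugate to the branch cycle of $Z_i$ at $y$, which is a $q$-cycle because $Z_i$ is \good; in particular $\monmap_i(\eta_R)$ has order exactly $q$.

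Next I would set $m$ to be the least common multiple of the orders of $\monmap_j(\eta_L)$ and $\monmap_j(\eta_R)$ over all $j=1,\dots,\numcovers$; note $q\mid m$. Because $D_e^{n}$ lifts to, and hence preserves, a $\Gq$-cover whenever $n$ is a multiple of the order of $\monmap(e)$ (\S\ref{dehn_homology}), both $D_{\eta_L}^{\varepsilon m}$ and $D_{\eta_R}^{-\varepsilon m}$ lie in $\MCG(Y-\{y\})_0$, so $\eta^m\in\pi_1(Y,y)_0$. Finally I would evaluate the monodromy map on this element, using \S\ref{dehn_homology}. The preimage of $\eta_R$ in $Z_i$ is a single circle (its permutation is a $q$-cycle), so its class is $\pi^{*}[\eta_R]$ and projects to $0$ in $\HPr(Z_i,Y)$; hence $\monsp(D_{\eta_R}^{q})=1$, and since $q\mid m$ also $\monsp(D_{\eta_R}^{-\varepsilon m})=1$. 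On the other hand the preimage of $\eta_L$ in $Z_i$ consists of $q$ disjoint circles, so the lift of $D_{\eta_L}$ acts on $\HPr(Z_i,Y)$ as a unipotent $u$ with $\operatorname{rank}(u-1)=q-1>0$; in characteristic zero $\operatorname{rank}(u^{\varepsilon m}-1)=\operatorname{rank}(u-1)$, so $\monsp(D_{\eta_L}^{\varepsilon m})=u^{\varepsilon m}\neq 1$. Therefore the image of $\eta^m$ under $\monsp$ in $\Sp(\HPr(Z_i,Y))$ is $u^{\varepsilon m}\cdot 1=u^{\varepsilon m}$, a nontrivial unipotent, which completes the argument.

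I expect the crux to be arranging that the two push-off contributions do not cancel: the $\eta_R$-contribution must vanish after passing to a $q$-th power, and this is exactly what the \good\ hypothesis provides, since it forces the branch cycle at $y$—and hence $\monmap_i(\eta_R)$—to be a full $q$-cycle rather than a product of shorter cycles. The remaining ingredients—the point-pushing formula, the lift of $D_e^{n_e}$ to the cover, and the fact that the classes of the preimage circles span a subspace of primitive homology of dimension one less than their number—are all established in \S\ref{dehn_homology} and \S\ref{GStd}.
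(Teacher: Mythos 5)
Your argument is correct and supplies exactly the ``simple topological proof'' that the paper leaves to the reader. The paper's own footnote gives an algebro-geometric alternative: a central image would make the monodromy on $\HPr(Z_i,Y)$ finite, hence trivial after a further finite cover of $Y'$; the theorem of the fixed part then forces the corresponding polarized variation of Hodge structure to be constant, so by Torelli all fibers of the Kodaira--Parshin curve fibration over a component of $Y'$ are isomorphic, and de Franchis (applied to the branched covers $Z_{y'}\rightarrow Y$) then contradicts the positive-dimensionality of $Y'$. Your route is more elementary and self-contained: it uses only the Birman sequence, the point-push identity $\mathrm{Push}(\eta)=D_{\eta_L}^{\varepsilon}D_{\eta_R}^{-\varepsilon}$ with the two push-offs disjoint, and the Dehn-twist lifting analysis and normal form already developed in \S\ref{dehn_homology} and \S\ref{GStd}, and it produces an explicit nontrivial unipotent $u^{\varepsilon m}$ in the image. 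It also isolates exactly where the \good\ hypothesis enters --- the branch cycle at $y$ is a full $q$-cycle, so the preimage of $\eta_R$ is a single circle whose class dies in primitive homology, which is what makes the $\eta_R$-contribution vanish --- a mechanism that the footnote's Hodge-theoretic argument does not make visible.
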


\begin{proof}
We leave the simple topological proof to the reader.\footnote{One can also give an algebro-geometric argument, as follows.
Suppose to the contrary. Now, as in \S \ref{rationalpoints}, there is an associated 
finite covering $Y' \rightarrow Y$ such that the various $Z_i$ fit together into a curve fibration $\mathsf{Z} \rightarrow Y'$. 
If (a) were false, the theorem of the fixed part means that
the Hodge structure of the fibers of $\mathsf{Z} \rightarrow Y'$ are constant, at least over one component of $Y'$. 
 By Torelli, this means that all the fibers are actually isomorphic. This contradicts de Franchis's theorem.}
\end{proof}

Because $\pi_1(Y, y)_0$ is normal inside $\MCG(Y-\{y\})_0$
and the symplectic groups are almost simple,
Theorem \ref{monodromy theorem} follows from the subsequent Lemma:

\begin{lemma} \label{one_star}
The monodromy map restricted to $\MCG(Y-\{y\})_0$,
$$\monsp: \MCG(Y-\{y\})_0 \rightarrow \prod_{i=1}^{\numcovers} \Sp( \HPr(Z_i, Y) ),$$
has Zariski-dense image.
\end{lemma}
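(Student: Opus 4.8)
The plan is to prove Lemma \ref{one_star} by a Goursat-type argument: first show that the monodromy map surjects (up to Zariski closure) onto each individual factor $\Sp(\HPr(Z_i,Y))$, and then show that no two factors are "linked" — i.e. the image cannot be contained in the graph of an isogeny between $\Sp(\HPr(Z_i,Y))$ and $\Sp(\HPr(Z_j,Y))$ for $i\neq j$. Since each $\Sp(\HPr(Z_i,Y))$ is almost simple with only inner automorphisms, the algebraic version of Goursat's lemma (in the spirit of Lemma \ref{goursat}, or directly \cite[Lemma 5.2.1]{Ribet}) then forces the Zariski closure of the image to be the full product. Throughout, the key tool is that $\MCG(Y-\{y\})_0$ contains many Dehn twists $D_e^{\bigexp}$ (for $e$ a simple closed curve and $\bigexp$ chosen to make the power lie in the common stabilizer), and that $\monsp$ of such a twist acts on each $\HPr(Z_i,Y)$ as a product of commuting transvections whose centers and ranks we understand explicitly from \S\ref{dehn_homology}.

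The first main step (call it Lemma \ref{two_stars} in the numbering hinted at) is surjectivity onto a single factor. Fix $Z = Z_i$ and put it in the normal form of the Normal Form Proposition: $Y = \mathsf{S}_{g-1}\#\mathsf{T}$ with the cover split over $\mathsf{S}_{g-1}^\circ$ and nontrivial, with monodromy as described, over $\mathsf{T}^\circ$. Using liftable curves $e$ (those whose monodromy maps to a generator of $\F_q^*$), the twist $D_e$ acts on $\HPr(Z,Y)$ as a single transvection $T_{\widetilde e}$. I would produce an explicit system of liftable curves $A_1,\dots,A_k$ whose associated classes $\widetilde A_1,\dots,\widetilde A_k$ span $\HPr(Z,Y)$ and whose "intersection graph" — edges where $\widetilde A_a\cdot\widetilde A_b\neq 0$, computed via formula \eqref{phf} — is connected; then Lemma \ref{transvection_graph} shows the Zariski closure of the group generated by the corresponding transvections contains $T_w$ for all $w$ in the span, hence is all of $\Sp(\HPr(Z,Y))$. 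Concretely, the split part $\widetilde{\mathsf{S}_{g-1}^\circ}$ consists of $q$ disjoint copies, and curves supported in one copy, together with "connecting" curves that run through $\mathsf{T}^\circ$ and meet $\alpha_1$, should give enough liftable curves; one checks the pairing computation \eqref{phf} makes the graph connected. This is essentially the topological heart and where the normal form pays off.

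The second main step (Lemma \ref{three_stars}) is the no-linkage/Goursat step: given that the image surjects onto each factor and onto each pair of factors modulo the possibility of a diagonal, one must rule out an isomorphism $\alpha\colon \Sp(\HPr(Z_i,Y))\xrightarrow{\sim}\Sp(\HPr(Z_j,Y))$ identifying the two projections of the image. The clean way is via Lemma \ref{dehn_index}: for a simple closed curve $e$ with a suitable power $D_e^{\bigexp}$ in $\MCG(Y)_{Z_i}\cap\MCG(Y)_{Z_j}$, the rank of $\monsp(D_e^{\bigexp})-\mathrm{Id}$ on $\HPr(Z_i,Y)$ determines the conjugacy class of $\monmap_i(e)$ in $\Sym(\F_q)$. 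Since $Z_i$ and $Z_j$ are non-isomorphic covers, the monodromy homomorphisms $\monmap_i,\monmap_j$ are not conjugate, so there is some $e$ on which they differ in cycle type; the corresponding Dehn twist power then acts with different unipotent ranks on the two factors, which an isomorphism of algebraic groups cannot match (a bijective algebraic-group isomorphism preserves the rank of $u-\mathrm{Id}$ for unipotent $u$ up to the possible graph automorphism, and for $\Sp_{2n}$ with $n\geq 3$ there is no outer automorphism; the small-rank cases $\Sp_2=\SL_2$, $\Sp_4$ need a separate, easy check, or can be avoided since our $\HPr$ have large rank $(2g-1)(q-1)$). This handles pairs; combined with almost-simplicity and Step 1, Goursat gives the full product.

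I expect the \textbf{main obstacle} to be the explicit topology in Step 1: producing a concrete collection of liftable simple closed curves on $Y$ whose lifted classes both span $\HPr(Z,Y)$ and have a connected intersection graph, and verifying this uniformly across all $q$-cycle covers in normal form. The normal-form decomposition $Y=\mathsf{S}_{g-1}\#\mathsf{T}$ reduces the problem to a fairly standard-looking count on the $q$-fold split cover $\widetilde{\mathsf{S}_{g-1}^\circ}$ glued to the cyclic-type cover $\widetilde{\mathsf{T}^\circ}$, but getting the dimension bookkeeping right — that the $\widetilde A_a$ really span the full $(2g-1)(q-1)/2$-dimensional primitive homology rather than a proper subspace, and that no "accidental" orthogonality disconnects the graph — is the delicate part, and is presumably where a careful picture and the intersection formula \eqref{phf} do the real work. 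The Goursat step and the reduction from $\pi_1(Y,y)_0$ to $\MCG(Y-\{y\})_0$ (using normality and almost-simplicity, already indicated before Lemma \ref{one_star}) are comparatively formal.
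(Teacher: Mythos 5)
Your overall strategy — surjectivity onto each $\Sp(\HPr(Z_i,Y))$ plus a Goursat argument distinguishing pairs of factors by the ranks of unipotent elements coming from Dehn twist powers — is exactly the paper's, and your use of Lemmas~\ref{goursat}, \ref{dehn_index}, and \ref{transvection_graph} is the right toolkit. The genuine gap is in your no-linkage step, specifically the sentence ``Since $Z_i$ and $Z_j$ are non-isomorphic covers, the monodromy homomorphisms $\monmap_i,\monmap_j$ are not conjugate, so there is some [simple closed curve] $e$ on which they differ in cycle type.'' This ``so'' hides the real content of the paper's Lemma~\ref{item_b}, which is not a formal consequence of non-conjugacy. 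What you need is not merely an \emph{element} of $\pi_1(Y-\{y\})$ distinguishing the two maps, but a \emph{simple closed curve} — and moreover one on which the \emph{cycle types}, not just the images, differ. For $\Aff(q)$, the cycle type of $\monmap(e)$ in $\Sym(\F_q)$ depends only on the multiplicative order of the image of $e$ in $\F_q^*$ (with a separate case for nontrivial translations), so two covers whose compositions to $\F_q^*$ have the same kernel will look identical on every simple closed curve when measured this way \emph{unless} one pushes further.

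The paper's proof of Lemma~\ref{item_b} handles this by a three-stage case analysis using the normal form you already invoke: (1) if the $\F_q^*$-projections have different kernels, a primitive homology class does it; (2) otherwise cut along the same $\alpha_1$ and compare the induced maps to $\F_q^+$, which gives a distinguishing curve if these are not proportional; (3) in the residual case the two covers differ only in a scalar $c_1 \neq c_2 \in \F_q^*$, and one must exhibit an explicit simple closed curve $\eta$ mapping to the word $\beta_1\beta_2\beta_1^{-1}\beta_2^{q-c_1}$ in $\pi_1(\mathsf{T}-y)$ (a nontrivial point, justified in the paper by a picture). Your proposal correctly identifies Step~1 (spanning and connectivity of the transvection graph) as delicate, and the paper's Lemma~\ref{scctoruslemma} and the computation around \eqref{phf} do the real work there as you anticipate, but you should flag the no-linkage step as an equally substantive lemma rather than a formality. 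You would also need to be careful that your Goursat step uses the hypothesis of Lemma~\ref{goursat} in the form stated there (a single group element whose projections to the $i$-th and $j$-th factors are unipotent with different fixed-space dimensions), which the Dehn twist power $D_\eta^{\bigexp}$ furnishes once such an $\eta$ is in hand via Lemma~\ref{dehn_index}; your appeal to rigidity of automorphisms of $\Sp_{2n}$ is an alternative route but is not how the paper's Lemma~\ref{goursat} is phrased.
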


In turn, using Lemma \ref{goursat}, this will follow from Lemmas \ref{item_b} and \ref{two_stars}.

\begin{lemma}[Distinct covers are distinguished by monodromy around a simple closed curve.] \label{item_b}
For two non-isomorphic $\Gq$-covers $Z_1, Z_2$
there exists a simple closed curve  $\eta$ in $Y$
such that the cycle decompositions of the monodromy around $\eta$ 
in $Z_1$ and $Z_2$ are different.
\end{lemma}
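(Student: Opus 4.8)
The plan is to restate the assertion group-theoretically and then reduce to a case analysis governed by the ``linear part'' of the covers. Recall that a singly ramified $\Gq$-cover of $Y$ is the same datum as an $\Gq$-conjugacy class of surjections $\phi:\pi_1(Y-\{y\},y_0)\twoheadrightarrow\Gq$ which is nontrivial on the loop $\ell$ around $y$, and that $Z_1\cong Z_2$ exactly when $\phi_1,\phi_2$ are $\Gq$-conjugate. For a simple closed curve $\eta$ on $Y$ the cycle decomposition of $\monmap(\eta)=\phi(\eta)$ in $\Sym(\F_q)$ records only: whether $\phi(\eta)=1$ (type $(1^q)$), whether $\phi(\eta)$ is a nontrivial translation (type $(q)$), or, when the image $a\in\F_q^*$ of $\phi(\eta)$ is nontrivial, the order of $a$ (type $(1,\ord a,\dots,\ord a)$). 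Note too that the statement is invariant under the mapping class group acting simultaneously on $Y$, on the simple closed curves, and on the covers, so we are free to normalise the position of $Z_1$.

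First I would treat the case in which the linear parts $\bar\phi_1,\bar\phi_2:H_1(Y;\Z)\to\F_q^*$ have different kernels as subgroups of $H_1(Y;\Z)$. Lifting $\bar\phi_1$ to a surjection $H_1(Y;\Z)\to\Z$, its kernel is a saturated corank-$1$ sublattice which (or the analogous one for $\bar\phi_2$) is not contained in $\ker\bar\phi_2$, hence contains a primitive class $v$ with $\bar\phi_1(v)=1\neq\bar\phi_2(v)$; representing $v$ by a simple closed curve $\eta$ (\cite[Proposition 6.2]{FM}), $\phi_1(\eta)$ lies in $\F_q^+$ while $\phi_2(\eta)$ has $1+(q-1)/\ord(\bar\phi_2(\eta))<q$ cycles, so the cycle decompositions differ whichever of $(1^q)$ or $(q)$ is that of $\phi_1(\eta)$. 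If instead the two kernels coincide, write $K$ for the common kernel of $\bar\phi_1,\bar\phi_2$ inside $\pi_1(Y-\{y\})$; since $\ord(a^c)=\ord a$ for $c$ prime to $q-1$, no curve $\eta$ with $\bar\phi_1(\eta)\neq1$ can separate the two covers, and we are reduced to curves lying in $K$.

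This is the crux. On $K$ the maps $\phi_1,\phi_2$ take values in $\F_q^+$, defining homomorphisms $\psi_1,\psi_2:K\to\F_q^+$; a short computation using that $\F_q^*$ acts faithfully on $\F_q^+$ and that $\bar\phi_1,\bar\phi_2$ have the same kernel shows that $Z_1\not\cong Z_2$ forces $\psi_1$ and $\psi_2$ to be non-proportional, so --- as $q$ is prime --- $\ker\psi_1\neq\ker\psi_2$. It therefore suffices to produce a simple closed curve $\gamma$ on $Y$ lying in $K$ on which exactly one of $\psi_1,\psi_2$ vanishes, for then $\phi_1(\gamma)$ and $\phi_2(\gamma)$ have cycle types $(1^q)$ and $(q)$ in some order. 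To find such a $\gamma$ I would put $Z_1$ into the normal form of \S\ref{GStd}, writing $Y=\mathsf S_{g-1}\#\mathsf T$ so that $Z_1$ splits over the genus-$(g-1)$ piece $\mathsf S_{g-1}^\circ$ and is standard over the genus-$1$ piece $\mathsf T$. If $\phi_2$ restricted to $\pi_1(\mathsf S_{g-1}^\circ)$ is nontrivial then, since $\phi_1$ vanishes identically there, a suitable primitive class of $\mathsf S_{g-1}^\circ$ detecting $\phi_2$ --- realised by a simple closed curve --- does the job. In the remaining subcase $\phi_1$ and $\phi_2$ both factor through $\pi_1(\mathsf T^\circ-\{y\})$, and one must exhibit $\gamma$ among the simple closed curves of the once-punctured handle (the curves of each rational slope), possibly band-summed with curves of $\mathsf S_{g-1}^\circ$ to gain room; here one uses $\delta:=\psi_1-\psi_2\neq0$ together with the explicit description of $\phi_i$ on slope curves and the list of cycle structures from \S\ref{dehn_homology}.

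The main obstacle is precisely this last subcase: although $\psi_1$ and $\psi_2$ differ on some element of the finite-index free subgroup $K$, one needs the difference to be visible on a \emph{simple} closed curve downstairs, and the supply of such curves mapping into $K$ is limited. The normal form is the tool that makes this manageable, by confining the non-split part of the monodromy to a single handle where slope curves can be analysed directly, while the auxiliary genus $g-1\geq1$ provides the flexibility to realise the required word by a simple curve.
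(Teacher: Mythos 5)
Your proposal tracks the paper's argument closely: the same top-level dichotomy on whether $\bar\phi_1,\bar\phi_2$ have the same kernel, the same reduction to the additive parts when they do, and the same appeal to the normal form of \S\ref{GStd} to isolate the essential data in a single handle. Your preliminary observation that surjectivity forces $\psi_1$ and $\psi_2$ to be non-proportional when the covers are distinct is correct and slightly more conceptual than what the paper writes (the paper instead normalizes both covers and reads off $c_1\neq c_2$ directly), but it is essentially the same information.

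However, there is a genuine gap, and you flag it yourself as ``the main obstacle'': in the subcase where $\phi_2$ is also trivial on $\pi_1(\mathsf{S}_{g-1}^\circ)$, you do not produce the required simple closed curve. Knowing $\ker\psi_1\neq\ker\psi_2$ inside the finite-index subgroup $K$ is not enough by itself --- one must realize an element of $\ker\psi_1\setminus\ker\psi_2$ (or the reverse) by a \emph{simple} closed curve on $Y$. Your suggestion to scan ``slope curves'' on the punctured handle is misleading here: after normalizing $\psi_1(\beta_2)=\psi_2(\beta_2)=1$, the distinguishing element of $\pi_1(\mathsf{T}-y)$ must have the form $\beta_1\beta_2\beta_1^{-1}\beta_2^{-c_1}$, whose homology class on the handle is $(1-c_1)\beta_2$, typically imprimitive; no slope curve on the torus alone maps to it. The missing idea is exactly what the paper supplies: one exhibits a simple closed curve on $Y$ whose image in $\pi_1(\mathsf{T}-y)$ is $\beta_1\beta_2\beta_1^{-1}\beta_2^{q-c_1}$ by routing it through the genus-$(g-1)$ piece, as drawn in Figure~5 (your ``band-summing with curves of $\mathsf S_{g-1}^\circ$'' is the right slogan, but the explicit construction is the content). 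Until that curve is exhibited, the proof is not complete.
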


\proof  
Two coverings $Z_1, Z_2$ define two maps $\pi_1(Y-y) \rightarrow \Gq$.
Suppose, first of all, that their projections to $\mathbf{F}_q^*$
have different kernels (i.e.\ are not related by an automorphism of $\mathbf{F}_q^*$).
We may find a primitive homology class whose images under the two maps $f_1, f_2: H_1(Y, \Z) \rightarrow \mathbf{F}_q^*$
have different orders in $\mathbf{F}_q^*$. 
Indeed, there is a basis  $e_1, \dots, e_r$ for $H_1(Y, \Z)$ 
such that the kernel of $f_1$ equals $(q-1)e_1, e_2, \dots, e_r$; 
not all of $e_2, \dots, e_r$ can be in the kernel of $f_2$, and 
so at least one of these latter classes suffice.
Represent this primitive homology class
by a simple closed curve to construct $\eta$. 

Otherwise, the coverings $Z_1, Z_2$ define maps $\pi_1(Y-y) \rightarrow \F_q^*$ having the same kernel.
Accordingly, in the algorithm to convert an $\Gq$-cover into a normal form described
in \S \ref{GStd}, we can cut $Y$ along the {\em same} curve $\alpha_1$, as in \S \ref{GStd}, 
for both $Z_1$ and $Z_2$. We obtain, as before, a surface $Y^1$ with two boundary components; the covers $Z_1, Z_2$
define two maps
$$g_1, g_2: H_1(Y^1,\Z) \longrightarrow \F_q^+.$$
If $g_1$ is not proportional to $g_2$, we can find a primitive homology class for $H_1(Y^1,\Z)$
which is in the kernel of one map but not the other. Represent this primitive homology class by a simple closed curve to construct $\eta$. 

Otherwise $g_1$ and $g_2$ are proportional, so the two maps $\pi_1(Y^1) \rightarrow \F_q^+$ have the same kernel.
Therefore, we can cut $Y_1$ along the same curve $\alpha_2$ for both $Z_1$ and $Z_2$.
So we get a decomposition of $Y$ as a connected sum $Y = \mathsf{S}_{g-1} \# \mathsf{T}$
as above, such that both $Z_1$ and $Z_2$ become trivial on $\mathsf{S}_{g-1}$.

Let $\beta_1$ and $\beta_2$ be curves on $\mathsf{T}$ as in the end of \S \ref{GStd}.
Then both maps $\pi_1(Y - y) \rightarrow \Gq$ send $\beta_1$ to an element of $\Gq$
projecting to a generator for $\F_q^*$; and they both send $\beta_2$ to an element of $\F_q^+$.
Each of $\monmap_1 (\beta_1)$ and $\monmap_2 (\beta_1)$ has a unique fixed point in $\F_q$;
up to conjugation, we may suppose this fixed point is $0$.  By a further conjugation we may assume that
$\monmap_1 (\beta_2) = \monmap_2 (\beta_2) = 1 \in \F_q^+$.

So we can write 
$$\monmap_1(\beta_1) \colon x \mapsto c_1 x$$
and
$$\monmap_2(\beta_1) \colon x \mapsto c_2 x.$$
If $Z_1$ and $Z_2$ are not isomorphic covers, we must have $c_1 \neq c_2$. 

There is a map
$$ \pi_1(Y-y) \longrightarrow \pi_1(\mathsf{T}-y)$$
which is obtained (in the notation of \eqref{TSdef}) 
by collapsing $\mathsf{S}_{g-1}^{\circ}$ to a point; this gives a map from $Y-y$ 
to a surface that is homotopy equivalent to $\mathsf{T}-y$.
  
There exists a simple closed curve $\eta \in \pi_1(Y - y)$  mapping to $\beta_1 \beta_2 \beta_1^{-1} \beta_2^{q-c_1}$ 
under this map $\pi_1(Y - y) \rightarrow \pi_1(\mathsf{T} - y)$: 
see Figure \ref{proof_by_picture} and its caption.

Then $\monmap_1(\eta)$ is trivial
but $\monmap_2(\eta)$ is not trivial.  This concludes the proof. \qed

\begin{figure}[ht]
\label{proof_by_picture}
\centering
\includegraphics[height=6cm]{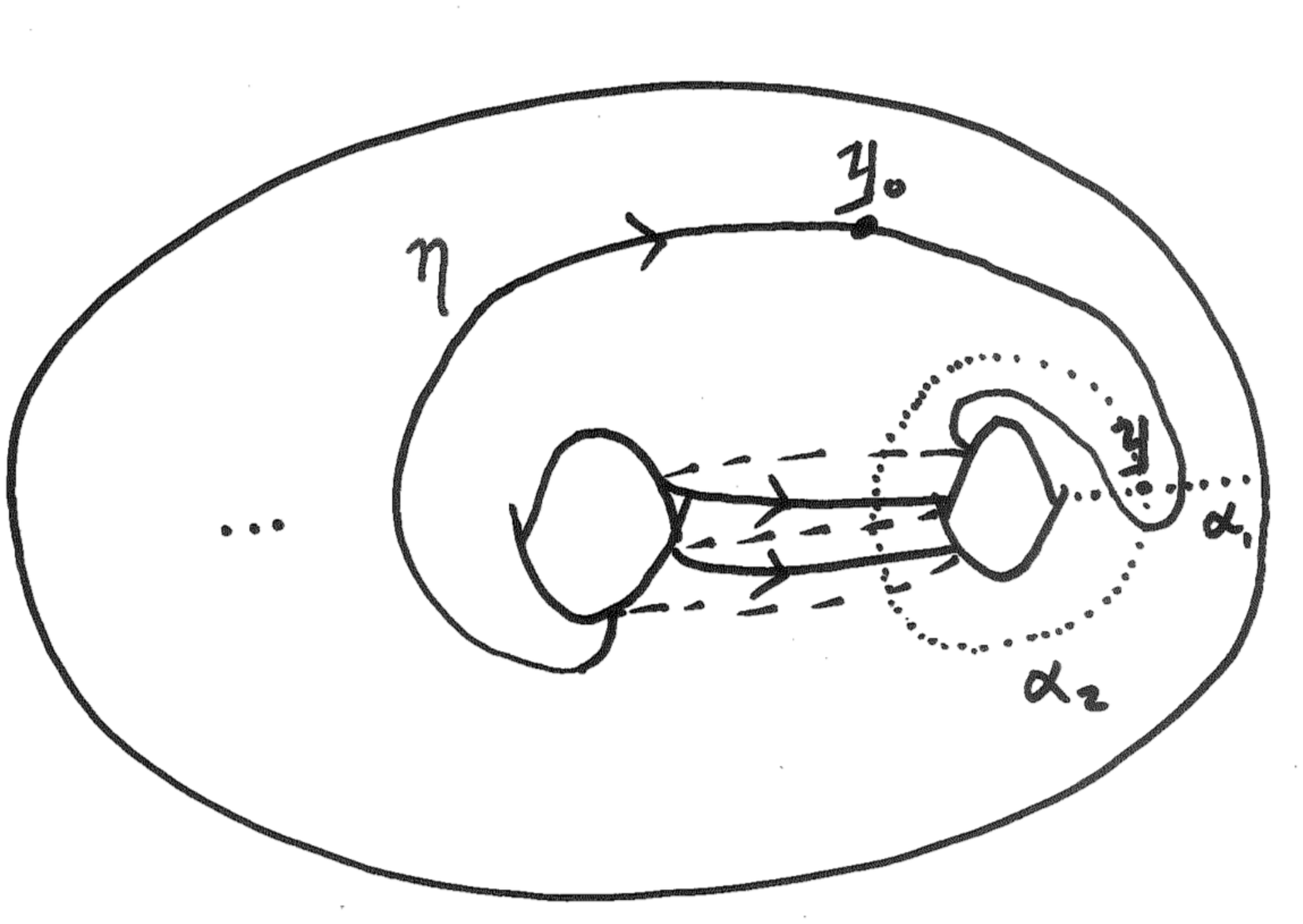}
\caption{The curve $\eta$.  \newline
How to read the picture: Follow along the path $\eta$, starting at the basepoint $y_0$.  
Write down a word in the symbols $\beta_1$ and $\beta_2$ as follows. 
Every time $\eta$ crosses $\alpha_1$, write $\beta_1$ or $\beta_1^{-1}$, depending whether the crossing
was in the positive or negative direction.
Every time $\eta$ crosses $\alpha_2$, write $\beta_2$ or $\beta_2^{-1}$.
The resulting word is the image of $\eta$ under the map $\pi_1(Y - y, y_0) \rightarrow \pi_1(\mathsf{T} - y, y_0)$,
which we readily see is $\beta_1 \beta_2 \beta_1^{-1} \beta_2^{2}$.}
\end{figure}

\begin{lemma} \label{two_stars}
The monodromy map
$\mathrm{Mon}: \MCG(Y)_{Z_i} \longrightarrow \Sp(\HPr(Z_i, Y))$
has Zariski-dense image.
\end{lemma}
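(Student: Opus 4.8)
The goal is to show that, for a single singly-ramified $\Gq$-cover $Z_i \to Y$, the monodromy map $\mathrm{Mon}\colon \MCG(Y)_{Z_i} \to \Sp(\HPr(Z_i,Y))$ has Zariski-dense image. The plan is to exhibit enough Dehn twists whose images are transvections, and then to invoke the transvection-generation results of \S\ref{notn} (Lemmas \ref{transvection_generation}, \ref{transvection_graph}, and the Goursat-type Lemma \ref{goursat}) to conclude. The main tool is the normal form for $\Gq$-covers established in the Proposition of \S\ref{GStd}: we may assume $Y = \mathsf{S}_{g-1}\#\mathsf{T}$, with $Z_i$ split over $\mathsf{S}_{g-1}^{\circ}$ and genuinely ramified only inside $\mathsf{T}^\circ$.

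\emph{First step: produce transvections from liftable curves.} Recall from \S\ref{dehn_homology} that if $e$ is a liftable curve (meaning $\monmap_i(e)$ projects to a generator of $\F_q^*$), then $D_e$ lies in $\MCG(Y)_{Z_i}$ and $\mathrm{Mon}(D_e)$ is a transvection on $\HPr(Z_i,Y)$ with center $\widetilde e$, the projection to primitive homology of the degree-one component $e^+$ of the preimage. I would first consider the trivial cover $\widetilde{\mathsf{S}_{g-1}^\circ}$: over the genus $g-1$ piece the cover splits into $q$ copies, and simple closed curves there lift to liftable curves in $Y$ (one crosses $\alpha_1$ once and $\alpha_2$ not at all, then rides along into $\mathsf{S}_{g-1}^\circ$). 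Their Dehn twists give transvections whose centers span the part of $\HPr(Z_i,Y)$ coming from the ``anti-invariant'' summands of $H_1$ of the $q$ copies of $\mathsf{S}_{g-1}^\circ$. One also uses liftable curves supported near $\mathsf T$ to pick up the remaining directions. The formula \eqref{phf}, $\widetilde A\cdot\widetilde B = (A^+\cdot B^+) - \tfrac1q A\cdot B$, lets one compute the symplectic pairings among these centers.

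\emph{Second step: connectedness of the transvection graph.} Form the graph on the chosen collection of centers $\{\widetilde e\}$ with an edge whenever the pairing is nonzero, as in Lemma \ref{transvection_graph}. Using \eqref{phf} together with explicit intersection numbers for a carefully chosen system of curves $A_1,\dots$ on $Y$ (curves crossing $\alpha_1$ once and lying in $\mathsf S_{g-1}^\circ$, together with a few curves near $\mathsf T$), I would check that this graph is connected and that the $\widetilde e$ span all of $\HPr(Z_i,Y)$. Then Lemma \ref{transvection_graph} gives that the Zariski closure of $\mathrm{Mon}(\MCG(Y)_{Z_i})$ contains every transvection $T_w$, $w\in\HPr(Z_i,Y)$; since transvections generate $\Sp$, the image is Zariski-dense. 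This computation — verifying connectedness and spanning for the explicit curve system, which requires keeping track of how components of preimages pair under the cover — is where the real work lies and is the step I expect to be the main obstacle; the topological input (that the needed simple closed curves exist and are liftable) is routine given the normal form, but the bookkeeping with the $q$-sheeted and $(q{-}1)$-sheeted preimage components and the pairing formula \eqref{phf} is delicate. Once density onto the single factor $\Sp(\HPr(Z_i,Y))$ is established, Lemma \ref{two_stars} is done, and combined with Lemma \ref{item_b} and the Goursat Lemma \ref{goursat} this yields Lemma \ref{one_star} and hence Theorem \ref{monodromy theorem}.
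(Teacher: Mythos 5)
Your plan is the same as the paper's: reduce Lemma \ref{two_stars}, via Lemma \ref{transvection_graph}, to exhibiting a collection of liftable curves whose transvection centers span $\HPr(Z_i,Y)$ and whose intersection graph is connected (this is precisely the paper's Lemma \ref{three_stars}), and then use the normal form $Y=\mathsf{S}_{g-1}\#\mathsf{T}$ and the pairing formula \eqref{phf} to carry out the verification. So the architecture is correct.

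However, there is a genuine gap in the ``spanning'' step, and it is not mere bookkeeping. A curve that crosses $\alpha_1$ once and then rides into $\mathsf{S}_{g-1}^\circ$ picks out \emph{one} sheet of the split cover $\widetilde{\mathsf{S}_{g-1}^\circ}$ when you take its degree-one lift: the sheet chosen is the unique fixed point of the monodromy element of $\F_q^*\ltimes\F_q^+$ attached to the $\mathsf{T}^\circ$-portion of the curve. If you use the same $\mathsf{T}^\circ$-portion for all your curves, every lift $w^+$ lands in the same copy of $\mathsf{S}_{g-1}^\circ$, so the resulting transvection centers lie in a proper subspace of $\HPr(Z_i,Y)$. ``Curves supported near $\mathsf{T}$'' do not fix this: they only contribute classes from $\widetilde{\mathsf{T}^\circ}$, not from the other $q-1$ sheets over $\mathsf{S}_{g-1}^\circ$. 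The missing idea is the paper's Lemma \ref{scctoruslemma}: one needs $q+1$ explicit simple closed curves $\gamma_0,\dots,\gamma_q$ on $\mathsf{T}^\circ$, all mapping to the same generator of $\F_q^*$ but with $\monmap(\gamma_j)$ having fixed point exactly $j$, so that concatenating $\gamma_j$ with $w\in\mathsf{S}_{g-1}^\circ$ lifts $w$ to the $j$-th sheet. Varying $j$ is what makes the centers $\widetilde{A(w,j)}$ span; the Mayer--Vietoris step in the paper then assembles the $\widetilde{\mathsf{T}^\circ}$ and $q$ copies of $\mathsf{S}_{g-1}^\circ$ contributions. Without the fixed-point trick (or some replacement for it) your curve system would fail to span, so this step should be flagged as an essential construction rather than routine.

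(One small side note: Lemma \ref{goursat} is used only for Lemma \ref{one_star}, the product over all covers $Z_i$; you do allude to this correctly at the end, but it plays no role in the single-factor statement \ref{two_stars} itself.)
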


We are now reduced to proving Lemma \ref{two_stars}. Let $Z=Z_i$ for some fixed $i$. 
By the construction of Dehn twists from liftable curves (see discussion at end of  
\S \ref{dehn_homology}), as well as
Lemma \ref{transvection_graph} on generation by transvections, 
it is enough to show:

\begin{lemma} \label{three_stars}
There exists 
a collection of liftable curves $A_1, \dots, A_N$  on $Y$ such that:
\begin{itemize}
\item[(a)] the $\widetilde{A}_i$ span the primitive homology $\HPr(Z, Y)$;
\item[(b)] the graph   obtained
by connecting $A_i, A_j$ when $\widetilde{A_i} \cdot \widetilde{A_j} \neq 0$ is connected.
\end{itemize}
\end{lemma}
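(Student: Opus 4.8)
The idea is to use the normal form for an $\Aff(q)$-cover from \S\ref{GStd} to reduce to an explicit local computation. Recall that the cover $Z = Z_i$ decomposes $Y$ as a connected sum $\mathsf{S}_{g-1} \# \mathsf{T}$, where the cover splits over $\mathsf{S}_{g-1}^{\circ}$ into $q$ copies and is a connected degree-$q$ cover over $\mathsf{T}^{\circ}$ with monodromy sending $\beta_1 \mapsto$ (a generator of $\F_q^*$) and $\beta_2 \mapsto$ (a nonzero element of $\F_q^+$). Correspondingly, $\HPr(Z,Y)$ has a block structure: contributions coming from liftable curves supported in $\mathsf{S}_{g-1}^{\circ}$ (where the cover is trivial), and contributions coming from curves meeting $\mathsf{T}^{\circ}$. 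The plan is to build the system $A_1, \dots, A_N$ in three groups: (i) liftable curves inside $\mathsf{S}_{g-1}^{\circ}$ coming from a symplectic basis of $H_1(\mathsf{S}_{g-1}^{\circ})$, together with enough of their translates under the deck group; (ii) the curve $\beta_1$ on $\mathsf{T}$ itself, which is liftable by construction; (iii) a few ``connector'' curves that cross from $\mathsf{T}^{\circ}$ into $\mathsf{S}_{g-1}^{\circ}$, chosen so that their lifts $\widetilde{A_i}$ have nonzero intersection with lifts of curves from both regions, thereby ensuring connectivity in (b).

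\textbf{Step 1: the split part.} Over $\mathsf{S}_{g-1}^{\circ}$ the cover is $q$ disjoint copies, indexed by $\F_q$, permuted by the deck group $\Aff(q)$. A simple closed curve $\gamma$ in $\mathsf{S}_{g-1}^{\circ}$ with $\monmap(\gamma)$ projecting to a generator $a$ of $\F_q^*$ is liftable; its preimage in $Z$ consists of a degree-$1$ component $\gamma^+$ (the one through the fixed point of $x \mapsto ax+b$) and a degree-$(q-1)$ component. I would take a standard symplectic basis of simple closed curves $\gamma_1, \gamma_1', \dots, \gamma_{g-1}, \gamma_{g-1}'$ for $\mathsf{S}_{g-1}$, arrange (by a preliminary choice in the normal form, or by replacing $\gamma$ with $\gamma\cdot(\text{boundary})^k$) that each is liftable, and also include suitable conjugates $\gamma \mapsto \eta \gamma \eta^{-1}$ so that the corresponding lifts $\widetilde{\gamma^{(j)}}$ sweep out the full induced subrepresentation. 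The point is that the $\Aff(q)$-action on the fiber $\F_q$ is $2$-transitive, hence the permutation module $\Q[\F_q]$ is the sum of the trivial and the standard $(q-1)$-dimensional irreducible; so the contribution of $\mathsf{S}_{g-1}^{\circ}$ to $\HPr(Z,Y)$ is spanned by the classes $\widetilde{\gamma}$ together with their images under deck transformations, and by a dimension count (using that $\gcd(q_v, q-1)$ and the Chebotarev hypotheses are irrelevant here --- this is purely topological) these classes span.

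\textbf{Step 2: the torus part and connectivity.} On $\mathsf{T}^{\circ}$, the curve $\beta_1$ is liftable, and $\beta_2$ is \emph{not} liftable (it maps into $\F_q^+$), so the relevant liftable curves are $\beta_1$ and curves of the form $\beta_1 \beta_2^k$. Using formula \eqref{phf}, $\widetilde{A}\cdot\widetilde{B} = (A^+\cdot B^+) - \tfrac{1}{q}A\cdot B$, I would compute the intersection numbers among the lifts of $\beta_1, \beta_1\beta_2, \dots$ and of the connector curves. The connector curves are chosen as boundary-parallel-modified versions of arcs joining a handle of $\mathsf{S}_{g-1}$ to the $\mathsf{T}$-part; the key is that a connector curve $\delta$ has $\delta^+$ meeting some $\gamma_j^+$ transversally (giving $\widetilde{\delta}\cdot\widetilde{\gamma_j}\ne 0$) and also meeting $\beta_1^+$ (giving $\widetilde{\delta}\cdot\widetilde{\beta_1}\ne 0$), so the intersection graph becomes connected. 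For (a), once Step 1 gives the $\mathsf{S}$-part and $\widetilde{\beta_1}$ plus its deck translates give the $\mathsf{T}$-part, one checks these together span all of $\HPr(Z,Y)$ --- this follows because $H_1(Z)$ is generated by classes supported over $\mathsf{S}_{g-1}^{\circ}$, over $\mathsf{T}^{\circ}$, and by the gluing circles, and projecting to primitive homology kills exactly the pulled-back classes.

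\textbf{Main obstacle.} The genuinely delicate part is \emph{not} spanning (Step 1 is a routine representation-theoretic bookkeeping given $2$-transitivity) but rather arranging connectivity (b) while keeping every curve in the list liftable --- i.e. keeping its monodromy projecting to a \emph{generator} of $\F_q^*$, not just a nontrivial element, since only then does a single Dehn twist act as a transvection on $\HPr$ (for a non-generator the twist power that lifts acts with larger unipotent part and Lemma \ref{transvection_graph} does not apply). Modifying a curve by a boundary class changes its $\F_q^*$-image; so one must check that the connectors and the handle curves can simultaneously be made liftable, which requires choosing the generator of $\F_q^*$ and the specific homology lifts compatibly. I would handle this by first fixing the surjection $H_1(Y,\Z)\to\F_q^*$ from \S\ref{GStd}, then noting that for a primitive class $c$ with $\langle c,\alpha_1\rangle$ a unit mod $q-1$ one can scale to land on any prescribed generator, and that the finitely many curves needed can be chosen in ``general position'' so that all intersection numbers computed via \eqref{phf} are nonzero modulo the constraints; the graph is then connected essentially by inspection of the normal-form picture. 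Finally, Lemma \ref{transvection_graph} converts (a) and (b) into Zariski-density of the transvection subgroup, which is Lemma \ref{three_stars} $\Rightarrow$ Lemma \ref{two_stars}, and combined with Lemma \ref{item_b} and the algebraic Goursat lemma (Lemma \ref{goursat}) yields Lemma \ref{one_star} and hence Theorem \ref{monodromy theorem}.
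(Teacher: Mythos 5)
The core of your Step~1 is incorrect, and the error is exactly what makes the proof of Lemma~\ref{three_stars} nontrivial. By the normal form of \S\ref{GStd}, the cover $Z \to Y$ \emph{splits} over $\mathsf{S}_{g-1}^{\circ}$: the restriction of $\monmap$ to $\pi_1(\mathsf{S}_{g-1}^{\circ})$ is the trivial homomorphism, and the preimage of $\mathsf{S}_{g-1}^{\circ}$ in $Z$ is $q$ disjoint copies. In particular, \emph{no} simple closed curve $\gamma$ supported in $\mathsf{S}_{g-1}^{\circ}$ can have $\monmap(\gamma)$ projecting to a generator of $\F_q^*$ (or to anything nontrivial at all), so no such $\gamma$ is liftable. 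Your proposed fix---replacing $\gamma$ by $\gamma \cdot (\text{boundary})^k$---also fails: the boundary circle of $\mathsf{S}_{g-1}^{\circ}$ is identified with $\partial\mathsf{T}^{\circ}$, around which the monodromy is trivial by construction (the cover extends over $\mathsf{T}$). The ensuing representation-theoretic bookkeeping with the $2$-transitive permutation module therefore rests on curves that do not exist.

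The resolution, and the paper's actual construction, is that the ``connector curves'' you mention parenthetically in Step~2 are not an auxiliary device for connectivity---they are the \emph{entire} family $A_1, \dots, A_N$. One fixes liftable curves $\gamma_0, \dots, \gamma_q$ on $\mathsf{T}^{\circ}$ (Lemma~\ref{scctoruslemma}, essentially $\beta_1\beta_2^j$ after reindexing, chosen so their degree-one lifts span $H_1(\widetilde{\mathsf{T}^{\circ}})$ modulo the boundary), and one fixes a curve $w$ on $\mathsf{S}_{g-1}^{\circ}$ for each primitive class in $H_1(\mathsf{S}_{g-1}^{\circ})$. The liftable curves are the concatenations $A(w,j) = \gamma_j \cdot w^{\pm 1}$: since $\monmap(w) = 1$, the monodromy of $A(w,j)$ equals that of $\gamma_j$, hence projects to a generator of $\F_q^*$. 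Spanning is then a Mayer--Vietoris argument, combining the torus contribution (via the $\gamma_j^+$) with the split contribution (via the $w_j$ in each of the $q$ sheets). Connectivity is read off from the explicit intersection formula $\widetilde{A(w_1,j)}\cdot\widetilde{A(w_2,k)} = \pm(\delta_{jk} - q^{-1})[w_1]\cdot[w_2] + \bigl([\gamma_j^+]\cdot[\gamma_k^+] - q^{-1}[\gamma_j]\cdot[\gamma_k]\bigr)$, using that $\delta_{jk} - q^{-1}$ never vanishes. Your intuition that the torus part and the split part must both contribute and must be linked is right, but the concatenation trick is not optional decoration: without it there are no liftable curves seeing the $\mathsf{S}_{g-1}^{\circ}$ homology at all.

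One smaller remark: you write that $\beta_1$ itself is one of the liftable curves, which is fine, but you also need $q+1$ of the $\gamma_j = \beta_1\beta_2^j$ rather than ``a few,'' and the precise count matters for the spanning argument via Schreier generators of the index-$q$ subgroup. This is the content of Lemma~\ref{scctoruslemma}(iii), which your sketch does not supply.
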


 \subsection{Proof of Lemma \ref{three_stars}} 
 We put the singly ramified $\Gq$-cover $Z \rightarrow Y$ 
 in a normal form, as explained in 
\S \ref{GStd}.
 Recall notation ($D$, $D'$, $\mathsf{S}_{g-1}$, $\mathsf{T}$, and so forth) from the end of \S \ref{GStd}.
 We will produce the curves $A_i$ by concatenating curves on $\mathsf{T}^{\circ}$ and curves on $\mathsf{S}_{g-1}^{\circ}$. 
 
Fix a point $p \in \partial D \cong \partial D'$. 
Fix a labelling of the points of $Z$ above $p$ by $\F_q$,
compatible with the usual action of $\Gq$
for some fixed homomorphism
$$\monmap: \pi_1(Y-y, p) \longrightarrow \Gq.$$ 

Recall that the cover $Z \rightarrow Y$ splits
over $\mathsf{S}_{g-1}^{\circ}$; the labelling above $p$ therefore permits us also to label
the components of $\mathsf{S}_{g-1}^{\circ}$ by $\F_q^+$. 

\begin{lemma} \label{scctoruslemma}
There exist $q+1$ simple closed curves $\{ \gamma_j: 0 \leqslant j \leqslant q \}$ on $\mathsf{T}^{\circ}$, beginning and ending at $p$, 
not passing through $y$, and intersecting $\partial D'$ only at its endpoints,
such that: 
\begin{itemize}
\item[(i)] For each $j$, the monodromy $\mathrm{Cov}(\gamma_j)$ projects
under $\Gq \rightarrow \F_q^*$ to the same fixed generator of $\F_q^*$; 
 \item[(ii)] The monodromy of $\gamma_j$, defining a map $\F_q \rightarrow \F_q$, fixes exactly $j$ modulo $q$. 
\item[(iii)] The (unique) lifts $\gamma_j^+$ to simple closed curves on $\widetilde{\mathsf{T}^{\circ}}$ span the homology of $\widetilde{\mathsf{T}^{\circ}}$  
{\em modulo the homology of its boundary.}
\item[(iv)] Each $\gamma_j$ has the same orientation near $p$, i.e., either the outgoing branch is ``above'' the incoming branch for all $j$,
or vice versa.
\end{itemize}
\end{lemma}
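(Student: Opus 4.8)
\textbf{Plan for Lemma \ref{scctoruslemma}.} The plan is to write the curves $\gamma_j$ down explicitly on the one-holed torus $\mathsf{T}^{\circ}$, using the normal form of \S\ref{GStd}. Recall from that discussion that $\mathsf{T}^{\circ}$ carries the ramification point $y$ in its interior, that $\pi_1(\mathsf{T}^{\circ}-y,p)$ is free on a standard system consisting of two loops $\beta_1,\beta_2$ ``around the handle'' together with a small loop $\delta$ around $y$, and that $\monmap(\beta_1)$ projects to a generator of $\F_q^{*}$ while $\monmap(\beta_2)$ is a nonzero translation. Conjugating the homomorphism $\monmap$ by a suitable translation of $\F_q$ — which replaces $Z$ by an isomorphic cover — I would normalise so that $\monmap(\beta_1)$ is the map $x\mapsto cx$, where $c$ is the prescribed generator of $\F_q^{*}$ (so that $0$ is the unique fixed point of $\monmap(\beta_1)$), and $\monmap(\beta_2)$ is $x\mapsto x+1$; then $\monmap(\delta)=\monmap([\beta_1,\beta_2])^{\pm1}$ is forced to be some nonzero translation $x\mapsto x+b$.

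\emph{Construction.} For $0\le j\le q-1$ I would take $\gamma_j$ to be a simple closed curve based at $p$ in the homotopy class $\beta_1\beta_2^{\,k_j}$, with $k_j\in\F_q$ chosen below; such a class is primitive, hence realised by an embedded based loop, which, staying in the interior except at $p$, can be taken to miss $y$. The monodromy of $\beta_1\beta_2^{\,k}$ is an affine map $x\mapsto cx+(\text{const})$ in which the constant is proportional to $k$ with nonzero factor of proportionality, so its unique fixed point runs bijectively over $\F_q$ as $k$ does; I choose $k_j$ so that the fixed point is $j$ (with $k_0=0$, giving $\gamma_0=\beta_1$). This yields (i) and (ii) for $j\le q-1$. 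For the extra curve I would take $\gamma_q$ to be the embedded loop obtained from $\beta_1$ by a finger move encircling $y$ once, followed if necessary by a handle slide correcting the fixed point back to $0$, so that $\monmap(\gamma_q)=\monmap(\beta_1)$ still fixes $0\equiv q$, giving (ii) for $j=q$. Finally (iv) is arranged by drawing every $\gamma_j$ with the same local picture near $p$ (this is what later makes the concatenations with arcs on $\mathsf{S}_{g-1}^{\circ}$ embedded). The verification that all of these really are simple closed curves meeting $\partial D'$ only at $p$ is a picture on the one-holed torus — a standard innermost-bigon/surgery argument, or an explicit figure.

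\emph{The span condition} (iii) is the substantive point. The cover $\widetilde{\mathsf{T}^{\circ}}\to\mathsf{T}^{\circ}$ is the degree-$q$ cover branched once over $y$ determined by $\monmap$; modelling $\mathsf{T}^{\circ}-y$ by a wedge of the three loops $\beta_1,\beta_2,\delta$ with one $2$-cell attached (and the point over $y$ filled back in), one obtains an explicit CW-model of $\widetilde{\mathsf{T}^{\circ}}$ with $0$-cells indexed by $\F_q$, from which $H_1(\widetilde{\mathsf{T}^{\circ}})$ — of rank $2q$, since $\widetilde{\mathsf{T}^{\circ}}$ has genus $(q+1)/2$ (note $q$ is odd) with $q$ boundary circles — and the rank-$(q-1)$ span of its boundary circles are read off directly, leaving a quotient of rank exactly $q+1$. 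The degree-one lifts $\gamma_j^{+}$, through the fixed vertex $j$, are explicit $1$-cycles in this model, and I would show the $q+1$ classes they represent are linearly independent modulo the boundary — either directly from the CW-model, or by computing the Gram matrix of intersection numbers $\widetilde{\gamma_i}\cdot\widetilde{\gamma_j}$ via \eqref{phf} and checking it has full rank. The comparison of counts, $q+1$ curves versus a $(q+1)$-dimensional quotient, already shows the construction is tight.

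\emph{Main obstacle.} I expect step (iii), and in particular the role of $\gamma_q$, to be the crux. The $q$ ``slope'' curves $\gamma_0,\dots,\gamma_{q-1}$ have lifts built only from $\beta_1$- and $\beta_2$-edges in the CW-model, so they can span at most a $q$-dimensional subspace of the $(q+1)$-dimensional quotient; the missing direction is supplied only by a lift that meets the $\delta$-edge, which is precisely why $\gamma_q$ must loop around $y$. One then has to choose $\gamma_q$ — and keep the uniform orientation of (iv) — carefully enough that $\gamma_q^{+}$ genuinely escapes the span of the others modulo the boundary. By comparison the affine-monodromy bookkeeping giving (i), (ii) and the realisation of each $\gamma_j$ as an embedded based loop are routine.
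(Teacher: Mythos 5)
You correctly identify the crux of the lemma as item (iii), and in particular the question of how to choose the extra curve $\gamma_q$. But your diagnosis of why the $q$ curves $\gamma_0,\dots,\gamma_{q-1}$ fail to span — and hence your proposed fix — rests on a false premise.

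You claim that lifts of curves built only from $\beta_1$- and $\beta_2$-edges ``can span at most a $q$-dimensional subspace,'' with the missing direction ``supplied only by a lift that meets the $\delta$-edge.'' This is not so. The paper's $\gamma_q$ is $\beta_1\beta_2^{q}$, which is a word purely in $\beta_1,\beta_2$ and never winds around $y$. It satisfies (i), (ii) (its monodromy coincides with that of $\beta_1$, since $\beta_2^q$ acts trivially on $\F_q$) and it supplies the missing homology class. To see why, identify $H_1(\widetilde{\mathsf{T}-y})$ with $\mathsf{H}^{\mathrm{ab}}$ where $\mathsf{H}\leqslant\langle\beta_1,\beta_2\rangle$ is the stabilizer of $0$; a Schreier generating set for $\mathsf{H}$ with respect to the coset representatives $1,\beta_2,\dots,\beta_2^{q-1}$ consists of $\beta_2^q$ together with the $q$ elements $\beta_2^{-[gj]}\beta_1\beta_2^{j}$ for $0\leqslant j\leqslant q-1$. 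The classes of $\gamma_0^+,\dots,\gamma_{q-1}^+$ account for the latter $q$ generators, and the identity $[\gamma_q^+]-[\gamma_0^+]=[\beta_2^q]$ in $\mathsf{H}^{\mathrm{ab}}$ (since $\gamma_q\gamma_0^{-1}=\beta_1\beta_2^q\beta_1^{-1}$ is conjugate to $\beta_2^q$) accounts for the first. So the full generating set is hit, without ever invoking $\delta$.

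By contrast your proposed $\gamma_q$, obtained by a finger move of $\beta_1$ around $y$ plus a correcting handle slide, is (a) not verified to actually span the missing direction — your spanning argument is explicitly deferred to ``either directly from the CW-model, or by computing the Gram matrix'' and neither is carried out; and (b) not obviously realizable as a simple closed curve on $\mathsf{T}^\circ$ meeting $\partial D'$ only at $p$ and compatible with the uniform orientation condition (iv). The conjugacy class $\beta_1\delta\beta_2^{k}$ in the free group $\pi_1(\mathsf{T}^\circ-y)$ is not automatically represented by an embedded loop; the paper sidesteps this by keeping every $\gamma_j$ in the uniform family $\beta_1\beta_2^j$, for which simpleness, avoidance of $y$, and (iv) are manifest. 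So as it stands there is a genuine gap: the reason you give for needing a $\delta$-curve is incorrect, your replacement curve is not justified to be simple, and the key span computation (iii) is not completed. The Schreier subgroup argument is the clean way to finish.
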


\medskip

\begin{figure}[ht] \label{gammaw}
\centering
\includegraphics[height=6cm]{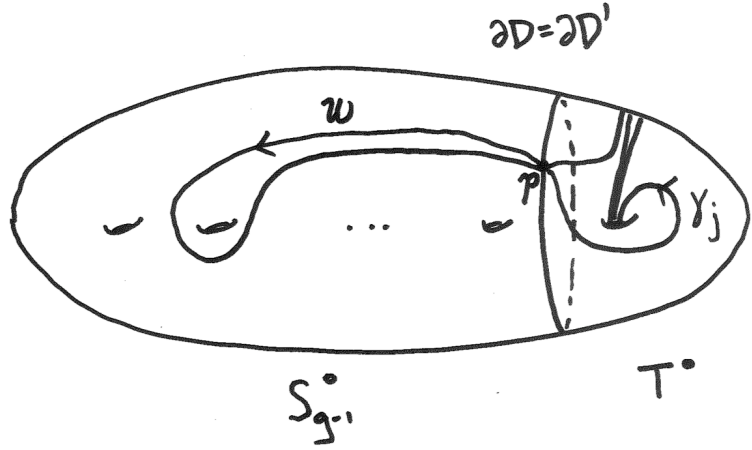}
\caption{The curves $\gamma_j$ and $w$ on $Y$.}
\end{figure}

\begin{proof}
Take an explicit basis $\beta_1, \beta_2$ of homology of $T$, such as was described
in \S \ref{GStd}; conjugating if necessary we can suppose that the monodromy of $\beta_1$
is $x \mapsto g x$ (for $g \in \F_q^*$  a generator) and the monodromy of $\beta_2$ is $x \mapsto x+1$.
 We can choose this basis in such a way that all powers $\beta_1 \beta_2^j$ with $j$ non-negative are represented by simple closed curves on $\mathsf{T}^{\circ}$, which start and end at $p$.  

 The monodromy around $\beta_1 \beta_2^j$ is given by $x \mapsto g(x+j)$, which fixes $  \frac{gj}{1-g} \in \F_q^+$.
 Write $[\ell]$ for the unique representative of $\ell \in \F_q$ that lies in $[0,q-1]$, and put
 $$ j^* =  \begin{cases}  [\frac{gj}{1-g}],    &  j  \neq q  \\  q, & j=q. \end{cases}$$
The map $j \mapsto j^*$ gives a bijection from $[0,q]$ to itself.
Now put $$\gamma_{j^*} = \beta_1 \beta_2^j \ \ (j \in [0,q]). $$
 
Conditions (i) and (ii) are clearly satisfied.  To check (iii)
we must verify that the associated homology classes span homology of $\widetilde{\mathsf{T}^{\circ}}$ modulo its boundary.
One could verify this by computing an explicit CW-complex for $\widetilde{\mathsf{T}^{\circ}}$; we present an alternative
group-theoretic proof.  
It is sufficient to show that
\begin{equation} \label{redu} \mbox{ the homology clases of the lifts of $\gamma_j$  span $H_1(\widetilde{\mathsf{T}-y})$.}\end{equation}
Here and in what follows we make use of the fact that our $\Gq$-cover extends over $\mathsf{T}$,
and thus write (e.g.) $\widetilde{\mathsf{T}}$. 
To see that \eqref{redu} indeed implies (iii),   consider the diagram
$$
\xymatrix{
H_1(\widetilde{\mathsf{T}^{\circ}-y})  \ar[d] \ar[r] & H_1(\widetilde{\mathsf{T}^{\circ}})/H_1(\partial \widetilde{\mathsf{T}^{\circ}}) \ar[d]^{\sim}  %
\\ 
H_1(\widetilde{\mathsf{T}-y}) \ar[r]^f & H_1(\widetilde{\mathsf{T}})/H_1(\widetilde{\mathsf{D}}) = H_1(\widetilde{\mathsf{T}}) %
}$$
where $f$ is surjective because in fact $H_1(\widetilde{\mathsf{T}-y}) \simeq H_1(\widetilde{\mathsf{T}})$:
the preimage of $y$ is a single point. 

Let $\tilde{p}$ be the point above $p$ corresponding to $0 \in \mathbf{F}_q$. 
Projection to $T$ identifies $\pi_1(\widetilde{\mathsf{T}-y}, \tilde{p})$
 with the subgroup
$\mathsf{H} \leqslant \langle \beta_1, \beta_2 \rangle$ defined by
$$ \mathsf{H} = \mbox{stabilizer of $0 \in \F_q$}.$$
Therefore the 
first homology of $\widetilde{\mathsf{T}-y}$  is the abelianization of $\mathsf{H}$. 
Under this correspondence, the homology class of the lift of $\gamma_j$ 
corresponds to the image in $\mathsf{H}^{\mathrm{ab}}$ of $(\beta_2^{-j^*}) \beta_1 \beta_2^j (\beta_2^{j^*}) \in \mathsf{H}$.

 We must therefore show that the elements
$ \beta_2^{-j^*} \beta_1 \beta_2^{j+j^*}$  actually generate $\mathsf{H}^{\mathrm{ab}}$.
 Note that among these elements are $\beta_1$ and (a conjugate of) $\beta_1 \beta_2^q$,
 so it is enough to show that
\begin{equation} \label{SCS} \beta_2^q \mbox{ and } \beta_2^{-j^*} \beta_1 \beta_2^{j+j^*} \  \ (0 \leqslant j \leqslant q-1)\end{equation}
 generate $\mathsf{H}^{\mathrm{ab}}$.  
However, a  set of left coset representatives for $\mathsf{H}$ are given by
$1, \beta_2, \dots, \beta_2^{q-1}$; according to Schreier's algorithm 
a generating set for $\mathsf{H}$ is given by
$$ \beta_2^q,  \medskip \beta_2^{-[g j]} \beta_1 \beta_2^j, j \in [0,q-1] $$
Now considered modulo $q$  the set of pairs $(-[gj], j) \equiv (-gj, j)$ appearing here coincide with the pairs
 $(-j^*, j+j^*) \equiv (-\frac{gj}{1-g}, \frac{j}{1-g})$ appearing in \eqref{SCS}. So the elements of \eqref{SCS} even generate $\mathsf{H}$
 not just its abelianization.
 \end{proof}

We return to the proof of Lemma \ref{three_stars}.
For each primitive homology class in $\mathsf{S}_{g-1}^{\circ}$ we fix a representative
which is a simple closed curve on $\mathsf{S}^{\circ}_{g-1}$  beginning and ending at $p$. 
Let $W$ be the resulting collection of simple closed curves.  
 For each $w \in W$ at least one of the two homotopy classes
\begin{equation} \label{signchoice} A(w,j) = \gamma_j \cdot w^{\pm 1} \in \pi_1(Y, p)\end{equation}
is representable by a simple closed curve on $Y$.   
The choice of sign depends only on $w$ and does not depend on $j$, in view of property (iv) of the curves $\gamma_j$. For a picture of the curve $A(w,j)$,
see Figure \ref{gammaw}.

The image of this curve in $\Gq$ projects to a generator of $\F_q^*$;
therefore it is ``liftable'' in the sense of \S \ref{dehn_homology}.  Recall also from
\S \ref{dehn_homology} the notation $e^+$ for the degree-$1$ lift of a liftable curve $e$.
 The lift of $A(w, j)$ has homology class given by
$$[A(w,j)^+] = [ \gamma_j^+] \pm [w_j],$$
where $w_j$ means that we lift  $w$ to a closed loop on the $j$th preimage
of $\mathsf{S}^{\circ}_{g-1}$ inside $Z$; the sign above is the same as in \eqref{signchoice}.

We have  
$$ [A(w,j)^+] -  [A(w', j)^+] =  \epsilon [w_j] + \epsilon' [(w')_j] \ (\epsilon, \epsilon' \in \pm 1)$$
and we see readily that these classes -- as both $w, w'$ vary through $W$ -- 
span the homology of the $j$th  preimage
of $\mathsf{S}_{g-1}^{\circ}$ in the cover $Z$. 

 The boundary of $\widetilde{\mathsf{S}_{g-1}^{\circ}}$
is a union of $q$ circles. 
Considering the Mayer--Vietoris sequence 
$$H_1(S^1)^q \rightarrow H_1(\widetilde{\mathsf{S}_{g-1}^{\circ}}) \oplus  H_1(\widetilde{\mathsf{T}^{\circ}}) \twoheadrightarrow H_1(Z).$$
and using the fact that the $[\gamma_j]$ span $H_1(\widetilde{\mathsf{T}^{\circ}})$ modulo its boundary (Lemma \ref{scctoruslemma} (iii)),
we see that the $[A(w,j)^+]$ span $H_1(Z)$. 

As before, we define
$$ \widetilde{A(w,j)} = \mbox{projection of } [A(w,j)^+] \mbox{ to primitive homology.}$$
so that the homology classes $\widetilde{A(w,j)}$
span $\HPr(Z, Y)$. This completes the proof of part (a) of Lemma \ref{three_stars}.

To prove part (b), we need to compute some intersection numbers.  We note that
 the intersection number between any $\gamma_j^+$ and any $w_k$ is trivial. Thus 
$$ [A(w_1,j)] \cdot [A(w_2,k)] = [\gamma_j ] \cdot [\gamma_k] \pm [w_1] \cdot [w_2]$$
$$ [ A(w_1, j)^+] \cdot  [A(w_2,k)^+] =   [\gamma_j^+ \cdot \gamma_k^+]
\pm \delta_{jk} [w_1] \cdot [w_2],
$$
where $\delta_{jk}$ is the Kronecker $\delta$ symbol, and, in both instances,
the sign that appears ithe product of the sign for $w_1$ and the sign for $w_2$. 
Upon projecting to primitive homology, \eqref{phf} gives
$$  \widetilde{A(w_1, j)} \cdot \widetilde{A(w_2, k)}=   \pm (  \delta_{jk}  -q^{-1})[w_1] \cdot [w_2] 
+ (   [\gamma_j^+] \cdot [\gamma_k^+] -  q^{-1} [\gamma_j \cdot \gamma_k]). $$

The connectedness of the ``intersection graph''  follows from this.  It is enough to show that
given $(w_1, j)$ and $(w_2, \ell)$ there exists $(w_3, k)$ with both intersection numbers nonzero. 
For this, we note that the factor $( \delta_{jk} - q^{-1})$ is never zero, so we simply choose $w_3$ so that $[w_1] \cdot [w_3]$ and $[w_2] \cdot [w_3]$ 
are sufficiently large: this is possible because $[w_1], [w_2] \neq 0$, the intersection pairing on $H_1(S_{g-1}^{\circ})$ is perfect, and we can choose $w_3$ 
such that $[w_3]$ is any given primitive homology class. 
This proves Lemma \ref{three_stars} and Theorem \ref{monodromy theorem}. \qed

\newcommand{\pmax}{p_{\max}}
 \section{Transcendence of period mappings; the Bakker--Tsimerman theorem} \label{hypersurface}
 
   It is desirable to extend the method to settings where the base $Y$ is higher-dimensional,
 thus feasibly leading to finiteness results for integral points on $Y$. 
  We will study the example when   $X \rightarrow Y$ 
is  the moduli space of smooth hypersurfaces in $\P^m$;
 then integral points on $Y$ correspond to integral homogeneous polynomials
 $ P(x_0, \dots, x_{m})$ of degree $d$ whose discriminant $(\mathrm{disc} \ P) \in \mathcal{O}^*$.

 (A natural family of generalizations of this example is given by considering
 the integral points on $\mathbf{P}^m - Z^{\vee}$, where $Z \subset \mathbf{P}^m$
 is a smooth subvariety, and $Z^{\vee}$ is the dual projective variety to $Z$: 
there is a natural smooth projective family over $\mathbf{P}^m-Z^{\vee}$,  namely, the 
 family of smooth hyperplane sections of $Z$.)

\subsection{The Ax-Schanuel theorem of Bakker and Tsimerman}

Suppose that we are given a smooth proper map $X \rightarrow Y$ of relative dimension $d$ over the complex numbers
(we identify complex algebraic varieties with their complex points). 
The primitive cohomology of each fiber  $H^d(X_y, \C)^{\mathrm{prim}}$ carries a polarized Hodge structure.   
Let $\mathfrak{H}$ be the associated period domain which classifies
polarized Hodge structures with the same numerical data 
as this primitive cohomology, 
so we have an analytic period map 
$$ \Phi : \widetilde{Y} \longrightarrow \mathfrak{H}$$
where $\widetilde{Y}$ is the universal cover of $Y(\C)$.  This $\mathfrak{H}$
is open (for the analytic topology) in a certain complex flag variety $\mathfrak{H}^*$, which
parameterizes isotropic flags with a given dimensional data inside a certain orthogonal or symplectic complex vector space.

Bakker and Tsimerman \cite{BT} have proven the following analogue of the Ax--Schanuel theorem.
It is a very strong statement about the transcendence of $\Phi$.

To simplify the statement, we assume that the monodromy
mapping 
$$\pi_1(Y) \rightarrow \mathrm{Aut}(H^d(X_y, \C)^{\prim})$$
has image whose Zariski closure contains the full special orthogonal
or symplectic group, stabilizing the intersection form.  (This restriction,
which guarantees that the image $\Phi(\widetilde{Y})$ is Zariski-dense in $\mathfrak{H}^*$, 
is not important, and in \cite{BT} the theorem is formulated
for an arbitrary Mumford-Tate domain as target.)
  
\begin{thm} \label{BTtheorem} (Theorem of Bakker and Tsimerman.)
 Suppose that $V \subset  Y \times \mathfrak{H}^*$
is algebraic. Write $W$ for the image of $\tilde{Y}$ in $Y \times \mathfrak{H}$. 
Suppose that $U \subset V \cap W$ is irreducible analytic such that
$$ \codim_{Y \times \mathfrak{H}^*}(U) < \codim_{Y \times \mathfrak{H}^*}(V) + \codim_{Y \times \mathfrak{H}^*}(W),$$
where all the codimensions are taken inside $Y \times \mathfrak{H}^*$.
Then the projection of $U$ to $Y$ is contained in a proper (``weak Mumford-Tate'') subvariety.
\end{thm}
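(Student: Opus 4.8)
Since Theorem \ref{BTtheorem} is the theorem of Bakker and Tsimerman established in \cite{BT}, ``proving'' it here means reconstructing the shape of their argument, which follows the o-minimal functional transcendence paradigm (Pila, Pila--Zannier, Klingler--Ullmo--Yafaev, Mok--Pila--Tsimerman) adapted to variations of Hodge structure. The plan is to reformulate the statement on the graph of the period map: set $D = \mathfrak{H}^*$ (the compact dual), so $\mathfrak{H} \subset D$ is analytically open, let $\Lambda$ be the image of $\pi_1(Y)$ acting on $\mathfrak{H}$, and work with $W \subset Y \times \mathfrak{H}$, the image of $\widetilde Y$ under $(\text{covering to } Y, \Phi)$. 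The goal is then: if a component $U$ of $V \cap W$ has codimension in $Y \times D$ strictly less than $\codim V + \codim W$, then the projection of $U$ to $Y$ is a weak Mumford--Tate subvariety.

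First I would import the two external inputs that make o-minimality available. (1) The Bakker--Klingler--Tsimerman definability theorem: $\Lambda \backslash \mathfrak{H}$ carries a canonical definable-analytic structure in $\mathbb{R}_{\mathrm{an},\exp}$ and the induced map $Y \to \Lambda \backslash \mathfrak{H}$ is definable; equivalently, $\Phi$ restricted to a semialgebraic fundamental set $\mathcal{F} \subset \widetilde Y$ is definable. (2) A curvature/volume estimate: Griffiths transversality forces negatively pinched holomorphic sectional curvature in the horizontal directions of $\mathfrak{H}$, whence (Nori's argument, refined in \cite{BT}) the number of $\Lambda$-translates of $\mathcal{F}$ met by a positive-dimensional piece of $W$ inside a metric ball of radius $R$ grows at least polynomially in $R$, with exponent controlled by the dimension of the piece.

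The core step is the counting dichotomy. Lift $U$ to an analytic component $\widetilde U$ of the corresponding intersection in $\widetilde Y \times D$, and consider $\Sigma = \{\gamma \in \Lambda : \gamma \cdot \widetilde U \text{ meets } \mathcal{F}\}$, which becomes a definable set once $\Lambda$ is parametrised by a height function. By the Pila--Wilkie counting theorem, either $\Sigma$ has only polynomially many points up to height $T$, or it contains a positive-dimensional semialgebraic block. The excess-dimension hypothesis on $U$, combined with input (2), forces $\widetilde U$ to be large enough that $|\Sigma|$ grows faster than any fixed polynomial in $T$; so the block alternative must hold. Feeding the block into the definable Chow theorem of Peterzil--Starchenko produces a genuine algebraic subvariety of $\widetilde Y \times D$ lying in the \emph{bi-algebraic} locus, together with a positive-dimensional subgroup of $\Aut(\mathfrak{H})$ stabilising it. Finally I would translate this back into geometry: the Zariski closure of the stabiliser of $\widetilde U$ inside $\Lambda$ is a nontrivial (normal-type) subgroup of the algebraic monodromy group, and the theory of Mumford--Tate domains together with the classification of bi-algebraic subvarieties of period domains identifies the projection of $U$ to $Y$ with a component of a weak Mumford--Tate subvariety.

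The hard part will be matching the two growth estimates: one must ensure the polynomial exponent produced by the curvature/volume bound genuinely beats the Pila--Wilkie exponent, and proving such a volume \emph{lower} bound in the non-Hermitian, merely ``horizontal'' geometry of a general period domain — rather than in a locally symmetric space, where this is classical — is the real technical engine of \cite{BT}. A secondary but also deep obstacle is input (1), the definability of the period map, which I would be taking as a black box from Bakker--Klingler--Tsimerman.
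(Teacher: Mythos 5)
You have set yourself the task of reproving a result that this paper does not prove: Theorem~\ref{BTtheorem} is imported verbatim as a black box from the work of Bakker and Tsimerman \cite{BT}, and the paper gives no proof of it (nor does it need to). So there is no ``paper's own proof'' against which to check your argument; the only thing to evaluate is whether your sketch is a faithful reconstruction of the argument in \cite{BT}.

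At the level of architecture, it is. You correctly isolate the three external ingredients: definability of period maps in $\mathbb{R}_{\mathrm{an},\exp}$ (Bakker--Klingler--Tsimerman), a hyperbolic volume lower bound for horizontal analytic subsets of a period domain, and the Pila--Wilkie/definable-Chow machinery, closed out by a bialgebraicity statement identifying the exceptional locus with weakly special (weak Mumford--Tate) subvarieties. You also correctly identify the two genuinely hard technical inputs: definability, and the volume estimate in the merely horizontal (non-locally-symmetric) setting. Two amendments are worth making. First, the actual argument in \cite{BT} is not a single pass from excess intersection to a positive-dimensional block; it is organized as an induction on a suitable invariant (essentially the dimension of the algebraic monodromy group, or equivalently the size of the weakly special subvariety produced), and the Pila--Wilkie/definable-Chow step is applied at each stage to force a strict drop. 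Your one-shot description elides this, and without the induction the conclusion ``projection of $U$ lies in a proper weak Mumford--Tate subvariety'' does not fall out directly from the existence of one block. Second, the phrase ``nontrivial normal-type subgroup of the algebraic monodromy group'' glosses over the most delicate part of the endgame: one must show that the stabilizer obtained is defined over $\mathbb{Q}$ and normalizes the monodromy, which is where the Mumford--Tate group and the theorem of the fixed part enter. But since the present paper takes the theorem as given, none of this is load-bearing here.
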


In particular this has the following corollary:

\begin{cor}[Transcendence property of period mappings] 
\label{trans_prop}

With notation as above, suppose that  $Z \subset \mathfrak{H}^*$ is an algebraic subvariety, and
\begin{equation} \label{dim cond} \mathrm{codim}_{\mathfrak{H}^*}(Z)  \geqslant  \dim(Y) .\end{equation}

Then any irreducible component of $\Phi^{-1}(Z)$ is contained inside the preimage, in $\widetilde{Y}$, of  the complex points of a proper subvariety of $Y$.
\end{cor}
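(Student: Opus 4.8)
The plan is to deduce Corollary \ref{trans_prop} from Theorem \ref{BTtheorem} by a direct dimension count. First I would set up the geometric picture: inside $Y \times \mathfrak{H}^*$ consider the two subvarieties whose ``unlikely intersection'' we want to detect. Take $V = Y \times Z$, which is algebraic in $Y \times \mathfrak{H}^*$ with $\codim_{Y \times \mathfrak{H}^*}(V) = \codim_{\mathfrak{H}^*}(Z)$. Take $W$ to be the image of the period section, i.e.\ the graph $\{(y, \Phi(\tilde y))\}$ pushed down from $\widetilde Y$; this is the analytic set called $W$ in the statement of Theorem \ref{BTtheorem}, and since $\Phi$ is a section over $Y$ (locally) we have $\dim W = \dim Y$, hence $\codim_{Y \times \mathfrak{H}^*}(W) = \dim \mathfrak{H}^*$.

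Next I would take $U$ to be an irreducible component of $W \cap V$ lying over a component $C$ of $\Phi^{-1}(Z) \subset \widetilde Y$ that is \emph{not} contained in the preimage of a proper subvariety of $Y$; the goal is to reach a contradiction. The key numerical step is to check that this $U$ violates the inequality
$$\codim_{Y \times \mathfrak{H}^*}(U) \geqslant \codim_{Y \times \mathfrak{H}^*}(V) + \codim_{Y \times \mathfrak{H}^*}(W),$$
i.e.\ that the intersection is ``atypically large.'' Since $U$ maps (via projection to $Y$, which is injective on $W$ and hence on $U$) isomorphically onto the image $C'$ of $C$ in $Y$, we get $\dim U = \dim C'$. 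On the other hand $C' \subseteq Y$ is at most $\dim Y$-dimensional, so $\codim_{Y \times \mathfrak{H}^*}(U) = \dim(Y \times \mathfrak{H}^*) - \dim U = \dim Y + \dim \mathfrak{H}^* - \dim C' \leqslant \dim Y + \dim \mathfrak{H}^* - \dim Y = \dim \mathfrak{H}^*$ only in the extreme case; more usefully, $\codim(U) = \dim Y + \dim \mathfrak H^* - \dim C'$. Meanwhile $\codim(V) + \codim(W) = \codim_{\mathfrak{H}^*}(Z) + \dim \mathfrak{H}^*$. So the ``typical'' inequality $\codim(U) \geqslant \codim(V) + \codim(W)$ reads $\dim Y - \dim C' \geqslant \codim_{\mathfrak{H}^*}(Z)$, i.e.\ $\dim C' \leqslant \dim Y - \codim_{\mathfrak{H}^*}(Z)$. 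By the hypothesis \eqref{dim cond}, $\codim_{\mathfrak{H}^*}(Z) \geqslant \dim Y$, so this would force $\dim C' \leqslant 0$; but by assumption $C'$ is not contained in a proper subvariety of $Y$, so $\dim C' = \dim Y > 0$ (the case $\dim Y = 0$ being trivial). Hence the typical inequality fails, and Theorem \ref{BTtheorem} applies: the projection of $U$ to $Y$, namely $C'$, is contained in a proper weak Mumford--Tate subvariety of $Y$ — contradicting the choice of $C$. Therefore every component of $\Phi^{-1}(Z)$ lies in the preimage of a proper subvariety of $Y$.

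The main obstacle, I expect, is bookkeeping rather than mathematics: making sure the codimension conventions in Theorem \ref{BTtheorem} are applied consistently (all codimensions are taken in $Y \times \mathfrak{H}^*$, and one must be careful that $W$ is only analytic, not algebraic, which is exactly the asymmetry the Ax--Schanuel statement is built to handle), and confirming that the projection $W \to Y$ really is generically finite — indeed injective — so that $\dim U = \dim(\text{image in } Y)$. One should also note that ``weak Mumford--Tate subvariety'' is in particular a proper algebraic subvariety of $Y$ when it is proper, which is what the corollary asserts; and that a component of $\Phi^{-1}(Z)$ contained in the $\widetilde Y$-preimage of such a subvariety is exactly the conclusion claimed. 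No delicate estimates are needed; the content is entirely in translating \eqref{dim cond} into the failure of the expected-codimension inequality.
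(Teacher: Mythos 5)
Your proposal takes the same route as the paper: set $V = Y \times Z$, take $W$ to be the image of the lifted period map, and feed the numerology of \eqref{dim cond} into Theorem \ref{BTtheorem}. The paper's version is a bit more economical --- it observes directly that $\codim V + \codim W = \dim\mathfrak{H}^* + \codim_{\mathfrak{H}^*}(Z) \geqslant \dim(Y\times\mathfrak{H}^*)$, so that either $\dim U = 0$ (in which case the projection to $Y$ is a point, hence already proper) or the strict Ax--Schanuel inequality holds automatically --- whereas you run a contradiction through the intermediate quantity $\dim C'$. Both amount to the same dimension count.

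One claim in your argument is false as stated, though fortunately immaterial: the projection $W \to Y$ is \emph{not} globally injective. Two lifts $\tilde y_1, \tilde y_2$ of the same $y \in Y$ related by a deck transformation $\gamma$ satisfy $\Phi(\tilde y_2) = \rho(\gamma)\Phi(\tilde y_1)$, so they give distinct points of $W$ over $y$ whenever the monodromy $\rho(\gamma)$ moves $\Phi(\tilde y_1)$; thus $W \to Y$ can have large (even infinite) fibers. What \emph{is} true, and what your dimension count actually uses, is that $W$ is locally the graph of an analytic section, so $W \to Y$ is a local biholomorphism. From that one gets $\dim W = \dim Y$ and $\dim U = \dim(\text{image of }U\text{ in }Y)$, and since the image of $U$ contains $C'$ you have $\dim U \geqslant \dim C'$, which is all you need to contradict the ``typical'' inequality (the stronger equality $\dim U = \dim C'$ is not needed and is not obvious without thinking harder about what exactly $U$ is). Your claim that ``the projection of $U$ to $Y$, namely $C'$'' should likewise be weakened to ``the projection of $U$ to $Y$, which contains $C'$''; the conclusion of Theorem \ref{BTtheorem} then still forces $C'$ into a proper subvariety, which is what you want.
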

 
\proof
Let $Q$ be an irreducible component as in the statement of the corollary. 

Let $V = Y \times Z$.
The intersection $W^Z$ of $W$ with $Y \times (Z \cap \mathfrak{H})$,
intersction taken in $Y \times \mathfrak{H}$, 
is  an analytic set. 
Moreover, the image of $Q$ under the analytic map
$\tilde{Y} \rightarrow Y \times \mathfrak{H}$
is contained in $W^Z$. Therefore, the image of $Q$ is contained
in some irreducible component of $W^Z$, call it $U$:
$$U = \mbox{an irreducible component of $W \cap \left( Y \times (Z \cap \mathfrak{H})\right)$.}$$

We apply Theorem \ref{BTtheorem} with this chioce of $U,V,W$. 
Then
$$\codim_{Y \times \mathfrak{H}^*} V = \codim_{\mathfrak{H}^*} Z
\mbox{ and }\codim_{Y \times \mathfrak{H}^*} W = \dim \mathfrak{H}^*.$$
so 
$$ \codim_{Y \times \mathfrak{H}^*} W + \codim_{Y \times \mathfrak{H}^*} V  = \dim \mathfrak{H}^* +\codim_{\mathfrak{H}^*} Z  \stackrel{\eqref{dim cond}}{\geqslant} \dim \mathfrak{H}^* + \dim(Y).$$
 This shows $\dim U  = 0$, unless the projection of $U$ to $Y$ is contained in a proper weak Mumford--Tate subvariety. This implies the same property for $Q$, as desired. \qed

 \subsection{Transferring transcendence to a $p$-adic setting} \label{padic conj}

Theorem \ref{BTtheorem} can be transferred to the $p$-adic setting, which is where we use it:
 
With notation as above, suppose additionally 
that $X \rightarrow Y$ is defined over $\Z[S^{-1}]$. 
Fix $p \notin S$ and $y_0 \in Y(\Z_p)$.
As before we can form the $p$-adic period map
\begin{equation} \label{phipdef} \Phi_p:  \underbrace{\mbox{ residue disk around $y_0$ in $Y(\Q_p)$}}_{U_p} \longrightarrow \mathfrak{H}^*_{\Q_p},\end{equation}
where $\mathfrak{H}^*_{\Q_p}$ is the base-change of $\mathfrak{H}^*$ to $\Q_p$;
the map above is $p$-adic analytic, i.e., it is given in suitable coordinate charts
by power series absolutely convergent on the residue disk.

Now suppose that
we give ourselves a $\Q_p$-algebraic subvariety $Z \subset \mathfrak{H}^*_{\Q_p}$
satisfying the dimensional condition \eqref{dim cond}, i.e.\ the codimension of $Z$
is greater than or equal to the dimension of $Y$.

\begin{lemma} \label{padic BT}
Let $U_p$ be as in \eqref{phipdef}, i.e.\ $\{y \in Y(\Z_p): y \equiv y_0 \mbox{ modulo $p$}\}$. 
The set  $$\Phi_p^{-1}(Z)$$
is not Zariski dense in $Y$.  \end{lemma}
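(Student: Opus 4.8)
The plan is to argue by contradiction and reduce to the complex statement, Corollary \ref{trans_prop}, using the power–series identity between the $p$-adic and complex period maps (Lemma \ref{vCpowerseries}). So suppose $\Phi_p^{-1}(Z)$ were Zariski dense in $Y$. Fix a coordinate chart adapted to the reduction $\bar y_0 \in Y(\mathbf{F}_p)$: rational functions $z=(z_1,\dots,z_m)$ on $Y$ ($m=\dim Y$) regular near $\bar y_0$ and realizing $U_p$ as a residue polydisk, together with an affine chart $\mathbf{A}^N\subseteq \mathfrak{H}^*$ containing the image of $\Phi_p$. As in \S\ref{padic conj} (and \S\ref{GMsubsec}), the Gauss--Manin coefficients in these coordinates lie in $\mathbf{Z}_{(p)}[[z]]$, so $\Phi_p$ is given by a vector $\Phi=(\phi_1,\dots,\phi_N)$ of power series \emph{with rational coefficients}, absolutely convergent on $U_p$ and on a small complex polydisk $\Omega_{\C}$, and on $\Omega_{\C}$ the complex period map $\Phi_{\C}$ is given by the \emph{same} $\Phi$. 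Write $Z\cap \mathbf{A}^N=V(f_1,\dots,f_r)$ with $f_j\in \Q_p[w]$; since a subvariety of codimension $\geqslant m$ needs at least $m$ generators, we may take $r\geqslant m$. Let $\Lambda\subseteq \Q_p$ be the $\mathbf{Q}$-span of the coefficients of the $f_j$, pick a $\mathbf{Q}$-basis $\lambda_1=1,\lambda_2,\dots,\lambda_s$ of $\Lambda$, and write $f_j=\sum_k\lambda_k f_{jk}$ with $f_{jk}\in\mathbf{Q}[w]$; then $g_{jk}(z):=f_{jk}(\Phi(z))\in\mathbf{Q}[[z]]$ is convergent on $U_p$ and on $\Omega_{\C}$. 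Applying Lemma \ref{vCpowerseries} to $z\mapsto\big(z,\Phi(z),(g_{jk}(z))_{j,k}\big)$ (after adjoining a constant coordinate) produces a $\mathbf{Q}$-subscheme $\mathcal{Z}\subseteq \mathbf{A}^m\times\mathbf{A}^N\times\mathbf{A}^{rs}$ whose base change to $\Q_p$ (resp.\ $\C$) is the Zariski closure of the image of this map over $U_p$ (resp., by analytic continuation along $\widetilde Y$, of the corresponding graph over all of $\widetilde Y$).

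In the $\mathbf{A}^{rs}$–factor, with coordinates $(\xi_{jk})$, let $L=\{\xi:\sum_k\lambda_k\xi_{jk}=0,\ j=1,\dots,r\}$, a $\Q_p$-linear subspace of codimension exactly $r$. By construction the graph of $\Phi_p|_{\Phi_p^{-1}(Z)}$ lies in $\mathcal{Z}_{\Q_p}\cap(\mathbf{A}^m\times\mathbf{A}^N\times L)$ and projects onto $\Phi_p^{-1}(Z)$, so our hypothesis says this intersection dominates $\mathbf{A}^m$. Now work on the Grassmannian $G=\mathrm{Gr}(rs-r,rs)$ over $\mathbf{Q}$, and let $\mathcal{D}\subseteq G$ be the locus of those $M$ for which (a) $\mathcal{Z}\cap(\mathbf{A}^m\times\mathbf{A}^N\times M)$ dominates $\mathbf{A}^m$, and (b) $Z_M:=\{h\in\mathfrak{H}^*:(f_{jk}(h))_{j,k}\in M\}$ has codimension $\geqslant m$ in $\mathfrak{H}^*$. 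Both conditions are constructible and defined over $\mathbf{Q}$ (the incidence varieties involve only $\mathcal{Z}$, the $f_{jk}$, $Y$ and $G$; use constructibility of the dominance locus of a morphism for (a), and upper semicontinuity of fibre dimension for (b)). The point $[L]$ lies in $\mathcal{D}(\Q_p)$: (a) holds by the previous sentence and (b) is the hypothesis \eqref{dim cond}. Hence $\mathcal{D}$ is a nonempty $\mathbf{Q}$-constructible set, so it has a point $[L']$ over some number field, in particular over $\overline{\mathbf{Q}}\hookrightarrow\C$. Then $Z':=Z_{L'}$ satisfies $\codim_{\mathfrak{H}^*}Z'\geqslant m=\dim Y$ and, unwinding the description of $\mathcal{Z}_{\C}$ in terms of the period map over $\widetilde Y$, the property ``$\mathcal{Z}_{\C}\cap(\mathbf{A}^m\times\mathbf{A}^N\times L'_{\C})$ dominates $\mathbf{A}^m$'' should translate into ``$\Phi_{\C}^{-1}(Z'_{\C})$ is Zariski dense in $Y$.'' This contradicts Corollary \ref{trans_prop}, which forces every component of $\Phi_{\C}^{-1}(Z'_{\C})$ into the preimage in $\widetilde Y$ of a proper subvariety of $Y$.

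The hard part is precisely the translation claimed at the end of the previous paragraph, and this is where the argument must be made with care. Under the full–monodromy hypothesis in force in \S\ref{hypersurface}, the Zariski closure of the complex period graph $W$ is \emph{all} of $Y\times\mathfrak{H}^*$ (Witt's theorem: the orthogonal/symplectic group acts transitively on the flag variety), so $\mathcal{Z}_{\C}$ carries no transcendence information and one cannot conclude Zariski density of $\Phi_{\C}^{-1}(Z'_{\C})$ merely from an intersection statement about $\mathcal{Z}_{\C}$; the content has to come from the \emph{thinness} of the analytic graph $W$, exactly as encoded by Theorem \ref{BTtheorem}. Concretely, I expect one must enlarge the constructible locus $\mathcal{D}$ so that it also records, on the complex side, the dimension of the relevant analytic fibre of $W\cap(Y\times Z'_{\C})$ (and check, again, that this enlarged locus is defined over $\mathbf{Q}$ and still contains $[L]$, which is where the inequality \eqref{dim cond} must be fed in), or else apply Theorem \ref{BTtheorem} directly to $V=Y\times Z'_{\C}$ after this bookkeeping. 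Keeping the codimension bound $\codim Z'\geqslant\dim Y$ intact through all these Grassmannian manipulations is the main obstacle; the coefficient–splitting and the invocation of Lemma \ref{vCpowerseries} are, by contrast, routine.
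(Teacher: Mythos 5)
Your plan — reduce the $p$-adic statement to Corollary \ref{trans_prop} by exploiting the common power series of the $p$-adic and complex period maps — is the right instinct, and is indeed the strategy the paper follows. But the specific mechanism you propose, funneling everything through Lemma \ref{vCpowerseries} and a $\mathbf{Q}$-constructible locus $\mathcal{D}$ on a Grassmannian, does not close, for exactly the reason you flag at the end: Lemma \ref{vCpowerseries} only compares \emph{Zariski closures}, and under the standing full-monodromy hypothesis the Zariski closure of the graph of $z\mapsto(z,\Phi(z))$ over the residue disk is all of $Y\times\mathfrak{H}^*$. Hence your $\mathcal{Z}$ is simply the graph in $Y\times\mathfrak{H}^*\times\mathbf{A}^{rs}$ of the polynomial map $(z,w)\mapsto(f_{jk}(w))_{j,k}$, and the intersection in condition (a) becomes $Y\times Z_M$ — so (a) is satisfied as soon as $Z_M\neq\varnothing$ and carries no transcendence content. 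Your proposed enlargement of $\mathcal{D}$ to ``record the dimension of the analytic fibre of $W\cap(Y\times Z'_{\C})$'' cannot be a constructible $\mathbf{Q}$-condition: that dimension is an analytic invariant of the single transcendental set $W$, not of any algebraic incidence variety, and varying $M$ algebraically gives no handle on it. So the argument does not recover from the gap you identified.

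The paper's proof circumvents this by \emph{not} descending the defining equations of $Z$ to $\mathbf{Q}$. Instead it fixes an abstract field isomorphism $\sigma:\overline{\Q_p}\simeq\C$ and works with the formal power series $G_i=F_i\circ\Phi_p$ themselves, rather than with their Zariski closures. Because the Gauss--Manin matrix in a chosen $\mathcal{O}_{(v)}$-basis has coefficients in $\mathcal{O}_{(v)}\subset K$, the isomorphism $\sigma$ acts coefficient-wise on the completed local ring, and one has the identity $G_i^{\sigma}=F_i^{\sigma}\circ\Phi_{\C}$ as formal power series at $y_0^{\sigma}$ (this is the same computation as behind Lemma \ref{vCpowerseries}, but used pointwise on a power series rather than to produce a Zariski closure). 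Corollary \ref{trans_prop} applied to $Z^{\sigma}$ now bounds the complex zero locus $\Phi_{\C}^{-1}(Z^{\sigma})$ by a proper subvariety, the complex-analytic Nullstellensatz produces a regular function $G$ with $G^m$ in the ideal $\langle G_i^{\sigma}\rangle$, and $\sigma^{-1}$ pulls this back to a regular function $H$ on $Y_{\overline{\Q_p}}$ lying in $\langle G_1,\dots,G_k\rangle$ in the completed local ring. Finally, an approximation argument (choosing $Z_i\in B_p$ with $H-\sum Z_iG_i\in\mathfrak{m}^t$ for every $t$) combined with Krull's intersection theorem in the Tate algebra shows $H$ lies in the minimal prime $\mathfrak{p}$, proving the Claim and hence the lemma. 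The moral is that the transcendence information of Theorem \ref{BTtheorem} lives at the level of analytic germs, so the reduction to $\C$ must be done on the power series directly; replacing the power series by their Zariski closures, as your route does, destroys precisely the information you need.
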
 

Note that $\Phi_p$ is defined only on the residue disk $U_p$.

\proof

  It will be convenient to have the freedom to vary $y_0$ later in the argument. To that end, 
 note that the statement above depends only on $U_p$;
after all, at the level of points, $\Phi_p$ is the map
 sending $y \in U_p$ to the induced Hodge filtration on the primitive
 crystalline cohomology of the special fiber of $X_y$.

By \cite[Thm.\ 7.6]{Mazur_estimates} (or by the discussion
of \S \ref{GMsubsec})
 the image of $\Phi_p(U_p)$ is contained in a residue disk on $\mathfrak{H}_p^*$ containing $\Phi_p(y_0)$, in particular, in some affine open set $\Spec A_p$ of $\mathfrak{H}_p^*$ containing $\Phi_p(y_0)$.
We may  suppose that $Z \subset \mathfrak{H}_{\Q_p}^*$ is defined locally by equations
$F_i = 0$, where we suppose $F_i \in A_p$, i.e.\ the $F_i$ are regular functions on this affine open set.

Consider now $$G_i = F_i \circ \Phi_p.$$
These are defined by power series converging absolutely in $U_p$, i.e.
in a suitable choice of local coordinates,  $G_i$ lies in a Tate algebra
$$ R= \Q_p \left \langle \frac{x_1}{p}, \dots, \frac{x_N}{p} \right \rangle$$ of formal power series convergent on a disk of $p$-adic radius $|p|$.  In these coordinates $U_p$ corresponds to $(x_1, \dots, x_N) \in (p \Z_p)^N$. 
We want to show that the common zero-locus, inside $U_p$,  of the $G_i$ is contained in 
(the $\Q_p$-points of) an algebraic set.
As a preliminary reduction, we will reduce to considering a single ``irreducible
component'' of this common zero locus.

Fix a suitable open affine set $\Spec \ B_p \subset Y_{\Q_p}$
``containing the residue disc of $y_0$.'' (More precisely, we may fix an open 
 affine neighbourhood in $Y$, considered now as $\Z_p$-scheme, of the image of the $\Z_p$-valued point $y_0: \Spec\ \Z_p \rightarrow Y$,
  and take its generic fiber.)
Then there is a morphism from $B_p$ to $R$. 
Our result  will now follow from the
\begin{quote}
{\em Claim:} Let $\mathfrak{p}$ be a prime ideal of $R$, vanishing
at some point of $U_p$. Suppose that
$\mathfrak{p}$ is minimal among prime ideals
containing 
$\langle G_1, \dots, G_n \rangle$. 
Then $\mathfrak{p}$ contains (the image in $R$ of) a regular function $H$,
i.e.\ a function $H$ belonging to $B_p$ as above. 
\end{quote}

To see why this implies the statement of the lemma, assume the {\em Claim.} There are only 
finitely many such minimal primes as in the statement.
Call them $\mathfrak{p}_1, \dots, \mathfrak{p}_t$.
Let $H_j \in \mathfrak{p}_j$ the function constructed according to the claim above.
Then the vanishing locus of $\prod_j H_j$
contains the common vanishing locus of the $G_i$: if $y$ lies in this common zero-locus,
it lies in the vanishing locus of some $\mathfrak{p}_j$, and then $H_j(y) = 0$. 

We now prove the {\em Claim}. The ideal $\mathfrak{p}$ vanishes at some point at $U_p$ by assumption;
choose such a point $y_0$.  

We now transfer the question to the complex numbers. 
We fix an isomorphism $\sigma: \overline{\Q_p} \simeq \C$,
which gives in particular an
 embedding $\sigma: \Q_p \hookrightarrow \C$.
Then $y_0$ gives rise to a complex point $y_0^{\sigma} \in Y(\C)$,
and the de Rham cohomology of $X_{y_0}^{\sigma}$
is obtained from that of $X_{y_0}$ via $\sigma$:
\begin{equation} \label{pciso} H^*_{\dR}(X_{y_0}) \otimes_{(\Q_p, \sigma)} \C = H^*_{\dR}(X_{y_0^{\sigma}}/\C).\end{equation}

We may regard the period map $\Phi_p$
as taking values in the Grassmannian $\mathfrak{H}^*_{\Q_p}$
for the left-hand de Rham cohomology. 
Also let $U_{\C}$
be a small complex neighbourhood of $y_0^{\sigma}$ and 
let $\Phi_{\C}: U_{\C} \rightarrow \mathfrak{H}_{\C}^*$
be the complex period mapping, which 
we regard as taking values in the associated complex variety $\mathfrak{H}^*_{\C} := \left(\mathfrak{H}^*_{\Q_p}\right)^{\sigma}$.
This complex variety parameterizes certain flags inside the right-hand space of \eqref{pciso}. 
Note the identification $\Phi_{\C}(y_0^{\sigma}) = \Phi_p(y_0)^{\sigma}$. 

Now $Z$ gives rise to an algebraic subvariety $Z^{\sigma} \subset \mathfrak{H}^*_{\C}$
and this subvariety again satisfies condition \eqref{dim cond}.
The functions $F_i$ are regular on an open affine containing $\Phi_p(y_0)$;
correspondingly we obtain
$F_i^{\sigma}$ on
an affine open in $\mathfrak{H}_{\C}^*$ containing $\Phi_{\C}(y_0^{\sigma})$, 
which locally cut out $Z^{\sigma}$.

Ignoring convergence for a moment, regard the $G_i$ in the completed local ring of $Y_{\Q_p}$ at $y_0$. This is a formal power series ring over $\Q_p$,
and $\sigma$ induces an injection from this completed local  
ring to the corresponding completed local ring of $Y_{\C}$ at $y_0^{\sigma}$;
call this map $G \mapsto G^{\sigma}$.  Then we have in fact  
\begin{equation} \label{KIKI} G_i^{\sigma} = \mbox{ power series expansion of } F_i^{\sigma} \circ \Phi_{\C} \mbox{ at $y_0^{\sigma}$}.\end{equation}
This follows from just the same analysis of \S \ref{GMsubsec}, or phrased informally, 
from the fact that the complex and $p$-adic period map satisfy the same differential equation.

 It follows from \eqref{KIKI} that the $G_i^{\sigma}$, {\em a priori} complex formal power series,
 are  in fact convergent in a small complex neighbourhood of $y_0^{\sigma}$;
their  common vanishing locus for a sufficiently small such neighbourhood  $V$ coincides with 
$\Phi_{\C}^{-1}(Z^{\sigma}) \cap V$.

Corollary \ref{trans_prop}, applied to $Z^{\sigma} \subset \mathfrak{H}^*_{\C}$, shows that 
$\Phi_{\C}^{-1}(Z^{\sigma}) \cap V \subset Y_{\C}$ is not Zariski dense in $Y_{\C}$.
Indeed, after analytically continuing $\Phi_{\C}$ from $V$ to a universal cover of $Y_{\C}$,  there are only finitely many irreducible components
of $\Phi_{\C}^{-1}(Z^{\sigma})$ which intersect $V$ (by local finiteness of irreducible components of an analytic set). We can apply
Corollary \ref{trans_prop} to each of them to conclude that the common zero-locus of $G_i^{\sigma}$ on $V$ is contained in the zero locus of some algebraic function $G$ (i.e., $G$ 
arises from a regular function on a Zariski-open subset of $Y_{\C}$ containing
 $V$).  
 
Consider the ring  $R_{\C} = \C\{x_1, \dots, x_n\}$ of formal power series that are convergent in some neighbourhood of $0$. 
 Given an ideal $I$ of this ring, we can associate a germ $V(I)$ of an analytic set at the origin. 
The locally analytic Nullstellensatz 
\cite[\S 3.4]{PdJ} asserts that
the ideal of functions vanishing along this germ is precisely the radical $\sqrt{I}$ of $I$. 

We apply this with $R_{\C}$ the ring of  germs of holomorphic functions near $y_0^{\sigma} \in Y_{\C}$,
taking $I$ to be the ideal 
 generated by the $G_i^{\sigma}$. Then $\sqrt{I}$
is the ideal of functions vanishing on $V(I)$ and in particular contains $G$. 
Thus  $G^m \in I$ for some $m \geqslant 1$. 
   
Therefore the ideal spanned by $G_i^{\sigma}$ 
  inside the ring of locally convergent power series contains the image of an algebraic function, i.e.\ a regular function on some
 Zariski-open subset of $Y_{\C}$ containing $y_0^{\sigma}$.  The same is then {\em a fortiori} true if we replace
 ``locally convergent'' by ``formal,'' 
 and this latter assertion can be carried back, via $\sigma^{-1}$, to $Y_{\overline{\Q_p}}$.
Thus, there is a regular function $H$,  in a neighbourhood of $y_0$ on $Y_{\overline{\Q_p}}$, belonging to the ideal
\begin{equation} \label{guruguha}  H \in \langle G_1, \dots, G_k \rangle\end{equation}
generated by the $G_i$ in the completed local ring $\widehat{\mathcal{O}}$ of $Y_{\overline{\Q_p}}$ at $y_0$.

By taking a norm we may suppose that $H$ in fact arises from a regular function 
in a neighbourhood of $y_0$ on $Y_{\Q_p}$. 
Without loss of generality (multiplying by a suitable denominator if necessary),
we may suppose that $H$ is regular on the chosen open affine around $y_0$, i.e., $H \in B_p$. 
Note that $B_p \otimes \overline{\Q_p}$ surjects on to each quotient $\widehat{\mathcal{O}}/\mathfrak{m}_{\widehat{\mathcal{O}}}^t$
(where $\mathfrak{m}_{\widehat{\mathcal{O}}}$ is the maximal ideal).
Therefore,   for each $t \geqslant 1$, there are $Z_1, \dots, Z_k \in B_p \otimes \overline{\Q_p}$ such that
\begin{equation} \label{einy} H  \in \sum Z_i G_i + \mathfrak{m}_{\widehat{\mathcal{O}}}^t.\end{equation}
By linear algebra   we see that we can even choose $Z_i \in B_p$.

The function $H$ then defines a rigid-analytic function on the residue disk of $y_0$. 
Thus $H$ and $G_i$ both lie inside the Tate algebra $R$ previously defined.
Recall that we have fixed a prime ideal $\mathfrak{p}$
of $R$,  contained in the maximal ideal $\mathfrak{m}$
associated to $y_0$, and containing the ideal $J$ generated
by the $G_i$ inside $R$. 

  Now \eqref{einy}  implies that
$$ H \in J + \mathfrak{m}^t$$
for every $t \geqslant 1$.
 Then 
the image of $H$ in $R/\mathfrak{p}$
lies in the
intersection $\bigcap_{t \geqslant 1} \mathfrak{m}^t$.
Krull's intersection theorem, applied to the Noetherian integral domain $R/\mathfrak{p}$, 
implies that the intersection of powers of $\mathfrak{m}$
is trivial. Therefore $H \in \mathfrak{p}$, as desired. 
\qed

\section{Bounds on points with good reduction}
\label{Hodge_numbers_funny}
Let $\pi: X \rightarrow Y$ be a smooth proper morphism over $\Z[S^{-1}]$, whose fibers are geometrically connected of relative dimension $d$. 
The goal of this section is to bound $Y(\Z[S^{-1}])$ by means of the same general techniques we have used elsewhere in the paper, 
i.e., by studying the variation of $p$-adic Galois representations of the fibers. We refer the reader to the Introduction (\S \ref{intro})
for a discussion of the methods and how they compare with the curve case; the main difference in this general setting
is that the linear algebra arguments required to avoid semisimplicity are much more elaborate, and are
discussed in \S \ref{GG combinatorics}.

\subsection{} \label{hodge setup subsec}
  
Fix $y_0 \in Y(\C)$, with fiber $X_0$ and set $\mathsf{V}_0 = H^d(X_{0,\C}, \Q)^{\prim}$.
This is equipped with an intersection form $\langle -, - \rangle$. Assume that 
 the image of \begin{equation} \label{large mon image} \pi_1(Y_{\C},y_0) \rightarrow \Aut\left( \mathsf{V}_0 \otimes \C,  \langle- , -\rangle \right)\end{equation}
has   Zariski closure containing the identity component of the right-hand group. 

The Hodge structure on $\mathsf{V}_0$ induces a  weight-zero Hodge structure on  
\begin{equation} \label{ZeroweightHS} \mathrm{Lie} \ \ \mathrm{GAut}\left( \mathsf{V}_0 \otimes \C, \langle-, - \rangle\right) \simeq  \C \oplus \mathrm{Sym}^2  \mathsf{V}_0 \mbox{ or } \C \oplus \wedge^2 \mathsf{V}_0, \end{equation} according to the parity of $\langle -, - \rangle$.  
 We will refer to this as the {\em adjoint Hodge structure} to distinguish it from the Hodge structure on $\mathsf{V}_0 \otimes \C$.  
 
Let  $h^p$ be the dimension of the Hodge component $(p,-p)$ in the adjoint Hodge structure.\footnote{As in Section \ref{red_galois}, we are abusing the symbol $p$ to refer to the indexing on a Hodge filtration.}  For any $E \in \mathbf{Z}_{\geqslant 0}$ that is at most the dimension of the adjoint Hodge structure,  let 
$$T(E) = \mbox{sum of the topmost $E$ Hodge numbers.}$$
Here the {\em Hodge numbers} are the list of $p$s for which $h^{p} \neq 0$,
each written with multiplicity $h^p$; 
thus, for example, if $p_{\max}$ is the largest $p$ for which $h^{p} \neq 0$,
then $T(1) = \pmax$, and if $h^{\pmax}  > 1$ then $T(2)=2\pmax$. 
 
We can extend  $T$ to be a continuous piecewise linear function $[0, \sum_{j \in \mathbf{Z}} h^j] \rightarrow \mathbf{R}_{\geqslant 0}$
such that $T(0) = 0$, and with derivative specified as
\begin{equation}
\label{Tdef}
T'(x) = 
\begin{cases}
\pmax & \mbox{ for $x \in (0, h^{\pmax})$},\\
\pmax-1 & \mbox{ for $x \in (h^{\pmax}, h^{\pmax}+h^{\pmax-1})$}, \\
\mbox{and so forth.} & \\
\end{cases}
\end{equation}

The transcendence property of period mappings is an essential ingredient in the following theorem. 
It says that integral points on the base are not Zariski dense whenever the adjoint Hodge structure is quite ``spread out,'' that is to say, whenever  the contribution of large $|p|$ to  the total dimension $\sum h^{p}$
is large.

\begin{thm} \label{transprop}
Let $\pi: X \rightarrow Y$ be a smooth proper morphism over $\Z[S^{-1}]$, whose fibers are geometrically connected of relative dimension $d$.   
With notation as above, suppose that the monodromy representation has large image, i.e.\ that \eqref{large mon image}  is satisfied, and moreover that 
\begin{equation} \label{weak condition} \sum_{p >0} h^p \geqslant h^0 + \dim(Y) \end{equation}
and
\begin{equation}  \label{condition statement}  \sum_{p>0} p h^{p} > T\left (h^{0}+\dim(Y)\right ) + T\left (\frac{3}{2} h^{0}+\dim(Y)\right ).\end{equation}
Then $Y(\Z[S^{-1}])$ is not Zariski dense in $Y$. 

If we assume, moreover, that the monodromy representation for any subvariety $Y' \subset Y$
continues to have large image\footnote{This is an unrealistically strong assumption. We include this statement simply 
to make clear the importance of this problem -- controlling monodromy drop along subvarieties -- for our method.} (see \eqref{large mon image}), then in fact $Y(\Z[S^{-1}])$ is {\em finite}. 
\end{thm}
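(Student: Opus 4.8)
The plan is to run the general mechanism of \S\ref{fibers}--\S\ref{highergenus} in this higher‑dimensional setting, substituting the $p$‑adic Bakker--Tsimerman statement (Lemma \ref{padic BT}) for Lemma \ref{Zarclos}, and the combinatorial ``general position'' result Proposition \ref{linalg} for the elementary linear algebra of \S\ref{highergenus}. First I would choose the prime $p$: take $p\notin S$ (automatically friendly since $K=\Q$) and, using a Sen‑type lower bound on the image of the \emph{global} Galois representation coming from the large‑monodromy hypothesis \eqref{large mon image} together with its Hodge--Tate weights (cf.\ \cite{Sen}, \S\ref{control_centralizer}(ii), or the point‑counting alternative of \S\ref{bound_frob} and Lemma \ref{end lemma}), arrange via Chebotarev that the $\Q_p$‑linear crystalline Frobenius $\phi=\phi_p$ on the primitive crystalline cohomology of the special fibre of $X_{y_0}$ is regular semisimple, so that $\dim_{\Q_p}\mathrm{Z}(\phi)$ is as small as possible (equal to the rank of the similitude group $\mathbf G$ of the intersection form). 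Since $Y$ has a smooth model $\mathcal Y$ over $\Z[S^{-1}]$, the set $\mathcal Y(\Z_p)$ is a finite disjoint union of residue disks; as a finite union of proper closed subsets is proper, it is enough to fix one such disk $U_p$ (centred at some $y_0$ after translating) and prove that $\mathcal Y(\Z[S^{-1}])\cap U_p$ is not Zariski dense in $Y$.

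Next I would unwind the chain of implications for a good‑reduction point. Let $y\in\mathcal Y(\Z[S^{-1}])\cap U_p$; then $X_y$ has good reduction away from $S$, so $\rho_y:=H^d_{\et}(X_{y,\bar\Q},\Q_p)^{\prim}$ is unramified outside $S$, pure of weight $d$, with integral Frobenius characteristic polynomials, and crystalline at $p$. Applying Lemma \ref{faltings_finiteness_reductive} with target the similitude group $\mathbf G\subset\GL$ of the intersection form (orthogonal or symplectic according to the parity of $d$), there are only finitely many possibilities, up to $\mathbf G(\Q_p)$‑conjugacy, for the pair $(\mathbf Q_y,\rho_y^{\mathrm{ss}})$, where $\mathbf Q_y$ is a $\Q_p$‑parabolic of $\mathbf G$ minimal among those containing $\operatorname{im}(\rho_y)$ and $\rho_y^{\mathrm{ss}}$ is its (irreducible) Levi projection. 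Via the crystalline comparison of \S\ref{HSsec} and Gauss--Manin (\S\ref{GMsubsec}), $\rho_y|_{G_{\Q_p}}$ corresponds to the filtered $\phi$‑module $(V_p,\phi,\Phi_p(y))$, with $V_p,\phi$ independent of $y\in U_p$ and only the Hodge filtration $\Phi_p(y)\in\mathfrak H^*_{\Q_p}$ varying, $p$‑adically analytically. The parabolic $\mathbf Q_y$ corresponds to a $\phi$‑stable isotropic flag $\mathcal W_y$ in $V_p$; each graded piece of $\mathcal W_y$ is a subquotient of $\rho_y$, hence pure of weight $d$ and crystalline at $p$, so by Lemmas \ref{globalsimple}--\ref{globalsimple2} (at the friendly prime $p$, $K=\Q$) the weight of its induced Hodge filtration is exactly $d/2$ --- $\mathcal W_y$ is ``balanced'' --- while by finiteness of $\rho_y^{\mathrm{ss}}$ the filtration induced on the associated graded of $\mathcal W_y$ lies in one of finitely many $\mathrm{Z}(\phi)$‑orbits. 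Hence $\Phi_p(y)$ lies in the locus
\[ Z\ :=\ \bigl\{\,F\in\mathfrak H^*_{\Q_p}\ :\ \exists\ \phi\text{-stable isotropic balanced flag }\mathcal W\text{ with induced graded filtration in the prescribed finite set}\,\bigr\}, \]
a formulation which also accommodates the failure of semisimplicity of $\rho_y$ (so $F$ need not split along $\mathcal W$).

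I would then invoke Proposition \ref{linalg}: using the Frobenius‑centralizer bound from the first step, the balancedness (purity) constraint, and the numerical hypotheses \eqref{weak condition} and \eqref{condition statement} --- the two terms $T(h^{0}+\dim Y)$ and $T(\tfrac32 h^{0}+\dim Y)$ bounding respectively the ambiguity in recovering $F$ from its associated graded and the combined contribution of the centralizer and the non‑semisimple defect, weighed against the ``gain'' $\sum_{p>0}ph^{p}$ of a spread‑out adjoint Hodge structure \eqref{ZeroweightHS} --- the set $Z$ is contained in a $\Q_p$‑algebraic closed subset of $\mathfrak H^*_{\Q_p}$ of codimension $\geq\dim Y$. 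This codimension estimate, carried out in $\mathfrak g=\operatorname{Lie}\mathbf G$ with its adjoint Hodge structure, is the combinatorial heart of the argument and is the step I expect to be the main obstacle (it is precisely what occupies \S\ref{GG combinatorics}). Granting it, Lemma \ref{padic BT} applies to $Z$ and yields that $\Phi_p^{-1}(Z)\subseteq U_p$ is not Zariski dense in $Y$; hence neither is $\mathcal Y(\Z[S^{-1}])\cap U_p$. Taking the Zariski closure $W$ of $Y(\Z[S^{-1}])$ inside $Y_{\bar\Q}$ (a $\Q$‑subvariety, being Galois stable) and noting that each piece $\mathcal Y(\Z[S^{-1}])\cap U_{p,i}$ sits inside a proper $\overline{\Q_p}$‑subvariety, one gets $W\neq Y$, which is the asserted non‑density, with $W(\Q)\supseteq Y(\Z[S^{-1}])$.

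Finally, for the strengthened conclusion, assume that monodromy stays large along every subvariety of $Y$. If $Y(\Z[S^{-1}])$ were infinite, its Zariski closure $W$ would have an irreducible component $Y'$ with $\dim Y'\geq 1$; as shown above the integral points are Zariski dense in each component of $W$, in particular in $Y'$. The restricted family $X\times_Y Y'\to Y'$ is still smooth and proper with geometrically connected fibres of relative dimension $d$ and the same adjoint Hodge numbers $h^{p}$ (computed at a base point of $Y'$), its monodromy is large by hypothesis, and \eqref{weak condition}--\eqref{condition statement} only become weaker when $\dim Y$ is replaced by the smaller $\dim Y'$ (since $T$ is nondecreasing and the relevant arguments stay in the domain of $T$ by \eqref{weak condition}). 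Thus the non‑density statement already proven applies to $Y'$ and shows $Y'(\Z[S^{-1}])$ is not Zariski dense in $Y'$, a contradiction. Therefore $Y(\Z[S^{-1}])$ is finite. Beyond Proposition \ref{linalg}, the only other genuinely delicate input is the choice of $p$ (Problem (a) of \S\ref{control_centralizer}); everything else is bookkeeping around the template of Proposition \ref{finitesetoforbits} and Corollary \ref{trans_prop}.
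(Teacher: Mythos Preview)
Your overall strategy --- Faltings finiteness for $\rho_y^{\mathrm{ss}}$, purity constraints on the Hodge weights of each graded piece, Proposition \ref{linalg} to bound the codimension of the bad locus in the period domain, and Lemma \ref{padic BT} to descend --- is the paper's architecture (Lemmas \ref{Goodprimelemma} and \ref{NZD}), and the inductive finiteness argument at the end is correct. But the first step, choosing the prime, has a genuine gap.

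You say: choose $p$ ``via Chebotarev'' so that the crystalline Frobenius on the special fibre of $X_{y_0}$ is regular semisimple, and then say it suffices to treat each residue disk $U_p$ centred at some $y_0$. The trouble is that the crystalline Frobenius depends on the residue disk (i.e.\ on $\bar y_0\in Y(\F_p)$), while your Chebotarev step presupposes a fixed $y_0$; and there is no single global Galois representation to which Chebotarev can be applied --- each integral point $y$ produces its own $\rho_y$, and the large-monodromy hypothesis \eqref{large mon image} is about $\pi_1(Y(\C))$, not about the image of any individual $\rho_y$. So one cannot pick a single $p$ in advance that makes the Frobenius centralizer small in \emph{every} residue disk. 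The paper's resolution (Lemma \ref{Goodprimelemma}) uses a \emph{second} prime: fix an auxiliary $p$; by Lemma \ref{faltings_finiteness_reductive} there are finitely many possibilities for $\rho_{y,p}^{\mathrm{ss}}$; for each, the Zariski closure of its image contains a conjugate of the Hodge torus (this is where Sen/Wintenberger/Pink enter), so Chebotarev produces a prime $\ell$ at which the semisimplified $\ell$-crystalline Frobenius has centralizer of dimension at most $h^0$, with $\ell\le L$ independent of $y$. One then runs your period-map argument at $\ell$, separately for each $\ell\le L$, and takes the finite union.

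A smaller point: you cannot in general arrange $\phi$ to be regular semisimple in $\mathbf G'$ (centralizer dimension equal to the rank). The Zariski closure of $\rho_{y,p}^{\mathrm{ss}}(G_{\Q})$ may well be a proper reductive subgroup of $\mathbf G'$, and the only general lower bound on it is that it contains the Hodge torus; so what is achievable is $\dim Z(\phi^{\mathrm{ss}})\le \dim Z_{\mathbf G'(\C)}(\varphi_0)=h^0$, not the rank. This is harmless: the hypotheses \eqref{weak condition}--\eqref{condition statement} are precisely what is needed to feed $z\le h^0$ and $e=\dim Y$ into Proposition \ref{linalg}, so once the two-prime mechanism above is in place the rest of your outline goes through.
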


Roughly speaking, a condition of type \eqref{weak condition} is easily seen to be necessary for our method:
with reference to the discussion of \S   \ref{subsec:outline} we want $Y$ to be transverse to all orbits
of a certain group $\mathrm{Z}(\phi)$ on a flag variety; the dimension of the flag variety is $\sum_{p > 0} h^p$,
and in our argument we shall bound the dimension of $\mathrm{Z}(\phi)$ above by $h^0$. 
Equation \ref{condition statement} is in practice a much more restrictive condition and is needed to control semisimplification.

The combinatorial machinations that
give rise to inequality \ref{condition statement} could probably be greatly optimized. We aimed to give a treatment that was fairly short,
at some cost to the sharpness of the results. 
Informally speaking, the condition says that the Hodge diamond of $Y$ is not very concentrated near the middle.  

  \subsection{Application to hypersurfaces} \label{app_hyp}
    
We will now outline the proof of the following statement:\footnote{In Section \ref{app_hyp} \emph{only}, the symbol $d$ represents the degree of a hypersurface,
and $n-1$ its dimension.}

\begin{proposition} \label{hyp prop}
There exists $n_0$ and a function $D_0(n)$ such that both \eqref{weak condition} and
\eqref{condition statement} apply to $X \rightarrow Y$ the universal family of hypersurfaces in $\mathbf{P}^n$ of degree $d$, 
so long as $n \geqslant n_0$ and $d \geqslant D_0(n)$.
\end{proposition}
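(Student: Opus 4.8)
\textbf{Proof plan for Proposition \ref{hyp prop}.}

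The strategy is to estimate the Hodge numbers $h^p$ of the \emph{adjoint} Hodge structure on $\operatorname{Sym}^2 \mathsf{V}_0$ or $\wedge^2 \mathsf{V}_0$ attached to the primitive middle cohomology $\mathsf{V}_0 = H^{n-1}(X_{0,\C})^{\prim}$ of a smooth hypersurface $X_0 \subset \mathbf{P}^n$ of degree $d$, and to show that as $d \to \infty$ (with $n$ large but fixed) these numbers become so spread out that the inequalities \eqref{weak condition} and \eqref{condition statement} hold, while also checking the large-monodromy hypothesis \eqref{large mon image}. First I would recall the classical description, due to Griffiths, of the primitive cohomology of a smooth hypersurface via the Jacobian ring: if $P$ is the defining polynomial and $R = \C[x_0,\dots,x_n]/J_P$ is the Jacobian ring, then $\operatorname{gr}^{n-1-k}_F H^{n-1}(X_0)^{\prim} \cong R_{(k+1)d - n - 1}$, so the Hodge numbers $h^{p,n-1-p}_{\prim}$ are the dimensions of graded pieces of $R$, which for $d$ large are given by an explicit binomial/Ehrhart-type formula in $d$ and $n$. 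This gives precise asymptotics: the dimension $N(n,d) = \dim \mathsf{V}_0$ grows like $d^n$, and the Hodge numbers, suitably normalized, approximate a continuous ``Hodge density'' supported on an interval of length growing with $n$; in particular, for $n$ fixed and $d \to \infty$ the ratio $h^{n-1,0}_{\prim}/N$ does not go to zero and the profile is genuinely spread out.

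Next I would pass from the Hodge numbers of $\mathsf{V}_0$ to those of the adjoint Hodge structure $h^p$ (the $(p,-p)$-piece of $\operatorname{Sym}^2$ or $\wedge^2$), which are obtained by the convolution $h^p = \sum_{a - b = p} h^{a,\cdot}_{\prim} h^{b,\cdot}_{\prim}$ up to the parity/diagonal corrections and the harmless $\C$-summand in \eqref{ZeroweightHS}. The key qualitative point is that $h^0$ (the ``middle'') is of the same order as a single convolution term, hence of order $N^2 / (\text{length of Hodge support})$, whereas $\sum_{p>0} p h^p$ is a genuine second-moment-type quantity of order $N^2 \cdot (\text{length})$; since the length of the adjoint Hodge support grows with $n$ while $\dim Y = \binom{n+d}{d} - 1 \sim d^n/n!$ is of order $N$ (hence \emph{negligible} compared to $N^2$), both \eqref{weak condition} — which asks $\sum_{p>0} h^p \ge h^0 + \dim Y$ — and the sharper \eqref{condition statement} — comparing $\sum_{p>0} p h^p$ against $T(h^0 + \dim Y) + T(\tfrac32 h^0 + \dim Y)$ — follow once $n$ is large enough (to make the Hodge profile spread out enough that the second moment beats roughly twice the top-$O(h^0)$ partial sum) and then $d$ is large enough (to make $\dim Y \ll N$ and to enter the asymptotic regime). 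Concretely I would fix $n_0$ by the requirement that the limiting normalized Hodge profile $f_n$ on $[-1,1]$ satisfy the strict inequality $\int x\,f_n > 2\int_{\text{top mass }f_n(0)} (\text{profile})$, which holds for $n$ large by a direct estimate on the convolution of the hypersurface Hodge profile with itself; then $D_0(n)$ is chosen so that for $d \ge D_0(n)$ the finite-$d$ Hodge numbers are within the required tolerance of the limit and $\dim Y/N \to 0$ is small enough to be absorbed.

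Finally, one must verify the large-monodromy hypothesis \eqref{large mon image}: for the universal family of smooth hypersurfaces the geometric monodromy on primitive middle cohomology is well known (Beauville, following Deligne) to have Zariski closure the full symplectic or orthogonal group of the intersection form — it is generated by the vanishing-cycle transvections, which form a single orbit — so this is a citation rather than new work. The main obstacle I expect is purely bookkeeping: carrying out the convolution asymptotics for $h^p$ cleanly enough to extract the strict inequality \eqref{condition statement}, since $T$ is a piecewise-linear ``top partial sum'' function and one needs a genuinely quantitative comparison of a first moment of the adjoint profile against $T$ evaluated at two points near $h^0 + \dim Y$; the authors already warn that their combinatorial bounds in \S \ref{GG combinatorics} (hence the shape of \eqref{condition statement}) are far from optimal, so the honest difficulty is choosing $n_0$ explicitly enough that the limiting profile clears the bound with room to spare, after which the finite-$d$ statement is a routine continuity/uniformity argument. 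I would not attempt to optimize $n_0$ or $D_0(n)$.
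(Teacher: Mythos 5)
Your high-level plan matches the paper's: estimate the Hodge numbers of a degree-$d$ hypersurface asymptotically in $d$ for fixed $n$, pass to the adjoint Hodge numbers $h^p$ by convolution, show that $\dim Y$ is negligible compared to $h^0$, and then argue the profile becomes sufficiently ``spread out'' as $n \to \infty$. The paper in fact expresses the same asymptotics via Eulerian numbers, $h^{p,q}(d) \sim \frac{d^n}{n!}A(n,p)$, which is the Jacobian-ring/Griffiths calculation you invoke, so the starting data is the same.

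However, there is a genuine quantitative error that hides the main difficulty. You assert that $h^0$ is of order $N^2/(\text{length of Hodge support})$ and $\sum_{p>0} p\,h^p$ of order $N^2 \cdot (\text{length})$, as if the Hodge numbers were roughly uniform on an interval of length $\sim n$. They are not: the normalized distribution $\alpha_p = A(n,p)/n!$ is concentrated like a Gaussian with standard deviation $\sim \sqrt{n}$ (variance $(n+1)/12$), and its self-convolution $\beta_p$ likewise. The correct orders are therefore $h^0 \asymp H/\sqrt{n}$ and $\sum_{p>0} p\,h^p \asymp H\sqrt{n}$ (where $H \sim N^2/2$), each off from your estimate by a factor of $\sqrt{n}$ and in the direction that makes \eqref{condition statement} \emph{harder}, not easier. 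Concretely, with the correct $h^0 \sim H/\sqrt{n}$, the trivial bound $T(h^0) \leqslant p_{\max}\,h^0 \sim n \cdot H/\sqrt{n} = H\sqrt{n}$ is \emph{the same order} as $\sum_{p>0} p\,h^p$. So the inequality \eqref{condition statement} is genuinely delicate and does not follow from order-of-magnitude counting: one must split the evaluation of $T$ between small and large $p$ and use a second-moment (Chebyshev) bound to control the large-$p$ contribution, as well as a log-concavity argument for the Eulerian numbers to prove a sharp upper bound of the form $\beta_0 < C/\sqrt{n}$. Your plan is silent on both points — you acknowledge a ``quantitative comparison'' is the honest difficulty, but the order-of-magnitude heuristics you give would mislead one into thinking the comparison is routine when it is in fact a near-miss requiring the specific estimates just described. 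Separately, your claim that $\dim Y$ is of order $N$ is off by a factor of $d$ (it is $\asymp d^{n+1}$ while $N \asymp d^n$), though this does not matter since both are negligible against $N^2$.

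Your citation of Beauville for the large-monodromy hypothesis is correct and matches the paper.
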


Numerical experiments suggest that $n_0 \approx 60$ will do.
Note that this family indeed has large monodromy image by \cite{Beauville}.

We must emphasize that, in this case, the dimension of $Y$ is {\em very large}, and so the statement
that $Y(\Z[S^{-1}])$ is not Zariski dense is very modest indeed; but it seems to us an interesting first step,
and potentially one can then iterate the argument by replacing $Y$ by the Zariski closure of integral points. 
As suggested by the last line of the Theorem, it becomes relevant to analyze the following question: 
   
  \begin{quote}
  What is the smallest possible codimension of a subvariety $Y' \subset Y$
  along which the monodromy drops?
  \end{quote}

In outline, the proof of Proposition \ref{hyp prop} is as follows. It can be verified  (we will omit the proof) that the middle Hodge numbers $h^{pq}$ of a degree-$d$ hypersurface inside
$\mathbf{P}^n$ satisfy
\begin{equation} \label{BLestimate} h^{pq}(d) \sim \frac{d^n}{n!} A(n,p)\end{equation}
where $p+q=n-1$, and $A(n,p)$ is the \emph{Eulerian number}:  the number of permutations  $\sigma: \{1, \dots, n\} \rightarrow \{1, \dots, n\}$
with the property that $\sigma(i+1) > \sigma(i)$ for precisely $p$ values of $i$.  
(Here we fix  the dimension $n$ of the ambient projective space,  and the meaning of $\sim$ is that  the ratio approaches $1$ as $d \rightarrow \infty$.)
Now consider  $\alpha_p := \frac{1}{n!} A(n, p)$,
which defines a probability distribution on $p \in \{0, \dots, n-1\}$. 
The conclusion will be deduced, in essence, from the fact that $\alpha_p$ 
 is well approximated by a binomial distribution with mean $n/2$ and variance $n/12$.
  We now describe the details.

First, consider the Hodge numbers $h^p$ for the adjoint Hodge structure. 
Since the dimension of the symmetric or adjoint square of a $k$-dimensional
vector space equals $\frac{k^2 \pm k}{2}$, 
we have   
$$ 2 h^{p} = \sum_{p_1  + p_2 = p + (n-1)} h^{p_1, q_1} h^{p_2, q_2}  \pm h^{(p+n-1)/2,(-p+n-1)/2} $$
where in all cases $p_1+q_1=p_2+q_2=n-1$.  In particular, we deduce that 
\begin{equation} \label{adjho} h^p \sim d^{2n} \frac{1}{2}  \underbrace{ \sum_{p_1 -p_2 = p}  \alpha_{p_1} \alpha_{p_2}}_{\beta_p}.\end{equation}

Next,  note that the dimension of the moduli space of degree $d$ hypersurfaces in $\mathbf{P}^n$ is given by   $$ {n+d \choose d-1}   -1 = \frac{d (d+1) \dots (d+n)}{(n+1)!}  -1 \sim  \frac{d^{n+1}}{(n+1)!} $$
 where the meaning of $\sim$ is as before.  In particular,  for any fixed $n \geqslant 2$, 
\begin{equation}  \label{limit}  \lim_{d \rightarrow \infty} \frac{\dim Y}{h^0} = 0, \end{equation}
 where $h^0$ is the dimension of the zeroth Hodge number for the adjoint structure.  
  
Let $X(n)$ (or just $X$ for short) be the random variable which sends a uniformly distributed random permutation $\sigma$ of $\{1, \dots, n\}$ to the number
of $i$ for which $\sigma(i+1) > \sigma(i)$,  subtract $\frac{n-1}{2}$. 
Write $y_i (1 \leq i \leq n-1)$ for the random variable, on the same space, 
with value $1/2$ if $\sigma(i+1)> \sigma(i)$, and $-1/2$  if $\sigma(i+1) < \sigma(i)$. 
 Thus $X = \sum y_i$ and the expectation $\mathbb{E}(X)$ is zero. 
The variance of $X$ is then given by
\begin{equation} \label{VarX}  \mbox{Var}(X)  = \sum_{i,j} \mathbb{E}(y_i y_j) = \underbrace{ \frac{n-1}{4} }_{i=j} - \underbrace{ 2 \frac{n-2}{12} }_{|i-j|=1} = \frac{n+1}{12}.\end{equation}

Now let $X'(n)$ be the random variable obtained
by convolving $X(n)$ with itself, i.e. with adding together two copies of $X(n)$.  
Then
\begin{equation} \label{Xpvar} \mbox{Var}(X') =2  \mbox{Var}(X) = \frac{n+1}{6}, \mbox{ and  the probability that $(X' =p)$} = \beta_p,\end{equation}
where $\beta_p$ is as in \eqref{adjho}. 
Moreover,
it is also known (see \cite{CD} for discussion and references to the literature) that as $n \rightarrow \infty$, \begin{equation} \label{limit equation}
\mbox{$X(n)/\sqrt{n}$ converges in distribution to a normal distribution with variance $1/12$.}
\end{equation}
and it follows then that $X'(n)/\sqrt{n}$ converges in distribution to a normal distribution with variance $1/6$. 
It follows in particular that 
   \begin{equation} \label{slb} \sum_{p > 0} p \beta_p > A \sqrt{n}\end{equation} for some absolute $A >0$. 
 We also need:

\begin{lemma} \label{large n}
For sufficiently large $n$, we have $\beta_0 < \frac{\Hodgeestimatebound}{\sqrt{n}}$. 
\end{lemma}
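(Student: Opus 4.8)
We must show that the Eulerian-number probability $\beta_0 = \sum_{p_1} \alpha_{p_1}^2$ (the mass at $p=0$ of the self-convolution of the descent distribution) satisfies $\beta_0 < \Hodgeestimatebound/\sqrt{n}$ for all sufficiently large $n$. The cleanest route is to recognize $\beta_0 = \sum_p \alpha_p \alpha_{-p} = \sum_p \alpha_p^2$ as the ``collision probability'' $\mathbb{P}(X_1 = X_2)$ where $X_1, X_2$ are independent copies of the centered descent statistic $X(n)$, equivalently $\beta_0 = \mathbb{P}(X'(n) = 0)$ for the convolved variable of \eqref{Xpvar}. So it suffices to bound the point mass of $X'(n)$ at $0$ (indeed of $X(n)$ at any single value) by $O(1/\sqrt n)$, with an explicit constant.

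\textbf{Key steps.} First I would reduce to a pointwise bound on $\alpha_p = \mathbb{P}(X(n) = p - \tfrac{n-1}{2})$: since $\beta_0 = \sum_p \alpha_p^2 \leqslant \bigl(\max_p \alpha_p\bigr)\sum_p \alpha_p = \max_p \alpha_p$, it is enough to prove $\max_p \alpha_p < \Hodgeestimatebound/\sqrt n$ for large $n$. Second, I would establish this maximal-point-mass bound. The Eulerian numbers are log-concave (a classical fact; e.g.\ via the real-rootedness of the Eulerian polynomial), hence the distribution $(\alpha_p)$ is unimodal, so $\max_p \alpha_p$ is attained near the mean $(n-1)/2$. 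Combined with the variance computation $\mathrm{Var}(X(n)) = (n+1)/12$ from \eqref{VarX} and a concentration/anti-concentration estimate, one gets that the mode has height $\asymp 1/\sqrt{n}$. Concretely: a log-concave distribution on $\mathbb{Z}$ with variance $\sigma^2$ has maximal atom at most $C/\sigma$ for an absolute constant $C$ (one standard argument: log-concavity forces $\alpha_{k}/\alpha_{k+j}$ to grow at least geometrically away from the mode, so the mass within $O(\sigma)$ of the mode is $\Omega(1)$ only if the mode height is $O(1/\sigma)$; alternatively invoke the local CLT for Eulerian numbers, which is available in the literature cited around \eqref{limit equation}). Plugging $\sigma = \sqrt{(n+1)/12}$ gives $\max_p \alpha_p \leqslant C\sqrt{12/(n+1)}$, and choosing the constant $\Hodgeestimatebound = 40$ comfortably dominates $C\sqrt{12}$ for $n$ large (this is exactly the numerical slack the authors have built in). Finally, assemble: $\beta_0 \leqslant \max_p \alpha_p \leqslant C\sqrt{12/(n+1)} < \Hodgeestimatebound/\sqrt n$ for $n \gg 0$.

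\textbf{Main obstacle.} The substantive point is the anti-concentration estimate $\max_p \alpha_p = O(1/\sqrt n)$ with a usable absolute constant; everything else ($\beta_0 \leqslant \max \alpha_p$, the variance formula, unimodality) is soft. There are two honest ways to get it, and I would pick whichever the authors prefer to cite: (i) a local limit theorem for the Eulerian distribution, stated e.g.\ in the references mentioned near \eqref{limit equation}, which directly gives $\alpha_p \sqrt n \to \frac{1}{\sqrt{2\pi/12}}e^{-6(p-(n-1)/2)^2/n}$ uniformly, hence $\max_p \alpha_p \sim \sqrt{6/(\pi n)}$; or (ii) a self-contained argument using only log-concavity plus the second-moment bound, which avoids quoting a local CLT at the cost of a worse (but still $O(1/\sqrt n)$, and for $n$ large still well below $40/\sqrt n$) constant. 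Route (ii) is more in the spirit of the ``short, non-optimal'' treatment the paper adopts, so I would write that one: log-concavity $\Rightarrow$ for the mode index $m$, $\alpha_{m\pm k} \leqslant \alpha_m \cdot r^{k}$ is \emph{false} in general, so instead use that $\sum_k \alpha_k = 1$ together with the variance bound $\sum_k (k-m)^2 \alpha_k \leqslant \mathrm{Var} + O(1) = O(n)$ via Cauchy--Schwarz / Paley--Zygmund to force $\alpha_m \leqslant O(1/\sqrt n)$ — the only delicate point being to make the constant explicit enough that $40$ works, which a direct Chebyshev-type computation handles with room to spare.
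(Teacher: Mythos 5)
Your reduction $\beta_0 = \sum_p \alpha_p^2 \leqslant \max_p \alpha_p$ is valid and clean, and it is a genuinely different starting point from the paper, which works directly with $\beta$ rather than reducing to the marginal. The issue lies downstream, in how you propose to bound $\max_p\alpha_p$.

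Your route (i), a local CLT for Eulerian numbers, would indeed finish the job, but it is a strictly stronger input than the paper uses: the paper only cites convergence in distribution of $X'(n)/\sqrt n$, and the whole point of its proof is to avoid needing a local limit theorem. Your route (ii) has a genuine gap of direction. Chebyshev, Cauchy--Schwarz and Paley--Zygmund combined with a variance estimate and unimodality yield a \emph{lower} bound on the mode, not an upper bound. For instance, Chebyshev gives $\Pr\bigl(|X-m|\leqslant 2\sigma\bigr)\geqslant 3/4$; that mass sits on at most $4\sigma+1$ atoms each of size at most $\max_p\alpha_p$, forcing $\max_p\alpha_p \geqslant \tfrac{3}{4(4\sigma+1)}$ --- the opposite of what you want. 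To deduce $\max_p\alpha_p=O(1/\sigma)$ you must exploit log-concavity's exponential decay in a substantive way (e.g.\ the inequality $\max_k p(k)\leqslant (1+\operatorname{Var})^{-1/2}$ for discrete log-concave laws), and that does not follow from the soft concentration tools you list; a plain unimodal law with a tall atom and heavy tails shows variance alone cannot cap the mode.

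The paper's actual argument sidesteps both difficulties. It works with $\beta$ directly: log-concavity of $\beta$ (inherited from the Eulerian numbers and closed under convolution), plus the weak CLT, give a single anchor $\beta_P > c/\sqrt n$ at some $P>\sqrt n$, with $c=1/40$ chosen just below the limiting normal density at $1.1$. If $\beta_0$ exceeded $c^{-1}/\sqrt n$, log-concavity would interpolate to force $\beta_p > 1/\sqrt n$ on all of $[-P/2,P/2]$, contradicting $\sum\beta_p=1$. This buys the conclusion from convergence in distribution alone, with no local CLT and no anti-concentration inequality for log-concave sequences. Your reduction to $\max_p\alpha_p$ is a reasonable idea, but you would need to either import a precise log-concave mode bound with a reference or a correct proof, or else supply a local CLT; the sketch as written does neither.
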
 

\begin{proof}
The sequence $\beta_p$ is symmetric and log-concave. The symmetry follows readily from the definition, whereas
the second statement follows from the classical fact 
that the Eulerian numbers are log-concave.  (See, for example, \cite[Thms 1.4, 3.3]{JG}.)

Let $c = \frac{1}{\Hodgeestimatebound}$. This number is chosen  to be
less than the density of the normal distribution with mean zero and variance $1/6$, 
at the point $1.1$.  From the convergence
in distribution of $X'$, it follows that for all large enough $n$ there exists $P > \sqrt{n}$ 
with the property that $\beta_{P} > \frac{c}{\sqrt{n}}$.
 
We show that  $\beta_0\leqslant \frac{c^{-1}}{\sqrt{n}}$.
Suppose not; then log-concavity means
$$ \beta_p > \frac{ (1/c)^{1-p/P} c^{p/P} }{\sqrt{n}}$$
for all $p \in [0, P]$.  In particular, this implies that $\beta_p > \frac{1}{\sqrt{n}}$
whenever $|p| \leq P/2$. This contradicts  $\sum \beta_p = 1$ for large enough $n$. 
\end{proof}

 \proof (of Proposition \ref{hyp prop}):
In what follows, write ``for big enough $n$ and $d$'' as an abbreviation 
for ``for $n \geqslant n_0$ and $d \geqslant D_0(n)$, for some function $D_0$ of $n$.''

There are two conditions to be checked, \eqref{weak condition} and \eqref{condition statement}.
 That the former  condition holds for big enough $n$ and $d$ follows from 
 \eqref{limit}, \eqref{adjho} and the convergence in distribution of $X'(n)/\sqrt{n}$. 
It remains then to verify that \eqref{condition statement} holds for  big enough $n$ and $d$.

  Write $T(y)$ for the sum of the topmost $y$ adjoint Hodge numbers and $H$ for the total dimension of the adjoint Hodge structure.  We claim that 
 $$2 T(2 h^0) < \sum_{p >0} p h^p,$$
for big enough $n$ and $d$.  That  statement readily implies the desired conclusion, in view of \eqref{limit}. 
 
By Lemma \ref{large n},  for   sufficiently large  $n$  we have $\beta_0 < \frac{\Hodgeestimatebound}{\sqrt{n}}$. Therefore,  for $d$ sufficiently large (depending on $n$) we have $h^0 < \Hodgeestimatebound H/\sqrt{n}$.
On the other hand, by \eqref{slb}, the right-hand side $\sum_{p > 0} p h^p$ is bounded below by a constant multiple of $ H \sqrt{n}$, for big enough $d$ and $n$. 
 Therefore,  it is enough to  
verify that, for fixed positive constants $c, \delta$, we have the inequality
\begin{equation} \label{md3} T(\frac{c H}{\sqrt{n}}) \leqslant \delta H \sqrt{n} \end{equation} for big enough $n$ and $d$. 
 
   Let $\epsilon = \frac{\delta}{2c}$. Separate the contribution of Hodge numbers above and below $\epsilon n$ to $T$; we get:
  $$ T(\frac{cH}{\sqrt{n}}) \leqslant   (\epsilon n) \frac{cH}{\sqrt{n}} +  \sum_{p > \epsilon n} p h^p$$
  Now the first quantity is bounded by $\frac{1}{2} \delta  H \sqrt{n}$.
The second quantity equals
  $$ \sum_{p > \epsilon n} p h^p = H   \sum_{p > \epsilon n}  p  \beta_p +   H \sum_{p > \epsilon n}  p \left( \frac{h_p}{H} -\beta_p \right).$$
There is a function $D_1$ such that, for $d \geqslant D_1(n)$, the second term is at most $  H$. Also, using the variance bound
 $\sum p^2 \beta_p = \frac{n+1}{6}$, the first term is at most $H \epsilon^{-1}$.  Thus,
 $$ T(\frac{cH}{\sqrt{n}})  \leqslant \frac{1}{2} \delta H \sqrt{n} +  H(1 + \epsilon^{-1}),$$
 and the latter term is certainly bounded above by $\frac{1}{2} \delta H \sqrt{n}$ for $n \geqslant n_0$. 
  This concludes the proof of \eqref{md3}, so also of our Proposition.

  \qed

\subsection{Setup for the proof of Theorem \ref{transprop}} 
In what follows, $\ell$ denotes an arbitrary prime number not belonging to the fixed set $S$. 

Working in the complex analytic category, let $\mathsf{V} = \mathbf{R}^d \pi_* \Q$. It is a local system of $\Q$-vector spaces
on $Y(\C)$ (and it admits an integral structure); the $\mathsf{V}_0$ defined in \S \ref{hodge setup subsec} is its fiber above $y_0$. 

Let $\mathbf{G}$ be the connected automorphism group of the intersection form on $\mathsf{V}_0$,
a semisimple $\Q$-group; 
also let 
$$ \mathbf{G}' =  \mathrm{GAut}(\mathsf{V}_0, \langle -, - \rangle),$$
the corresponding generalized automorphism group, where we permit to scale the form $\langle -, - \rangle$.

Passing to $\ell$-adic \'{e}tale cohomology,   there is a monodromy mapping
$\pi_1^{\arith}(Y, y_0) \longrightarrow \mathbf{G}'(\Q_{\ell})$
and the section associated to an integral point  $y \in Y(\Z[S^{-1}])$
gives a representation
$$ \rho_{y,\ell}: G_{\Q} \longrightarrow \mathbf{G}'(\Q_{\ell}).$$
This describes the Galois action on the primitive geometric {\'e}tale cohomology of the fiber $X_y$ in degree $d$
(after using an isomorphism $\mathsf{V}_0 \otimes \Q_{\ell} \simeq \mathsf{V}_y \otimes \Q_{\ell}$).

In what follows we will freely use certain results
about Galois representations into $\mathbf{G}'$ which are parallel to certain known results
about $\GL_n$-valued representations; we refer to \S \ref{SS} for further discussion of these points. 

We denote by $\rho_{y,\ell}^{\mathrm{ss}}$ the semisimplification of  $\rho_{y,\ell}$ relative to $\mathbf{G}'$ (see \S \ref{SS}). 
 By Faltings' finiteness theorem (Lemma \ref{faltings_finiteness_reductive})
 there are only finitely many possibilities
for  the $\mathbf{G}'(\Q_{\ell})$-conjugacy class of $\rho_{y,\ell}^{\mathrm{ss}}$.

  We must understand the variation of the representation $\rho_y$ with $y$;
as usual, we will study this using the period mapping.  We begin with the complex Hodge structures.
  
The Hodge structure on $\mathsf{V}_y$, the fiber of $\mathsf{V}$ at $y$, 
 is given by a self-dual filtration
\begin{equation} \label{FV data} \mathsf{V}_y = F^0 \mathsf{V}_y \supset \dots \supset F^i \mathsf{V}_y \supset \dots \end{equation}
 and in this way  we can regard the period mapping as  \begin{equation} \label{MT domain}  \mbox{ universal cover of }Y_{\C} \longrightarrow   \mbox{Mumford-Tate domain for $\mathbf{G}'$,}\end{equation}
 where the Mumford-Tate domain in question is understood to be the space of self-dual filtrations
 on $\mathsf{V}_0$ with the same dimensional data as the Hodge filtration on $\mathsf{V}_0$.

Also, the Hodge structure on $\mathsf{V}_0$ gives rise to a morphism
$$\varphi_{0}: S^1 \longrightarrow \mathbf{G}'(\C).$$
For each $y \in Y(\mathbf{Z}[S^{-1}])$, we may reduce modulo $\ell$ and
consider the crystalline Frobenius  of the reduction $X_{y,\mathbf{F}_{\ell}} := X_y \times_{\Z[S^{-1}]} \mathbf{F}_{\ell}$. 
This determines a transformation of the (primitive) crystalline cohomology
$$\mathrm{F}_{y}^{\crys,\ell} \in \Aut \ H^d_{\mathrm{crys}}(X_{y,\mathbf{F}_{\ell}})^{\prim}.$$
The characteristic polynomial of this endomorphism is determined by the $\zeta$-function  
of   $X_{y,\mathbf{F}_{\ell}}$, and it can be deduced (see \cite{KM})  that its eigenvalues coincide with the eigenvalues
of $\ell$-Frobenius on {\em $p$-adic} absolute {\'e}tale cohomology for any prime $p \neq \ell$.

In the coming subsections we will prove the following two Lemmas:  

\begin{lemma}[Frobenius centralizer small, for some $\ell$ below an absolute bound] \label{Goodprimelemma}
There exists an integer $L$ with the following property: 
\begin{quote} For any $y \in Y(\mathbf{Z}[S^{-1}])$, 
there exists a prime $\ell \leqslant  L, \ell \notin S$ such that
the semisimplification of $\mathrm{F}_y^{\crys,\ell}$ (and so also the crystalline Frobenius itself)
satisfies
\begin{equation} \label{crys Frob ineq} \dim Z(\left[ \mathrm{F}_y^{\crys,\ell}\right]^{\mathrm{ss}}) \leqslant  \dim Z_{\mathbf{G}'(\C)}(\varphi_0).\end{equation}
\end{quote}
On the left hand side, we take the centralizer
inside $\mathrm{GAut}(H^{d,\prim}_{\crys})$, to which the crystalline Frobenius -- and so also its semisimplification --  belongs. 
\end{lemma}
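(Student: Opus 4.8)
\textbf{Proof strategy for Lemma \ref{Goodprimelemma}.}
The plan is to use the large monodromy hypothesis \eqref{large mon image} together with a Chebotarev argument to find, for each $y$, a small prime $\ell$ at which the $\ell$-adic Frobenius is ``as generic as possible,'' and then transfer this to the crystalline Frobenius via the comparison of characteristic polynomials already recalled in the setup (the eigenvalues of $\mathrm{F}_y^{\crys,\ell}$ agree with those of $\ell$-Frobenius on $p$-adic {\'e}tale cohomology for $p \neq \ell$, by \cite{KM}). First I would fix a reference prime $p_0 \notin S$ and consider the global $p_0$-adic representation $\rho_{y,p_0}: G_\Q \to \mathbf{G}'(\Q_{p_0})$ attached to the fiber $X_y$. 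The key input is a lower bound on the image: by the theorem of Sen and the crystalline/de Rham comparison at $p_0$ (cf.\ \cite{Sen}), the Hodge--Tate weights of $\rho_{y,p_0}$ restricted to $G_{\Q_{p_0}}$ are prescribed by the Hodge numbers of $\mathsf{V}_0$; more to the point, the image of $\rho_{y,p_0}$, being the image of a representation whose geometric monodromy is large (Beauville, and \eqref{large mon image}), contains an open subgroup of $\mathbf{G}(\Q_{p_0})$ -- note that the geometric monodromy of the universal family is a fixed group independent of $y$, so the Zariski closure of $\rho_{y,p_0}(G_\Q)$ contains $\mathbf{G}$.

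Granting that the Zariski closure of $\rho_{y,p_0}(G_\Q)$ contains the semisimple group $\mathbf{G}$, I would invoke Chebotarev: the set of semisimple conjugacy classes $c$ in $\mathbf{G}'(\overline{\Q_{p_0}})$ whose centralizer has dimension $\leqslant \dim Z_{\mathbf{G}'(\C)}(\varphi_0)$ is a nonempty Zariski-open (and conjugation-invariant) subset of $\mathbf{G}'$ -- nonempty precisely because $\varphi_0(S^1)$ lands in $\mathbf{G}'$, so the generic element has centralizer at most that of $\varphi_0$, indeed a maximal torus has the minimal-dimensional centralizer and $\dim Z(\varphi_0) \geqslant \operatorname{rank}$. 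Hence the set of Frobenius elements $\Frob_\ell$ (for $\ell$ unramified, $\ell \notin S \cup \{p_0\}$) landing in this open locus has positive density. The Chebotarev bound gives an \emph{effective} $L$ -- depending only on the ambient group $\mathbf{G}'$, the degree of the number field $\Q$ (here $\Q$, so trivial), and the conductor, which is bounded in terms of $S$ and the discriminant of the family, all independent of $y$ -- such that at least one such $\ell \leqslant L$ exists. For that $\ell$, the $\ell$-adic Frobenius $\rho_{y,p_0}(\Frob_\ell)$ (which by smooth base change computes $\ell$-Frobenius on $p_0$-adic {\'e}tale cohomology) has centralizer of dimension $\leqslant \dim Z_{\mathbf{G}'(\C)}(\varphi_0)$; by \cite{KM} the semisimplification of $\mathrm{F}_y^{\crys,\ell}$ has the same eigenvalues and hence (being semisimple with the same characteristic polynomial, up to the ambient $\mathbf{G}'$-structure) a conjugate centralizer, giving \eqref{crys Frob ineq}. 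The passage from the raw crystalline Frobenius to its semisimplification is the content of Lemma \ref{Artss}: semisimplification does not change the semisimple part, and the centralizer of an element is contained in that of its semisimple part only after... here one uses that $\dim Z(\mathrm{F}) \leqslant \dim Z(\mathrm{F}^{\ss})$, which is the elementary inequality for centralizers under Jordan decomposition.

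The main obstacle I anticipate is making the lower bound on the global image uniform in $y$ and extracting from it a genuinely effective constant $L$. One must be careful that the conductor of $\rho_{y,\ell}$ (ramification set and tame/wild bounds at primes of bad reduction) is bounded purely in terms of $S$ and the geometry of $X \to Y$, not in terms of the particular integral point $y$; this is where one needs that $y \in Y(\Z[S^{-1}])$ forces $X_y$ to have good reduction outside $S$, so $\rho_{y,\ell}$ is unramified outside $S \cup \{\ell\}$ with controlled local behavior at $\ell$. Given that, the effective Chebotarev theorem (e.g.\ in the form for the Galois extension cut out by $\rho_{y,\ell} \bmod \ell^k$, or better, working with the $\Q_{p_0}$-image and its reductions) yields $L$ depending only on $\dim \mathbf{G}'$ and on a bound for the discriminant of the splitting field, which in turn is bounded in terms of $S$ alone. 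An alternative, avoiding Sen entirely, is the point-counting argument sketched in Lemma \ref{end lemma}, which I would keep in reserve: one counts points of $X_{y,\mathbf{F}_\ell}$ over small extensions and directly estimates the number of $\ell$ for which the Frobenius eigenvalue pattern is non-generic, using equidistribution of Frobenius conjugacy classes (a Deligne-type estimate for the relevant $\ell$-adic sheaf on $Y$). Either route closes the lemma; I would write up the Chebotarev version as the main line since it is shorter and the needed open-image input is already packaged in \S \ref{SS} and \cite{Sen}.
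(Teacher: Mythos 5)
Your central claim — that the Zariski closure of $\rho_{y,p_0}(G_\Q)$ contains $\mathbf{G}$ (or an open subgroup of $\mathbf{G}(\Q_{p_0})$) because the geometric monodromy of the family is large — does not follow and is false in general. The hypothesis \eqref{large mon image} concerns the monodromy of $\pi_1(Y_\C,y_0)$ on the local system, i.e.\ the generic behavior of the family; it says nothing about the Galois image attached to the specific fiber $X_y$ over a single integral point $y$. That fiber may well have extra endomorphisms, CM structure, or other degeneracies forcing $\rho_{y,p_0}(G_\Q)$ to land in a much smaller reductive subgroup $\mathbf{H} \subsetneq \mathbf{G}'$. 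If $\mathbf{H}$ is, say, a small torus with $\dim Z_{\mathbf{G}'}(\mathbf{H}) > \dim Z_{\mathbf{G}'(\C)}(\varphi_0)$, then \emph{every} element of $\mathbf{H}$, hence every Frobenius, would have centralizer too big, and no amount of Chebotarev would help. This is precisely the difficulty the lemma has to overcome, and your proposal assumes it away.

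The paper's proof handles the general $\mathbf{H}$ (the Zariski closure of $\rho_{y,p}^{\ss}(G_\Q)$), and the load-bearing ingredient is not Chebotarev plus large monodromy but a $p$-adic Hodge theory fact in the Sen/Wintenberger/Pink circle: the Zariski closure of the \emph{local} image $\rho_{y,p}(G_{\Q_p})$ already contains a torus $\mathbf{S}$ conjugate, over $\C$, to the Hodge torus $\overline{\varphi_0(S^1)}$. Hence so does (a conjugate of) $\mathbf{H}$, and a ``very regular'' element of $\mathbf{H}^\circ$ — i.e.\ one with centralizer of minimal dimension among semisimple elements of $\mathbf{H}^\circ$ — has centralizer no larger than $Z_{\mathbf{G}'}(\mathbf{S})$, whose dimension equals $\dim Z_{\mathbf{G}'(\C)}(\varphi_0)$. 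Chebotarev then only needs to produce a prime $\ell$ with $\rho_{y,p}^{\ss}(\Frob_\ell)$ very regular, and the uniform bound $L$ comes for free from Faltings' finiteness (only finitely many possibilities for $\rho_{y,p}^{\ss}$, hence finitely many $\mathbf{H}$'s to consider), with no need for effective Chebotarev or conductor bounds. You do invoke Sen, but only to pin down Hodge--Tate weights, which does not give you the torus inside the image; that is the step missing from your argument. The comparison with crystalline Frobenius via \cite{KM} and the use of Lemma \ref{Artss} to pass between $\left[\rho_{y,p}(\Frob_\ell)\right]^{\ss}$ and $\rho_{y,p}^{\ss}(\Frob_\ell)$ are fine, but the core reduction to a large-image situation is where the proposal breaks.
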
 
 
\begin{lemma}[Not Zariski dense.  This is where semisimplicity gets taken care of.] 
\label{NZD}
Given a prime $\ell \notin S$ and $y_0 \in Y(\Z[\frac{1}{S}])$ with the property that the centralizer
of crystalline Frobenius $\Frob_{y_0}^{\crys, \ell}$ is at most the dimension of $Z_{G'(\C)}(\varphi_0)$, 
the set 
\begin{equation} \label{onco2} \{y \in Y(\Z[\frac{1}{S}]): y \equiv y_0 \mbox{ modulo $\ell$}, \rho_{y,\ell}^{\mathrm{ss}} \simeq \rho_{y_0, \ell}^{\mathrm{ss}}\}\end{equation}
is not Zariski dense.  (Here $\simeq$ means that the representations are $\mathbf{G}'$-conjugate). 
\end{lemma}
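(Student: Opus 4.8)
The plan is to mimic the proof of Lemma \ref{claim2} (the ``Galois representations really do vary'' argument), but with the added complication of handling non-semisimplicity, which is where Lemma \ref{NZD} differs from the curve case and requires the linear-algebra input of \S \ref{GG combinatorics}. First I would set up the $p$-adic period map. Fixing $\ell \notin S$ and $y_0$ as in the statement, restrict attention to the residue disk $U_\ell = \{y \in Y(\Z_\ell) : y \equiv y_0 \bmod \ell\}$; since $Y(\Z[S^{-1}])$ is covered by finitely many such residue disks for varying $y_0$ (and finitely many $\ell \le L$ by Lemma \ref{Goodprimelemma}), it suffices to show \eqref{onco2} is not Zariski dense inside this disk. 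On $U_\ell$ we have the $\ell$-adic period map $\Phi_\ell : U_\ell \to \mathfrak{H}^*_{\Q_\ell}$, the Mumford--Tate flag variety for $\mathbf{G}'$, sending $y$ to the Hodge filtration on the primitive crystalline cohomology $\mathsf V_0$ of the reduction, identified via Gauss--Manin with a filtration on the fixed space $H^d_{\dR}(X_{y_0})^{\prim}$ carrying the semilinear Frobenius $\phi = \Frob_{y_0}^{\crys,\ell}$. By the crystalline comparison theorem (as in \S \ref{HSsec}), for $y$ in the set \eqref{onco2} the filtered $\phi$-module attached to $\rho_{y,\ell}$ lies in the isomorphism class of that attached to $\rho_{y_0,\ell}$, hence $\Phi_\ell(y)$ lies in a single orbit of the Frobenius centralizer $Z(\phi) \subseteq Z(\phi^{[\Q_\ell:\Q_\ell]}) = Z(\phi)$ acting on $\mathfrak{H}^*$.

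Next I would invoke the transcendence input. By Lemma \ref{padic BT} (the $p$-adic transfer of Bakker--Tsimerman), if $Z \subset \mathfrak{H}^*_{\Q_\ell}$ is an algebraic subvariety with $\codim_{\mathfrak{H}^*} Z \ge \dim Y$, then $\Phi_\ell^{-1}(Z)$ is not Zariski dense in $Y$. So the entire problem reduces to producing a \emph{single} algebraic subvariety $Z \subset \mathfrak{H}^*$ of codimension $\ge \dim Y$ that contains the image $\Phi_\ell(\eqref{onco2})$. The naive candidate is the union of $Z(\phi)$-orbits through the finitely many points $\Phi_\ell(y_0)$-type representatives; its dimension is at most $\dim Z(\phi)$. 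Here is where the semisimplicity issue bites: \emph{a priori} $\rho_{y,\ell}$ need not be semisimple, so $\Phi_\ell(y)$ need not lie in a $Z(\phi)$-orbit of a filtration compatible with a semisimple Frobenius module --- it only satisfies purity constraints on its Hodge filtration (Lemma \ref{globalsimple2}) relative to each step of a $\phi$-stable flag coming from the $\mathbf{G}'$-socle filtration of $\rho_{y,\ell}$. So instead I would let $Z$ be the locus of filtrations $F \in \mathfrak{H}^*$ for which there exists a $\phi$-stable flag $W_\bullet$ in $\mathsf V_0$ (of some bounded combinatorial type, dictated by parabolic subgroups of $\mathbf{G}'$ and the finitely many $\rho^{\ss}_{y_0,\ell}$) such that the induced filtration on each graded piece of $W_\bullet$ has weight exactly $w/2$ as forced by purity --- i.e.\ $F$ is ``bad'' in the sense that its numerics are compatible with a non-generic $\phi$-stable sub-object. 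This $Z$ is algebraic (image of a closed incidence variety under a proper projection, exactly as in Lemma \ref{finaux}).

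The main obstacle --- and the heart of the section --- is the \emph{codimension estimate} $\codim_{\mathfrak{H}^*} Z \ge \dim Y$. One must bound $\dim Z$ above by $\dim \mathfrak{H}^* - \dim Y$. The dimension of $\mathfrak{H}^*$ is $\sum_{p>0} h^p$ (the number of negative-graded pieces of the adjoint Hodge structure), the generic-orbit contribution $\dim Z(\phi)$ is bounded by $h^0$ using Lemma \ref{centralizer lemma} and Lemma \ref{Goodprimelemma} (the crystalline-Frobenius centralizer is at most $\dim Z_{\mathbf{G}'(\C)}(\varphi_0) = h^0$), and the extra room consumed by allowing a non-trivial $\phi$-stable flag $W_\bullet$ is controlled by the combinatorial function $T$: the filtration on a proper $\phi$-stable subspace, constrained to have the ``wrong'' weight, can vary in a flag variety whose dimension is bounded, after optimizing over all parabolic types, by something like $T(h^0 + \dim Y) + T(\tfrac32 h^0 + \dim Y)$ --- this is precisely Proposition \ref{linalg} (the ``general position'' result of \S \ref{GG combinatorics}). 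Feeding this into the inequality, $\dim Z \le h^0 + T(h^0+\dim Y) + T(\tfrac32 h^0 + \dim Y)$, and comparing with $\dim \mathfrak{H}^* = \sum_{p>0} h^p$, the hypothesis \eqref{condition statement} together with \eqref{weak condition} is exactly what forces $\codim Z \ge \dim Y$. So the plan is: (1) reduce to one residue disk and set up $\Phi_\ell$; (2) use crystalline comparison to confine \eqref{onco2} into $\Phi_\ell^{-1}(Z)$ for a purity-defined bad locus $Z$; (3) prove $Z$ is algebraic; (4) bound $\dim Z$ via Lemma \ref{centralizer lemma}, Lemma \ref{Goodprimelemma} and Proposition \ref{linalg}; (5) apply Lemma \ref{padic BT}. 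I expect step (4), specifically invoking and correctly bookkeeping Proposition \ref{linalg}, to be the technical crux --- the rest is assembling machinery already in place.
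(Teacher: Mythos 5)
Your proposal is essentially correct and follows the same route as the paper: pass to a residue disk, use crystalline comparison to identify the image of \eqref{onco2} under $\Phi_\ell$ with a ``bad'' locus $\mathfrak{S}$ of Hodge filtrations compatible (via purity, Lemma \ref{globalsimple}) with some $\phi$-stable semisimplification flag, bound $\codim \mathfrak{S}$ by Proposition \ref{linalg} together with the hypothesis $\dim Z(\phi)\le \dim Z_{\mathbf{G}'(\C)}(\varphi_0)=h^0$, and finish with Lemma \ref{padic BT}. One small inaccuracy: you cite Lemma \ref{centralizer lemma} in step (4), but that lemma concerns semilinear Frobenius over a nontrivial unramified extension and plays no role here (the base is $\Q$, so $[\Q_\ell:\Q_\ell]=1$ as you yourself note); the centralizer bound in this setting comes entirely from the hypothesis of the Lemma, which is supplied by Lemma \ref{Goodprimelemma}, not from the semilinear-algebra trick of \S \ref{Sunit}.
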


 Assuming these Lemmas, let us conclude the proof of Theorem \ref{transprop}.   With $L$ as in Lemma \ref{Goodprimelemma} let $N= \prod_{\ell \leqslant  L, \ell \notin S} \ell$. 
Now each $y \in Y(\Z[\frac{1}{S}])$ gives a collection of representations $\rho_y^{\mathrm{ss}}: G_{\Q} \rightarrow \mathbf{G}'(\Q_{\ell})$, one for each $\ell$ dividing $N$. 
For each $\ell$ dividing $N$, let $\mathcal{G}_{\ell}$ be the  set of  representations $G_{\Q} \rightarrow \mathbf{G}'(\Q_{\ell})$
that arises  as some $\rho_{y}^{\mathrm{ss}}$. This is a finite set (modulo conjugacy) by Lemma 
\ref{faltings_finiteness_reductive}  applied to $\mathbf{G}' \subset \mathrm{GL}(\mathsf{V}_0)$;  note that it is straightforward
to verify that the integrality of characteristic polynomial of Frobenius passes from the whole cohomology to the primitive cohomology.  

Call a pair $(y, \ell)$ as in Lemma \ref{Goodprimelemma} \emph{good} if it satisfies
\eqref{crys Frob ineq}.  For each $\ell$, Lemma \ref{NZD} and the finiteness of $\mathcal{G}_{\ell}$
guarantee that the set of $y$ for which $(y, \ell)$ is good is not Zariski dense.
Taking the union over $\ell \leq L$ and applying Lemma \ref{Goodprimelemma}, we see that
$ Y(\mathbf{Z}[S^{-1}])$ is itself not Zariski dense.

 \subsection{Proof of Lemma \ref{Goodprimelemma}}

Fix a prime $p \notin S$
and let $\rho_{y,p}: G_{\Q} \rightarrow \mathbf{G}'(\Q_p)$ be the $p$-adic Galois representation at $y$, as above.   We have observed that there are only finitely many possibilities for $\rho_{y,p}^{\mathrm{ss}}$
(here, and below, the semisimplification is taken inside $\mathbf{G}'$). 

Let $\mathbf{H}$ be the Zariski closure of $\rho_{y,p}^{\mathrm{ss}}(G_{\Q})$, with identity component $\mathbf{H}^{\circ}$. 
It is a reductive group (because we took the semisimplification, see \S \ref{SS} and references therein).    Call an element in $\mathbf{H}^{\circ}(\overline{\Q_p})$ {\em very regular} if  it is semisimple and:
\begin{quote} (*) its centralizer  inside $\Aut(\mathsf{V}_0 \otimes \overline{ \Q_p})$ has minimal dimension amongst all semisimple elements of $\mathbf{H}^{\circ}(\overline{\Q_p})$. 
\end{quote}

Choose a maximal torus $\mathbf{T}_0 \subset \mathbf{H}^{\circ}$,
and let $\Phi$ be the set of nontrivial characters $\mathbf{T}_0 \rightarrow \mathbf{G}_m$
arising from the conjugation action of $\mathbf{T}_0$ on the Lie algebra 
of $\Aut(\mathsf{V}_0 \otimes \overline{\Q_p})$. 
For $t \in \mathbf{T}_0(\overline{\Q_p})$ the dimension of the centralizer
of $t$, in $\Aut(\mathsf{V}_0 \otimes \overline{\Q_p})$, is the dimension of the centralizer of $\mathbf{T}_0$ in $\Aut(\mathsf{V}_0 \otimes \overline{\Q_p})$,  plus the number
of roots $\alpha \in \Phi$ with $\alpha(t) =1$ (counted with multiplicity). 
 The condition (*) for an element $t \in \mathbf{T}_0(\overline{\Q_p})$
amounts to asking that $\alpha(t) \neq 1$ for all $\alpha \in \Phi$. In particular:
\begin{itemize}
\item Any very regular element is regular inside $\mathbf{H}^{\circ}$, and
\item Condition (*) implies the same condition with $\Aut(\mathsf{V}_0 \otimes \overline{ \Q_p})$ replaced by $\mathbf{G}'$.
\end{itemize}

The set of very regular elements is a nonempty Zariski-open subset of $\mathbf{H}^{\circ}$ (so also of $\mathbf{H}$).  Indeed, the function
$f =\prod_{\alpha \in \Phi}  (\alpha(t)-1)$ defines a regular function on $\mathbf{T}_0$ which is invariant under the Weyl group.
Therefore $f$ extends to a regular function on $\mathbf{H}^{\circ}$, and the set of very regular elements is the locus where $f \neq 0$
(this forces semisimplicity).

It follows, then, that   the 
set of very regular elements in $\mathbf{H}(\Q_p)$ is 
the complement of a proper Zariski-closed set. 
The preimage of the very regular set  under $\rho_{y,p}^{\mathrm{ss}}: G_{\Q} \rightarrow \mathbf{H}(\Q_p)$
is nonempty,  because $\rho_{y,p}^{\mathrm{ss}}(G_{\Q})$ is Zariski-dense in $\mathbf{H}$. 
This preimage is also topologically open, since the very regular set is open.
By the Chebotarev density theorem, then, we may choose some $\ell$  
such that \begin{equation}\label{VR}
\mbox{$\rho_{y,p}^{\mathrm{ss}}(\Frob_{\ell})$ is a very regular element
of $\mathbf{H}^{\circ}$.}\end{equation}
Because  there are only finitely many possibilities for $\rho_{y,p}^{\mathrm{ss}}$, this
 $\ell$  can be taken to be bounded above by $L$ that depends only on $S, p, \dim(\mathsf{V})$.

 On the other hand,  it is known that:
 \begin{quote} the Zariski closure of $\rho_{y,p}(G_{\Q_p})$ (this is an algebraic subgroup of $\mathbf{G'}$)
 contains a  group $\mathbf{S}$  defined over $\overline{\Q_p}$, with the following property: 
 with respect to a suitable isomorphism $\overline{\Q_{\p}} \simeq \mathbf{C}$,  the group $\mathbf{S}$
 becomes isomorphic
to the Hodge torus, i.e.\ to the  Zariski closure of the image of $\varphi_0$ in $\mathbf{G}'(\C)$. 
\end{quote}

A result of rather similar nature  to the quoted statement was proved by Sen \cite{Sen} using Hodge--Tate decomposition (Sen's result 
pertains to the target group $\GL_n$).  It can be deduced using 
a   remarkable result of Wintenberger \cite{Wintenberger} about functorially splitting the Hodge filtration for Fontaine--Laffaille modules. 
This is carried out by Pink \cite[\S 2]{Pink}; this latter method also readily adapts to $\mathbf{G}'$ 
target.\footnote{We outline how this is done. 
We may describe the Zariski closure $\mathbf{Z}$ in question as the Tannakian group associated
to the neutral Tannakian category of $G_{\Q_p}$-modules generated by $\mathsf{V}_y \otimes \Q_p$ (i.e., the automorphisms
of the natural fiber functor).  By the theory of Fontaine--Laffaile, 
there is another fiber functor on this category, arising from passing to filtered $\phi$-modules;
in particular, this gives rise to another Tannakian group $\mathbf{Z}' $, which acts on the  
(primitive part of the) de Rham cohomology of $X_y \times_{\Q} \Q_p$.
These two fiber functors become isomorphic over $\overline{\Q_p}$ (cf. \cite[\S 3]{Deligne--Milne});
in particular there is an isomorphism of $\mathsf{V}_y \otimes \overline{\Q_p}$
with the de Rham cohomology of $X_y \otimes_{\Q} \overline{\Q_p}$, 
which can be taken to preserve the respective intersection forms, and
which carries $\mathbf{Z}_{\overline{\Q_p}}$ to $\mathbf{Z}'_{\overline{\Q_p}}$.

The Hodge filtration gives this fiber functor
the structure of a filtered fiber functor; it 
gives a parabolic subgroup $\mathbf{P}' \subset \mathbf{Z}'$.
Now Wintenberger's canonical splitting of the Hodge filtration provides
a character $\varphi_W: \mathbf{G}_m \rightarrow \mathbf{P}'$. 

Now pass to $\C$ by means of an isomorphism $\overline{\Q_p} \simeq \C$;
then $\mathbf{Z}'_{\C}$ acts on the cohomology of $X_y \otimes_{\Q} \C$,
as does $\varphi_0$.  We claim that $\varphi_0$ and $\varphi_W|_{S^1}$
are conjugate inside $\mathrm{GAut}(H^d(X_y \otimes_{\Q} \C)^{\prim})$;
but they both preserve the Hodge filtration and induce the same scalar on the successive quotients;
the conjugacy then follows by  Lemma \ref{wks2}.}

Thus \begin{equation} \label{zz}  \dim Z_{\mathbf{G}'}(\mathbf{S}) = \dim Z_{\mathbf{G}'(\C)}(\varphi_0). \end{equation}
 Moreover,  a $\mathbf{G}(\overline{\Q_p})$-conjugate of $\mathbf{S}$ -- call it $\mathbf{S}'$ --  is also contained in the Zariski closure of the image of $\rho_{y,p}^{\ss}(G_{\Q})$.
Indeed, 
 choose a parabolic $\mathbf{Q} \leqslant \mathbf{G}'$ containing the image of $\rho_{y,p}$ and minimal for that property;
then $\rho_{y,p}^{\ss}$ is obtained by projecting $\rho_{y,p}$ to a Levi factor of $\mathbf{Q}$,
and in particular the Zariski closure of the image of $\rho_{y,p}^{\ss}$
certainly contains the projection of the Zariski closure of the image of $\rho_{y,p}$. 
Now apply Lemma \ref{wks2}.

Now we have
 $$\left[ \rho_{y,p}(\mathrm{Frob}_{\ell})\right]^{\mathrm{ss}}  \stackrel{\text{Lemma \ref{Artss}}}{\sim} \left[\rho_{y,p}^{\mathrm{ss}}(\mathrm{Frob}_{\ell})\right]^{\mathrm{ss}}
\stackrel{\eqref{VR}}{=}   \rho_{y,p}^{\mathrm{ss}}(\mathrm{Frob}_{\ell}) $$
 where $\sim$ denotes $\mathbf{G}'(\overline{\Q_p})$-conjugacy.  
 By \eqref{VR},  the definition of ``very regular'' element, and the discussion that follows it,  the centralizer of this element
inside $\mathbf{G}'$ is as small as possible,  amongst semisimple elements in $\mathbf{H}^{\circ}(\overline{\Q_p})$.  In particular, this centralizer is at most as large as the centralizer of $\mathbf{S}'$ on $\mathbf{G}'$, and so 
 \begin{equation} \label{one} \dim Z_{\mathbf{G}'} \left[ \rho_{y,p}(\Frob_{\ell})\right]^{\mathrm{ss}} \leqslant  
\dim Z_{\mathbf{G}'}(\mathbf{S}') = \dim Z_{\mathbf{G}'}(\mathbf{S}) = \dim Z_{\mathbf{G}'(\C)}(\varphi_0).\end{equation}
 
 We  now transfer this to the corresponding assertion for the crystalline Frobenius $\Frob_{\ell}^{\crys}$. 
We know that 
the crystalline  $\ell$-Frobenius on the $\ell$-adic vector space $H^d_{\crys}(X_{y,\mathbf{F}_{\ell}})$ and the usual $\ell$-Frobenius on
the $p$-adic geometric {\'e}tale cohomology of $X_y$
have the same characteristic polynomial.   The same is true for primitive parts. 
Thus 
$\rho_{y,p}(\Frob_{\ell})^{\ss}$
and $(\Frob_{\ell}^{\crys})^{\ss}$ both have the same characteristic polynomial;
also they both scale the 
bilinear forms by $\ell$.

Split $\mathsf{V}_0 \otimes \Q_p = \bigoplus \mathsf{V}_{\lambda}$ into eigenspaces
for $\rho_{y,p}(\Frob_{\ell})^{\ss}$.  The biinear form gives a perfect pairing  between each $\mathsf{V}_{\lambda}$ and $\mathsf{V}_{\ell \lambda^{-1}}$ 
(interpreted as a self-pairing when $\lambda^2=\ell$); 
the centralizer of $\rho_{y,p}(\Frob_{\ell})^{\ss}$ in $\mathbf{G}'$ is the set of $g$
stabilizing each $\mathsf{V}_\lambda$ and respecting these pairings. 
In particular the centralizer dimension is determined by the function $\lambda \mapsto \dim(\mathsf{V}_{\lambda})$;  the same analysis applies for $(\Frob_{\ell}^{\crys})^{\ss}$.   We deduce that 
\begin{equation} \label{two} \dim Z_{\mathrm{GAut}}(\left[ \Frob_{\ell}^{\crys}\right]^{\mathrm{ss}}) = \dim Z_{\mathbf{G}'}(\rho_{y,p}(\Frob_{\ell})^{\mathrm{ss}} )
\stackrel{\eqref{one}}{\leqslant}  \dim Z_{\mathbf{G}'(\C)}(\varphi_0),\end{equation}
concluding the proof of the Lemma. \qed
 
\subsection{Proof of Lemma \ref{NZD}}
We must analyze the set
\begin{equation} \label{onco23} \{y \in Y(\Z[\frac{1}{S}]): y \equiv y_0 \mbox{ modulo $p$}, \rho_{y,p}^{\mathrm{ss}} \simeq \rho_{y_0, p}^{\mathrm{ss}}\}\end{equation}
(we have  switched from $\ell$ to $p$ for typographical simplicity).  Here
we are assuming that the centralizer
of crystalline Frobenius $\Frob_{y_0}^{\crys, p}$, inside the group $\mathrm{GAut}$ of generalized automorphisms of the intersection pairing, has dimension at most the dimension of $Z_{G'(\C)}(\varphi_0)$,

Now let us unwind the condition in \eqref{onco23}, namely, that the semisimplified $p$-adic Galois representations  for $y$ and for $y_0$ are isomorphic. 
Recall that semisimplification is taken relative to the ambient group $\mathbf{G}'(\Q_p)$. The representation $\rho_{y,p}$ is realized on $H^d(X_y, \Q_p)^{\prim}$, and similarly for $y_0$. 
The semisimplification of $\rho_{y_0,p}$ (in the ambient group $\mathbf{G}'$) is obtained by taking a maximal  self-dual flag of $\rho_{y_0,p}$-stable subspaces
$$0 \subset \mathfrak{f}^1 \subset \mathfrak{f}^2 \subset \dots \subset  \mathfrak{f}^{\Jmid} \subset \underbrace{ (\mathfrak{f}^\Jmid)^{\perp} }_{\mathfrak{f}^{\Jmid+1}}\subset \dots  \subset H^d(X_{y_0}, \Q_p)^{\prim}$$ with the property that the representation on each graded piece is irreducible. (For the middle graded piece,
i.e.\ the piece $\mathfrak{f}^{\Jmid+1}/\mathfrak{f}^{\Jmid}$, 
we interpret ``irreducible'' to mean that there is no {\em isotropic} invariant subspace, see \S \ref{SS} for explanation. We also permit
the possibility that $\mathfrak{f}^{\Jmid} = \mathfrak{f}^{\Jmid+1}$.)

Since $\rho_y^{\mathrm{ss}}$ and $\rho_{y_0}^{\mathrm{ss}}$ are isomorphic, it means that 
there exist such flags $\mathfrak{f}_{y}$ and $\mathfrak{f}_0$ for both $y$ and $y_0$ such that
the $G_{\Q}$-representations on $\bigoplus_j \mathrm{gr}^{\mathfrak{f}}_j $ are isomorphic.
In fact, we can arrange even that this is true for every $j$ individually, and that the isomorphism preserves the intersection form for $j = \Jmid$:
this follows by using the last sentence of Lemma \ref{faltings_finiteness_reductive}. \footnote{
In more detail:  In our reasoning to date, 
instead of using the finiteness of conjugacy classes 
of possible $\rho_y^{\mathrm{ss}}: G_{\Q} \rightarrow \mathbf{G}'$, we could instead use the stronger finiteness provided by the last sentence of Lemma \ref{faltings_finiteness_reductive} . Namely, we fix 
for each $y$ a parabolic subgroup $\mathbf{Q}_y$ containing
the image of $\rho_y$, such that the projection of $\rho_y$ to its Levi
gives the semisimplification, and then use the finiteness up to conjugacy
of possible pairs $(\mathbf{Q}_y, \rho_y^{\mathrm{ss}})$. 
}

Now the functors of $p$-adic Hodge theory 
carry $H^d(X_{y_0},\Q_p)$ to $H^d_{\dR}(X_{y_0}, \Q_p)$
and similarly for $y$. Moreover, the intersection form
$$H^d(X_{y_0}, \Q_p) \otimes H^d(X_{y_0}, \Q_p) \longrightarrow H^{2d}(X_{y_0},\Q_p) \left( \simeq \Q_p(-d) \right)$$
is  carried to the intersection form
$H^d_{\dR}(X_{y_0}, \Q_p) \otimes H^d_{\dR}(X_{y_0}, \Q_p)
\longrightarrow  H^{2d}_{\dR}(X_{y_0},\Q_p).$ These assertions remain valid for the primitive parts of cohomology.

The flags $\mathfrak{f}_y$ and $\mathfrak{f}_0$ are in particular $G_{\Q_p}$-invariant, and, under the correspondence of $p$-adic Hodge theory, these flags $\mathfrak{f}_y, \mathfrak{f}_0$ correspond
to self-dual flags $\mathfrak{f}^{\dR}_y$ and $\mathfrak{f}^{\dR}_{0}$ inside the associated ``de Rham'' vector spaces:
$$ \mathfrak{f}_y^{\dR} \mbox{ in } H^d_{\dR}(X_y)^{\prim} \mbox{ and } \mathfrak{f}_0^{\dR} \mbox{ in } H^d_{\dR}(X_{y_0})^{\prim}.$$ 

Moreover, under the correspondence of $p$-adic Hodge theory,
the filtered $\phi$-modules
$$ (\mathfrak{f}^{\dR}_y)^{m+1}/(\mathfrak{f}^{\dR}_y)^{m} \mbox{ and }
(\mathfrak{f}^{\dR}_0)^{m+1}/(\mathfrak{f}^{\dR}_0)^{m}$$
correspond, respectively, to the Galois representations of $G_{\Q_p}$
on $\mathfrak{f}_y^{m+1}/\mathfrak{f}_y^{m}$ and $\mathfrak{f}_0^{m+1}/\mathfrak{f}_0^m$.
These Galois representations are isomorphic, so the filtered $\phi$-modules  just mentioned above are also isomorphic.  For $m=\Jmid$, the middle degree, the isomorphism of Galois representations can be taken
to preserve the bilinear form, and so the same is true for the isomorphism of filtered $\phi$-modules.

The map sending $y$ to the Hodge filtration on $X_y$ defines a period map
 $$  \Phi_p: \mbox{residue disk at $y_0$, modulo $p$} \longrightarrow \mbox{$p$-adic period domain $\mathfrak{H}_p$}$$
where $\mathfrak{H}_p$ is now the set of self-dual flags inside $V:= H^d_{\dR}(X_{y_0},  \Q_p)^{\prim}$ with the same
dimensional data as the Hodge filtration on $H^d_{\dR}(X_{y_0})^{\prim}$. 
Write $\phi$ for the Frobenius map on $V$.  
Our analysis above shows that the
 the set $ \{y \in Y(\Z[\frac{1}{S}]): y \equiv y_0 \mbox{ modulo $p$}, \rho_{y,p}^{\mathrm{ss}} \simeq \rho_{y_0, p}^{\mathrm{ss}}\}$
is contained in a finite union of sets of the following type:
$$ \Phi_p^{-1}(\mathfrak{S}),$$
where $\mathfrak{S} \subset \mathfrak{H}_p$ is the space of filtrations $F$ on $V = H^d_{\dR}(X_{y_0}, \Q_p)^{\prim}$ with the property that
there exists {\em another} self-dual filtration $\mathfrak{f}$, the ``semisimplification filtration'':
$$0  = \mathfrak{f}^0 \subset \mathfrak{f}^1 \subset \mathfrak{f}^2 \subset \dots \subset  \mathfrak{f}^{\Jmid} \subset \underbrace{ (\mathfrak{f}^\Jmid)^{\perp} }_{\mathfrak{f}^{\Jmid+1}}\subset \dots  \subset \mathfrak{f}^{2 \Jmid +1} =  V$$
with the following properties:

\begin{itemize}
\item $\mathfrak{f}$ is $\phi$-stable.
\item The filtration induced by $F$ on each graded piece $\mathrm{gr}^{\mathfrak{f}}_j$
has weight equal to $d/2$. (This follows because it arises from applying $p$-adic Hodge theory to the restriction of a global Galois representation that is pure of weight $d$, using  Lemma \ref{globalsimple}.)
\item  We have an isomorphism of filtered $\phi$-modules
$$ (\mathrm{gr}^{\mathfrak{f}}_j, \mbox{filtration induced by $F$}) \simeq (\mathrm{gr}^{\mathfrak{f}_0}_j, \mbox{filtration induced by $F_0$})$$
(i.e., an isomorphism of vector spaces respecting Hodge filtration and Frobenius).
In particular, the left-hand side of the above equation lies in a fixed isomorphism class. 

On the right hand side $F_0$ is the filtration at $y_0$. 
 In the case of the middle graded piece $j =\Jmid$, the isomorphism above may be taken, moreover, to 
preserve the bilinear forms on both sides.   \end{itemize}

The following Proposition \ref{linalg} implies
that the codimension of the set $\mathfrak{S}$ above
is at least equal to the dimension of $Y$.  
Given this Proposition, Lemma \ref{NZD} now follows from 
Lemma \ref{padic BT} (the $p$-adic transcendence of period mappings).

\begin{proposition} \label{linalg}
Suppose $V$  is a vector space over the field $K$ 
 equipped with a bilinear form $\langle -, - \rangle$
and a linear automorphism $\phi \in \mathrm{GAut}(V)$.  

Suppose $A_1, \dots, A_{\Jmid}$ is a collection of $K$-vector spaces,
each equipped with a decreasing filtration and a 
linear automorphism $\phi_i: A_i \rightarrow A_i$. 
We suppose the final space $A_{\Jmid}$ is equipped with a bilinear form $\langle -, - \rangle$. 

Consider all self-dual filtrations 
$$V = F^0 V \supset F^1  V\supset \dots \supset F^d V  \supset F^{d+1}V = \{0\}$$
on $V$, where we fix the dimensions of each $F^i$. 

Call such a filtration $F$ ``bad'' if there  exists another self-dual filtration $\mathfrak{f}$ on $V$   
$$ 0 = \mathfrak{f}^0 \subset \mathfrak{f}^1 \subset \dots \subset \mathfrak{f}^{\Jmid} \subset \mathfrak{f}^{\Jmid+1} \subset \dots \subset \mathfrak{f}^{2\Jmid+1} = V.$$
such that the following conditions hold.
 \begin{itemize}
\item[(a)] $\mathfrak{f}$ is $\phi$-stable.
\item[(b)] The weight of the filtration induced by $F$ on each graded piece $\mathrm{gr}^k_{\mathfrak{f}}$
equals $d/2$, i.e.\ the weight of the filtration $F$ on $V$. 
\item[(c)]   There exists an isomorphism of filtered $\phi$-modules:
$$ \left( \mathrm{gr}_{\mathfrak{f}}^j V, \mbox{ filtration induced by $F$} \right) \simeq A_j$$
for each $j \leq \Jmid$, and in the middle dimension $j=\Jmid$ this also preserves bilinear forms. 
\end{itemize}

Define the Hodge numbers $h^p$ as the dimension of $\mathrm{gr}^p_F \ \mathrm{Lie} \mathrm{GAut}(V)$; 
let $T(y)$ be the sum of the topmost $y$ Hodge numbers, extended by linearity as in \eqref{Tdef}. 

Put $z = \dim Z(\phi^{\mathrm{ss}})$, the dimension of the centralizer of the semisimple part of $\phi$ in $\mathrm{GAut}(V)$. 

 If $e$ is a positive integer such that 
\begin{equation} \mbox{number of positive Hodge numbers} \geqslant z+e \end{equation} 
and
\begin{equation} \label{equat}  \mbox{sum of all positive Hodge numbers} >  T(z+e)   + \ T \left( \frac{h^0}{2}+z+e \right),\end{equation}  then the codimension of the space of bad filtrations 
is greater than or equal to $e$.  
\end{proposition}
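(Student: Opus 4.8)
The strategy is to bound, for each possible ``semisimplification filtration'' $\mathfrak{f}$, the dimension of the locus of bad filtrations $F$ compatible with that $\mathfrak{f}$ — and then take a union over the finitely many combinatorial types of $\mathfrak{f}$, noting that the number of such types does not affect codimension. So fix $\mathfrak{f}$ with the prescribed jump dimensions. The set of $F$ compatible with $\mathfrak{f}$ in the sense of (a)--(c) will be analyzed in two stages: first the constraint that $F$ induces, on each graded piece $\mathrm{gr}^j_{\mathfrak{f}}V$, a filtration isomorphic \emph{as a filtered $\phi$-module} to $A_j$; and second the constraint that the whole filtration $F$ be recoverable from this graded data up to the freedom of choosing an extension — i.e.\ up to the unipotent group of the parabolic stabilizing $\mathfrak{f}$, intersected with $\mathrm{Z}(\phi^{\ss})$.

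First I would reduce to the case $K$ algebraically closed (the codimension of a $K$-subvariety equals that of its base change), and replace $\phi$ by its semisimple part, so that $\mathrm{Z}(\phi) = \mathrm{Z}(\phi^{\ss})$ is reductive of dimension $z$; here one uses that a filtered $\phi$-module isomorphism on a graded piece is really a $\mathrm{Z}(\phi)$-orbit statement, via the argument already used in Proposition \ref{finitesetoforbits} and in the proof of Lemma \ref{claim2}. Next, the dimension count: parametrize $F$ by first choosing, for each $j\le \Jmid$, a point in the flag variety of $\mathrm{gr}^j_{\mathfrak{f}}V$ lying in a single $\mathrm{Z}(\phi)$-orbit (on the middle piece one restricts to the orthogonal/symplectic flag variety and insists the form be preserved); and then choosing the extension datum, which lives in the unipotent radical $U$ of the parabolic $P_{\mathfrak{f}}\subset \mathrm{GAut}(V)$ stabilizing $\mathfrak{f}$. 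Because $\mathfrak{f}$ is $\phi$-stable, $\phi\in P_{\mathfrak{f}}$, and the relevant extension freedom is $U\cap \mathrm{Z}(\phi)$, whose dimension is at most $\dim(\mathrm{Lie}\,U \cap \mathfrak{z}(\phi))$. The key numerical input is that the filtration $F$ on $V$ induces the \emph{adjoint} filtration on $\mathrm{Lie}\,\mathrm{GAut}(V)$ with Hodge numbers $h^p$, and the weight-$d/2$ condition (b) forces the induced filtration on each $\mathrm{gr}^j_{\mathfrak{f}}$, and on $\mathfrak{z}(\phi)$ and on $\mathrm{Lie}\,U$, to be ``balanced,'' so that the dimension of the flag-variety orbits and of $U\cap\mathrm{Z}(\phi)$ can each be bounded by sums of the top few $h^p$, i.e.\ by values of the function $T$.

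The dimension of the bad locus is then bounded by (dimension of the ambient flag variety $\mathfrak{H}$) minus (codimension we are trying to establish), so equivalently one must show: $\dim \mathfrak{H} - (\text{dim of bad locus for fixed }\mathfrak{f}) \ge e$. The dimension of the bad locus is the sum of the graded-piece orbit dimensions plus $\dim(U\cap\mathrm{Z}(\phi))$; comparing this to $\dim\mathfrak{H}$, which itself equals the sum over all $p>0$ of $h^p$ (the dimension of the Hodge flag variety equals the number of positive adjoint Hodge numbers), the ``lost'' dimensions are exactly controlled by the off-diagonal (with respect to $\mathfrak{f}$) part of $\mathrm{Lie}\,\mathrm{GAut}$. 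One shows the contributions of the orbit-inside-the-centralizer and of the middle-piece form-preservation eat into at most $T(z+e)$ and the extra $T(\tfrac{h^0}{2}+z+e)$ of the statement — the $h^0/2$ accounting for the self-duality/Frobenius-pairing constraint on the middle graded piece, and $z$ for the centralizer. Hypothesis \eqref{equat}, which says the \emph{total} $\sum_{p>0}p h^p$ of positive adjoint Hodge \emph{weighted} numbers exceeds $T(z+e)+T(\tfrac{h^0}{2}+z+e)$, is precisely what is needed to beat these two error terms and conclude codimension $\ge e$; the first inequality (number of positive Hodge numbers $\ge z+e$) just ensures $T$ is being evaluated in its domain and the relevant flag varieties are nonempty.

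\textbf{Main obstacle.} The genuinely delicate part — and where the combinatorics of \S\ref{GG combinatorics} is needed — is the bookkeeping that converts ``dimension of orbit of $\mathrm{Z}(\phi)$ on a flag variety of a graded piece, plus dimension of $U\cap\mathrm{Z}(\phi)$'' into the clean bound $T(z+e)+T(\tfrac{h^0}{2}+z+e)$. Here one must: (i) track how the balanced filtration $F$ distributes across the blocks cut out by $\mathfrak{f}$, using that $F$ and $\mathfrak{f}$ interact through a common refinement; (ii) handle the middle graded piece separately because of the bilinear form (this is the source of the $h^0/2$ shift, via $\dim\mathrm{Sym}^2$ vs.\ $\dim\wedge^2$ and the fact that the zeroth adjoint Hodge number $h^0$ is where the form-preservation cuts); and (iii) relate the centralizer $\mathrm{Z}(\phi)$ — whose $F$-adjoint Hodge numbers are some sub-collection of the $h^p$ — to $T(z)$ via a majorization/rearrangement argument (largest $z$ of the $h^p$ dominate any $z$-term sub-collection). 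Each of these is elementary in isolation, but assembling them uniformly over all $\mathfrak{f}$ and all compatible $F$, with the self-duality constraints, is what forces the somewhat lossy inequality \eqref{equat}; I expect no conceptual difficulty beyond careful indexing, and would simply invoke Proposition \ref{linalg}'s proof machinery (the content of \S\ref{GG combinatorics}) for the final counting lemma rather than redo it.
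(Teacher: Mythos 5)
Your proposal has a genuine gap at the central step: you assert that the ``extension freedom'' for the filtration $F$, once its restrictions to the graded pieces of $\mathfrak{f}$ are fixed, lives in $U \cap \mathrm{Z}(\phi)$ (where $U$ is the unipotent radical of the parabolic $P_{\mathfrak{f}}$ stabilizing $\mathfrak{f}$). This is not so. The filtration $F$ is not required to be $\phi$-stable at all --- only $\mathfrak{f}$ is $\phi$-stable, and the filtered $\phi$-module isomorphism in condition (c) concerns only the graded pieces. Thus the position of $F$ within the fiber of $X(F) \to \{\text{filtrations of } M\}$ varies over all of $U/(U\cap Q)$, not over $U \cap \mathrm{Z}(\phi)$. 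If your bound were literally correct, the bad locus would have dimension bounded by roughly $3z$, independent of the Hodge numbers, which is clearly not what is going on and cannot produce the weighted-Hodge-number hypothesis \eqref{equat}.

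What actually constrains the extension data is the \emph{balanced} (equal-weight) condition (b) alone, which is a purity statement, not a commutation statement. The paper turns this into a dimension bound via a Bruhat-decomposition argument (Proposition \ref{lw2}): for each $w \in W_{PQ}$ parametrizing a $P$-orbit on $G/Q$, the balanced condition reads $\sum_{\gamma \in \Sigma - \Sigma_P}\langle w\mu,\gamma\rangle = 0$, which splits into a positive sum $\sum_{X}$ and a negative sum $\sum_{X'}$ of the quantities $\langle\mu, w^{-1}\beta\rangle$; bounding one side from below by (all positive Hodge numbers) $-$ (top $e'$ Hodge numbers), and the other from above by (top $a_0/2 + e'$ Hodge numbers), gives a contradiction precisely when the fiber codimension is too small. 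This Weyl-group estimate is the heart of \S\ref{GG combinatorics}; you never engage with it, and the final sentence of your proposal --- invoking ``Proposition \ref{linalg}'s proof machinery'' --- is circular, since that machinery \emph{is} the proof of the very statement you were asked to prove. Your outer reduction (algebraic closure, replace $\phi$ by $\phi^{\mathrm{ss}}$, bound the moduli of pairs $(P, F_M)$ by $z$) does track the paper, but the key bound \eqref{equat} requires the $P$-orbit/Bruhat analysis, which is absent from the proposal.
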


To be clear, we apply this with:
\begin{itemize}
\item  $K = \Q_p$
and $V = H^d_{\dR}(X_{y_0}, \Q_p)^{\prim}$
for some fiber of the family of  Theorem \ref{transprop};
\item 
The filtration $F$ comes from the Hodge filtration on $X_y$, where $y$ lies in the residue disk of $y_0$.
\item 
$\mathfrak{f}$ is another filtration which comes from a potential failure of the global
Galois representation at $y$ to be semisimple; 
the passage to the graded $\mathrm{gr}_{\mathfrak{f}}$ affects semisimplification of the Galois representation.
\item Condition (b) comes eventually from global purity.
\item  We have $z \leqslant h^0$ by  assumption (this came
from Lemma \ref{Goodprimelemma}) and we take $e=\dim(Y)$. 
\end{itemize}

The statement of Proposition \ref{linalg} is complicated -- it is an analogue, in our current setting, of Lemma \ref{generalsimple2}.  We'll offer some vague motivation here. 
Proposition \ref{linalg} asks: for which filtrations $F$ on $V$ do $A_1, \ldots, A_{\Jmid}$ form a composition series for $V$ as filtered $\phi$-modules?
Of course, filtered $\phi$-modules are in general far from simple, and the choice of $F$ often amounts to a choice of extension class.
Based on this, one might expect that the space of bad filtrations is large, perhaps even Zariski dense in the flag variety.
This does not happen here because of the condition that the weight of the filtration on each $A_i$ equal $d/2$.
This equal-weight condition generalizes Equation (\ref{crit}).
Requiring the subobjects $\mathfrak{f}^j V$ giving the composition series to have large intersection with pieces of the filtration
turns out to impose strong conditions on the filtration $F$.
This is the content of Proposition \ref{linalg}.
     
\section{Combinatorics related to reductive groups} \label{GG combinatorics}

It remains to prove Proposition 
\ref{linalg}  from the prior section. This is ``just'' a problem in linear algebra but it is a notational mess. 
We analyze it using some  simple ideas about root systems. Although we work in the generality of an arbitrary reductive group, to help the exposition
we will often explicate the discussion in the case of $\GL_n$.  One other reason we chose to work in this generality is that
analysis of this type is likely necessary when  carrying out a similar analysis for more general monodromy groups. 

Since Proposition \ref{linalg} is geometric, concerning the dimensions of certain algebraic sets,
we can and will suppose that the base field $K$ is algebraically closed. We will therefore permit
ourselves to identify algebraic groups with their $K$-points; they will be correspondingly denoted
by usual letters $P, G$ etc., rather than boldface letters
as we have done previously. 

There is a correspondence between filtrations and parabolic subgroups.  We have a question about
the interaction of two filtrations $\mathfrak{f}$ and $F$; we're going to convert it to a question about
the interaction of two parabolic subgroups $P$ and $Q$.

One important warning: As defined  $\mathfrak{f}$ is an increasing filtration, whereas $F$ is decreasing.  %
However, in actual fact, the indexing of $\mathfrak{f}$ is irrelevant. All that
will matter throughout is the stabilizer of $\mathfrak{f}$; we could re-index it to be a decreasing filtration
and nothing at all would change.   On the other hand,
the indexing of $F$ {\em does} matter, and thus we will need to keep track of extra data beyond its stabilizer.

Tracing back the origins of Proposition \ref{linalg}, $\mathfrak{f}$
comes from the semisimplification filtration on a global Galois representation, and $F$ from the Hodge filtration.  
The following informal dictionary may be helpful, at least in interpreting the material 
 from \S \ref{indfil2} onward:
\begin{itemize}
  \item The parabolic denoted  $P$  should be thought of as the stabilizer of the semisimplification filtration $\mathfrak{f}$. 
 
\item The Levi quotient $M$ of $P$ corresponds to the associated graded for $\mathfrak{f}$;
 globally, 
 the semisimplification of the Galois representation takes values in $M$.
 \item The parabolic   $Q$ should be thought of as the stabilizer of the Hodge filtration $F$. 
 \end{itemize}
  
The argument  can be informally summarized like this: 
\begin{itemize}
\item First of all, we bound the number of possibilities for $\mathfrak{f}$, using the fact that it is $\phi$-stable.
This uses the fact that the centralizer of $\phi^{\mathrm{ss}}$ is not too large and happens
in \eqref{1db}. 
 After this point, it is enough to work with a given $\mathfrak{f}$ and $P$. 
\item   Having fixed $\mathfrak{f}$ and $P$, we break up the space of possible $F$ into $P$-orbits. 
The set of $F$ satisfying the weight condition (b) of   Proposition \ref{linalg},
is a union of $P$ orbits. We need to show that no $P$ orbit of small codimension
occurs in this set. 
 
\item    
To illustrate the idea, we will just explain why the {\em open} $P$ orbit doesn't occur.
Suppose $F$ satisfies the weight condition (b) of Proposition \ref{linalg}.  We show then
that $PQ/Q$ is not open in $G/Q$. 

We find a maximal torus $T \subset P \cap Q$ and a character $\nu: \mathbb{G}_m \rightarrow T$ which
defines the filtration $F$. In particular, $Q$ consists of non-negative root spaces for $\nu$. 
The weight condition  will imply that 
\begin{equation} \label{weight}  \sum_{\gamma \in \Sigma-\Sigma_P} \langle \nu, \gamma \rangle = 0,\end{equation}
the sum being taken over roots $\Sigma$ for $T$ that correspond to root spaces outside $P$.

By using the assumed numerology of Hodge numbers, not too many of these $\langle \nu, \gamma \rangle$ can be zero. 
In particular, \eqref{weight} implies that
$\langle \nu, \gamma \rangle  < 0$ for at least one $\gamma \in \Sigma-\Sigma_P$. 

That means there is at least one such root $\gamma \in \Sigma-\Sigma_P$ that doesn't belong to the Lie algebra of $Q$;
equivalently, 
$$ \mathrm{Lie}(Q) + \mathrm{Lie}(P) \neq \mathrm{Lie}(G),$$
which implies the desired conclusion.
\end{itemize}

\subsection{Filtrations on reductive groups}
 
Let $G$ be a reductive group over an algebraically closed field $K$. 
 
A (rational) cocharacter $\lambda : \G_m \dashrightarrow G$
is simply a co-character that is allowed to be defined on a finite cover of $\G_m$. 
It determines a parabolic $P_{\lambda}$, whose Lie algebra is the sum of non-negative weight spaces
for $\lambda$; the centralizer of $\lambda$ is  therefore a Levi factor for this parabolic. 
A ``filtration'' for $G$ will be, by definition, 
an equivalence class  of  such rational cocharacters $\lambda$, where $\lambda \sim \lambda'$ 
if $\lambda'$ is conjugate to $\lambda$ under $P_{\lambda}$ (or equivalently 
under the unipotent radical of $P_{\lambda}$). 

\begin{example}
Filtrations. %
\begin{itemize}
\item
A filtration on $G = \GL(V)$  is the same as a (decreasing) filtration $F^{\bullet}V$ on $V$,
where the indices are indexed by rational numbers. 
Specifically we set
\begin{equation} \label{fp formula}  F^p V = \mbox{ sum of all  weight spaces for $\lambda$ on $V$ with weights $\geqslant p$}\end{equation}
The associated parabolic $P_{\lambda}$ is precisely the stabilizer of this filtration. 
 
Note that $F^{\bullet} V$ determines $\lambda$ up to the equivalence described above:
any two rational characters $\mathbf{G}_m \rightarrow P$
with the same projection to a Levi quotient are actually $P$-conjugate by Lemma \ref{wks2}. 
  
\item
If $V$ is equipped with a bilinear form $\langle -, - \rangle$, then a filtration
on $\mathrm{GAut}(V, \langle -, - \rangle)$ is the 
same as a {\em self-dual} filtration on $V$, again via the formula \eqref{fp formula};
more precisely, if the filtration $F$ corresponds to a character $\chi: \mathbf{G}_m \dashrightarrow \mathrm{GAut}(V)$  
for which $\chi(x)$ scales the form by $x^{r}$, then 
\begin{equation}\label{FFF} \mbox{ $F^p V$ and $F^{r-p+\epsilon} V$ are orthogonal complements of  one another 
(for sufficiently small $\epsilon$). }
\end{equation} 
\end{itemize}
\end{example}
 
A map $G_1 \rightarrow G_2$ of reductive groups induces, obviously, a map from
filtrations for $G_1$ to filtrations for $G_2$. Thus a filtration on $G$ determines a filtration on the underlying space of any $G$-representation. 
If $G = \GL_n$, this corresponds to the usual way in which a filtration on $V$
induces (e.g.) a filtration on $V \otimes V, V^*$, etc. 

Indeed, for a general group $G$, to give a filtration of $G$ is the same
as giving a filtration functorially on every representation of $G$: this is part of the theory
of filtered fibered functors, see \cite[Section IV.2.1]{NSR}.

For any reductive group $S$ write $$\mathfrak{a}_{\mathbf{S}} := X_*( Z_{S}) \otimes \Q$$ where $Z_{S}$ is the center.  (As usual,
we write $X_*$ for cocharacters and $X^*$ for characters.) This space is canonically in duality 
with $X^*(S) \otimes \Q$.   If $F$ is a filtration on $S$
the projection of the associated cocharacter to the torus quotient of $S$ 
defines a class in $\mathfrak{a}_S$. We call this the weight of $F$:
$$ \wt(F) \in \mathfrak{a}_S.$$

\begin{example}
Weights of filtrations. %
\begin{itemize}
\item
For $\GL(V)$, $\mathfrak{a}_{\GL(V)}$is a one-dimensional $\Q$-vector space.
We identify it with $\Q$ by identifying the character
$t \in \mathbf{G}_m \mapsto t \mathrm{Id}_V$ with $1 \in \Q$. 
With this identification, 
the weight of the filtration  on $\GL(V)$, corresponding to $F^p V$ as in (a) above,  is 
$\frac{\sum_{p} p \dim(F^p/F^{p+1})}{\dim V}$; thus this definition coincides
with our previous definition (\ref{weight definition}). 

\item  For $\mathrm{GAut}(V)$, we can make the same identification of $\mathfrak{a}$ with $\Q$
as for $\GL$. With this identification, the weight of the filtration described before \eqref{FFF} is necessarily equal to $r/2$,
one-half of the integer by which the associated character scales the form. 
\end{itemize}
\end{example}

We can alternately describe filtrations using parabolics: 
For $\lambda: \mathbf{G}_m \rightarrow G$
the projection of $\lambda$ to the Levi quotient $M_{\lambda}$ of the parabolic $P_{\lambda}$
is central in $M_{\lambda}$; thus we get a class  $\bar{\lambda}$ in $\mathfrak{a}_{M_{\lambda}}$. 
The pair $(P_{\lambda}, \bar{\lambda} \in \mathfrak{a}_{M_{\lambda}})$
depends only on the filtration associated to $\lambda$, and moreover
completely determines that filtration, because of Lemma \ref{wks2}. 
In fact, any pair $(P, e \in \mathfrak{a}_M)$
of a parabolic and a ``strictly positive'' element of $\mathfrak{a}_M$,
i.e.\ positive on all roots in the unipotent radical of $P$, arises from
a filtration. 
 
\subsection{Levi subgroups}  \label{sec:Levi}
 Now suppose that $N$ is a Levi subgroup of $G$. 
 The center of $N$ then contains the center of $G$. In this way we obtain a map
\begin{equation} \label{aGN} \mathfrak{a}_G \longrightarrow \mathfrak{a}_N\end{equation}
which is naturally split: A character of $G$, i.e.\ a homomorphism $G \rightarrow \mathbf{G}_m$,
 can be pulled back to a character of $N$. The resulting map
 $$ \underbrace{ X^*(G) \otimes \Q}_{\simeq \mathfrak{a}_G^*} \longrightarrow \underbrace{ X^*(N) \otimes \Q}_{\simeq \mathfrak{a}_N^*}$$ 
gives rise to a splitting of \eqref{aGN}. 

\begin{example}
If $\dim V_i = n_i$ then $\GL(V_1) \times \GL(V_2)$ is a Levi subgroup of $\GL(V_1 \oplus V_2)$.
We identify $\mathfrak{a}_N  = \Q^2$ as in the previous example;
then $\mathfrak{a}_G$ is embedded as the subspace  $(1, 1)$
and the complementary subspace is spanned by $(-\dim(V_2), \dim(V_1))$. 
\end{example}

\subsection{The induced filtration on a Levi subgroup} \label{indfil2}

If $V$ is a vector space equipped with
filtrations $F^{\bullet}$ and $\mathfrak{f}^{\bullet}$, then $F^{\bullet}$
induces a filtration on $\mathrm{gr}^{\mathfrak{f}}_* V$. 
We need to analyze this induced filtration carefully when $F^{\bullet}$ the Hodge filtration and 
$\mathfrak{f}^{\bullet}$ the semisimplification filtration.    

It is convenient to again express this abstractly:
For any reductive group $G$ and any parabolic $P$,
a filtration  $F$ on $G$ induces a filtration  $F_M$ on the Levi quotient $M$ of $P$.
(With reference to the example above, $P$ corresponds to the filtration $\mathfrak{f}^{\bullet}$,
and $M$ to the associated graded). To explain this we require the following Lemma:
 
\begin{lemma}
Let  $\chi: \mathbf{G}_m \rightarrow G$ be a character defining the parabolic subgroup $Q$. 
Let $Q$ act transitively on an algebraic variety $Y$. Then all fixed points
of $\chi$ on $Y$ are conjugate under the centralizer $N$ of $\chi$. 
 \end{lemma}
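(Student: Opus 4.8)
The statement is a standard fact about parabolic subgroups acting on homogeneous spaces, and the plan is to reduce to the case of a single orbit, pick a fixed point, and propagate it using the Bruhat-type decomposition of $Q$ relative to the Levi factor $N = Z_G(\chi)$. First I would fix a fixed point $y_0 \in Y^\chi$ and consider an arbitrary second fixed point $y \in Y^\chi$. Since $Q$ acts transitively, write $y = q \cdot y_0$ for some $q \in Q$. The unipotent radical $U$ of $Q$ is exactly the subgroup on which $\chi$ acts with strictly positive weights, and $Q = N \ltimes U$ (Levi decomposition). So I can write $q = n u$ with $n \in N$, $u \in U$; replacing $y$ by $n^{-1} y$, which is still a fixed point of $\chi$ (as $N$ commutes with $\chi$), I reduce to the case $y = u \cdot y_0$ with $u \in U$.

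\textbf{The key step.} Now the task is to show that if $u \in U$ and $u \cdot y_0$ is $\chi$-fixed, then in fact $u \cdot y_0 = y_0$ — or at least that $u \cdot y_0$ is conjugate to $y_0$ under $N$; in this reduced situation I expect to get equality. Conjugating the fixed-point relation by $\chi(t)$: since $\chi(t)$ fixes both $y_0$ and $u y_0$, we get $\chi(t) u \chi(t)^{-1} \cdot y_0 = u \cdot y_0$ for all $t$. As $t \to 0$ (or $t \to \infty$, depending on sign conventions), $\chi(t) u \chi(t)^{-1} \to 1$ because $U$ is spanned by positive weight spaces for $\chi$; by properness/continuity of the orbit map on the variety $Y$ (one should be slightly careful and pass to the closure of the orbit or use that stabilizers are closed), the limit relation forces $u \cdot y_0 = 1 \cdot y_0 = y_0$. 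Hence $y = n \cdot y_0$, so every $\chi$-fixed point lies in the $N$-orbit of $y_0$, which is what we want.

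\textbf{Main obstacle.} The only subtlety is making the limiting argument rigorous on a general algebraic variety $Y$ rather than on an affine or projective space: I would phrase it via the stabilizer subgroup. Let $H = \mathrm{Stab}_Q(y_0)$, a closed subgroup of $Q$. The set of $q \in Q$ with $q \cdot y_0 \in Y^\chi$ is $\{q : \chi(t) q \chi(t)^{-1} q^{-1} \in H \text{ for all } t\}$, i.e. the preimage in $Q$ of a closed condition; I want to show this equals $N \cdot H$. Restricting to $u \in U$ and using that $\chi(t) u \chi(t)^{-1} u^{-1}$ ranges through $U$ and tends to $1$, together with closedness of $H$ in $Q$, gives $u \in H$, hence $u \in N \cdot H \cap U$. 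This is essentially the standard "dynamic method" argument (cf. the limit subgroups $P_\chi$, $U_\chi$, $N_\chi = Z_G(\chi)$ of Conrad–Gabber–Prasad) and I expect it to go through cleanly; the write-up just needs care to cite or reprove the fact that $\lim_{t} \chi(t) u \chi(t)^{-1}$ exists and equals $1$ for $u$ in the unipotent radical, and that this forces membership in the stabilizer.
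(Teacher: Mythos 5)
Your proof is correct and is essentially the same argument as the paper's: both reduce via the Levi decomposition $Q = N \ltimes U$ to showing that a $\chi$-fixed point of the form $u\cdot y_0$ with $u$ in the unipotent radical must equal $y_0$, and both deduce this from the fact that $\chi(t)u\chi(t)^{-1} \to 1$ as $t \to 0$ together with closedness (the paper packages this as the absence of nonidentity $\chi$-fixed cosets in $V/V_0$, while you argue directly that the limit forces $u$ into the stabilizer). Aside from a small slip in the exact conjugation identity — the condition should read $q^{-1}\chi(t)q\chi(t)^{-1} \in H$ rather than $\chi(t)q\chi(t)^{-1}q^{-1}\in H$, which does not affect the argument — the write-up matches the paper's proof.
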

 \proof
We have a Levi decomposition
 $Q=NV$, with $V$ the unipotent radical.
Suppose that $y_0 \in Y$ is $\chi$-fixed.   It is enough to verify that $y_0$ is the unique point in $Vy_0$ that is $\chi$-fixed. 
Let $V_0$ be the stabilizer of $y_0$ inside $V$. For $x \in \mathbf{G}_m$, 
$$\chi(x) \cdot (vy_0) = (\chi(x) v \chi(x)^{-1}) y_0,$$
and thus the $\chi$-fixed points on $Vy_0$ correspond to the
fixed points for $\chi(x)$-conjugation on $V/V_0$. 
 
 But all the weights of this $\mathbf{G}_m$-action on $V$ are positive,
 i.e.\ the limit of $\chi(x) v \chi(x)^{-1}$ as $x \rightarrow 0$ is equal to the identity. 
  Therefore the only  fixed point on $V/V_0$
 for  conjugation  by $\chi(\mathbf{G}_m)$ is the identity coset. 
\qed

Before we formulate the induced filtration in terms of parabolics, we recall some linear algebra associated to two parabolics.
Suppose that $P,Q$ are parabolic subgroups of $G$, where $Q$ is the stabilizer of a filtration $F$. 
It is known that $P$ and $Q$ contain a common maximal torus $T$ and that $P \cap Q$ is connected;
this, together with everything else we will use is contained in \cite[Chapter 2]{DigneMichel}.   We will briefly summarize what we need. 
 
Fixing $T$ as above, we get Levi decompositions of $P$ and $Q$ such that both Levi factors contain $T$: 
\begin{equation} \label{pmu}  P = M U \mbox{ and }Q = N V\end{equation}
 We have a factorization
\begin{equation} \label{factorization} P \cap Q = (M \cap Q) \cdot (U \cap Q).\end{equation}
In particular, this implies that the projection of $P \cap Q$ to $M$ along $P \twoheadrightarrow M$
is just $M \cap Q$.

 To verify this factorization, we note that  $(M \cap Q)$ normalizes $(U \cap Q)$,
 and also that it is easy to verify the corresponding splitting at the level of Lie algebras;
 since $P \cap Q$ is connected, this factorization also follows.
  
  \begin{lemma}\label{indfil} (Induced filtration on the Levi factor of a parabolic.)
  Let $F$ be a filtration for the group $G$. 
  There exists a   
   representative $\chi_P: \mathbf{G}_m \rightarrow G$
for the filtration $F$ with the property that $\chi$ is valued in $P$.  Moreover, any two such representatives
are conjugate under $P \cap Q$.

For each such representative $\chi_P$, the projection of $\chi_P$ to the Levi quotient $M$ of $P$ defines a filtration 
on $M$ which is independent of the choice of $\chi_P$. 
 \end{lemma}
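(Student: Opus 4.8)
The plan is to deduce all three assertions from the structure theory of two parabolics recalled just above — a common maximal torus $T\subseteq P\cap Q$, the compatible Levi decompositions $P=MU$, $Q=NV$ of \eqref{pmu}, and the factorization $P\cap Q=(M\cap Q)(U\cap Q)$ of \eqref{factorization} — together with the fixed-point Lemma proved immediately before this one.

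For \emph{existence} of a representative valued in $P$, I would start from an arbitrary representative $\lambda_0$ of the filtration $F$. It is valued in the centre of a Levi factor of its own parabolic $P_{\lambda_0}=Q$, hence in $Q$, and its image is a torus of $Q$. All maximal tori of $Q$ are $Q$-conjugate, and conjugating $\lambda_0$ by an element of $Q=P_{\lambda_0}$ leaves its class $F$ unchanged; so after such a conjugation we may assume $\lambda_0$ is valued in $T\subseteq P$, and we take $\chi_P:=\lambda_0$.

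The heart of the matter is \emph{uniqueness up to $P\cap Q$}. Let $\chi_P,\chi_P'$ be two representatives of $F$ valued in $P$; lying in the same class, they satisfy $\chi_P'=\operatorname{Ad}(q)\chi_P$ for some $q\in Q$. The key point — and the only delicate one — is that $\chi_P$ is itself a cocharacter whose associated parabolic is $Q$, so the fixed-point Lemma applies with this $\chi_P$ acting on the $Q$-orbit $O$ of $P$ inside the variety $G/P$ of parabolic subgroups conjugate to $P$. Since $\chi_P$ is valued in $P$ it normalises $P$, so $P\in O$ is a $\chi_P$-fixed point; and since $\chi_P'$ is valued in $P$, a direct computation using $q\chi_P(t)q^{-1}=\chi_P'(t)$ shows that $q^{-1}Pq$ (which corresponds to the coset $q^{-1}P\in O$, as $q^{-1}\in Q$) is also $\chi_P$-fixed. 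The fixed-point Lemma then gives $q^{-1}Pq=n^{-1}Pn$ for some $n$ in the centralizer $Z_G(\chi_P)$, which is a Levi factor of $Q$ and in particular lies in $Q$. Hence $p:=qn^{-1}\in N_G(P)=P$, while also $p=qn^{-1}\in Q$, so $p\in P\cap Q$; and $\chi_P'=\operatorname{Ad}(q)\chi_P=\operatorname{Ad}(p)\chi_P$ because $n$ centralises $\chi_P$.

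Finally, for \emph{well-definedness} of the induced filtration on $M=P/U$: given two representatives of $F$ valued in $P$, the previous step writes one as $\operatorname{Ad}(x)$ of the other with $x\in P\cap Q$; factoring $x=m'u'$ with $m'\in M\cap Q$ and $u'\in U\cap Q\subseteq U$ via \eqref{factorization} and projecting to $M$ kills $u'$, so the two projections to $M$ differ by $\operatorname{Ad}(m')$ with $m'\in M\cap Q$. It remains to note that $m'$ lies in the parabolic of $M$ cut out by the projected cocharacter: indeed $\chi_P$ acts on $\mathfrak m$ through its projection $\bar\chi_P$, so $P_{\chi_P}\cap M$ and the parabolic of $M$ attached to $\bar\chi_P$ share a Lie algebra and hence coincide, which is exactly the identification of $M\cap Q$ noted after \eqref{factorization}; thus conjugation by $m'$ does not alter the induced filtration. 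The one real obstacle in all of this is the middle step — pinning the conjugating element down inside $P\cap Q$ rather than merely inside $Q$ — and the device that resolves it is to feed $\chi_P$ itself, not an auxiliary cocharacter, into the fixed-point Lemma.
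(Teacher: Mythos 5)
Your proof is correct and follows essentially the same route as the paper's: conjugate an arbitrary representative into a maximal torus of $P\cap Q$ for existence, apply the fixed-point lemma to the $Q$-orbit $QP/P$ (your orbit $O$) to locate the conjugating element in $(P\cap Q)\cdot Z_G(\chi_P)$ for uniqueness, and use the factorization $P\cap Q=(M\cap Q)(U\cap Q)$ together with the identification $M\cap Q = Q_{\bar\chi_P}$ for well-definedness on the Levi quotient. The only cosmetic difference is that you phrase the fixed-point step in terms of the variety $G/P$ of conjugates of $P$ rather than the coset space $QP/P$ directly, but these are the same space.
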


\proof 
Let $\chi: \G_m \rightarrow G$ represent the filtration, and let $Q$ be the associated parabolic. 
The intersection $P \cap Q$ contains a maximal torus $T$ of $G$ and we may certainly conjugate
$\chi$ so it is valued in $T$, so in $P$; this proves the existence statement.

For uniqueness fix $\chi_P$, which we may now suppose to be valued in $P \cap Q$. 
Now the image of $\Ad(q)^{-1} \chi_P$ is in $P$ if and only if 
$$\chi_P(\mathbf{G}_m) \subset \Ad(q) P,$$
i.e.\ $qP$ lies in the set of fixed points of $\chi_P(\mathbf{G}_m)$ on $QP/P$.
These fixed points are all conjugate under $N$, the centralizer of $\chi_P$,  as we have seen above;
thus $qP \in NP$, so that $q \in N(Q \cap P)$. 
Thus the characters $\Ad(q)^{-1} \chi_P$ lie in a single $(P \cap Q)$-orbit.  

It remains to prove the final statement. Choose $\chi_P, \chi_P': \mathbf{G}_m \rightarrow P$, as above, both   $P$-valued representatives for the filtration $F$.
We have $\chi_P' = \Ad(g) \chi_P$ for some $g \in P \cap Q$,
so $\overline{\chi_P'} = \Ad(\bar{g}) \overline{\chi_P}$, where bars denote projection to the Levi quotient of $P$. 
To see that these two characters define the same filtration we need to verify that
$$  \overline{g} \in Q_{\overline{\chi_P}}.$$

This follows from the remark after \eqref{factorization}: extend the image of $\chi_P$ to a maximal torus
inside $P \cap Q$;  then, with the corresponding choice $M$ of Levi subgroup for $P$, we have
$\overline{\chi_P} = \chi_P$ and $Q_{\overline{\chi_P}} = Q \cap M$. 

\qed

\begin{example}
Induced filtration on the associated graded. 
\begin{itemize}
\item
Consider the case of $G=\GL(V)$.  Suppose given a decreasing filtration $F^{\bullet} V$
(with associated parabolic $Q$) and another parabolic $P$; we fix an increasing filtration $\mathfrak{f}^{\bullet} V$  with stabilizer $P$.
We show that the construction above gives precisely the filtration induced by $F$ on 
the associated graded to $\mathfrak{f}$. 
  
As above, we can represent the character for $F$ by a character $\chi$ {\em preserving the filtration $\mathfrak{f}$}.
Then, writing $\bar{F}$ for the induced filtration:
$$ \bar{F}^j(\mathfrak{f}^k/\mathfrak{f}^{k-1}) $$
is the sum of all eigenspaces with weights $\geqslant j$; this is the image of the corresponding space in $\mathfrak{f}^k$, that is to say, 
$$ \bar{F}^j (\mathfrak{f}^k/\mathfrak{f}^{k-1}) = \mbox{image of $F^j \cap \mathfrak{f}^k$ in $\mathfrak{f}^k/\mathfrak{f}^{k+1}$}.$$
  
\item We now modify the example above by taking $G$ to be $\mathrm{GAut}(V, \langle -, - \rangle)$
for some symmetric or skew-symmetric nondegenerate bilinear pairing $\langle -, - \rangle$.
Now suppose that $P$ is a parabolic subgroup of $G$, stabilizing the self-dual increasing
filtration
$$ 0 = \mathfrak{f}^0 \subset \mathfrak{f}^1 \subset \dots \subset \mathfrak{f}^{\Jmid} \subset \mathfrak{f}^{\Jmid+1} \subset \dots  \subset \mathfrak{f}^{2\Jmid+1} = V.$$
Just as before, $F$ induces a filtration $\bar{F}$ on each graded piece $\mathfrak{g}^j = \mathfrak{f}^j/\mathfrak{f}^{j-1}$. 
The associated Levi subgroup is isomorphic to
$$ \GL(\mathfrak{g}^1) \times \cdots \times \GL(\mathfrak{g}^{\Jmid}) \times \mathrm{GAut}(\mathfrak{g}^{\Jmid+1}),$$
where we regard the last factor as $\mathbf{G}_m$ even if $\mathfrak{f}^{\Jmid} = \mathfrak{f}^{\Jmid+1}$,
and the corresponding filtration on each factor is the one
induced by $\bar{F}$.
\end{itemize}
\end{example}

\subsection{Balanced filtrations and parabolic subgroups} \label{balfilsec}
As above, let $G$ be a reductive group over a field $K$, and 
let:
\begin{itemize}
\item  $F$ be a filtration of $G$ associated with the parabolic subgroup $Q$, 
\item $P$ a parabolic subgroup of $G$, with Levi quotient $M$. 
\end{itemize}

We say that $F$ is {\em balanced} with respect to $P$
if $\wt(F) \in \mathfrak{a}_G$
is carried, under the embedding $\mathfrak{a}_G \rightarrow \mathfrak{a}_M$,
to the weight $\wt(F_M)$ of the filtration induced on the Levi quotient. 
Here $\mathfrak{a}_G \hookrightarrow \mathfrak{a}_M$ is as in \S \ref{sec:Levi}. 

\begin{example} Balanced filtrations.
\begin{itemize}
\item
If $G=\GL(V)$, and $P$ is associated to the increasing filtration
$\mathfrak{f}^q V$, then ``balanced'' says that, for every $q$, 
the filtration that $F$ induces on $\mathfrak{f}^q/\mathfrak{f}^{q+1}$
has the same weight as the filtration $F$ on $V$. 
\item 
The same assertion holds for $\mathrm{GAut}(V)$, 
where now $F$ and $\mathfrak{f}$ are self-dual filtrations.
\end{itemize}
\end{example}

Note that if we choose a cocharacter $\chi_P: \mathbf{G}_m \dashrightarrow P$
representing $F$, the condition of being ``balanced'' implies that, for any character $\psi$ of $P$ trivial on the center of $G$,
\begin{equation} \label{consequence} \langle \psi, \chi_P \rangle  = 0.\end{equation}

Now define
$$X(F) = \left \{ \mbox{$G$-conjugates of $F$ that are balanced with respect to $P$}\right \},$$
so that $X(F)$ is a $P$-stable subvariety of $G/Q$ and is equipped with a map 
\begin{equation} \label{fibered filtration}  X(F) \longrightarrow  \left \{ \mbox{filtrations of $M$} \right \}\end{equation}
via the rule $F \mapsto F_M$.  We may regard this, in an evident way,
as a ``constructible'' map between algebraic varieties (i.e.\ its graph is a constructible set)
and thus we can reasonably speak of dimension of fibers. 

We will analyze \eqref{fibered filtration} by breaking $X(F)$ into $P$-orbits.
Consider for a moment  $F \mapsto F_M$ as a map
\begin{equation} \label{NNN} \mbox{filtrations $P$-conjugate to $F$} \longrightarrow \mbox{filtrations on Levi quotient of $P$}\end{equation}
where both sides are   $P$-varieties. The left hand side  is identified with $P/(P \cap Q)$, and -- if we choose a maximal torus of $P \cap Q$
containing the image of a character defining $F$, and take the corresponding Levi decomposition $P=MU$ --  the image  is identified with  $M/(M \cap Q)$. 
From this and \eqref{factorization}  we find that each
fiber  of \eqref{NNN} has dimension    
\begin{equation} \label{Dimb} \dim(U) - \dim(Q \cap U)\end{equation}
 where $U$ is the unipotent radical of $P$, and $Q$ the stabilizer of $F$.

 \subsection{Double cosets of parabolic subgroups}
 
Fix, as before, $F$ a filtration of $G$ associated with the parabolic subgroup $Q$, and
$P$ a parabolic subgroup of $G$, with Levi quotient $M$. 
Continue with notation $X(F)$ as above.  We will be concerned with estimating
the size of the fibers of \eqref{fibered filtration}.

Fix a Borel $B$ contained in $P$  and a maximal torus $T \subset B$.
Since the variety $X(F)$ depends only on the $G$-orbit of $F$, we may harmlessly replace $F$ by a $G$-conjugate; 
in particular we may suppose that $F$ is defined by a co-character $\mu: \mathbf{G}_m \rightarrow T$
that is positive with respect to $B$, i.e.\ $B \subset Q$. 

Let $\Sigma \supset \Sigma^+$ be the set of roots of $T$ on $G$ and on $B$, respectively;
one therefore gets notions of simple and positive roots. 
Let $\Sigma_P, \Sigma_Q$ be the set of roots of $T$ on $P$ and $Q$. 
Therefore, $\Sigma^+ \subset \Sigma_P$ and $\Sigma^+ \subset \Sigma_Q$. 
Let $\Delta_P$ be the subset of simple roots $\alpha$ for which $-\alpha \in \Sigma_P$,
and similarly define $\Delta_Q$; thus $P$ and $Q$ correspond to 
the subsets $\Delta_P, \Delta_Q$ of the set of simple roots.
Note that, since  $\mu$ defines the parabolic subgroup $Q$, 
$\Sigma_Q$ is the set of roots having nonnegative pairing with $\mu$,
and in particular $\mu$ is orthogonal to all roots for $\Delta_Q$:
$$ \langle \mu, \beta \rangle =0,  \ \ \beta \in \Delta_Q.$$

Recall the ``adjoint'' Hodge numbers associated to $\mathrm{Lie} \ \ \mathrm{GAut} \left( \mathsf{V}_0 \otimes \C, \langle-, - \rangle\right)$, introduced in Section \ref{Hodge_numbers_funny}.   The following
proposition uses an abstraction of that notion: \begin{proposition} \label{lw2}
Let the ``Hodge numbers'' be the multi-set of integers of the form
 $\langle \mu, \gamma\rangle$  with $\gamma \in \Sigma$, adding multiplicity $\dim(T)$ to the multiplicity of zero. 
For $i \neq 0$ let $a_i$ be the number of roots $\gamma \in \Sigma$ with $\langle \mu,\gamma\rangle = i$,
so that $a_i$ is the multiplicity of $i$ as a Hodge number and
$\sum_{i > 0} a_i = \dim(G/Q)$;   %
we take $a_0$ the dimension of the Levi factor of $Q$. 

Suppose $e \leq \dim(G/Q)$ is a positive integer such that  
\begin{multline} \mbox{sum of all positive Hodge numbers} > \mbox{sum of top $e$ Hodge numbers} \\  + \mbox{sum of top $\left (\frac{a_0}{2}+e\right )$ Hodge numbers},\end{multline}
Then the codimension inside $G/Q$ of any fiber of the mapping 
\eqref{fibered filtration}$$X(F) \rightarrow \mbox{filtrations of $M$}$$
is greater than $e$. 
\end{proposition}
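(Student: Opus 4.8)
\textbf{Proof proposal for Proposition \ref{lw2}.}

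The plan is to break the variety $X(F)$ into $P$-orbits and bound the codimension of each orbit individually, since any fiber of \eqref{fibered filtration} is a union of such orbits (the induced filtration on $M$ is constant along a $P$-orbit, by Lemma \ref{indfil}). By \eqref{Dimb}, a fiber over a given filtration on $M$, restricted to one $P$-orbit, has dimension $\dim(U) - \dim(Q' \cap U)$, where $U$ is the unipotent radical of $P$ and $Q'$ is the stabilizer of the relevant conjugate $F'$ of $F$; thus its codimension inside $G/Q$ is $\dim(G/Q) - \dim(U) + \dim(Q'\cap U)$. So I want to show: for every $G$-conjugate $F'$ of $F$ that is balanced with respect to $P$, the dimension $\dim(Q' \cap U)$ is small enough that the displayed codimension exceeds $e$. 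Equivalently (after conjugating so that $F'=F$ is defined by $\mu: \mathbf{G}_m \to T$ positive with respect to a Borel $B \subset P$, and $P=MU$), I must bound $\dim(Q \cap U) = \#\{\gamma \in \Sigma \setminus \Sigma_P : \langle \mu,\gamma\rangle \geq 0, \ \gamma \text{ a root of } U\}$, but more usefully I will bound the number of roots $\gamma$ of $U$ with $\langle\mu,\gamma\rangle > 0$ (the roots with $\langle\mu,\gamma\rangle=0$ lie in the Levi $N$ of $Q$ and can be accounted separately).

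The key input is the balanced condition, which via \eqref{consequence} gives $\sum_{\gamma \in \Sigma\setminus\Sigma_P} \langle\mu,\gamma\rangle = 0$: the weight $\mu$ pairs to zero against the sum of the roots in $U$ and $\bar U$ (the opposite radical). Splitting the roots outside $\Sigma_P$ into those in $U$ and those in $\bar U = -U$, this says the total $\mu$-weight of $U$-roots equals the total $\mu$-weight of $\bar U$-roots; call this common value $W$. Now the roots of $U$ with positive $\mu$-pairing have total weight at most $W$ and each is at least $1$, so there are at most $W$ of them; similarly for $\bar U$. Summing all positive Hodge numbers $\sum_{i>0} i\, a_i = \sum_{\gamma\in\Sigma,\ \langle\mu,\gamma\rangle>0}\langle\mu,\gamma\rangle$, and noting every root of $U$ with positive $\mu$-pairing contributes, I get that $W$ is bounded by (sum of the top-$e$ Hodge numbers is the wrong direction) — the right way is: the number of $U$-roots with $\langle\mu,\gamma\rangle > 0$ is at most the number of Hodge numbers needed to "use up" total weight $W$, and since $W \le$ (sum of all positive Hodge numbers) $-$ (weight of $\bar U$-roots), I convert a bound on the count into the stated numerical inequality. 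The factor $a_0/2$ enters because roots of $U \cap N$ (those with $\langle\mu,\gamma\rangle=0$ lying in the Levi of $Q$) also contribute to $\dim(Q\cap U)$, and their number is bounded by half of $a_0 = \dim(\text{Levi of }Q)$, using self-duality / that they come in $\pm$ pairs within the Levi.

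The main obstacle, and the step requiring the most care, is the precise bookkeeping that turns "$W$ is small" plus "few zero-weight roots of $U$" into the codimension bound $> e$: one must correctly match $\dim(G/Q) = \sum_{i>0} a_i$ against $\dim U - \dim(Q\cap U)$, track that $\dim(G/Q) - \dim U$ is itself controlled by how $\Sigma_P$ and $\Sigma_Q$ overlap, and verify the two $T$-inputs $T(e)$ and $T(a_0/2 + e)$ appear with the right arguments — essentially, $\dim(Q\cap U)$ is bounded by (number of $U$-roots with $\langle\mu,\cdot\rangle>0$, which is $\le e$-many top Hodge numbers' worth contradiction unless the sum inequality holds) plus $a_0/2$. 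I would first reduce cleanly to a purely combinatorial statement about the multiset of pairings $\{\langle\mu,\gamma\rangle\}$, prove that statement by a greedy/rearrangement argument (the sum of the $k$ largest values is a concave function of $k$, matched against the $T$ function defined in \eqref{Tdef}), and only at the end translate back through \eqref{Dimb} and \eqref{factorization}. The $\GL_n$ case should be worked first as a sanity check, since there $\mathfrak{a}_M \hookrightarrow \mathfrak{a}_G$ and the root combinatorics are transparent.
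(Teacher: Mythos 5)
Your high-level strategy — decomposing $X(F)$ into $P$-orbits, using the balanced condition to constrain which orbits occur, and converting that constraint into the numerical inequality — is the right one and matches the paper. But the combinatorial core, which you rightly identify as the hard part, goes wrong in two places. First, $\Sigma \setminus \Sigma_P$ consists \emph{entirely} of roots of the opposite radical $\bar{U}$ (since $\Sigma_P$ contains the roots of both the Levi $M$ and the unipotent radical $U$); there is no split of this set into ``$U$-roots and $\bar{U}$-roots.'' Your reformulation of the balanced condition as ``total $\mu$-weight of $U$-roots equals total $\mu$-weight of $\bar{U}$-roots'' is therefore not meaningful: over all of $U \cup \bar{U}$ the sum $\sum \langle\mu,\gamma\rangle$ vanishes identically and carries no information. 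Second, the cocharacter appearing in the balanced condition for the orbit $PwQ/Q$ is $w\mu$, not $\mu$; the Weyl element $w \in W_{PQ}$ cannot be conjugated away without simultaneously conjugating $P$ and $U$, which you then treat as fixed in \eqref{Dimb} — that is a genuine error, not merely loose bookkeeping.

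The paper's proof fixes $B \subset P$ with $\mu$ dominant for $B$ (so $B \subset Q$), indexes the $P$-orbits on $G/Q$ by $w \in W_{PQ}$, and shows the fiber dimension over orbit $w$ is $\#X$ where $X = \{\beta \in \Sigma - \Sigma_P : w^{-1}\beta > 0\}$, with $X'$ its complement in $\Sigma - \Sigma_P$. Assuming the codimension is $\leq e$ forces $\#X' \leq \dim(G/P) - \dim(G/Q) + e \leq \dim(Q/B) + e \leq \frac{a_0}{2} + e$; this chain is where $a_0/2$ actually enters, not via ``roots of $U \cap N$.'' The balanced condition \eqref{balanced condition} rearranges to $\sum_{\beta \in X}\langle\mu, w^{-1}\beta\rangle = \sum_{\beta \in X'} -\langle\mu, w^{-1}\beta\rangle$, with strictly positive terms on the left and non-negative terms on the right; bounding the right side by $T(\frac{a_0}{2}+e)$ and the left side below by (sum of all positive Hodge numbers) $- T(e)$ gives the contradiction. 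Your instinct that the $T$-function enters through a top-$k$ count of pairings, and that concavity of partial sums is the relevant structure, is sound — but you must carry out the root combinatorics with the $w$-indexing in place, and the decomposition that matters is $X$ vs.\ $X'$ by the sign of $w^{-1}\beta$, not $U$ vs.\ $\bar{U}$.
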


\proof  We are going to analyze this $P$-orbit by $P$-orbit.
Note that we have
$G = P W_{PQ} Q$, where  $W_{PQ}$ is the subset of the Weyl group $W$ defined 
via \begin{equation} \label{wpqdef} W_{PQ} = \{w \in W: \ w^{-1} \Delta_P  > 0, w\Delta_Q > 0\}. \end{equation}
Indeed, it is enough to see (by the Bruhat decomposition) that $W = W_P \cdot W_{PQ} \cdot W_Q$,
where $W_P$ and $W_Q$ are generated by simple reflections corresponding to $\Delta_P$ and $\Delta_Q$. 
Writing as usual
$ \ell(w) =  \# \{ \alpha > 0: w \alpha < 0\}$ for the length of a Weyl element, 
any minimal-length representative in a fixed double coset $W_P \cdot w \cdot W_Q$
belongs to $W_{PQ}$: 
for  $\alpha \in \Delta_P$, the element $s_{\alpha} w$ has shorter length than
$w$ if $w^{-1} \alpha < 0$. Similarly,  for $\beta \in \Delta_Q$, we know that $w s_{\beta}$ has shorter length than $w$
if $w \beta < 0$.

For each $w \in W_{PQ}$ we have either $PwQ/Q \subset X(F)$,
or $PwQ/Q \cap X(F) = \emptyset$. Call $w$ {\em bad} in the former case. 
For each bad $w \in W_{PQ}$ let $X(F)_w$ be the corresponding locally closed subvariety of $X(F)$,
i.e.\ $$X(F)_w = X(F) \cap  \left( (P wQ)/Q \right).$$  
 
Thus $X(F) = \coprod X(F)_w$, the union taken over bad $w$.  Assume, by way of contradiction, that there exists some bad $w$ such that
a fiber of
\begin{equation} \label{oink} X(F)_w \rightarrow \mbox{filtrations of $M$}\end{equation}
has codimension inside $G/Q$ that is $\leqslant e$.
 
That  $w$ is bad means that the filtration defined by the co-character $w\mu$ is balanced
with reference to $P$. 
This means in particular that 
\begin{equation} \label{balanced condition} \sum_{\gamma \in \Sigma - \Sigma_P}  \langle w  \mu, \gamma \rangle = 0.\end{equation}
In fact $\sum_{\gamma \in \Sigma - \Sigma_P} \gamma$
computes the modular character of the parabolic subgroup $P$: it is
the  negative of the character by which $P$ acts on the determinant of its unipotent radical,
and then use \eqref{consequence}.

For this (bad) $w$, write 
$$X =   \{\beta \in \Sigma - \Sigma_P: w^{-1} \beta > 0\} = \{\beta \in \Sigma-\Sigma_P: -w^{-1} \beta \in \Sigma-\Sigma_Q\}$$ 
(using Lemma \ref{root lemma}, see below) and let $X'$ be the complement of $X$
 inside $\Sigma-\Sigma_P$.
 
 Each fiber of \eqref{oink} 
has, by \eqref{Dimb},  dimension $$\dim(U) - \dim(\Ad(w) Q \cap U)
=     \# \{ \alpha \in \Sigma-\Sigma_P: - w^{-1} \alpha \in \Sigma - \Sigma_Q \} = \# X.$$
(see Lemma \ref{root lemma}).  This is equal to the length $\ell(w)$, although we won't make explicit use of it. 
Therefore our assumption means
$\#X \geqslant \dim(G/Q)-e$.  Then, since $\# X' = \dim(G/P) - \#X$, we have
 \begin{multline}  \#  X'
 \leqslant   \dim(G/P) - \dim(G/Q) + e \\ = \dim(Q)-\dim(P) + e \leqslant  \dim(Q/B) + e \leqslant   \frac{a_0}{2} +e.\end{multline}
 Also, by \eqref{balanced condition}, 
\begin{equation} \label{badbad} \sum_{\beta \in X} \langle \mu, w^{-1} \beta\rangle = \sum_{X'} - \langle   \mu, w^{-1} \beta \rangle.\end{equation}
All entries on the left hand side are {\em strictly} positive because $w^{-1} \beta$ is the negative
of an element of $\Sigma-\Sigma_Q$.  All entries on the right-hand side are non-negative
(because $B \subset Q$ the cocharacter $\mu$ is non-negative on positive roots.)
Now $X$ has size $\geqslant \dim(G/Q) -e$, so the image $-w^{-1}(X)$
omits at most $e$ roots inside $\Sigma-\Sigma_Q$. 
Therefore, the left-hand side of \eqref{badbad} 
is {\em at least}
$$ \mbox{sum of all positive Hodge numbers} - \mbox{sum of the topmost $e$ Hodge numbers.}$$
On the other hand, the right-hand side of \eqref{badbad} is {\em at most}
the sum of the top $(a_0/2+e)$ Hodge numbers.
(Here we have used that, since $e \leq \dim(G/Q)$, the top $e$ Hodge numbers are all positive.)   So we get a contradiction to \eqref{badbad} under the stated hypothesis.  \qed
  
We used the following Lemma:
  
\begin{lemma} \label{root lemma}
Let $\Sigma$ be the set of all roots, and take $w \in W_{PQ}$ (see \eqref{wpqdef}).   
\begin{itemize}
\item[(i)]
For $\beta \in \Sigma- \Sigma_Q$,
we have $w \beta > 0 \iff  -w\beta \in \Sigma-\Sigma_P$.
\item[(ii)] For $\alpha \in \Sigma-\Sigma_P$,
we have $w^{-1} \alpha > 0 \iff -w^{-1} \alpha \in \Sigma-\Sigma_Q$.
 
\item[(iii)] The map $x \mapsto -w(x)$ induces a bijection of these sets:
\begin{equation} \label{L1Lem} \{ \beta \in \Sigma - \Sigma_Q: w \beta > 0\} \longrightarrow \{\alpha \in \Sigma-\Sigma_P: w^{-1} \alpha >0\}\end{equation}
The size of this set is precisely the length $\ell(w)$. 

\end{itemize}

\end{lemma}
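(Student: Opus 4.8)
\textbf{Proof sketch for Lemma \ref{root lemma}.}

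The plan is to extract all three assertions from a single underlying fact about the behavior of minimal-length double-coset representatives, so the bulk of the work goes into (i) and (ii), with (iii) then essentially a formal consequence. First I would recall the standard description of $W_{PQ}$: an element $w$ lies in $W_{PQ}$ precisely when $w$ is the minimal-length element in the double coset $W_P w W_Q$, equivalently when $w^{-1}\Delta_P\subset \Sigma^+$ and $w\Delta_Q\subset\Sigma^+$. I would also record the elementary closure properties we need: $\Sigma_P$ is stable under $W_P$, $\Sigma_Q$ is stable under $W_Q$, and $\Sigma^+\subset\Sigma_P$, $\Sigma^+\subset\Sigma_Q$ (these are exactly the standing conventions set up just before the Lemma). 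Finally I would note the self-evident symmetry $\beta\in\Sigma-\Sigma_Q \iff -\beta\in\Sigma-\Sigma_Q$ (since $\Sigma_Q\cup(-\Sigma_Q)=\Sigma$ and $\Sigma_Q\cap(-\Sigma_Q)=\Sigma_{L_Q}$ is the root system of the Levi, hence stable under negation), and likewise for $\Sigma-\Sigma_P$; these let us freely pass between a root and its negative when testing membership in $\Sigma-\Sigma_Q$ or $\Sigma-\Sigma_P$.

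For part (i): take $\beta\in\Sigma-\Sigma_Q$. If $w\beta>0$, I claim $w\beta\in\Sigma-\Sigma_P$. Suppose not, i.e. $w\beta\in\Sigma_P$; since $w\beta>0$ this would mean $w\beta$ is a nonnegative combination of simple roots lying in the span spanned by $\Sigma^+$ together with $\Delta_P$, and one shows using $w\in W_{PQ}$ (so $w^{-1}\Delta_P>0$) that $w^{-1}(w\beta)=\beta$ would then be forced to lie in $\Sigma_Q$ — contradicting $\beta\in\Sigma-\Sigma_Q$. Concretely: write $w\beta$ in terms of the Levi-$P$ root system; $w^{-1}$ applied to any element of $\Sigma_P^+$ stays positive (minimality), and $w^{-1}$ of a negative root of $M$... here I would argue directly via the characterization that for $w\in W_{PQ}$ one has $w(\Sigma-\Sigma_Q)\cap\Sigma^-\subset\Sigma-\Sigma_P$ and dually. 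The cleanest route is: $w\beta<0$ is impossible to rule out in general, so the content of (i) is that \emph{when} $w\beta>0$ it must escape $\Sigma_P$; combining with the negation symmetry of $\Sigma-\Sigma_P$ noted above gives $w\beta>0\iff -w\beta\in\Sigma-\Sigma_P$ as stated (the ``$\Leftarrow$'' direction: if $-w\beta\in\Sigma-\Sigma_P$ then in particular $w\beta\notin\Sigma_P$, and if also $w\beta<0$ then $-w\beta>0$ and $-w\beta\notin\Sigma_Q$-image... one pushes this back through $w^{-1}$ using $w\Delta_Q>0$). Part (ii) is (i) applied to $w^{-1}$ in the role of $w$ with the roles of $P$ and $Q$ interchanged, once one checks $w\in W_{PQ}\iff w^{-1}\in W_{QP}$, which is immediate from the defining conditions $w^{-1}\Delta_P>0$, $w\Delta_Q>0$.

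For part (iii): the map $x\mapsto -w(x)$ sends $\{\beta\in\Sigma-\Sigma_Q: w\beta>0\}$ into $\Sigma-\Sigma_P$ by part (i), and its image consists of roots $\alpha=-w\beta$ with $w^{-1}\alpha=-\beta$; since $\beta\in\Sigma-\Sigma_Q$ and $\Sigma-\Sigma_Q$ is negation-stable, $-\beta=w^{-1}\alpha$ is the negative of an element of $\Sigma-\Sigma_Q$, which by part (ii) is exactly the condition $w^{-1}\alpha>0$. The inverse map is $y\mapsto -w^{-1}(y)$, shown to be well-defined by the same two parts, so we get the asserted bijection. For the cardinality, I would identify $\{\beta\in\Sigma-\Sigma_Q: w\beta>0\}$ with $\{\beta\in\Sigma^+\setminus\Sigma_Q^+ : w\beta\in\Sigma^+\}$... actually the efficient statement is $\ell(w)=\#\{\alpha>0: w^{-1}\alpha<0\}$, and since $w\in W_{PQ}$ kills no positive root of $M$ ($w^{-1}\Delta_P>0$ forces $w^{-1}\Sigma_P^+>0$) and no positive root of $L_Q$, the inversions of $w^{-1}$ all lie in $\Sigma-\Sigma_P$ mapping to $\Sigma-\Sigma_Q$; matching this count against the displayed set of size $\#X$ in the proof of Proposition \ref{lw2} gives $\#X=\ell(w)$. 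The main obstacle I anticipate is bookkeeping the sign conventions so that (i) and (ii) are genuinely dual rather than merely "morally" so — in particular making sure the negation-stability of $\Sigma-\Sigma_Q$ and $\Sigma-\Sigma_P$ is invoked correctly in each direction, and that the characterization $w\in W_{PQ}$ is used symmetrically; once those are pinned down, (iii) and the length formula are bookkeeping via the standard inversion-set description of $\ell(w)$.
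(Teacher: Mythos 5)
Your argument hinges on a claim that is false, and it is not a peripheral slip: $\Sigma - \Sigma_Q$ and $\Sigma - \Sigma_P$ are \emph{not} stable under negation. Since $B \subset Q$ we have $\Sigma^+ \subset \Sigma_Q$, so $\Sigma - \Sigma_Q$ consists entirely of negative roots (precisely those negative roots not in the span of $\Delta_Q$). Hence if $\beta \in \Sigma - \Sigma_Q$ then $-\beta$ is positive, so $-\beta \in \Sigma^+ \subset \Sigma_Q$, i.e.\ $-\beta \notin \Sigma - \Sigma_Q$. The facts you cite, $\Sigma_Q \cup (-\Sigma_Q) = \Sigma$ and $\Sigma_Q \cap (-\Sigma_Q) = \Sigma_{L_Q}$, are correct, but they give the \emph{opposite} of your conclusion: they say $\Sigma - \Sigma_Q$ and $-(\Sigma - \Sigma_Q)$ are disjoint. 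The observation that these complements consist entirely of negative roots is in fact the key fact the lemma turns on, and using it replaces every appeal to your ``negation symmetry.''

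The sign error then propagates. In the forward direction of (i) you assert ``If $w\beta>0$, I claim $w\beta\in\Sigma-\Sigma_P$,'' but $w\beta>0$ forces $w\beta\in\Sigma^+\subset\Sigma_P$, so $w\beta\notin\Sigma-\Sigma_P$; the lemma says $-w\beta\in\Sigma-\Sigma_P$. The correct forward argument is a streamlined version of what you gesture at: if $-w\beta\in\Sigma_P$ while $w\beta>0$, then $w\beta$ is a positive root lying in the Levi of $P$, hence a nonnegative combination of $\Delta_P$; applying $w^{-1}$ and using $w^{-1}\Delta_P>0$ forces $\beta>0$, contradicting $\beta\in\Sigma-\Sigma_Q$. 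The $\Leftarrow$ direction needs no symmetry at all: $-w\beta\in\Sigma-\Sigma_P$ already implies $-w\beta<0$, hence $w\beta>0$. Likewise in (iii), $w^{-1}\alpha=-\beta>0$ follows directly from $\beta<0$. Finally your length count is either garbled ($\Sigma^+\setminus\Sigma_Q^+$ is empty because $\Sigma^+\subset\Sigma_Q$) or circular (you match against the $\#X$ appearing in the proof of Proposition \ref{lw2}, but that proof is precisely the intended application of this lemma). A clean route: $\gamma:=-\beta$ identifies $\{\beta\in\Sigma-\Sigma_Q : w\beta>0\}$ with $\{\gamma>0 : w\gamma<0\}$, using that $w\Delta_Q>0$ forces $w$ to preserve positivity on roots in $\mathrm{span}(\Delta_Q)$, so none of the inversions of $w$ can lie there.
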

\proof 
Take $\beta \in \Sigma-\Sigma_Q$ with $w \beta > 0$. 
If $-w(\beta)$ were in $\Sigma_P$, 
then $\beta$ is a  positive linear combination of roots in $w^{-1} \Delta_P$,
contradicting the negativity of $\beta$. 

This shows the $\implies$ direction of (i) and the $\implies$ direction of (ii) is similar.
The reverse directions for (i) and (ii) are clear.
For example, if $-w\beta$ is in $\Sigma-\Sigma_P$, 
then $w \beta > 0$ because all roots in $\Sigma-\Sigma_P$ are negative.
Now it is clear that the maps $w$ and $w^{-1}$ give inverse bijections in \eqref{L1Lem}.
\qed

\subsection{Conclusion of the argument}
We now return to the situation  of Proposition \ref{linalg}. 
Let $G= \mathrm{GAut}(V, \langle -, - \rangle)$. 
 
We translate the problem into reductive group language.
Let $F_0$ be a fixed self-dual filtration on $V$; we will consider those filtrations $F$ that are conjugate to $F_0$
under $G$. 
Let $Q$ be the stabilizer of $F_0$ in $G$,
with Levi quotient $N$.   Reformulating Proposition \ref{linalg} (replacing $\mathfrak{f}$ from the Proposition with the parabolic subgroup
which is its stabilizer): 
we must estimate the codimension of $g \in G/Q$ 
such that, writing $F = gF_0$, there   exists another parabolic subgroup $P \leqslant G$ such that:

\begin{itemize}
\item[(a)'] (from property (a) of Proposition \ref{linalg}):   $\phi \in P$;

\item[(b)'] (from property (b) of Proposition \ref{linalg}): 
$F$ is balanced with respect to $P$, cf.\ the example of \S \ref{balfilsec}. 
\item[(c)']   (from property (c)  of Proposition \ref{linalg}):  The $G$-conjugacy class of $(P, \phi_M, F_M)$ is fixed, where $\phi_M$  is the projection of $\phi$ to the Levi quotient $M$ %
of the parabolic $P$.\footnote{We say here that $(P, \phi_M, F_M)$ is conjugate to $(P', \phi_{M'}, F_{M'})$ when there is $g \in G$ such that $\Ad(g) P = P'$, and
the induced isomorphism of Levi quotients carries $(\phi_M, F_M)$ to $(\phi_{M'}, F_{M'})$.}
 \end{itemize} 
 
First of all, we reduce to the case when $\phi$ is semisimple. 
Firstly,  $\phi \in P \implies \phi^{ss} \in P$ and, supposing that $\phi \in P$, then also
$(\phi^{ss})_M = (\phi_M)^{ss}$ (the subscript $M$ denotes projection to $M$).
Now if  $(P, \phi_M, F_M)$ and $(P', \phi_{M'}, F_{M'})$ are conjugate,
so that there is $g \in G$ with $\Ad(g) P=P'$ and $\Ad(g):M \rightarrow M'$
carries $\phi_M$ to $\phi_{M'}$, then $\Ad(g): M \rightarrow M'$ 
also carries $(\phi_M)^{\mathrm{ss}} = (\phi^{\mathrm{ss}})_M$
to $(\phi_{M'})^{\mathrm{ss}} = (\phi^{\mathrm{ss}})_{M'}$.
In other words, if we replace $\phi$ by $\phi^{\mathrm{ss}}$  then  the codimension of the set described above will only decrease. We do this, and can therefore assume that $\phi$ is semisimple. 

We will first show that
\begin{equation} \label{1db} (\mbox{dimension of possible pairs $(P, F_M)$}) \leq  z= \dim Z(\phi),\end{equation} 
the dimension of the centralizer of $\phi$ in $G$.  (Note that, because of our reduction above, 
$z$ corresponds to the dimension of the centralizer of $\phi^{ss}$,  for the original choice of $\phi$.)

The set of $P$ containing a given semisimple $\phi$
is a finite union of orbits of $Z(\phi)$, as we see by infinitesimal computations. 
It suffices, therefore, to examine a single $Z(\phi)$-orbit on the space of $P$.  
Fix $P_1$ in this orbit. The dimension of $Z(\phi)\cdot P_1$ 
equals
\begin{equation} \label{TT}\dim Z(\phi) - \dim Z_{P_1}(\phi)\end{equation}

Next, if we fix $P \in Z(\phi) \cdot P_1$, 
the collection of filtrations $\mathcal{F}$ on its Levi factor $M$
for which $(P, \phi_M,  \mathcal{F})$ belongs to 
a fixed $G$-isomorphism class corresponds to a finite
collection of orbits of $Z_M(\phi_M)$ on the space of filtrations on $M$. 
Now $\phi$ is $P$-conjugate to $\phi_M$ by \eqref{well known statement}  so that  $\dim Z_M(\phi_M) \leqslant  \dim Z_P(\phi)$. 
It follows that the dimension of the space of possible filtrations on $M$,
for $P$ fixed, is at most $\dim Z_P(\phi) = \dim Z_{P_1}(\phi)$. 
Adding this to \eqref{TT} we  deduce \eqref{1db}.

 We may now conclude the proof. Suppose $e$ is as in \eqref{equat}, so that both conditions are satisfied:
$$  \mbox{number of positive Hodge numbers} \geqslant z+e $$  
$$  \mbox{sum of all positive  Hodge numbers} >  T(z+e)   + \ T(\frac{h^0}{2}+z+e). $$
Recall that $X(F) \subset G/Q$ is the set of filtrations that are $G$-conjugate to $F$ and are balanced with respect to $P$;
 We may apply  Proposition \ref{lw2}, but taking the $e$ of that Proposition to be $z+e$ in the discussion above. (Note that the first displayed equation above guarantees, in the notation of Proposition \ref{lw2},
that $z+e \leqslant \dim(G/Q)$, as needed to apply it.) Thus,  if we fix $P$, the codimension inside $G/Q$ of any fiber of
$$   X(F) \rightarrow \mbox{filtrations on $M$}$$
is at least $z+e$. 

However, we saw above that the dimension of possibilities for $(P, \mbox{filtration on $M$})$
is at most $z$. 
Therefore, the total codimension of the set of $g \in G/Q$ satisfying (a)', (b)', (c)' 
is at least $e$, concluding the proof. 
\qed

\section{Bounding Frobenius via point counts}
\label{bound_frob}

We remark on an alternative approach to bounding the size of the Frobenius centralizer, 
i.e.\ the step that was achieved in the previous argument by Lemma \ref{Goodprimelemma}. 
It is likely that in some ranges this gives rise to better numerical bounds: %
 
\begin{lemma} \label{end lemma}
Let $Y$ be a smooth hypersurface of degree $d$ and dimension $n \geqslant 2$, defined over the finite field $k$
with $q$ elements;
let $b=  \dim H^n_{\mathrm{prim}}(Y_{\bar{k}}, \Q_{\ell}).$  Then the centralizer $Z$
of the semisimplified Frobenius, acting on $H^n_{\mathrm{prim}}(Y_{\bar{k}}, \Q_{\ell})$, has dimension at most $3b^2/N$,
where $N$ is the largest integer for which $q^{(n/2+1)N} < b/3$.
\end{lemma}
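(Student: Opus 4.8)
The plan is to bound the dimension of the Frobenius centralizer by combining (a) the relation between the centralizer dimension and the multiplicities of Frobenius eigenvalues, and (b) a point-counting argument that forces no eigenvalue to occur with too large a multiplicity. The key observation is that if $\Frob$ acts semisimply on $H^n_{\mathrm{prim}}$ with eigenvalues $\lambda_1,\dots,\lambda_b$, then the centralizer dimension (inside $\GL$) is $\sum_\mu m_\mu^2$, where $m_\mu$ runs over the multiplicities of the distinct eigenvalues $\mu$. Since $\sum_\mu m_\mu = b$, one has $\sum_\mu m_\mu^2 \leqslant (\max_\mu m_\mu)\cdot b$. So it suffices to show $\max_\mu m_\mu \leqslant 3b/N$.

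First I would recall the Lefschetz trace formula: for each $j \geqslant 1$,
$$\# Y(k_j) = \sum_{i=0}^{2n} (-1)^i \tr\left(\Frob^j \mid H^i_{\et}(Y_{\bar k},\Q_\ell)\right),$$
where $k_j$ is the degree-$j$ extension of $k$. For a smooth hypersurface, $H^i$ for $i \neq n$ is spanned by powers of the hyperplane class (for $i$ even, $0 \leqslant i \leqslant 2n$, $i\neq n$) and vanishes for $i$ odd, $i \neq n$; on these pieces $\Frob^j$ acts by $q^{ij/2}$. Subtracting off this ``trivial'' contribution, one obtains $\sum_{s=1}^{b} \alpha_s^j$, where $\alpha_1,\dots,\alpha_b$ are the eigenvalues of $\Frob$ on $H^n_{\mathrm{prim}}$ (counted with multiplicity), and by Deligne $|\alpha_s| = q^{n/2}$. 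The plan is then: for each $j$ in the range $1 \leqslant j \leqslant N$, take the eigenvalue $\mu$ of largest multiplicity $m = m_\mu$, and estimate $|\sum_s \alpha_s^j|$ from below by $m\cdot q^{nj/2}$ minus the contribution of the remaining $\leqslant b - m$ terms, each of absolute value $q^{nj/2}$. This gives $|\sum_s \alpha_s^j| \geqslant (2m-b)q^{nj/2}$ when $2m > b$ — but this alone is not enough; instead one should play off the sum over several values of $j$.

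The cleaner route: fix the eigenvalue $\mu$ of multiplicity $m$. For each $j = 1,\dots,N$, write $P_j := \#Y(k_j) - (\text{trivial part}) = \sum_s \alpha_s^j$; this is an algebraic integer, in fact a rational integer by the integrality of characteristic polynomials of Frobenius (compare condition (c) of Lemma \ref{finiteness}). Now $\mu/q^{n/2}$ is an algebraic number all of whose conjugates have absolute value $1$; averaging $q^{-nj/2}P_j$ against $\mu^{-j}$ over $j = 1,\dots,N$ isolates the multiplicity: one gets $\frac{1}{N}\sum_{j=1}^N \mu^{-j} q^{-nj/2} P_j = m + (\text{small})$, where the error is bounded by $\frac{b-m}{N}$ times a geometric-type sum, hence by $\frac{b}{N}$ up to a constant. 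On the other hand, $|P_j| \leqslant \#Y(k_j) + O(q^{(n/2+1)j})$ and $\#Y(k_j)$ is comparable to $q^{nj}$ — so the terms in the average, after dividing by $q^{nj/2}$, are of size $q^{nj/2}$, which is far too big.

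So in fact the argument must be run the other way: one must use that $\#Y(k_j) = \frac{q^{(n+1)j}-1}{q^j - 1} + P_j + O(q^{(n/2+1)j})$ has a \emph{known} main term, so $P_j$ is determined modulo an error of size $c\,q^{(n/2+1)j}$ by the \emph{count}, and the genuinely unknown piece of $P_j$ — namely the eigenvalue sum — has size exactly $q^{nj/2}\cdot(\text{something of size }\leqslant b)$. The condition $q^{(n/2+1)N} < b/3$ is precisely what makes $q^{(n/2+1)j} < b/3 \cdot q^{(n/2+1)(j-N)} \leqslant b/3$ comparable to (indeed smaller than $q^{nj/2}$ times) the eigenvalue contribution negligible in the averaging. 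Concretely: the plan is to show that if $m > 3b/N$, then examining the first $N$ point counts $\#Y(k),\dots,\#Y(k_N)$ — whose ``non-main'' parts are each forced to be $\sum \alpha_s^j$ with an eigenvalue $\mu$ of multiplicity $m$ — produces a contradiction, because the dominant behavior $m\mu^j q^{nj/2}$ cannot be cancelled by $b-m < b$ other terms of the same modulus \emph{together with} an error term of size $< q^{nj/2}$ (guaranteed by $q^{(n/2+1)j}<b/3 \leqslant q^{nj/2}/3$ for $j\leqslant N$, using $n\geqslant 2$). The quantitative heart is a Dirichlet/pigeonhole or Vandermonde-type estimate: $N$ linear combinations $\sum_{s} \alpha_s^j$ ($j=1,\dots,N$) of unit-modulus complex numbers (after scaling by $q^{-nj/2}$) cannot all be small if one eigenvalue is repeated more than $\sim b/N$ times — the standard way to see this is that the $N\times b$ matrix $(\alpha_s^j/q^{nj/2})$ has the repeated columns contributing a large block, and one extracts a lower bound $\gtrsim m - b/N$ for some weighted average.

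\textbf{Main obstacle.} The delicate point is the passage from ``the averaged eigenvalue sum is at least $m - (\text{const})\cdot b/N$'' to an actual contradiction with the point counts: one needs the error term in $\#Y(k_j)$ beyond the primitive cohomology to be genuinely $O(q^{(n/2+1)j})$ with an \emph{explicit} constant (so that ``$3$'' in $3b^2/N$ and in $q^{(n/2+1)N}<b/3$ are correct), and one needs to handle the fact that the relevant sums are over complex numbers, so some care with complex absolute values and with choosing the right weights $\mu^{-j}$ (or, to stay rational, symmetrizing over the Galois conjugates of $\mu$) is required. I expect that tracking the constants through the averaging — ensuring the loss is genuinely bounded by the factor $3$ rather than something larger — is where the real work lies; the structural skeleton (centralizer $=\sum m_\mu^2 \leqslant b\max m_\mu$, Lefschetz, Deligne's bound, and a Vandermonde/pigeonhole lower bound) is routine.
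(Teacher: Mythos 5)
The structural skeleton you identify is right, and it matches the paper's: $\dim Z = \sum_\mu m_\mu^2$ where $m_\mu$ are the Frobenius eigenvalue multiplicities, Lefschetz plus Deligne's bound gives the angles $\theta_s$ with $\alpha_s = q^{n/2}e^{2\pi i\theta_s}$, and the point count sandwich $0 \leqslant \#Y(k_j) \leqslant \#\mathbf{P}^{n+1}(k_j)$ gives $|\sum_s e^{2\pi i j\theta_s}| \leqslant q^{(n/2+1)j}$, a nontrivial bound precisely when $j \leqslant N$. But the ``quantitative heart,'' which you explicitly leave open, is where your proposed route breaks down, and it is a genuine gap rather than a bookkeeping issue.

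You reduce to bounding $\max_\mu m_\mu \leqslant 3b/N$ (via $\sum m_\mu^2 \leqslant b\max_\mu m_\mu$), and you propose to isolate a heavy multiplicity $m$ at $\theta_0$ by averaging $\sum_s e^{2\pi i j\theta_s}$ against the weights $e^{-2\pi i j\theta_0}$ over $j = 1,\dots,N$. The error you claim to control is $\frac{1}{N}\sum_{s\,:\,\theta_s\neq\theta_0}\sum_{j=1}^N e^{2\pi i j(\theta_s - \theta_0)}$, and you assert it is ``$\frac{b-m}{N}$ times a geometric-type sum, hence $\lesssim b/N$.'' This is false in general: the inner geometric sum is $\min(N, O(1/|1 - e^{2\pi i(\theta_s-\theta_0)}|))$, which is $\Theta(N)$ whenever $\theta_s$ is within $O(1/N)$ of $\theta_0$. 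Clustered-but-distinct eigenvalue angles therefore swamp the signal $m$, and neither the Dirichlet-kernel average nor a Vandermonde/pigeonhole argument on the matrix $(\alpha_s^j)$ gives the claimed $\gtrsim m - b/N$ lower bound: Vandermonde determinants degenerate exactly when nodes cluster on the circle. (There is a secondary confusion in your ``error term of size $< q^{nj/2}$'' aside: the relevant comparison is of the normalized sum $\sum_s e^{2\pi i j\theta_s}$ against its trivial bound $b$, and for $j \leqslant N$ one in fact has $b > 3q^{(n/2+1)j} > q^{nj/2}$, so the inequality $b/3 \leqslant q^{nj/2}/3$ you invoke runs the wrong way.)

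The paper's argument sidesteps the clustering problem entirely by replacing the Dirichlet weights $e^{-2\pi i j\theta_0}$ with a Fej\'er-type kernel $g_N(t) = \bigl(\sum_{r=-N}^N e^{2\pi i rt}\bigr)^2 \geqslant 0$. Non-negativity is the whole point: in
$$\int_{S^1}\Bigl|\sum_s g_N(t-\theta_s)\Bigr|^2\,dt \;=\; \sum_{s,s'}\int g_N(t-\theta_s)\,g_N(t-\theta_{s'})\,dt,$$
every cross-correlation integral is $\geqslant 0$, and the diagonal terms with $\theta_s = \theta_{s'}$ each contribute $\|g_N\|_{L^2}^2$, so the integral is $\geqslant (\sum_\mu m_\mu^2)\|g_N\|_{L^2}^2 = \dim Z\cdot\|g_N\|_{L^2}^2$ \emph{regardless} of how the angles are distributed. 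The upper bound on the left then comes from feeding the point-count estimate into the Fourier coefficients of $g_N$, which are also non-negative. This $L^2$ autocorrelation argument bounds $\sum m_\mu^2$ directly, without passing through $\max m_\mu$ at all; your detour via the maximum is consistent with the numerics but strictly harder to close, since you would need a pointwise rather than average estimate. If you want to salvage your approach, the essential fix is exactly this: use a non-negative kernel in place of (or in addition to) the oscillating weights $\mu^{-j}$, so that you never have to separate close eigenvalue angles.
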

 
\proof
To avoid confusion between $i = \sqrt{-1}$ and as an index we write $e(\alpha) := \exp(2 \pi i \alpha)$. 

Let the Frobenius eigenvalues on  $H^n_{\mathrm{prim}}(Y_{\bar{k}}, \Q_{\ell})$ be given by
 $$\lambda_1=q^{n/2} e(\theta_1),  \dots, \lambda_b= q^{n/2} e (\theta_b),$$
 and  let $\mu$ be the measure on $S^1$
given by $\sum_{i=1}^b \delta_{\theta_i}$.  If the multiplicities of the $\theta_i$ are $m_1, \dots, m_r$, with $\sum m_i = b$, 
then $\dim Z = \sum m_i^2$. 

If $g$ is any  non-negative real-valued function on $S^1$ we have
$\int g(t-\theta) d\mu(\theta) = \sum_{s} g(t- \theta_s)$, and so 
\begin{equation} \label{gbound} \int_{t} dt \left| \int g(t-\theta) d\mu(\theta) \right|^2  \geqslant \mathrm{dim} Z \cdot \|g\|_{L^2}^2 \end{equation}
which bounds from above the dimension of the centralizer; this estimate is most effective if the support of $g$ is concentrated near $0$. 
Here, and in what follows, the measure is the Haar probability measure on $S^1$. 

If $k'$ is the field extension of $k$
of degree $j$, the number of points of $Y(k')$ is given by
$$ |Y(k')| =  \sum_{\ell=0}^{n} q^{\ell j}  + (-1)^n q^{nj/2} \sum_{s=1}^{b} e(j \theta_s).$$
 Since this lies between $0$ and the size of $\mathbf{P}^{n+1}(k')$,  i.e.\ between $0$ and $\sum_{\ell=0}^{n+1} q^{\ell j}$, 
we see that
\begin{equation} \label{pointbound} \left| \sum_{s} e(j  \theta_s) \right| \leqslant   q^{(n/2+1) j}.\end{equation}    Let 
$$g_N(t) = \left (\sum_{r = -N}^{N} e(rt) \right )^2 = \sum_{r = -2N}^{2N} (2N+1 - \left | r \right |) e(rt),$$ 
a function on $S^1$.  Note that $\|g_N\|_{L^2}^2  = (2N+1)^2 + 2 \sum_{i=1}^{2N} i^2$.  %
We have
$$  \int g_N(t - \theta) d\mu(\theta) = \sum_{r=-N}^N (2N+1 - \left | r \right |)  \sum_{s=1}^b e(r(t-\theta_s) ).$$
Using \eqref{pointbound}, we see that this is bounded in absolute value by
 $$  (2N+1) \left[ b  + 2   \sum_{r=1}^N q^{(n/2+1)|r|}  \right]\leqslant  (2N+1) \left( b + 3  q^{(n/2+1) N}\right)$$
since $q^{n/2+1} \geqslant 4$.   Therefore, by \eqref{gbound}, 
$$ \dim(Z) \leqslant   b^2 (1 + 3 b^{-1} q^{(n/2+1)N})^2 \cdot  \underbrace{
\left( \frac{(2N+1)^2}
{(2N+1)^2 + 2 \sum_{i=1}^{2N}i^2}
\right)}_{\leq \frac{3}{4N}}.$$
   Choose   $N$  the largest integer with $q^{(n/2+1)N} < b/3$; we get
$$ \dim(Z) \leqslant 3b^2/N.$$
\qed

\appendix
 
 \bibliography{padicTorelli-sept2019-arxiv}
\bibliographystyle{plain}

 \end{document}